\tikzstyle directed=[postaction={decorate,decoration={markings,
    mark=at position #1 with {\arrow{>}}}}]
\tikzstyle rdirected=[postaction={decorate,decoration={markings,
    mark=at position #1 with {\arrow{<}}}}]
    \tikzset{anchorbase/.style={baseline={([yshift=-0.5ex]current bounding box.center)}}}
\theoremstyle{plain}
\newtheorem{thm}{Theorem}[section]
\newtheorem{thm-nono}{Theorem}
\newtheorem{prop}[thm]{Proposition}
\newtheorem{cor}[thm]{Corollary}
\newtheorem{lem}[thm]{Lemma}
\newtheorem{conv}[thm]{Convention}
\theoremstyle{definition}
\newtheorem{defi}[thm]{Definition}
\theoremstyle{remark}
\newtheorem{rem}[thm]{Remark}
\newtheorem{exa}[thm]{Example}
\def\PW#1{\textcolor[rgb]{0.00,0.00,1.00}{#1}}%
\begin{document}

\def\emph#1{{\sl #1\/}}
\newcommand{\Ru}{\cal{R}_\cal{B}}
\newcommand{\Su}{\cal{S}_\cal{B}}
\newcommand{\Vu}{\cal{V}_\cal{B}}
\newcommand{\Wu}{\cal{W}_\cal{B}}
\newcommand{\Tu}{\cal{T}_\cal{B}}
\newcommand{\LH}[2]{\mathrm{KhR}^{#2}(#1)} 
\newcommand{\LHH}[2]{\mathrm{KhR}^{\infty}_{#2}(#1)} 
\newcommand{\LHHr}[2]{\overline{\mathrm{KhR}}^{\infty}_{#2}(#1)} 
\newcommand{\LHr}[2]{\overline{\mathrm{KhR}}^{\Sigma}_{#2}(#1)} 
\newcommand{\LC}[3]{\left \llbracket #1_{#3} \right \rrbracket ^{#2}}
\newcommand{\LCa}[2]{\left \llbracket #1 \right \rrbracket ^{#2}} 
\newcommand{\taut}{\mathrm{taut}} 
\newcommand{\Tor}{\mathrm{Tor}} 
\newcommand{\Mf}{\mathrm{MF}^\Sigma} 
\newcommand{\Mfu}{\mathrm{MF}} 
\newcommand{\Mfd}{\mathrm{MF'}} 
\newcommand{\UMf}{\mathrm{UMF}} 
\newcommand{\grtaut}{\mathrm{grtaut}} 
\newcommand{\Hom}{{\rm Hom}}
\newcommand{\End}{{\rm End}}
\newcommand{\Homcat}{\Hom} 
\newcommand{\idem}{\mathbbm{1}}
\newcommand{\extp}{\bV^a \C[\xi]}
\newcommand{\NWeb}{N\textbf{Web}}
\newcommand{\Foam}[1]{#1\textbf{Foam}}
\newcommand{\Bimod}{\textbf{Bimod}}
\newcommand{\grFoam}[1]{#1\textbf{grFoam}}
\newcommand{\Dec}{\mathrm{Dec}} 
\newcommand{\zerorep}{V}
\def\mf{\mathfrak}
\def\cal#1{\mathcal{#1}}%
\newcommand{\glnn}[1]{\mf{gl}_{#1}}
\newcommand{\slnn}[1]{\mf{sl}_{#1}}
\newcommand{\bV}{\textstyle{\bigwedge}}
\newcommand{\cat}[1]{\ensuremath{\mbox{\bfseries {\upshape {#1}}}}}
\newcommand{\foam}[1]{#1\cat{Foam}}
\newcommand{\Kom}{\cat{Kom}}
\newcommand{\grVect}{\cat{grVect}}
\newcommand{\Vect}{\cat{Vect}}
\newcommand{\Sym}{\mathrm{Sym}}
\def\la{\langle}
\def\ra{\rangle}
\def\dmod{{\mathrm{-mod}}}   
\newcommand{\comm}[1]{}

 \newcommand{\tikzcrossing}[7][1]{
\begin{tikzpicture} [scale=.5,fill opacity=0.2]
	\draw[very thick]  (2,0) to (1,0) to [out=180,in=315] (0.2,0.3);
	\draw[very thick, directed=.65] (-0.2,0.7) to [out=135,in=0] (-1,1) to (-2,1);
	\draw[very thick, directed=.85] (2,1) to (1,1) to [out=180,in=0](-1,0) to (-2,0) ;
	\node[opacity=1] at (1.8,1.5) {$_{#4}$};
	\node[opacity=1] at (1.8,-0.5) {$_{#5}$};
	\node[opacity=1] at (-1.8,1.5) {$_{#6}$};
	\node[opacity=1] at (-1.8,-0.5) {$_{#7}$};
\end{tikzpicture}
 }

\def\C{{\mathbbm C}}
\def\N{{\mathbbm N}}
\def\R{{\mathbbm R}}
\def\Z{{\mathbbm Z}}
\def\Q{{\mathbbm Q}}
\def\H{{\mathbbm H}}
\def\P{{\mathbbm P}}
\def\P{{\mathbbm P}}
\def\k{{\mathbbm k}}
\def\V{{\mathbbm V}}
\def\W{{\mathbbm W}}
\def\X{{\mathbbm X}}
\def\Y{{\mathbbm Y}}
\def\L{{\mathbbm L}}
\def\M{{\mathbbm M}}
\def\A{{\mathbbm A}}
\def\S{{\mathbbm S}}
\def\T{{\mathbbm T}}
\newcommand{\Bb}{\mathbb{B}}

\author{Paul Wedrich}
\thanks{Present address: Department of Mathematics, Imperial College London, SW7 2AZ London, United Kingdom}
\address{Department of Mathematics, Imperial College London, SW7 2AZ London, United Kingdom\newline 
\indent Max Planck Institute for Mathematics, Vivatsgasse 7, 53111 Bonn, Germany}
\email{p.wedrich@gmail.com}

\thanks{Funding: This work was supported by an EPSRC doctoral training grant at the Centre for Mathematical Sciences, University of Cambridge; by the Leverhulme Research Grant RP2013-K-017 to Dorothy Buck, Imperial College London; and by a Fortbildungsstipendium at the Max Planck Institute for Mathematics.}

\title{Exponential growth of colored HOMFLY-PT homology}
\begin{abstract}
We define reduced colored $\glnn{N}$ link homologies and use deformation spectral sequences to characterize their dependence on color and rank.  We then define reduced colored HOMFLY-PT homologies and prove that they arise as large $N$ limits of $\glnn{N}$ homologies. Together, these results allow proofs of many aspects of the physically conjectured structure of the family of type A link homologies. In particular, we verify a conjecture of Gorsky, Gukov and Sto\v{s}i\'c about the growth of colored HOMFLY-PT homology.

\smallskip
\noindent Keywords: \textbf{Knot homology, Khovanov homology, HOMFLY-PT, deformation, spectral sequence.}
\end{abstract}
\maketitle
\tableofcontents 

\section{Introduction}
It is well-known that the famous \emph{Jones polynomial} is an instance of the large class of  \emph{Reshetikhin--Turaev (RT) invariants} of knots and links \cite{RT}. These are rational functions in one variable $q$ and depend on a datum consisting of an oriented, framed link $\cal{L}$ in $S^3$, a complex semisimple Lie algebra $\mathfrak{g}$ and a labelling (coloring) of the connected components of $\cal{L}$ by representations of $\mathfrak{g}$. The Jones polynomial arises in the case where $\mathfrak{g}=\slnn{2}$ and all link components are labelled by the vector representation.

Given such a large family of invariants, it is natural to inquire into their dependence on the datum of link, Lie algebra and labelling. The best-studied subfamily is the one where $\mathfrak{g} \in \{ \glnn{N} \}_{N\in \N}$ and many important aspects of the relationship between these invariants have been uncovered. For example, for a fixed link and labelling, it is well-known that the $\glnn{N}$ invariants stabilize for large $N$ to a \emph{colored HOMFLY-PT} invariant in two variables $a$ and $q$, which contains generalizations of the Alexander polynomial and from which the $\glnn{N}$ invariants can be recovered as one-variable specializations $a=q^N$. The dependence of HOMFLY-PT invariants on the labelling is governed by recurrence relations \cite{GLL}, which are notoriously difficult to compute and are conjectured to have deep relationships to the A-polynomial, character varieties and knot contact homology, see Garoufalidis \cite{Gar2}, Gukov--Saberi \cite{GSa} and references therein. However, as regards questions like the one about dependence of the invariant under variations of the input link, the RT invariants seem to carry too little structure to allow meaningful answers. 

This deficit has been remedied by the development of \emph{link homology theories} categorifying the RT invariants, starting with Khovanov's categorification of the Jones polynomial \cite{Kho1}. They associate to a datum of link, Lie algebra and labelling a bi-graded vector space (usually as the homology of a graded chain complex), whose graded Euler characteristic recovers the corresponding RT invariant. Additionally, for every smooth cobordism between two links in $\R^3$, one can construct a linear map between the corresponding invariants, and it is expected (and proven for fundamental representations in type A, see Ehrig--Tubbenhauer--Wedrich~\cite{ETW} and references therein) that this construction is functorial and only depends on the isotopy type of the cobordism. This provides a much more satisfactory framework for thinking about the dependence of the invariant on the underlying link than would have been possible for the original RT invariants. Furthermore, some of these purely combinatorial link homologies can be related to gauge-theoretic link invariants, with important consequences, for example a proof that Khovanov homology detects the unknot \cite{KM}, a question which is still open for the Jones polynomial. However, the increase in sophistication in categorified invariants is bought at the price of increased algebraic, combinatorial or geometric complexity in the definition. In fact, only for the simplest cases it is possible to compute link homologies directly from the definition. In parallel, the questions about the dependence of the invariant on the other parts of the datum become correspondingly harder and also more interesting. 

The purpose of this paper is to study the rank and labelling (color) dependence of $\glnn{N}$ link homologies with respect to fundamental (exterior power) representations. Such link homologies have been constructed by Wu \cite{Wu1} and Yonezawa \cite{Yon} and many others\footnote{We refer the reader to the introduction of \cite{MW} for a brief survey of the different approaches to colored $\glnn{N}$ link homologies and an argument for why they are equivalent, which is based on work of Cautis \cite{Cau2}.}, building on the pioneering work of Khovanov--Rozansky \cite{KR1}, but up until now, little has been known about the relationships between members of this family.

\subsection{The large $N$ limit, categorified} For knots and in the \emph{uncolored} case of the labelling by the vector representation of $\glnn{N}$, Rasmussen \cite{Ras1} has categorified the relationship between the HOMFLY-PT polynomial and the $\glnn{N}$ RT polynomials by constructing \emph{specialization spectral sequences} from Khovanov--Rozansky's reduced triply-graded HOMFLY-PT homology \cite{KR2} to their reduced $\glnn{N}$ homologies \cite{KR1}. These spectral sequences degenerate for large $N$ and exhibit the triply-graded homology as a stable limit of the $\glnn{N}$ homologies. In this sense, all uncolored reduced $\glnn{N}$ homologies can be packaged into a single link homology theory together with a family of specialization spectral sequences.

In this paper, we define bi-graded \emph{reduced colored $\glnn{N}$ homologies} and triply-graded \emph{reduced colored HOMFLY-PT homologies} and show that they satisfy analogous relationships, generalizing the results of Rasmussen.

\begin{thm-nono}
\label{thm-1} Let $\cal{L}(i)$ be a labelled link with a marked component $i$ of minimal label. There is a spectral sequence 
\[\LHHr{\cal{L}(i)}{} \quad \rightsquigarrow \quad \overline{\mathrm{KhR}}^{\glnn{N}}(\cal{L}(i))\]
from colored HOMFLY-PT homology to colored $\glnn{N}$ homology, both reduced at the component $i$. See Theorem~\ref{thm-redHss}.
\end{thm-nono}

\begin{thm-nono}
\label{thm-2}.
Let $\cal{K}$ be a labelled knot. For sufficiently large $N$, there are isomorphisms
\begin{equation*}
\bigoplus_{{i+2N j = I}\atop{h-j=J}} \LHHr{\cal{K}}{i,j,h} \;\cong\; \overline{\mathrm{KhR}}_{I,J}^{\glnn{N}}(\cal{K})
\end{equation*}
between a grading-collapsed version of reduced colored HOMFLY-PT homology and reduced colored $\glnn{N}$ homology. See Theorem~\ref{thm-gradcollapse}.
\end{thm-nono}

Theorem~\ref{thm-2} relies on the fact that the reduced colored HOMFLY-PT homology is finite-dimensional for knots, see Proposition~\ref{prop-findim}. On the contrary, the original \emph{unreduced} colored HOMFLY-PT homology of Mackaay--Sto\v{s}i\'c--Vaz \cite{MSV} and Webster--Williamson \cite{WW} is always infinite-dimensional. Nevertheless, we find an analogue of Theorem~\ref{thm-1}, which relates it to unreduced colored $\glnn{N}$ homologies and---more generally---their $\Sigma$-deformed versions, which have been defined by Wu in \cite{Wu2} and further studied by Rose and the author in \cite{RW}.

\begin{thm-nono}\label{thm-3} Let $\cal{L}$ be a labelled link. There is a spectral sequence 
\[\LHH{\cal{L}}{} \quad\rightsquigarrow\quad \LH{\cal{L}}{\Sigma}{}\]
from unreduced colored HOMFLY-PT homology to unreduced, $\Sigma$-deformed colored $\glnn{N}$ homology. See Theorem~\ref{thm-Hss}.
\end{thm-nono}

\subsection{The physical structure}
The existence of reduced colored $\glnn{N}$ and HOMFLY-PT homologies and the theorems describing their relationships, although new, will not come as a surprise to the expert. In fact, they are at the very centre of current interest in the study of link homologies from the perspective of theoretical physics. Since Gukov--Schwarz--Vafa's physical interpretation of Khovanov--Rozansky homology as a space of BPS states \cite{GSV}, there has been significant cross-fertilization between the mathematical and physical sides of this field. A particularly interesting outcome is a package of conjectures about the structure of the family of colored $\glnn{N}$ and HOMFLY-PT homologies, which has been developed over a decade in a series of papers by Dunfield--Gukov--Rasmussen \cite{DGR}, Gukov--Walcher \cite{GW}, Gukov--Sto\v{s}i\'c \cite{GS}, Gorsky--Gukov--Sto\v{s}i\'c \cite{GGS} and Gukov--Nawata--Saberi--Sto\v{s}i\'c--Su\l{}kowski \cite{GNSSS}. 

Thanks to recent advances in understanding the higher representation theory underlying link homologies (in particular \emph{categorical skew Howe duality}), which led to the convergence of several different approaches to link homologies \cite{Cau2, QR, RW, MW}, we are now in a position to prove a significant part of the physically conjectured structure. In the following theorems, $\cal{K}^k$ denotes a knot labelled with the $k^{th}$ exterior power of the vector representation.

\begin{thm-nono}\label{thm-4} 
There is a spectral sequence
\begin{align*}
\LHHr{\cal{K}^k}{}  \quad &\rightsquigarrow \quad  \left(\LHHr{\cal{K}^1}{}\right)^{\otimes k}
\end{align*}
See Corollary~\ref{cor_expgrw}.
\end{thm-nono}

This implies that the colored HOMFLY-PT homologies of a knot grow at least exponentially in color, as predicted in \cite{GGS}. A related result has been obtained previously by the author for categorical invariants of rational tangles, \cite{Wed1}. The next theorem provides rank-reducing spectral sequences, as suggested in \cite{DGR,GW}. In special cases ($M=k=1$), these have been defined and used before by Lewark--Lobb in \cite{LL} for the construction of new slice genus bounds.

\begin{thm-nono}\label{thm-5} 
For $N\geq M\geq k$ there is a spectral sequence
\begin{align*}
\overline{\mathrm{KhR}}^{\glnn{N}}(\cal{K}^k)  \quad &\rightsquigarrow \quad \overline{\mathrm{KhR}}^{\glnn{M}}(\cal{K}^k)
\end{align*}
See Corollary~\ref{cor_rankdiff}.
\end{thm-nono}  

We are optimistic that the these and other spectral sequences (Theorem~\ref{thm_reducedss}) will help to understand relations among the many slice genus bounds and concordance homomorphisms obtained from $\glnn{N}$ link homologies (see Lobb \cite{Lo1}, Wu \cite{Wu3} and Lewark--Lobb \cite{LL}) as analogs of Rasmussen's $s$-invariant \cite{Ras2}. Finally, there are color-reducing spectral sequences, analogous to the colored differentials in \cite{GS,GGS}.

\begin{thm-nono}\label{thm-6} 
For $k\geq h$ there is a spectral sequence
\begin{align*}
\LHHr{\cal{K}^k}{}  \quad &\rightsquigarrow \quad \LHHr{\cal{K}^h}{}
\end{align*}
See Corollary~\ref{cor_coldiff}.
\end{thm-nono}

\subsection{Open problems} While this paper solves a significant part of the physically conjectured structure from \cite{DGR,GW,GS,GGS,GNSSS}, several adventurous questions remain open and deserve further mathematical attention. We finish this introduction with a list of some of these open problems (roughly in increasing order of perceived difficulty) and comment on their relation to the results in the present work.
\\
\newline
\textit{The module structure of colored HOMFLY-PT homology.} In our definition of reduced colored HOMFLY-PT homology we assume that the reduction happens on a link component of minimal label. Related to this technical restriction are questions about the relationship between reduced and unreduced HOMFLY-PT homology, see Remark~\ref{rem-redunred}, and about extensions of colored $\glnn{N}$ and HOMFLY-PT homologies to invariants of pointed links along the lines of Baldwin--Levine--Sarkar \cite{BLS}. Further, if this assumption can be avoided or circumvented, then proofs for some of the conjectures about color-shifting properties on links made in \cite{GNSSS} should be within reach. 
\\
\newline
\textit{Are the spectral sequences actually differentials?} Theorem~2 implies that the spectral sequences in Theorem~5 are degenerate for sufficiently large $M$. In general, it is unknown how fast the spectral sequences from Theorems~1 and 3-6 converge\footnote{Note, however, that since the first arXiv version of this paper appeared, Naisse--Vaz have proposed a strategy for proving that the \textit{Rasmussen spectral sequences}---the spectral sequences from Theorems~1 and 3 in the uncolored case---converge on the second page \cite{NV1}.} and whether the differentials in the spectral sequences between (colored) HOMFLY-PT homologies in Theorems 4-6 preserve the $a$-grading. The latter is because these Theorems rely on the stabilization behaviour from Theorem 5, which requires collapsing the $a$-grading, and on deformation spectral sequences between finite-rank homologies, whose grading behaviour sensitively depends on the chosen deformation parameters \cite{LL}. For conjectures about these properties see \cite{DGR, GS, GGS, GNSSS}.
\\
\newline
\textit{Reduced homologies for non-fundamental colors.} Unreduced colored $\glnn{N}$ homologies have also been defined for labellings by non-fundamental representations via cabling operations by infinite-twist torus links, see Rozansky \cite{Roz} and Cautis \cite{Cau2}. Provided compatible definitions of reduced colored $\glnn{N}$ and HOMFLY-PT homologies for these labellings, many of the results in the present paper can be expected to extend to this setting to produce specialization, rank-reducing and color-reducing spectral sequences.
\\
\newline
\textit{The $q$-symmetry and another grading.} A long-standing conjecture about reduced HOMFLY-PT homologies posits that there is an involution on reduced uncolored HOMFLY-PT homology, which inverts the $q$-grading. On reduced colored HOMFLY-PT homologies, it is expected to swap labellings with their \emph{transposes}, e.g. exterior power with symmetric power representations. This $q$-symmetry, also called \emph{mirror symmetry} in \cite{DGR, GS, GGS, GNSSS}, is further conjectured to intertwine with specialization spectral sequences and to be related to an alternative homological grading---$t_r$ in their notation---effectively making colored HOMFLY-PT homology quadruply-graded. 
\\
\newline
\textit{Lie superalgebra link homologies.} Gorsky--Gukov--Sto\v{s}i\'c \cite{GGS} have suggested that the $q$-symmetry and its related structure might be more naturally interpreted in a hypothetical framework of colored $\mathfrak{gl}_{N|M}$ link homologies, whose $M=0$ cases are related to the known $\glnn{N}$ homologies. Recent work of the author in collaboration with Tubbenhauer and Vaz \cite{TVW} supports this conjecture on the decategorified level by exhibiting the $q$-symmetry of colored HOMFLY-PT polynomials as a consequence of relations between the representation categories of quantum $\mathfrak{gl}_{N|M}$ for various parameters $N,M$. Some progress towards a definition of $\mathfrak{gl}_{N|M}$ link homologies via categorical skew Howe duality has been made by Grant \cite{Gra2}. A string theory setup for such link homologies has been proposed by Mikhaylov--Witten \cite{MiW}.
\\
\newline
\textit{A spectral sequence to knot Floer homology.} The classical Alexander polynomial can be recovered as the $a=1$ specialization of the (uncolored) HOMFLY-PT polynomial. Dunfield--Gukov--Rasmussen \cite{DGR} have conjectured the existence of a corresponding specialization spectral sequence from (uncolored) HOMFLY-PT homology to the \emph{knot Floer homology} developed by Ozsv\'{a}th--Szab\'{o} and Rasmussen\cite{OS, RasTh}. Recent developments related to this conjecture and the present paper include work by Dowlin \cite{Dow} and Baldwin--Levine--Sarkar \cite{BLS}, who match up structural properties of both types of link homologies, and Ellis--Petkova--V{\'e}rtesi \cite{EPV}, whose results can be interpreted as saying that knot Floer homology (and its extension to tangles) is already a $\mathfrak{gl}_{1|1}$ homology.
\\
\newline

\noindent \textbf{Outline of the paper:}
Theorems~1, 2 and 3 are proved in Section~\ref{sec-HH} using $\Z$-graded matrix factorizations and arguments analogous to those in Rasmussen's work on the uncolored case \cite{Ras1}. The keystone Lemma~\ref{lem-singlehor} is proved using a technique inspired by the annular web evaluation algorithm of Queffelec--Rose \cite{QR2}. Theorems~4, 5 and 6 follow via large $N$ stabilization from analogous results for reduced $\glnn{N}$ link homologies. These are proved using the deformation theory for reduced $\glnn{N}$ link homologies, which is developed in Section~\ref{sec-slN}, building on earlier work of the author in collaboration with Rose on the unreduced case \cite{RW} and work of Lewark--Lobb on the uncolored case \cite{LL}. For this we use the $\glnn{N}$ foam 2-categories introduced by Queffelec--Rose in \cite{QR}, building on earlier work of Mackaay--Sto\v{s}i\'c--Vaz \cite{MSV2}. These foam $2$-categories were given a satisfying new combinatorial description by Robert--Wagner~\cite{RWa} after the preprint version of this article appear on the mathematics arXiv.  For the reader's convenience we will recall matrix factorizations and foam 2-categories and explain their use in the construction of link homology theories. However, in the interest of compactness we will blackbox some technical aspects, especially regarding the relationships between the link homologies defined using foams and matrix factorization, for which we refer the reader to the cited literature. 
\newline

\noindent \textbf{Acknowledgements:} It is my pleasure to thank Anna Beliakova, Christian Blanchet, Sergei Gukov, Mikhail Khovanov, Lukas Lewark, Andrew Lobb, Satoshi Nawata, Louis-Hadrien Robert, David Rose, Marko Sto\v{s}i\'c, Daniel Tubbenhauer, Emmanuel Wagner, and Edward Witten for valuable discussions related to this work. Special thanks go to Jake Rasmussen for his advice and comments on the manuscript, and to an anonymous referee, whose comments have significantly improved the exposition. Further, I would like to thank the California Institute of Technology, Columbia University, the Simons Center for Geometry and Physics, the Universit\'e catholique de Louvain and the Universit\"{a}t Z\"{u}rich for their kind hospitality and support while this work was in progress.
\newline

\noindent \textbf{Conventions:} All links and tangles are assumed to be oriented and framed, link and tangle diagrams are assumed to be oriented and they represent oriented, framed links and tangles via the blackboard framing.

\section{Deformed colored $\glnn{N}$ homology}
\label{sec-slN}
Sections~\ref{sec-2-1} and \ref{sec-2-2} recall the essential features of the foam 2-categories of Queffelec--Rose \cite{QR} and their use in the construction of (deformed) colored $\glnn{N}$ Khovanov--Rozansky homologies. In Section~\ref{sec-module} we consider these link homologies as modules over the corresponding unknot invariants. Section~\ref{sec-2-4} extends the deformation theory of $\glnn{N}$ link homologies from \cite{RW} to 2-ended tangles. This allows the definition of corresponding reduced homologies in Section~\ref{sec-reducedN} and deformation spectral sequences in Section~\ref{sec-2-6}, which leads to a proof of Theorem 5.

\subsection{Foams}
\label{sec-2-1}

The monoidal 2-category $\foam{N}$ is a categorification of the category of fundamental representations of $U_q(\glnn{N})$ in its diagrammatic incarnation $\NWeb$. The category $\NWeb$ of $\glnn{N}$ webs is a $\Z[q^{\pm 1}]$-linear monoidal category whose objects are sequences $\mathbf{a} = (a_1,\ldots, a_m)\in \{1,\ldots,N\}^m$ for $m \geq 0$, which correspond to tensor products $\bV^{a_1}_q\C_q^N\otimes \cdots\otimes \bV^{a_m}_q\C_q^N$ of fundamental $U_q(\glnn{N})$-representations. The morphisms in $\NWeb$ are linear combinations of $\glnn{N}$ webs ---leftward oriented, trivalent graphs, with edges labelled by elements of $\{1,\ldots,N\}$, which are generated under planar composition by
\[
\xy
(0,0)*{
\begin{tikzpicture}[scale=.4]
	\draw [very thick,directed=.55] (2.25,0) to (.75,0);
	\draw [very thick,directed=.55] (.75,0) to [out=135,in=0] (-1,.75);
	\draw [very thick,directed=.55] (.75,0) to [out=225,in=0] (-1,-.75);
	\node at (3.25,0) {\tiny $a+b$};
	\node at (-1.5,.75) {\tiny $a$};
	\node at (-1.5,-.75) {\tiny $b$};
\end{tikzpicture}
};
\endxy
\quad , \quad
\xy
(0,0)*{
\begin{tikzpicture}[scale=.4]
	\draw [very thick,rdirected=.55] (-2.25,0) to (-.75,0);
	\draw [very thick,rdirected=.55] (-.75,0) to [out=45,in=180] (1,.75);
	\draw [very thick,rdirected=.55] (-.75,0) to [out=315,in=180] (1,-.75);
	\node at (-3.25,0) {\tiny $a+b$};
	\node at (1.5,.75) {\tiny $a$};
	\node at (1.5,-.75) {\tiny $b$};
\end{tikzpicture}
};
\endxy
\quad , \quad
\xy
(0,0)*{
\begin{tikzpicture}[scale=.4]
	\draw [very thick,directed=.55] (2.25,0) to (0,0);
\node at (-.35,0) {\tiny $a$};
\node at (2.5,0) {\tiny $a$};
\end{tikzpicture}
};
\endxy
\]
which we view as mapping from the sequence determined by the labelled points on the right boundary (read top to bottom) to the one determined by the left. Merge and split vertices correspond to the natural intertwiners $\bV^{a}_q\C_q^N\otimes \bV^{b}_q\C_q^N \to \bV^{a+b}_q\C_q^N$ and $\bV^{a+b}_q\C_q^N \to \bV^{a}_q\C_q^N\otimes \bV^{b}_q\C_q^N$ respectively. The morphisms in $\NWeb$ are subject to the $\Z[q^{\pm 1}]$-linear relations that hold among the corresponding $U_q(\glnn{N})$-intertwiners. We illustrate some of these relations in Section~\ref{sec-app}. More details about $\NWeb$ and its relationship with the representation theory of $U_q(\glnn{N})$ appear in \cite{CKM, TVW}.

The monoidal 2-category $\foam{N}$ has the same objects as $\NWeb$ and its 1-morphisms are formal direct sums of $\glnn{N}$ webs (now not considered up to any relations). The 2-morphisms are matrices of $\C$-linear combinations of $\glnn{N}$ foams -- singular cobordisms between such webs assembled from local pieces
\[
\xy
(0,0)*{
\begin{tikzpicture} [scale=.5,fill opacity=0.2]
	\path [fill=green] (4.25,-.5) to (4.25,2) to [out=165,in=15] (-.5,2) to (-.5,-.5) to 
		[out=0,in=225] (.75,0) to [out=90,in=180] (1.625,1.25) to [out=0,in=90] 
			(2.5,0) to [out=315,in=180] (4.25,-.5);
	\path [fill=green] (3.75,.5) to (3.75,3) to [out=195,in=345] (-1,3) to (-1,.5) to 
		[out=0,in=135] (.75,0) to [out=90,in=180] (1.625,1.25) to [out=0,in=90] 
			(2.5,0) to [out=45,in=180] (3.75,.5);
	\path[fill=green] (.75,0) to [out=90,in=180] (1.625,1.25) to [out=0,in=90] (2.5,0);
	\draw [very thick,directed=.55] (2.5,0) to (.75,0);
	\draw [very thick,directed=.55] (.75,0) to [out=135,in=0] (-1,.5);
	\draw [very thick,directed=.55] (.75,0) to [out=225,in=0] (-.5,-.5);
	\draw [very thick,directed=.55] (3.75,.5) to [out=180,in=45] (2.5,0);
	\draw [very thick,directed=.55] (4.25,-.5) to [out=180,in=315] (2.5,0);
	\draw [very thick, red, directed=.75] (.75,0) to [out=90,in=180] (1.625,1.25);
	\draw [very thick, red] (1.625,1.25) to [out=0,in=90] (2.5,0);
	\draw [very thick] (3.75,3) to (3.75,.5);
	\draw [very thick] (4.25,2) to (4.25,-.5);
	\draw [very thick] (-1,3) to (-1,.5);
	\draw [very thick] (-.5,2) to (-.5,-.5);
	\draw [very thick,directed=.55] (4.25,2) to [out=165,in=15] (-.5,2);
	\draw [very thick, directed=.55] (3.75,3) to [out=195,in=345] (-1,3);
	\node [black, opacity=1]  at (1.625,.5) {\tiny{$_{a+b}$}};
	\node[black, opacity=1] at (3.5,2.65) {\tiny{$b$}};
	\node[black, opacity=1] at (4,1.85) {\tiny{$a$}};		
\end{tikzpicture}
};
\endxy
\; , \; 
\xy
(0,0)*{
\begin{tikzpicture} [scale=.5,fill opacity=0.2]
	\path [fill=green] (4.25,2) to (4.25,-.5) to [out=165,in=15] (-.5,-.5) to (-.5,2) to
		[out=0,in=225] (.75,2.5) to [out=270,in=180] (1.625,1.25) to [out=0,in=270] 
			(2.5,2.5) to [out=315,in=180] (4.25,2);
	\path [fill=green] (3.75,3) to (3.75,.5) to [out=195,in=345] (-1,.5) to (-1,3) to [out=0,in=135]
		(.75,2.5) to [out=270,in=180] (1.625,1.25) to [out=0,in=270] 
			(2.5,2.5) to [out=45,in=180] (3.75,3);
	\path[fill=green] (2.5,2.5) to [out=270,in=0] (1.625,1.25) to [out=180,in=270] (.75,2.5);
	\draw [very thick,directed=.55] (4.25,-.5) to [out=165,in=15] (-.5,-.5);
	\draw [very thick, directed=.55] (3.75,.5) to [out=195,in=345] (-1,.5);
	\draw [very thick, red, directed=.75] (2.5,2.5) to [out=270,in=0] (1.625,1.25);
	\draw [very thick, red] (1.625,1.25) to [out=180,in=270] (.75,2.5);
	\draw [very thick] (3.75,3) to (3.75,.5);
	\draw [very thick] (4.25,2) to (4.25,-.5);
	\draw [very thick] (-1,3) to (-1,.5);
	\draw [very thick] (-.5,2) to (-.5,-.5);
	\draw [very thick,directed=.55] (2.5,2.5) to (.75,2.5);
	\draw [very thick,directed=.55] (.75,2.5) to [out=135,in=0] (-1,3);
	\draw [very thick,directed=.55] (.75,2.5) to [out=225,in=0] (-.5,2);
	\draw [very thick,directed=.55] (3.75,3) to [out=180,in=45] (2.5,2.5);
	\draw [very thick,directed=.55] (4.25,2) to [out=180,in=315] (2.5,2.5);
	\node [black, opacity=1]  at (1.625,2) {\tiny{$_{a+b}$}};
	\node[black, opacity=1] at (3.5,2.65) {\tiny{$b$}};
	\node[black, opacity=1] at (4,1.85) {\tiny{$a$}};		
\end{tikzpicture}
};
\endxy \;,\;
\xy
(0,0)*{
\begin{tikzpicture} [scale=.5,fill opacity=0.2]
	\path[fill=green] (-.75,4) to [out=270,in=180] (0,2.5) to [out=0,in=270] (.75,4) .. controls (.5,4.5) and (-.5,4.5) .. (-.75,4);
	\path[fill=green] (-.75,4) to [out=270,in=180] (0,2.5) to [out=0,in=270] (.75,4) -- (1.25,4) -- (1.25,1) -- (-1.25,1) -- (-1.25,4) -- (-.75,4);
	\path[fill=green] (-.75,4) to [out=270,in=180] (0,2.5) to [out=0,in=270] (.75,4) .. controls (.5,3.5) and (-.5,3.5) .. (-.75,4);
	\draw[very thick, directed=.55] (1.25,1) -- (-1.25,1);
	\path (.75,1) .. controls (.5,.5) and (-.5,.5) .. (-.75,1); 
	\draw [very thick, red, directed=.7] (-.75,4) to [out=270,in=180] (0,2.5) to [out=0,in=270] (.75,4);
	\draw[very thick] (1.25,4) -- (1.25,1);
	\draw[very thick] (-1.25,4) -- (-1.25,1);
	\draw[very thick,directed=.65] (1.25,4) -- (.75,4);
	\draw[very thick,directed=.65] (-.75,4) -- (-1.25,4);
	\draw[very thick,directed=.55] (.75,4) .. controls (.5,3.5) and (-.5,3.5) .. (-.75,4);
	\draw[very thick,directed=.55] (.75,4) .. controls (.5,4.5) and (-.5,4.5) .. (-.75,4);
	\node [black, opacity=1]  at (0,1.5) {\tiny{$_{a+b}$}};
	\node[black, opacity=1] at (-.25,3.375) {\tiny{$a$}};
	\node[black, opacity=1] at (-.25,4.1) {\tiny{$b$}};	
\end{tikzpicture}
};
\endxy
\; , \;
\xy
(0,0)*{
\begin{tikzpicture} [scale=.5,fill opacity=0.2]
	\path[fill=green] (-.75,-4) to [out=90,in=180] (0,-2.5) to [out=0,in=90] (.75,-4) .. controls (.5,-4.5) and (-.5,-4.5) .. (-.75,-4);
	\path[fill=green] (-.75,-4) to [out=90,in=180] (0,-2.5) to [out=0,in=90] (.75,-4) -- (1.25,-4) -- (1.25,-1) -- (-1.25,-1) -- (-1.25,-4) -- (-.75,-4);
	\path[fill=green] (-.75,-4) to [out=90,in=180] (0,-2.5) to [out=0,in=90] (.75,-4) .. controls (.5,-3.5) and (-.5,-3.5) .. (-.75,-4);
	\draw[very thick, directed=.55] (1.25,-1) -- (-1.25,-1);
	\path (.75,-1) .. controls (.5,-.5) and (-.5,-.5) .. (-.75,-1); 
	\draw [very thick, red, directed=.7] (.75,-4) to [out=90,in=0] (0,-2.5) to [out=180,in=90] (-.75,-4);
	\draw[very thick] (1.25,-4) -- (1.25,-1);
	\draw[very thick] (-1.25,-4) -- (-1.25,-1);
	\draw[very thick,directed=.65] (1.25,-4) -- (.75,-4);
	\draw[very thick,directed=.65] (-.75,-4) -- (-1.25,-4);
	\draw[very thick,directed=.55] (.75,-4) .. controls (.5,-3.5) and (-.5,-3.5) .. (-.75,-4);
	\draw[very thick,directed=.55] (.75,-4) .. controls (.5,-4.5) and (-.5,-4.5) .. (-.75,-4);
	\node [black, opacity=1]  at (0,-1.5) {\tiny{$_{a+b}$}};
	\node[black, opacity=1] at (-.25,-3.4) {\tiny{$b$}};
	\node[black, opacity=1] at (-.25,-4.1) {\tiny{$a$}};
\end{tikzpicture}
};
\endxy
\; , \;
\xy
(0,0)*{
\begin{tikzpicture} [scale=.5,fill opacity=0.2]
	\path[fill=green] (-2.5,4) to [out=0,in=135] (-.75,3.5) to [out=270,in=90] (.75,.25)
		to [out=135,in=0] (-2.5,1);
	\path[fill=green] (-.75,3.5) to [out=270,in=125] (.29,1.5) to [out=55,in=270] (.75,2.75) 
		to [out=135,in=0] (-.75,3.5);
	\path[fill=green] (-.75,-.5) to [out=90,in=235] (.29,1.5) to [out=315,in=90] (.75,.25) 
		to [out=225,in=0] (-.75,-.5);
	\path[fill=green] (-2,3) to [out=0,in=225] (-.75,3.5) to [out=270,in=125] (.29,1.5)
		to [out=235,in=90] (-.75,-.5) to [out=135,in=0] (-2,0);
	\path[fill=green] (-1.5,2) to [out=0,in=225] (.75,2.75) to [out=270,in=90] (-.75,-.5)
		to [out=225,in=0] (-1.5,-1);
	\path[fill=green] (2,3) to [out=180,in=0] (.75,2.75) to [out=270,in=55] (.29,1.5)
		to [out=305,in=90] (.75,.25) to [out=0,in=180] (2,0);
	\draw[very thick, directed=.55] (2,0) to [out=180,in=0] (.75,.25);
	\draw[very thick, directed=.55] (.75,.25) to [out=225,in=0] (-.75,-.5);
	\draw[very thick, directed=.55] (.75,.25) to [out=135,in=0] (-2.5,1);
	\draw[very thick, directed=.55] (-.75,-.5) to [out=135,in=0] (-2,0);
	\draw[very thick, directed=.55] (-.75,-.5) to [out=225,in=0] (-1.5,-1);
	\draw[very thick, red, rdirected=.85] (-.75,3.5) to [out=270,in=90] (.75,.25);
	\draw[very thick, red, rdirected=.75] (.75,2.75) to [out=270,in=90] (-.75,-.5);	
	\draw[very thick] (-1.5,-1) -- (-1.5,2);	
	\draw[very thick] (-2,0) -- (-2,3);
	\draw[very thick] (-2.5,1) -- (-2.5,4);	
	\draw[very thick] (2,3) -- (2,0);
	\draw[very thick, directed=.55] (2,3) to [out=180,in=0] (.75,2.75);
	\draw[very thick, directed=.55] (.75,2.75) to [out=135,in=0] (-.75,3.5);
	\draw[very thick, directed=.65] (.75,2.75) to [out=225,in=0] (-1.5,2);
	\draw[very thick, directed=.55]  (-.75,3.5) to [out=225,in=0] (-2,3);
	\draw[very thick, directed=.55]  (-.75,3.5) to [out=135,in=0] (-2.5,4);
	\node[black, opacity=1] at (-2.25,3.375) {\tiny$c$};
	\node[black, opacity=1] at (-1.75,2.75) {\tiny$b$};	
	\node[black, opacity=1] at (-1.25,1.75) {\tiny$a$};
	\node[black, opacity=1] at (0,2.75) {\tiny$_{b+c}$};
	\node[black, opacity=1] at (0,.25) {\tiny$_{a+b}$};
	\node[black, opacity=1] at (1.35,2.5) {\tiny$_{a+b}$};	
	\node[black, opacity=1] at (1.35,2) {\tiny$_{+c}$};	
\end{tikzpicture}
};
\endxy
\; , \;
\xy
(0,0)*{
\begin{tikzpicture} [scale=.5,fill opacity=0.2]
	\path[fill=green] (-2.5,4) to [out=0,in=135] (.75,3.25) to [out=270,in=90] (-.75,.5)
		 to [out=135,in=0] (-2.5,1);
	\path[fill=green] (-.75,2.5) to [out=270,in=125] (-.35,1.5) to [out=45,in=270] (.75,3.25) 
		to [out=225,in=0] (-.75,2.5);
	\path[fill=green] (-.75,.5) to [out=90,in=235] (-.35,1.5) to [out=315,in=90] (.75,-.25) 
		to [out=135,in=0] (-.75,.5);	
	\path[fill=green] (-2,3) to [out=0,in=135] (-.75,2.5) to [out=270,in=125] (-.35,1.5) 
		to [out=235,in=90] (-.75,.5) to [out=225,in=0] (-2,0);
	\path[fill=green] (-1.5,2) to [out=0,in=225] (-.75,2.5) to [out=270,in=90] (.75,-.25)
		to [out=225,in=0] (-1.5,-1);
	\path[fill=green] (2,3) to [out=180,in=0] (.75,3.25) to [out=270,in=45] (-.35,1.5) 
		to [out=315,in=90] (.75,-.25) to [out=0,in=180] (2,0);				
	\draw[very thick, directed=.55] (2,0) to [out=180,in=0] (.75,-.25);
	\draw[very thick, directed=.55] (.75,-.25) to [out=135,in=0] (-.75,.5);
	\draw[very thick, directed=.55] (.75,-.25) to [out=225,in=0] (-1.5,-1);
	\draw[very thick, directed=.45]  (-.75,.5) to [out=225,in=0] (-2,0);
	\draw[very thick, directed=.35]  (-.75,.5) to [out=135,in=0] (-2.5,1);	
	\draw[very thick, red, rdirected=.75] (-.75,2.5) to [out=270,in=90] (.75,-.25);
	\draw[very thick, red, rdirected=.85] (.75,3.25) to [out=270,in=90] (-.75,.5);
	\draw[very thick] (-1.5,-1) -- (-1.5,2);	
	\draw[very thick] (-2,0) -- (-2,3);
	\draw[very thick] (-2.5,1) -- (-2.5,4);	
	\draw[very thick] (2,3) -- (2,0);
	\draw[very thick, directed=.55] (2,3) to [out=180,in=0] (.75,3.25);
	\draw[very thick, directed=.55] (.75,3.25) to [out=225,in=0] (-.75,2.5);
	\draw[very thick, directed=.55] (.75,3.25) to [out=135,in=0] (-2.5,4);
	\draw[very thick, directed=.55] (-.75,2.5) to [out=135,in=0] (-2,3);
	\draw[very thick, directed=.55] (-.75,2.5) to [out=225,in=0] (-1.5,2);
	\node[black, opacity=1] at (-2.25,3.75) {\tiny$c$};
	\node[black, opacity=1] at (-1.75,2.75) {\tiny$b$};	
	\node[black, opacity=1] at (-1.25,1.75) {\tiny$a$};
	\node[black, opacity=1] at (-.125,2.25) {\tiny$_{a+b}$};
	\node[black, opacity=1] at (-.125,.75) {\tiny$_{b+c}$};
	\node[black, opacity=1] at (1.35,2.75) {\tiny$_{a+b}$};
	\node[black, opacity=1] at (1.35,2.25) {\tiny$_{+c}$};	
\end{tikzpicture}
};
\endxy
\]
modulo certain local relations and isotopies, for which we refer the reader to \cite[Section 3.1]{QR}. 
The local foam relations ensure that the defining linear relations between webs in $\NWeb$---which are \textit{identities} of elements of a $\Z[q^{\pm 1}]$-module---can be lifted to \textit{isomorphisms} between $1$-morphisms in $\foam{N}$.

The facets of these foams carry labellings by elements in $\{1,\ldots,N\}$, and a $k$-labelled facet may also be decorated by 
elements from a ring of symmetric polynomials in $k$ variables. For later use we list a family of foam relations, see \cite[Relation (3.9)]{QR}, which govern \emph{decoration migration} between the three facets adjacent to a \emph{seam} singularity:
\begin{equation}\label{eqn-decmigr}
\xy
(0,0)*{
\begin{tikzpicture} [scale=.5,fill opacity=0.2]
	\path[fill=green] (2.25,3) to (.75,3) to (.75,0) to (2.25,0);
	\path[fill=green] (.75,3) to [out=225,in=0] (-.5,2.5) to (-.5,-.5) to [out=0,in=225] (.75,0);
	\path[fill=green] (.75,3) to [out=135,in=0] (-1,3.5) to (-1,.5) to [out=0,in=135] (.75,0);	
	\draw [very thick,directed=.55] (2.25,0) to (.75,0);
	\draw [very thick,directed=.55] (.75,0) to [out=135,in=0] (-1,.5);
	\draw [very thick,directed=.55] (.75,0) to [out=225,in=0] (-.5,-.5);
	\draw[very thick, red, directed=.55] (.75,0) to (.75,3);
	\draw [very thick] (2.25,3) to (2.25,0);
	\draw [very thick] (-1,3.5) to (-1,.5);
	\draw [very thick] (-.5,2.5) to (-.5,-.5);
	\draw [very thick,directed=.55] (2.25,3) to (.75,3);
	\draw [very thick,directed=.55] (.75,3) to [out=135,in=0] (-1,3.5);
	\draw [very thick,directed=.55] (.75,3) to [out=225,in=0] (-.5,2.5);
	\node [black, opacity=1]  at (1.5,2.75) {\tiny{$_{a+b}$}};
	\node[black, opacity=1] at (-.75,3.25) {\tiny{$b$}};
	\node[black, opacity=1] at (-.25,2.25) {\tiny{$a$}};	
	\node[opacity=1] at (1.5,1.5) {\small$\pi_\gamma$};
\end{tikzpicture}
};
\endxy
\quad = \sum_{\alpha,\beta} c_{\alpha,\beta}^\gamma \quad
\xy
(0,0)*{
\begin{tikzpicture} [scale=.5,fill opacity=0.2]
	\path[fill=green] (2.25,3) to (.75,3) to (.75,0) to (2.25,0);
	\path[fill=green] (.75,3) to [out=225,in=0] (-.5,2.5) to (-.5,-.5) to [out=0,in=225] (.75,0);
	\path[fill=green] (.75,3) to [out=135,in=0] (-1,3.5) to (-1,.5) to [out=0,in=135] (.75,0);	
	\draw [very thick,directed=.55] (2.25,0) to (.75,0);
	\draw [very thick,directed=.55] (.75,0) to [out=135,in=0] (-1,.5);
	\draw [very thick,directed=.55] (.75,0) to [out=225,in=0] (-.5,-.5);
	\draw[very thick, red, directed=.55] (.75,0) to (.75,3);
	\draw [very thick] (2.25,3) to (2.25,0);
	\draw [very thick] (-1,3.5) to (-1,.5);
	\draw [very thick] (-.5,2.5) to (-.5,-.5);
	\draw [very thick,directed=.55] (2.25,3) to (.75,3);
	\draw [very thick,directed=.55] (.75,3) to [out=135,in=0] (-1,3.5);
	\draw [very thick,directed=.55] (.75,3) to [out=225,in=0] (-.5,2.5);
	\node[black, opacity=1]  at (1.5,2.75) {\tiny{$_{a+b}$}};
	\node[black, opacity=1] at (-.75,3.25) {\tiny{$b$}};
	\node[black, opacity=1] at (-.25,2.25) {\tiny{$a$}};	
	\node[opacity=1] at (.125,1.25) {\small$\pi_\alpha$};
	\node[opacity=1] at (-.25,3) {\small$\pi_\beta$};	
\end{tikzpicture}
};
\endxy
\end{equation} 
Here $\pi_{\alpha}$ denotes the Schur polynomial corresponding to the Young diagram $\alpha$ and the 
$c_{\alpha,\beta}^\gamma$ are the corresponding Littlewood--Richardson coefficients. Imposing this family of relations has the effect of identifying the union of the alphabets of variables on the left side facets with the alphabet on the right side facet. Analogous relations hold across all seams.

The 2-morphisms in $\foam{N}$ admit a grading, which is first defined on undecorated foams as a certain weighted Euler characteristic, see \cite[Definition 3.3]{QR}, to which is added twice the sum of the degrees of the decorating symmetric polynomials in the general case. This grading is additive under disjoint union $\sqcup$, under composition in the 1-morphism direction $\otimes$ and under composition in the 2-morphism direction $\circ$. All relations in $\foam{N}$ are homogeneous, so this grading descends to $\foam{N}$. 

We now introduce versions of foam 2-categories used in the definition of link homologies:
\begin{itemize}

\item $\grFoam{N}$ is the 2-category with the same objects as $\foam{N}$, with 1-morphisms being formal direct sums of webs as in $\foam{N}$, but equipped with an additional formal $\Z$-grading. I.e. corresponding to a web $W$ from $\foam{N}$ we have webs $q^d W$ in $\grFoam{N}$, where $d\in \Z$ indicates the formal degree, and the automorphism shifting the grading by $1$ is given by formal multiplication by $q$. 2-morphisms are matrices of $\C$-linear combinations of foams $F\colon q^{d_1} W_1 \to q^{d_2} W_1$ of grading $d_2-d_1$.\footnote{In \cite{QR} $\grFoam{N}$ is denoted by $\foam{N}$.}

\item $\foam{N}^\bullet$ is the 2-category which is obtained from $\foam{N}$ by imposing the following additional relation on 2-morphisms:
\begin{equation}
\label{eqn_dot1}
\xy
(0,0)*{
\begin{tikzpicture} [fill opacity=0.2,decoration={markings, mark=at position 0.6 with {\arrow{>}}; }, scale=.4]
	\draw [fill=green] (1,1) -- (-1,2) -- (-1,-1) -- (1,-2) -- cycle;
	\node [opacity=1] at (0,0) {$\bullet^{N}$};
	\node[ opacity=1] at (-.75,1.5) {\tiny{$1$}};
\end{tikzpicture}};
\endxy
 \quad = \quad 0
\end{equation}
This relation is to be understood as saying that whenever a foam contains a $1$-labelled facet decorated by the $N^{th}$ power of the facet variable (indicated by the dot), it is equal to zero. Note that this relation is homogeneous and the morphisms in $\foam{N}^\bullet$ can be equipped with a grading. Thus it also makes sense to define:

\item $\grFoam{N}^\bullet$, the 2-category which is obtained from $\grFoam{N}$ by imposing relation \eqref{eqn_dot1} on 2-morphisms.

\item Let $\Sigma$ be a multiset of $N$ complex numbers. $\foam{N}^\Sigma$ is the 2-category obtained from $\foam{N}$ by imposing the following additional relation on 2-morphisms:
\begin{equation}
\label{eqn_dot2}
\xy
(0,0)*{
\begin{tikzpicture} [fill opacity=0.2,decoration={markings, mark=at position 0.6 with {\arrow{>}}; }, scale=.4]
	\draw [fill=green] (1,1) -- (-1,2) -- (-1,-1) -- (1,-2) -- cycle;
	\node [opacity=1] at (0,0) {$\bullet^{N}$};
	\node[ opacity=1] at (-.75,1.5) {\tiny{$1$}};
\end{tikzpicture}};
\endxy
 \quad = \quad  \sum_{i=0}^{N-1} (-1)^{N-i-1} e_{N-i}(\Sigma) ~\xy
(0,0)*{
\begin{tikzpicture} [fill opacity=0.2,decoration={markings, mark=at position 0.6 with {\arrow{>}}; }, scale=.4]
	\draw [fill=green] (1,1) -- (-1,2) -- (-1,-1) -- (1,-2) -- cycle;
	\node [opacity=1] at (0,0) {$\bullet^{i}$};
	\node[ opacity=1] at (-.75,1.5) {\tiny{$1$}};
\end{tikzpicture}};
\endxy
\end{equation}
Here $e_k(\Sigma)$ the $k^{th}$ symmetric polynomial in $N$ variables, evaluated at $\Sigma$. Note that, in general, this relation is not homogeneous with respect to the grading on foams. The only exception is  $\foam{N}^{\{0,\dots, 0\}}=\foam{N}^\bullet$. If we denote by $P\in\C[X]$ the monic polynomial with root multiset $\Sigma$, then \eqref{eqn_dot2} can be abbreviated to $P(\bullet)=0$.
\end{itemize}

\subsection{Link homology}
\label{sec-2-2}
In this section, we recall how foam 2-categories can be used to define $\glnn{N}$ link homologies. For the details, the reader is referred to Queffelec--Rose \cite[Section 4]{QR}. 

Let $\cal{T}$ be a tangle diagram inside an axis-parallel rectangle in $\R^2$, possibly with boundary on the left and right boundary of the rectangle, and with strands labelled by natural numbers. The following construction associates to the labelled tangle diagram $\cal{T}$ a so-called \emph{cube of resolutions} complex $\LC{\cal{T}\hspace{0.5pt}}{}{}$ in the category $\Kom(\foam{N})$ of chain complexes in $\foam{N}$. In \cite[Theorem 4.8]{QR} it is shown that the complex $\LC{\cal{T}\hspace{0.5pt}}{}{}$, considered as an element of the homotopy category $\Kom_h(\foam{N})$, is an invariant of the labelled tangle represented by $\cal{T}$. 

Every labelled tangle admits a diagram given as the horizontal composition $\otimes$ of tangles which are the disjoint union $\sqcup$ of \emph{generating tangles}: caps, cups, crossings and horizontally directed identity tangles. In fact, it suffices to define $\LC{\cdot}{}{}$ on such generating tangles because the operations $\otimes$ and $\sqcup$ on $\foam{N}$ naturally extend to bounded complexes $\Kom^b(\foam{N})$: On chain spaces they act as $\sqcup$ and $\otimes$ respectively and the structure of differentials is defined similarly as in a tensor product complex. Further, it is a standard result that these operations respect homotopies and, hence, are well defined on the homotopy category $\Kom^b_h(\foam{N})$ of bounded complexes in $\foam{N}$.  
 
On 1-strand identity tangles, $\LC{\cdot}{}{}$ is defined as
\begin{equation*}
\label{eqn_reverseEdge}
\left \llbracket \;\;
\xy
(0,0)*{\begin{tikzpicture} [scale=.5]
\draw[very thick, directed=.55] (2,0) to (0,0);
\node at (2.5,0) {$_a$};
\end{tikzpicture}};
\endxy
\right \rrbracket = \xy
(0,0)*{\begin{tikzpicture} [scale=.5]
\draw[very thick, directed=.55] (2,0) to (0,0);
\node at (2.5,0) {$_a$};
\end{tikzpicture}};
\endxy
\quad ,\quad 
\left \llbracket \;\;
\xy
(0,0)*{\begin{tikzpicture} [scale=.5]
\draw[very thick, rdirected=.55] (2,0) to (0,0);
\node at (2.5,0) {$_a$};
\end{tikzpicture}};
\endxy
\right \rrbracket = \xy
(0,0)*{\begin{tikzpicture} [scale=.5]
\draw[very thick, directed=.55] (2,0) to (0,0);
\node at (2.8,0) {$_{N-a}$};
\end{tikzpicture}};
\endxy .
\end{equation*}
Note that we replace a rightward strand of label $a$ by the leftward strand of complementary label $N-a$. This is motivated by the isomorphism of $U_q(\slnn{N})$ representations $(\bV^a_q(\C_q^N))^*\cong\bV^{N-a}_q(\C_q^N)$, and it allows us to exclusively work with leftward oriented webs. The trade-off is that we have the following slightly unnaturally-looking definitions for cap and cup tangles:
\begin{align*}
\left \llbracket
\xy
(0,0)*{\begin{tikzpicture} [scale=.5]
\draw[very thick, directed=.99] (0,1) to [out=180,in=90] (-1.25,.5) to [out=270,in=180] (0,0);
\node at (-1.5,1) {$_a$};
\end{tikzpicture}};
\endxy 
\;\; \right \rrbracket
\;\; = \;\;
\xy
(0,0)*{\begin{tikzpicture} [scale=.5]
 \draw [double] (-1.25,.5) to (-2.25,.5);
 \draw [very thick, directed=.55] (0,0) to [out=180,in=300] (-1.25,.5);
\draw [very thick, directed=.55] (0,1) to [out=180,in=60] (-1.25,.5);
 \node at (.875,0) {$_{N-a}$};
 \node at (.5,1) {$_a$};
 \node at (-2.75,.5) {$_N$}; 
\end{tikzpicture}};
\endxy
\;\; &, \;\;
\left \llbracket \;\;
\xy
(0,0)*{\begin{tikzpicture} [scale=.5]
\draw[very thick, directed=.99] (0,0) to [out=0,in=270] (1.25,.5) to [out=90,in=0] (0,1);
\node at (1.5,1) {$_a$};
\end{tikzpicture}};
\endxy
\right \rrbracket
\;\; = \;\;
\xy
(0,0)*{\begin{tikzpicture} [scale=.5]
 \draw [double] (2.25,.5) -- (1.25,.5);
 \draw [very thick, directed=.55] (1.25,.5) to [out=240,in=0] (0,0);
\draw [very thick, directed=.55] (1.25,.5) to [out=120,in=0] (0,1);
 \node at (-.875,0) {$_{N-a}$};
 \node at (-.5,1) {$_a$};
 \node at (2.75,.5) {$_N$}; 
\end{tikzpicture}};
\endxy \\
\left \llbracket
\xy
(0,0)*{\begin{tikzpicture} [scale=.5]
\draw[very thick, rdirected=.05] (0,1) to [out=180,in=90] (-1.25,.5) to [out=270,in=180] (0,0);
\node at (-1.5,1) {$_a$};
\end{tikzpicture}};
\endxy
\;\; \right \rrbracket
\;\; = \;\;
\xy
(0,0)*{\begin{tikzpicture} [scale=.5]
 \draw [double] (-1.25,.5) to (-2.25,.5);
 \draw [very thick, directed=.55] (0,0) to [out=180,in=300] (-1.25,.5);
\draw [very thick, directed=.55] (0,1) to [out=180,in=60] (-1.25,.5);
 \node at (.875,1) {$_{N-a}$};
 \node at (.5,0) {$_a$};
 \node at (-2.75,.5) {$_N$}; 
\end{tikzpicture}};
\endxy
\;\; &, \;\;
\left \llbracket \;\;
\xy
(0,0)*{\begin{tikzpicture} [scale=.5]
\draw[very thick, rdirected=.05] (0,0) to [out=0,in=270] (1.25,.5) to [out=90,in=0] (0,1);
\node at (1.5,1) {$_a$};
\end{tikzpicture}};
\endxy
\right \rrbracket
\;\; = \;\;
\xy
(0,0)*{\begin{tikzpicture} [scale=.5]
 \draw [double] (2.25,.5) -- (1.25,.5);
 \draw [very thick, directed=.55] (1.25,.5) to [out=240,in=0] (0,0);
\draw [very thick, directed=.55] (1.25,.5) to [out=120,in=0] (0,1);
 \node at (-.875,1) {$_{N-a}$};
 \node at (-.5,0) {$_a$};
 \node at (2.75,.5) {$_N$}; 
\end{tikzpicture}};
\endxy
\end{align*}
The $N$-labelled edges that appear here are drawn as doubled lines---erasing them recovers the tangle.
 
Finally, for labelled, left-directed crossings one uses foam-analogues of Chuang--Rouquier's \cite{CR} Rickard complexes, see \cite[Section 4.2]{QR}. For example, the complex assigned to a negative crossing of strands labelled by $a\geq b$ is
\begin{align}
\label{eqn-crossingcx}
\left \llbracket
\xy (0,0)*{
\tikzcrossing[1]{a}{b}{}{}{}{}
};
(12,-3)*{_b};(12,3)*{_a};
\endxy \right \rrbracket  = \uwave{\xy
(0,0)*{
\begin{tikzpicture} [scale=.5,fill opacity=0.2]
	\draw[very thick, directed=.55]  (2,0) to (1.25,0);
	\draw[very thick]  (1.25,0) to (0.5,0);
	\draw[very thick, directed=.55]  (-1.25,0) to (-2,0);
	\draw[very thick]  (-0.5,0) to (-1.25,0);
	\draw[very thick, directed=.55]  (2,1) to (1.25,1);
	\draw[very thick, directed=.55]  (-1.25,1) to (-2,1);
	\draw[very thick, directed=.55]  (0.5,0) to (-0.5,0);
	\draw[very thick, directed=.55] (1.25,1) to [out=180,in=0](0.5,0) ;
	\draw[very thick, directed=.55] (-0.5,0) to [out=180,in=0](-1.25,1) ;
	\node[opacity=1] at (2.25,1) {${_a}$};
	\node[opacity=1] at (2.25,0) {${_b}$};
		\node[opacity=1] at (-2.25,1) {${_b}$};
	\node[opacity=1] at (-2.25,0) {${_a}$};
	\node[opacity=1] at (-1.35,0.45) {\tiny ${b}$};
\end{tikzpicture}
};
\endxy} \to \;\;q\!\!\!
\xy (0,0)*{
\begin{tikzpicture} [scale=.5,fill opacity=0.2]
	\draw[very thick, directed=.55]  (2,0) to (1.25,0);
	\draw[very thick]  (1.25,0) to (0.5,0);
	\draw[very thick, directed=.55]  (-1.25,0) to (-2,0);
	\draw[very thick]  (-0.5,0) to (-1.25,0);
	\draw[very thick, directed=.55]  (2,1) to (1.25,1);
	\draw[very thick, directed=.55]  (-1.25,1) to (-2,1);
	\draw[very thick] (1.25,1) to (-1.25,1);
	\draw[very thick, directed=.55]  (0.5,1) to (-0.5,1);
	\draw[very thick, directed=.55]  (0.5,0) to (-0.5,0);
	\draw[very thick, directed=.55] (1.25,1) to [out=180,in=0](0.5,0) ;
	\draw[very thick, directed=.55] (-0.5,0) to [out=180,in=0](-1.25,1) ;
	\node[opacity=1] at (2.25,1) {$_{a}$};
	\node[opacity=1] at (2.25,0) {$_{b}$};
	\node[opacity=1] at (-2,0.45) {\tiny $\quad b-1$};
		\node[opacity=1] at (-2.25,1) {$_{b}$};
	\node[opacity=1] at (-2.25,0) {$_{a}$};
\end{tikzpicture}}
\endxy
\to \cdots 
 \to \;\;q^b\xy
(0,0)*{
\begin{tikzpicture} [scale=.5,fill opacity=0.2]
	\draw[very thick, directed=.55]  (2,0) to (1.25,0);
	\draw[very thick]  (1.25,0) to (0.5,0);
	\draw[very thick, directed=.55]  (-1.25,0) to (-2,0);
	\draw[very thick]  (-0.5,0) to (-1.25,0);
	\draw[very thick, directed=.55]  (2,1) to (1.25,1);
	\draw[very thick, directed=.55]  (-1.25,1) to (-2,1);
	\draw[very thick] (1.25,1) to (-1.25,1);
	\draw[very thick, directed=.55]  (0.5,1) to (-0.5,1);
	\draw[very thick, directed=.55]  (0.5,0) to (-0.5,0);
	\draw[very thick, directed=.55] (1.25,1) to [out=180,in=0](0.5,0) ;\node[opacity=1] at (2.25,1) {${_a}$};
	\node[opacity=1] at (2.25,0) {${_b}$};
		\node[opacity=1] at (-2.25,1) {${_b}$};
	\node[opacity=1] at (-2.25,0) {${_a}$};
\end{tikzpicture}
};
\endxy
\end{align}
where the \uwave{underlined} term is in homological degree zero, every component of the differential is given by a foam containing a single $1$-facet bridging the middle \emph{square} and we include $q$-degree shift that make the differentials grading preserving, allowing the complex to be defined over $\grFoam{N}$. The following illustrates a typical component of the differential:

\[\xy
(0,0)*{
\begin{tikzpicture} [scale=.5,fill opacity=0.1]
\path[fill=green, opacity=0.3] (1,.75) to [out=240,in=330] (0,.75) to (-2,2.25) to [out=90,in=180] (0,3.5) to [out=0,in=90] (2,2.25) to (1,0.75) ;
\path[fill=green] (0,0) to (0,3) to (-2,4) to (-2,1) to (0,0);
\path[fill=green] (1,0) to (1,3) to (2,4) to (2,1) to (1,0);
\path[fill=green] (-3,1) to (3,1) to (3,4) to (-3,4);
\path[fill=green] (-2.5,0) to (3.5,0) to (3.5,3) to (-2.5,3);
	\draw[very thick, directed=.44] (3,1) to (-3,1);
	\draw[very thick, directed=.52] (3.5,0) to (-2.5,0);
	\draw[very thick, directed=.55] (2,1) to  (1,0);
	\draw[very thick, directed=.55] (0,0) to  (-2,1);
	\draw[very thick, red, directed=.55] (0,0.75) to [out=330,in=240] (1,0.75);
	\draw[very thick, red, rdirected=.55] (-2,2.25) to [out=90,in=180] (0,3.5) to [out=0,in=90] (2,2.25);
\draw[very thick, red, directed=.55] (1,3) to (1,0);
\draw[very thick, red, directed=.85] (-2,4) to (-2,1);
\draw[very thick, red, directed=.55] (0,0) to (0,3);
\draw[very thick, red, directed=.25] (2,1) to (2,4);
\draw[very thick, red, rdirected=.55] (2,2.25) to  (1,0.75);
\draw[very thick, red, rdirected=.55] (0,0.75) to  (-2,2.25);
	\draw[very thick, directed=.44] (3,4) to (-3,4);
	\draw[very thick, directed=.52] (3.5,3) to (-2.5,3);
	\draw[very thick, directed=.55] (2,4) to  (1,3);
	\draw[very thick, directed=.55] (0,3) to  (-2,4);
\draw[very thick] (-3,4) to (-3,1);
\draw[very thick] (3,4) to (3,1);
\draw[very thick] (-2.5,3) to (-2.5,0);
\draw[very thick] (3.5,3) to (3.5,0);
\node[opacity=1] at (-1.7,.5) {${_k}$};
	\end{tikzpicture}
};
\endxy
\]

These assignments, together with the extension of $\otimes$ and $\sqcup$ to $\Kom^b(\foam{N})$, define $\LC{\cdot}{}{}$ on all tangle diagrams.\footnote{ For technical reasons, one usually assumes that all $N$-labelled boundary points occur at the top. This can be achieved by horizontally composing with webs of the following type: 
$
\xy
(0,0)*{\begin{tikzpicture} [scale=.3]
 \draw [double] (1.25,0) -- (0,0);
\draw [very thick, directed=.55] (1.25,1) -- (-.5,1);
\draw[very thick, directed=.55] (0,0) to (-1.75,0);
\draw[double] (-.5,1) to (-1.75,1);
 \draw[very thick, directed=.75] (0,0) to (-.5,1); 
 \node at (-2.25,0) {$_a$};
 \node at (-2.25,1) {$_N$}; 
 \node at (1.75,1) {$_a$};
 \node at (1.75,0) {$_N$}; 
\end{tikzpicture}};
\endxy 
\;\;,\;\;
\xy
(0,0)*{\begin{tikzpicture} [scale=.3]
 \draw [double] (3,1) -- (1.75,1);
 \draw[very thick, directed=.55] (3,0) to (1.25,0);
 \draw [double] (1.25,0) -- (0,0);
\draw [very thick, directed=.55] (1.75,1) -- (0,1);
 \draw [very thick, directed=.75] (1.75,1) -- (1.25,0);
 \node at (3.5,0) {$_a$};
 \node at (3.5,1) {$_N$}; 
  \node at (-0.5,0) {$_N$};
 \node at (-0.5,1) {$_a$}; 
\end{tikzpicture}};
\endxy.$} 
The resulting complex can alternatively be regarded as lying in $\Kom^b(\foam{N})$ or in the graded version $\Kom^b(\grFoam{N})$. We use the following notation for the images of this complex under the natural quotient 2-functors: \begin{itemize}
\item $\LCa{\cal{T}}{\{0,\dots, 0\}}\in \Kom(\foam{N}^\bullet)$ for the singly-graded undeformed version,
\item $\LCa{\cal{T}}{\Sigma}\in \Kom(\foam{N}^\Sigma)$ for the singly-graded $\Sigma$-deformed version and
\item $\LCa{\cal{T}}{\glnn{N}}\in \Kom(\grFoam{N}^\bullet)$ for the bi-graded undeformed version.
\end{itemize}

\begin{thm} \cite[c.f.~Theorem 4.8]{QR}.
\label{thm_tangleinv} Given a labelled tangle diagram $\cal{T}$, the complex $\LC{\cal{T}\hspace{0.5pt}}{}{}$ in $\Kom(\foam{N})$ depends, up to homotopy, only on the labelled tangle represented by $\cal{T}$. The same holds for the versions $\LCa{\cal{T}}{\{0,\dots, 0\}}$, $\LCa{\cal{T}}{\Sigma}$ and $\LCa{\cal{T}}{\glnn{N}}$.
\end{thm}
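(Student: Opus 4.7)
The proof is by reduction to previously established invariance results, plus a deformation-theoretic enhancement. The core statement is the bi-graded invariance of $\LCa{\cal{T}}{\slnn{N}}$ in $\Kom_h(\grFoam{N}^\bullet)$, which is precisely Queffelec-Rose's Theorem 4.8 in \cite{QR}. Their proof proceeds in the standard cube-of-resolutions fashion: for each framed Reidemeister move one exhibits explicit chain maps between the corresponding cubes of foams and verifies that they are mutual homotopy inverses, using the local foam relations (MOY-type web isomorphisms, bubble evaluations, neck cutting, and so on). I simply quote this.

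With the graded case in hand, the versions $\LC{\cal{T}}{}{}$ and $\LCa{\cal{T}}{\{0,\ldots,0\}}$ follow formally. There are canonical 2-functors $\grFoam{N} \to \foam{N}$ forgetting the formal $q$-grading, the analogous one $\grFoam{N}^\bullet \to \foam{N}^\bullet$, and the relation-imposing quotient $\foam{N} \to \foam{N}^\bullet$ producing $\LCa{\cal{T}}{\{0,\ldots,0\}}$ from $\LC{\cal{T}}{}{}$. Any 2-functor sends chain homotopy equivalences to chain homotopy equivalences, so invariance of the bi-graded complex descends to its images in the other undeformed categories. In particular, the explicit Reidemeister homotopies of \cite{QR} remain Reidemeister homotopies in any quotient.

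The $\Sigma$-deformed version $\LCa{\cal{T}}{\Sigma}$ requires the essential new input. Because the relation \eqref{eqn_dot2} is inhomogeneous in $q$-degree, $\foam{N}^\Sigma$ is not a quotient of $\grFoam{N}^\bullet$, so one cannot simply push the graded invariance forward. Instead, I would reuse the chain maps realising the Reidemeister homotopy equivalences in $\foam{N}^\bullet$, now viewed as morphisms in $\foam{N}^\Sigma$, and verify that the homotopy identities persist modulo \eqref{eqn_dot2}. The guiding principle is that each calculational step in \cite{QR} that invokes $\bullet^N = 0$ on a $1$-labelled facet can be modified by adding a specific sum of lower-order dot powers weighted by elementary symmetric polynomials in $\Sigma$, so that the identity continues to hold via $P(\bullet)=0$.

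The main obstacle is organising these lower-order corrections coherently across all Reidemeister homotopies and all MOY-type reductions so that no cancellation is broken. This is precisely the deformation argument underlying the well-definedness of the $\Sigma$-deformed $\slN$ link homologies constructed by Wu \cite{Wu2} and re-derived in the present foam-theoretic framework by Rose and the author in \cite{RW}. Once these cited works are invoked to supply the deformed analogues of the key local foam isomorphisms, the proof of invariance in $\foam{N}^\Sigma$ reduces to the same Reidemeister checks as in the undeformed case, completing the argument.
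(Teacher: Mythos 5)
Your handling of the first three versions is fine and matches the paper's (implicit) argument: the theorem is essentially a citation of \cite[Theorem 4.8]{QR}, and the variants are obtained by observing that the quotient and forgetful 2-functors between the foam 2-categories carry homotopy equivalences to homotopy equivalences. However, your treatment of the $\Sigma$-deformed case rests on a misreading of the definitions and manufactures an obstacle that is not there. You argue that because \eqref{eqn_dot2} is inhomogeneous, $\foam{N}^\Sigma$ is not a quotient of $\grFoam{N}^\bullet$, and conclude that one must re-verify the Reidemeister homotopies modulo $P(\bullet)=0$ with lower-order corrections. But $\foam{N}^\Sigma$ is, by definition, a quotient of the \emph{ungraded} 2-category $\foam{N}$ (inhomogeneity of the relation is irrelevant there, since $\foam{N}$ carries no formal grading), and the theorem's primary claim is invariance of $\LC{\cal{T}}{}{}$ already in $\Kom_h(\foam{N})$. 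The complex $\LCa{\cal{T}}{\Sigma}$ is defined in the paper precisely as the image of $\LC{\cal{T}}{}{}$ under the quotient 2-functor $\foam{N}\to\foam{N}^\Sigma$, so its invariance follows by exactly the same one-line functoriality argument you used for $\LCa{\cal{T}}{\{0,\dots,0\}}$. No deformation-theoretic bookkeeping of dot corrections is needed, and the QR homotopies do not invoke $\bullet^N=0$ in the first place, since they live in the relation-free category.

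As written, your deformed case is therefore a gap: you assert that "essential new input" is required, sketch a correction scheme you do not carry out, and then defer to \cite{Wu2} and \cite{RW}. Those references do establish the deformed invariance, but the hard content there concerns the structure of the deformed homology (filtrations, decompositions into summands indexed by root multisets), not tangle invariance of the cube of resolutions, which is formal. Replace that paragraph with the observation that the relation \eqref{eqn_dot2} is imposed on $\foam{N}$ rather than on $\grFoam{N}^\bullet$, and the proof closes.
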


In the case that the tangle is actually a link, represented by a diagram $\cal{L}$, all of the boundary points in the complex $\LC{\cal{L}}{}{}$ are $N$-labelled and all webs in it are endomorphisms of a \emph{highest weight object} of the form $\mathbf{o}^{top}:=(N,\dots, N)$. Hence, one can apply the representable functor
\begin{equation*}
\grtaut \colon \Kom(\End_{\grFoam{N}^\bullet}(\mathbf{o}^{top})) \to \Kom(\grVect_\C)
\end{equation*} 
given by $\grtaut(\cdot) := \bigoplus_{d\in \Z}\Hom(q^d~1_{\mathbf{o}^{top}}, \cdot )$ to $\LC{\cal{L}}{\glnn{N}}{}$ to obtain a complex of graded vector spaces. Then one defines the bi-graded vector space
\begin{equation}
\label{def_undeformedli}
\LH{\cal{L}}{\glnn{N}}:= H_*(C^{\glnn{N}}(\cal{L})) \quad \text{ where } C^{\glnn{N}}(\cal{L}):=\grtaut(\LC{\cal{L}}{\glnn{N}}{}).
\end{equation}

\begin{thm}~\cite[Theorem 4.12]{QR}
\label{thm_itsKhR}
Up to shifts in homological and \emph{quantum} grading, $\LH{\cal{L}}{\glnn{N}}$ is isomorphic as a bi-graded $\C$-vector space to Wu's and Yonezawa's colored Khovanov--Rozansky homology of the mirror link $\cal{L}^{\#}$.\footnote{In the following, we suppress this difference in conventions and always consider Wu's and Yonezawa's construction applied to the mirror link and with adjusted gradings.}
\end{thm}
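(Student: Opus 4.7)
The plan is to exhibit an equivalence between the foam-based link homology $\LH{\cal{L}}{\slnn{N}}$ defined here and the matrix factorization–based homology of Wu and Yonezawa, by constructing a 2-functor between the ambient higher categories that intertwines the two constructions of the tangle invariant. Since the cited result \cite[Theorem 4.12]{QR} already records this matching, I would essentially retrace that argument, indicating how each ingredient plugs in.

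First, I would set up a target 2-category $\cal{MF}_N$ whose objects are the same sequences $\mathbf{a}$, whose 1-morphisms are Wu/Yonezawa's enhanced $\slN$ webs (with $k$-labelled edges corresponding to $\bV^k \C^N$), and whose 2-morphisms are given by the matrix factorization categories $\MF(W_\Gamma)$ with potentials built out of power sums $p_{N+1}$ in the edge alphabets. The essential input is then a grading-preserving 2-functor
\begin{equation*}
\Phi \maps \grFoam{N}^\bullet \lra \cal{MF}_N
\end{equation*}
that is the identity on objects, sends a web to its associated matrix factorization of MOY type, and sends each local foam generator (zip, unzip, cap, cup, associator) to the corresponding morphism of matrix factorizations prescribed by Khovanov--Rozansky \cite{KR1} and developed in the colored case by Wu, Yonezawa and Mackaay--Sto\v{s}i\'c--Vaz. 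I would then verify that $\Phi$ respects the defining foam relations (including the decoration migration relation \eqref{eqn-decmigr} and the dot relation \eqref{eqn_dot1}); the latter is the standard observation that $x^N$ acts as zero in the Jacobi ring of $p_{N+1}$.

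Next I would check that $\Phi$ intertwines the two definitions of the Rickard crossing complex: the foam complex displayed in \eqref{eqn-crossingcx} is sent term by term to the colored Khovanov--Rozansky crossing complex, since both are built from the same squared webs connected by a sequence of $1$-facet (respectively unzip-zip) differentials. This compatibility propagates through the horizontal and disjoint-union operations $\otimes,\sqcup$, which are preserved by $\Phi$ by construction. Therefore $\Phi$ sends $\LCa{\cal{L}}{\slnn{N}}$ to the Wu/Yonezawa cube of resolutions of $\cal{L}^{\#}$, up to the standard normalization shifts.

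Finally, to pass from a matching of complexes of 2-morphisms to the asserted isomorphism of homologies, I would identify $\grtaut = \bigoplus_k \Hom(q^k\, 1_{\mathbf{o}^{top}},-)$ with taking the graded Hom space (equivalently, total homology) of the corresponding matrix factorization endomorphism algebra of $1_{\mathbf{o}^{top}}$; this uses that $\Phi$ is fully faithful on $\End_{\grFoam{N}^\bullet}(\mathbf{o}^{top})$, which in turn reduces to the computation of closed foam evaluations versus closed MOY matrix factorization traces. The main obstacle is precisely this last faithfulness statement: showing that no extra relations hold on the matrix factorization side beyond those already imposed in $\grFoam{N}^\bullet$. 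I would handle it by invoking the categorical $\mathfrak{sl}_m$ skew Howe duality framework of \cite{QR,Cau2,MW}, which realizes both sides as equivalent categorifications of the same $\dot{\mathbf{U}}_q(\mathfrak{gl}_m)$-weight space, so that the resulting tangle invariants must coincide up to the stated grading shifts.
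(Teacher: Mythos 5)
This theorem is quoted verbatim from \cite[Theorem 4.12]{QR} and the paper supplies no proof of its own beyond the citation; your sketch follows the same strategy as the cited proof, namely a grading-compatible 2-representation of the foam 2-category on (colored Khovanov--Rozansky) matrix factorizations, which is exactly the machinery the paper itself recalls from \cite[Section 4.4]{RW} in Section 3. The one imprecision is that the full-faithfulness you need is on the Hom-spaces $\Hom(q^k\,1_{\mathbf{o}^{top}},W)$ for every web $W$ occurring in the cube of resolutions, not merely on $\End(1_{\mathbf{o}^{top}})$ (which is just $\C$), but this is precisely what the categorical skew Howe duality argument you invoke is meant to supply.
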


In the $\Sigma$-deformed case, we can use the representable functor 
\begin{align*}
\taut \colon \Kom(\End_{\foam{N}^\Sigma}(\mathbf{o}^{top})) \to \Kom(\Vect_\C)
\end{align*} 
given by $\taut(\cdot) := \Hom(1_{\mathbf{o}^{top}}, \cdot )$ to $\LC{\cal{L}}{\Sigma}{}$ to obtain a complex of finite-dimensional vector spaces and we define the graded $\C$-vector space
\begin{equation}
\label{def_deformedli}
\LH{\cal{L}}{\Sigma}:= H_*(C^\Sigma(\cal{L})) \quad \text{ where } C^\Sigma(\cal{L}):=\taut(\LC{\cal{L}}{\Sigma}{}).
\end{equation}

\begin{thm} ~\cite[Theorem 2]{RW}
\label{thm_itsderKhR}
Up to shifts in homological degree, $\LH{\cal{L}}{\Sigma}$ is isomorphic as a singly-graded $\C$-vector space to Wu's deformed Khovanov--Rozansky homology with respect to deformation parameters $\Sigma$.
\end{thm}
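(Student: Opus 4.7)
The plan is to extend the equivalence between the foam-based description and the matrix factorization description of undeformed colored $\slN$ link homology (Theorem~\ref{thm_itsKhR}) to the $\Sigma$-deformed setting. In the undeformed case, the equivalence is established by constructing a 2-functor $\Phi \colon \grFoam{N}^\bullet \to \Mfu$ from foams to matrix factorizations whose underlying Landau-Ginzburg potential on a $k$-labelled edge is built from $W_N(x) = x^{N+1}/(N+1)$. The $\Sigma$-deformed category $\foam{N}^\Sigma$ is obtained by replacing the relation $\bullet^N = 0$ on $1$-labelled facets with $P(\bullet) = 0$, where $P$ is the monic polynomial with root multiset $\Sigma$. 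Correspondingly, Wu's deformed Khovanov-Rozansky homology is built from matrix factorizations whose potential is the antiderivative $W_\Sigma$ of $P$ on $1$-labelled edges (and its natural extension to higher-labelled edges via symmetric functions).

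First I would construct a 2-functor $\Phi_\Sigma \colon \foam{N}^\Sigma \to \Mf$ into Wu's deformed matrix factorization 2-category, by the same formulas as in the undeformed case but with the deformed potential $W_\Sigma$ in place of $W_N$. The critical compatibility check is that the new foam relation \eqref{eqn_dot2} is respected: on the matrix factorization side, the facet variable $x$ on a $1$-labelled edge acts by multiplication, and $P(x) = W_\Sigma'(x)$ is null-homotopic in the associated Koszul matrix factorization, hence acts as zero on homology. The other defining foam relations of $\foam{N}^\Sigma$ (the decoration migration relations \eqref{eqn-decmigr}, the local foam relations, and isotopies) are formal and are verified exactly as in the undeformed case, since they do not depend on the specific choice of potential beyond its symmetry properties.

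The main obstacle is to show that $\Phi_\Sigma$ is fully faithful on endomorphisms of the highest weight object $\mathbf{o}^{top}$, which is what ultimately guarantees that applying $\taut$ to $\LCa{\cal{L}}{\Sigma}$ recovers Wu's chain complex. In the undeformed case this is done by combining a generators-and-relations presentation of foam Hom spaces with a dimension count coming from the closed foam evaluation. For the deformed case, I would run the parallel argument: the $\Sigma$-deformed closed foam evaluation takes values in $\C$, obtained from the undeformed evaluation by substituting the elementary symmetric polynomials $e_k(\Sigma)$ for the universal Chern classes that arise, and this matches the trace of Wu's deformed matrix factorizations on closed webs. Alternatively, one can deduce fully faithfulness from the undeformed case by a filtration argument: the filtration by powers of the maximal ideal $(e_1(\Sigma), \dots, e_N(\Sigma))$ of the parameter ring on both sides has associated graded the undeformed picture, so the isomorphism lifts.

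Finally, under $\Phi_\Sigma$, the cube of resolutions complex $\LCa{\cal{L}}{\Sigma}$ is sent to Wu's chain complex built from deformed matrix factorizations, up to the standard grading shifts in the Rickard complex \eqref{eqn-crossingcx} that differ by convention between the foam and matrix factorization presentations. Applying $\taut$ and taking homology then yields the isomorphism $\LH{\cal{L}}{\Sigma} \cong \mathrm{HKR}^{W_\Sigma}(\cal{L})$ of singly-graded $\C$-vector spaces, up to an overall shift in homological degree, as claimed.
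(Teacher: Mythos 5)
This statement is not proved in the paper at hand: it is quoted verbatim from \cite[Theorem 2]{RW}, and the present paper only recalls the relevant machinery (the 2-functor from $\foam{N}^{\Sigma}$ to matrix factorizations with potential modelled on $Q$, cf.\ Section~\ref{sec-ss} and \cite[Section 4.4]{RW}). Your proposal reconstructs essentially that argument — deform the potential, check that $P(\bullet)=0$ becomes the null-homotopy of $Q'$ on the matrix factorization side, verify the remaining foam relations, and match Hom spaces out of $1_{\mathbf{o}^{top}}$ via (deformed) closed evaluation — so it is the same approach as the cited source, and I see no gap in the outline.
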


\begin{rem} $\LH{\cal{L}}{\glnn{N}}$ is isomorphic to $\LH{\cal{L}}{\{0,\dots,0\}}$ as a singly-graded $\C$-vector space.
\end{rem}

\subsection{Module structure}
\label{sec-module}
In this section, we show that (deformed) colored $\glnn{N}$ link homologies can be regarded as modules over corresponding unknot homologies. For this, let $\cal{T}$ be a labelled 2-ended tangle diagram with boundary label $k$, i.e. a labelled tangle diagram with one right and one left endpoint, which represents a labelled link $\cal{L}$ with a $k$-labelled component cut open.

\begin{equation}
\label{eqn:closure}
\xy
(0,0)*{
\begin{tikzpicture}[scale=.4]
	\draw [very thick,directed=.65] (1.5,.75) to[out=180,in=45]  (.75,0);
	\draw [very thick,directed=.55] (-.75,0) to [out=135,in=0] (-1.5,.75);
	\draw[thick] (1,0) rectangle (-1,-1.5);
	\node at (0,-.75) {$\cal{T}$};
	\node at (1.75,.75) {\tiny $k$};
	\node at (-1.75,.75) {\tiny $k$};
\end{tikzpicture}
};
\endxy\;
\xrightarrow{\textrm{closure}}\;\;  \cal{L}=
\xy
(0,0)*{
\begin{tikzpicture}[scale=.4]
	\draw [very thick,directed=.55] (-.75,0) to [out=135,in=270] (-1,.5) to [out=90,in=180] (0,1) to [out=0,in=90] (1,.5) to [out=270,in=45] (.75,0);
	\draw[thick] (1,0) rectangle (-1,-1.5);
	\node at (0,-.75) {$\cal{T}$};
	\node at (1.25,.75) {\tiny $k$};
\end{tikzpicture}
};
\endxy 
\end{equation}

We write $\mathbf{o}^{cut}:=(N,\dots,N,k)$ for the boundary labels of objects in the complex $\LCa{\cal{T}}{}$ and $H^N_k:= H^*(Gr(k,N),\C)$ for the (graded) cohomology ring (with coefficients in $\C$) of the Grassmannian of complex $k$-planes in $\C^N$. This ring has the following presentation as the quotient of a ring of symmetric polynomials in a $k$-element alphabet $\X$ with coefficients in $\C$
\begin{equation}
\label{eqn_pres1}
H_k^N \cong \frac{\Sym(\X)}{\la h_{N-k+i}(\X)\mid i> 0\ra},
\end{equation}
where $h_i(\X)$ denotes the $i^{th}$ complete symmetric polynomial in $\X$, see \cite{Las}. We write $H(\X)=\sum_{i\geq 0} h_i(\X)t^i = \prod_{x\in X}(1-xt)^{-1}$ for the generating function of the complete symmetric polynomials. We will also need the (ungraded) \emph{$\Sigma$-deformed Grassmannian cohomology ring} $H_k^\Sigma$, given by the presentation
\begin{equation}
\label{eqn_pres2}
H_k^\Sigma := \frac{\Sym(\X)}{\la h_{N-k+i}(\X-\Sigma)\mid i> 0\ra}
\end{equation}
where the complete symmetric polynomials $h_{i}(\X-\Sigma)$ in the difference of the alphabets $\X$ and $\Sigma$ are given by the generating function:
\[H(\X-\Sigma):=H(\X)H(\Sigma)^{-1}=\sum_{i\geq 0}  h_{i}(\X-\Sigma) t^i := \frac{\prod_{\lambda\in \Sigma}(1-\lambda t)}{\prod_{x\in \X}(1-x t)}.\]

\begin{rem} \label{rem-defunknotprop} $H_k^N$ and $H_k^\Sigma$ have (homogeneous) $\C$-bases with elements represented by the Schur polynomials $\pi_\lambda\in \Sym(\X)$ indexed by Young diagrams $\lambda$ that fit into a $k\times (N-k)$ box, see e.g.  \cite[Theorem 2.10]{Wu2}.
\end{rem}

\begin{lem}
\label{lem-endos} There are $\C$-algebra isomorphisms 
\[\End_{\foam{N}^\Sigma}(1_{\mathbf{o}^{cut}})\cong H_k^\Sigma \quad \text{and}\quad \End_{\foam{N}^\Sigma}(1_{\mathbf{o}^{top}})\cong \C\]and graded $\C$-algebra isomorphisms 
\[\End_{\grFoam{N}^\bullet}(1_{\mathbf{o}^{cut}})\cong H^N_k \quad \text{and}\quad \End_{\grFoam{N}^\bullet}(1_{\mathbf{o}^{top}})\cong \C .\]
\end{lem}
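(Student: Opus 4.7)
The plan is to reduce the calculation to endomorphism algebras of single-strand identity webs and identify these with known decoration algebras.

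First, I would exploit the horizontal (monoidal) structure of $\foam{N}^\Sigma$. The identity web on $\mathbf{o}^{cut}=(N,\dots,N,k)$ decomposes as the horizontal composite of $m$ identity webs on single $N$-labeled strands and one identity on a $k$-labeled strand. No seams can connect distinct strands in an endomorphism of such an identity web, because seams must terminate at trivalent vertices and the identity web has none; hence any endomorphism foam splits as a horizontal composite of independent single-strand endomorphism foams, giving
\[
\End_{\foam{N}^\Sigma}(1_{\mathbf{o}^{cut}})\cong \End_{\foam{N}^\Sigma}(1_N)^{\otimes m}\otimes_\C \End_{\foam{N}^\Sigma}(1_k),
\]
and analogously for $\mathbf{o}^{top}$ and for $\grFoam{N}^\bullet$. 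It therefore suffices to identify $\End_{\foam{N}^\Sigma}(1_a) \cong H_a^\Sigma$ for $1\leq a\leq N$, and similarly in the graded setting; the two stated cases then follow by taking $a=k$ and $a=N$ respectively, noting that $H_N^\Sigma\cong \C$ because the relations $h_i(\X-\Sigma)=0$ for all $i>0$ force $e_i(\X)=e_i(\Sigma)$ and collapse $\Sym(\X)$ to scalars.

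Next, I would construct the candidate isomorphism. Decorating the single $a$-facet of the identity foam on $1_a$ by a symmetric polynomial $p\in\Sym(\X)$ defines a $\C$-algebra map
\[
\phi\colon \Sym(\X)\longrightarrow \End_{\foam{N}^\Sigma}(1_a).
\]
Surjectivity follows from the standard foam reduction arguments of \cite{QR,RW}: any closed $a$-labeled foam in this endomorphism algebra can be reduced, via local foam relations and bubble evaluations, to a decorated identity foam. For the kernel, the dot relation \eqref{eqn_dot2} specializes the decoration on any $1$-labeled facet to $P(X)=\prod_{\lambda\in\Sigma}(X-\lambda)$. Applying the decoration migration relation \eqref{eqn-decmigr} to a small bubble carved out of the $a$-facet with an $(a-1,1)$-splitting, and then collapsing it against the dot relation, translates this specialization into the vanishing of the universal coefficients $h_{N-a+i}(\X-\Sigma)$ on the $a$-facet for all $i>0$, so $\phi$ factors through the quotient $H_a^\Sigma$.

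Finally, to show that $\phi$ induces an isomorphism, I would verify that the Schur polynomials $\pi_\lambda$ for $\lambda$ fitting in an $a\times(N-a)$ box, which span $H_a^\Sigma$ by Remark \ref{rem-defunknotprop}, give linearly independent endomorphisms of $1_a$. This can be established by pairing against evaluation foams obtained by capping with $a$- and $(N-a)$-labeled cups and computing the resulting closed foams via the evaluation formulas in \cite{QR,RW}: nondegeneracy of the resulting bilinear pairing, which deforms the Poincar\'e pairing on $H_a^N$, forces injectivity of the induced map. The main technical obstacle is precisely this last step, since explicit evaluation of decorated foams in $\foam{N}^\Sigma$ is combinatorially involved; however, the computation is compatible with the deformation and reduces, under the specialization $\Sigma=\{0,\dots,0\}$, to the well-known graded case $\End_{\grFoam{N}^\bullet}(1_a) \cong H_a^N$, in which the formal $q$-grading coincides with twice the polynomial degree of the Schur decoration $\pi_\lambda$.
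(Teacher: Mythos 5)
The central step of your argument --- that any endomorphism foam of $1_{\mathbf{o}^{cut}}$ splits as a horizontal composite of independent single-strand endomorphism foams because ``seams must terminate at trivalent vertices and the identity web has none'' --- does not hold as stated. The absence of trivalent vertices on the boundary webs only forces the seams to be \emph{closed} curves in the interior of the foam; it does not prevent them from existing, and a closed seam can perfectly well join distinct strands. Concretely, for two parallel strands labelled $a$ and $b$ with $a+b\leq N$, the composite of a zip and an unzip is an endomorphism of the identity web containing a closed seam circle bounding an $(a+b)$-labelled facet that connects the two strands; such foams are not horizontal composites of single-strand endomorphisms, so your claimed tensor decomposition is false for general identity webs. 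For the specific objects $\mathbf{o}^{cut}=(N,\dots,N,k)$ and $\mathbf{o}^{top}$ this particular obstruction happens to be absent, since every pairwise sum of labels exceeds $N$ and no merge facet can form --- but that is the argument you would need to make, and even then one must still account for closed foam components and for the fact that $2$-morphisms are formal composites of generators modulo relations rather than literal embedded surfaces, so a purely topological splitting is not automatic.

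This is exactly why the paper takes a different route for the multi-strand step: it cites \cite[Lemma 23]{RW} for the single-strand identification $\End_{\foam{N}^\Sigma}(1_{(k)})\cong H_k^\Sigma$ (which subsumes both your surjectivity and injectivity steps, so your pairing argument, while plausible, is re-deriving a cited result), and then argues that every endomorphism of $1_{\mathbf{o}^{cut}}$ is the image of a closed string diagram in categorified quantum $\glm$ under the foamation $2$-functor; by Khovanov--Lauda's bubble calculus such diagrams reduce to polynomials in non-nested bubbles, which act as decorations on the identity foam. That algebraic detour is what replaces your splitting claim. To repair your proof you would need either to invoke the label bound explicitly and carefully justify the geometric decomposition for these particular objects, or to fall back on the string-diagram argument.
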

\begin{proof}
In \cite[Lemma 23]{RW} it was proven that 
\[\End_{\foam{N}^\Sigma}(1_{(k)})\cong H_k^\Sigma\] where $H_k^\Sigma$ can be interpreted as algebra of decorations on a $k$-labelled foam facet. In particular $\End_{\foam{N}^\Sigma}(1_{(0)}) \cong \C \cong \End_{\foam{N}^\Sigma}(1_{(N)})$. Since $\End_{\foam{N}^\Sigma}(1_{\mathbf{o}^{cut}})$ clearly contains the subalgebra $H_k^\Sigma$ of decorated identity foams, it suffices to prove that these span $\End_{\foam{N}^\Sigma}(1_{\mathbf{o}^{cut}})$. This can be verified as in \cite[Proposition 4.3]{QR}. The undeformed, graded case is analogous.
\end{proof} 

We define the representable functors
\begin{align*}
\grtaut_{cut} \colon \Kom(\End_{\grFoam{N}^\bullet}(\mathbf{o}^{cut})) &\to \Kom(H^N_k\dmod)\\
\taut_{cut} \colon \Kom(\End_{\foam{N}^\Sigma}(\mathbf{o}^{cut})) &\to \Kom(H_k^\Sigma\dmod)
\end{align*} 
 given by $\grtaut_{cut}(\cdot) := \bigoplus_{d\in \Z}\Hom(q^d~1_{\mathbf{o}^{cut}}, \cdot )$ and $\taut_{cut}(\cdot) := \Hom(1_{\mathbf{o}^{cut}}, \cdot )$.

\begin{defi} For 2-ended  tangle diagrams $\cal{T}$ with boundary label $k$, the colored Khovanov--Rozansky homology and its deformed version are defined as
\begin{align*}
\LH{\cal{T}}{\glnn{N}} := \mathrm{H}_*(C^{\glnn{N}}(\cal{T})) \quad &\text{ where } C^{\glnn{N}}(\cal{T}) := \grtaut_{cut}(\LC{\cal{T}\hspace{0.5pt}}{\glnn{N}}{}),\\
\LH{\cal{T}}{\Sigma} := \mathrm{H}_*(C^\Sigma(\cal{T})) \quad &\text{ where } C^\Sigma(\cal{T}) := \taut_{cut}(\LC{\cal{T}\hspace{0.5pt}}{\Sigma}{}).
\end{align*}
\end{defi}

Note that $C^{\glnn{N}}(\cal{T}))$ is a chain complex of graded $H^N_k$-modules and $C^\Sigma(\cal{T}))$ is a chain complex of $H^\Sigma_k$-modules. $\LH{\cal{T}}{\glnn{N}}$ and $\LH{\cal{T}}{\Sigma}$ thus inherit actions of $H^N_k$ and $H^\Sigma_k$ respectively. Before considering the naturality of these actions, we record the following corollary of Theorem~\ref{thm_tangleinv}.
\begin{cor}\label{cor_inv}  $\LH{\cdot}{{\glnn{N}}}$ and $\LH{\cdot}{\Sigma}$, as (bi-)graded vector spaces, are invariant under Reidemeister moves and their tangle analogues and, thus, define invariants of 2-ended tangles.
\end{cor}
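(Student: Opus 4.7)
The plan is to deduce the corollary from Theorem~\ref{thm_tangleinv} by routine functoriality. First I would observe that for any $(1,1)$-tangle diagram $\cal{T}$ with boundary label $k$, all webs appearing in the cube of resolutions $\LC{\cal{T}}{}{}$ are endomorphisms of the fixed object $\mathbf{o}^{cut}$, so $\LC{\cal{T}\hspace{0.5pt}}{\slN}{}$ and $\LC{\cal{T}\hspace{0.5pt}}{\Sigma}{}$ land in $\Kom(\End_{\grFoam{N}^\bullet}(\mathbf{o}^{cut}))$ and $\Kom(\End_{\foam{N}^\Sigma}(\mathbf{o}^{cut}))$, respectively. This is the setting where $\grtaut_{cut}$ and $\taut_{cut}$ are defined.

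Next I would note that $\grtaut_{cut}$ and $\taut_{cut}$ are additive functors, since they are of the form $\Hom(1_{\mathbf{o}^{cut}}, -)$ (or a direct sum of grading shifts thereof) with the natural left actions by $\End(1_{\mathbf{o}^{cut}})$, identified with $H^N_k$ respectively $H_k^\Sigma$ via Lemma~\ref{lem-endos}. Extending them term-wise to complexes gives additive functors on the chain-complex categories, which therefore preserve chain homotopies: a null-homotopy $s\colon f\simeq 0$ is sent to $\grtaut_{cut}(s)$, a null-homotopy of $\grtaut_{cut}(f)$ (and similarly for $\taut_{cut}$). In particular, they send homotopy equivalences in $\Kom(\foam{N}^\bullet)$ or $\Kom(\grFoam{N}^\bullet)$ (respectively $\Kom(\foam{N}^\Sigma)$) to homotopy equivalences of complexes of graded $H^N_k$-modules (respectively $H_k^\Sigma$-modules).

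By Theorem~\ref{thm_tangleinv}, any two diagrams $\cal{T}, \cal{T}'$ related by a Reidemeister move or tangle isotopy yield homotopy equivalent complexes $\LC{\cal{T}\hspace{0.5pt}}{\slN}{}\simeq \LC{\cal{T'}\hspace{0.5pt}}{\slN}{}$ and $\LC{\cal{T}\hspace{0.5pt}}{\Sigma}{}\simeq \LC{\cal{T'}\hspace{0.5pt}}{\Sigma}{}$. Applying $\grtaut_{cut}$ and $\taut_{cut}$ produces homotopy equivalences $C^{\slN}(\cal{T})\simeq C^{\slN}(\cal{T'})$ and $C^\Sigma(\cal{T})\simeq C^\Sigma(\cal{T'})$, and passing to homology yields isomorphisms of the respective (bi-)graded vector spaces. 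Since the Reidemeister moves generate the equivalence relation on $(1,1)$-tangle diagrams, this shows that $\LH{\cal{T}}{\slN}$ and $\LH{\cal{T}}{\Sigma}$ only depend on the underlying $(1,1)$-tangle, establishing the corollary. There is no real obstacle here; the only point requiring care is checking that the Hom-space computations of Lemma~\ref{lem-endos} make these into genuine (graded) module invariants, but that is exactly what that lemma provides.
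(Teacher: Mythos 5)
Your argument is correct and is exactly the (implicit) proof in the paper, which simply records this as an immediate consequence of Theorem~\ref{thm_tangleinv}: the representable functors $\grtaut_{cut}$ and $\taut_{cut}$ are additive, hence preserve homotopy equivalences, and homology then gives the invariance. You have merely spelled out the routine functoriality that the paper leaves to the reader.
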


In the following, we will show that the Khovanov--Rozansky homologies of 2-ended tangles and their closures are isomorphic. For this, we need a technical lemma.

\begin{lem}\label{lem-straighten} Every web in $\End_{\grFoam{N}^\bullet}(\mathbf{o}^{cut})$ is isomorphic to a direct sum of grading shifted copies of the identity web $1_{\mathbf{o}^{cut}}$. 
\end{lem}
\begin{proof} We first claim that every web $W$ in $\End_{\grFoam{N}^\bullet}(\mathbf{o}^{cut})$---when considered as a 1-term complex in $\Kom_h(\End_{\grFoam{N}^\bullet}(\mathbf{o}^{top}))$---is homotopy equivalent to a chain complex $C(W)$, whose chain groups are direct sums of grading shifted copies of the identity web $1_{\mathbf{o}^{cut}}$.  On the decategorified level, in $\NWeb$, it is well-known that for any such web $W$, there exists a sequence of web relations which can be used to write this web as a linear combination of identity webs $1_{\mathbf{o}^{cut}}$. This can be deduced from the main theorem of Cautis--Kamnitzer--Morrison \cite[Theorem 3.3.1]{CKM}, but we also give an explicit algorithm in Proposition~\ref{prop-annulareval}. Each web relation used in this sequence can be written in a form that lifts to an isomorphism in $\grFoam{N}$, which we would like to use to iteratively simplify $W$. If $V$ is a web appearing at an intermediate stage in the simplification and the next lifted relation is of the form $V\cong \bigoplus_j q^{d_j} V_j$, then we can use this isomorphism to simplify $V$ in $\grFoam{N}$.  However, sometimes the lifted relations are of the form $V\oplus \bigoplus_i q^{d_i} V'_i \cong \bigoplus_j q^{d_j} V_j$. In this case, we use the isomorphism $V \cong \textrm{Cone}\left(\bigoplus_i q^{d_i} V'_i\to \bigoplus_j q^{d_j} V_j\right)$ in the homotopy category instead. 

By Lemma~\ref{lem-endos}, we see that all components of the differential in $C(W)$ are decorated identity foams on $1_{\mathbf{o}^{cut}}$. We may assume that all components have positive degree, since a component of degree zero would be an identity foam that can be cancelled by Gaussian elimination. Then there are no non-trivial homotopies between chain endomorphisms of $C(W)$, as they would have components of negative degree, which is impossible by Lemma~\ref{lem-endos}. This implies that the homotopy equivalence $W \to C(W)$ is in fact an isomorphism of chain complexes, and so $C(W)$ is concentrated in degree zero, and $W$ is isomorphic to its only non-trivial chain group, which proves the lemma.
\end{proof}

\begin{lem}
\label{lem_tangisknot} If $\cal{T}$ is a 2-ended tangle diagram with boundary label $k$ and $\cal{L}$ its closure, then there are isomorphisms $C^{\glnn{N}}(\cal{T}) \cong C^{\glnn{N}}(\cal{L})$ and $C^\Sigma(\cal{T})\cong C^\Sigma(\cal{L})$ of complexes of (graded) vector spaces. Consequently, $\LH{\cal{T}}{\glnn{N}} \cong \LH{\cal{L}}{\glnn{N}}$ and $\LH{\cal{T}}{\Sigma} \cong \LH{\cal{L}}{\Sigma}$ as (bi-)graded vector spaces.
\end{lem}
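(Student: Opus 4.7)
The plan is to realize the isomorphism cellwise in the cube of resolutions. View $\cal{L}$ as the link diagram obtained from $\cal{T}$ by attaching a simple $k$-labelled arc $\gamma$ outside $\cal{T}$ joining its two $k$-labelled boundary points. Since $\gamma$ contains no crossings, the cube-of-resolutions construction yields a canonical bijection between the webs $W_I$ appearing in $\LCa{\cal{T}}{\cdot}$ (with boundary $\mathbf{o}^{cut}$) and the webs $\bar W_I$ appearing in $\LCa{\cal{L}}{\cdot}$ (with boundary $\mathbf{o}^{top}$): $\bar W_I$ is obtained from $W_I$ by adjoining $\gamma$. The differentials, which by \eqref{eqn-crossingcx} are defined locally by $1$-facet foams at each crossing, agree under this bijection.

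For each resolution $I$, the key step is to establish a natural $\C$-linear isomorphism
\[
\Hom(1_{\mathbf{o}^{top}}, \bar W_I) \;\cong\; \Hom(1_{\mathbf{o}^{cut}}, W_I),
\]
valid in both $\grFoam{N}^{\bullet}$ (with the $\Z$-grading preserved) and in $\foam{N}^{\Sigma}$. This is the standard cup/cap adjunction in the pivotal foam 2-category: any foam into $\bar W_I$ can be cut along the sheet obtained by thickening $\gamma$ to yield a foam into $W_I$ with an extra $k$-labelled boundary facet, and conversely gluing along this facet reconstructs the original foam. Geometrically this is the familiar ``partial trace'' operation which identifies the Hom space associated to a categorical closure with a Hom space of the original object; algebraically it can also be read off from the matrix factorization description, where the assignment to $\bar W_I$ is the tensor product computing a trace of the assignment to $W_I$ over the $k$-labelled cut edge.

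Finally, both the web-level bijection $W_I\mapsto \bar W_I$ and the Hom-level adjunction isomorphism are local outside the crossings of $\cal{T}$ and therefore intertwine the cube-of-resolutions differentials tautologically. Assembling the per-vertex isomorphisms produces isomorphisms of complexes $C^{\slN}(\cal{T})\cong C^{\slN}(\cal{L})$ and $C^\Sigma(\cal{T})\cong C^\Sigma(\cal{L})$, and the claim about $\LH{\cal{T}}{\cdot}\cong \LH{\cal{L}}{\cdot}$ follows on passing to homology. The main subtle point I expect is verifying the compatibility of the adjunction with the decoration algebras: decorations on the cut $k$-labelled edge of $W_I$, which generate the $H_k^{\cdot}$-action on $C^{\cdot}(\cal{T})$ by Lemma~\ref{lem-endos}, must correspond under the bijection to decorations freely migrated onto the closing arc $\gamma$ in $\bar W_I$. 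This in turn is a routine check using decoration migration \eqref{eqn-decmigr} across the glued sheet together with the presentation of $H_k^{\cdot}$ in terms of symmetric functions.
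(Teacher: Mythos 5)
Your argument is essentially correct, but it is a genuinely different route from the paper's. The paper does not construct a termwise isomorphism between the two cubes at all: instead it simplifies \emph{both} complexes using web evaluation (the Cautis--Kamnitzer--Morrison coherence result, categorified as in Proposition~\ref{prop-annulareval}), reducing $\LCa{\cal{T}}{\slN}$ to a complex of grading-shifted identity webs $1_{\mathbf{o}^{cut}}$ and $\LCa{\cal{L}}{\slN}$ to the corresponding complex of $(N-k,k)$-bigon webs, with differentials given by decorated identity foams in both cases; applying $\grtaut_{cut}$ resp.\ $\grtaut$ then yields literally the same complex of free $H^N_k$-modules by Lemma~\ref{lem-endos}. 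Your approach instead keeps the cubes intact and matches them vertex by vertex via a partial-trace isomorphism $\Hom(1_{\mathbf{o}^{top}},\bar W_I)\cong\Hom(1_{\mathbf{o}^{cut}},W_I)$. What your route buys is conceptual transparency and naturality (the isomorphism is manifestly compatible with any differential supported away from the closure arc); what the paper's route buys is that it only uses web \emph{isomorphisms} already established in $\foam{N}$, and as a by-product exhibits the chain groups as free $H^N_k$-modules, which is reused later (Corollary~\ref{cor-hoffree}).

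The one point you should not treat as automatic is the adjunction itself. In the paper's conventions the closure of a $k$-labelled strand is not a bare $k$-labelled arc but a composite of the merge/split webs $(k,N-k)\leftrightarrow N$, so your ``cutting along the thickened arc'' is really the assertion that these cup and cap webs are (bi)adjoint in $\foam{N}$, with units and counits satisfying the zigzag identities and with the grading shifts cancelling. This pivotal structure is never stated or used in the paper, and it is a nontrivial input (the web-level zigzag isomorphisms such as \eqref{webiso1} are not by themselves enough; one needs honest adjunction data at the level of foams). If you cite or verify that biadjointness, your proof goes through; the sanity check on $W_I=1_{\mathbf{o}^{cut}}$, where both sides give $H^N_k$ with no shift, confirms the normalization. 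Your closing remark about decoration migration onto the closure arc is exactly the right compatibility to check and matches how the paper identifies both sides with $H^N_k$.
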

\begin{proof} Lemma~\ref{lem-straighten} implies that $\LCa{\cal{T}}{\glnn{N}}$ is isomorphic to a complex $C'(\cal{T})$, whose objects are direct sums of grading shifts of $1_{\mathbf{o}^{cut}}$ and whose differentials are matrices of decorated identity foams on this web. After closure as in \eqref{eqn:closure}, this induces an isomorphism between $\LCa{\cal{L}}{\glnn{N}}$ and a complex $C'(\cal{L})$ whose objects are direct sums of grading shifts of bigon webs $B_{k,N-k}$ 
\[1_{\mathbf{o}^{cut}}=
\begin{tikzpicture} [scale=.5,anchorbase]
 \draw [very thick, directed=.55] (1.25,.5) to (-1.25,.5);
 \draw [double] (1.25,-.5) to (-1.25,-.5);
 \draw [double] (1.25,-1.5) to (-1.25,-1.5);
 \node at (-1.75,.5) {$_k$};
 \node at (1.75,.5) {$_k$};
 \node at (-1.75,-.5) {$_N$};
 \node at (1.75,-.5) {$_N$};
 \node at (-1.75,-1.5) {$_N$};
 \node at (1.75,-1.5) {$_N$};
 \node at (1.25,1.25) {$\quad$}; 
  \node at (0,-.8) {$\vdots$}; 
\end{tikzpicture}
\;
\xrightarrow{\textrm{closure}}
\;
\begin{tikzpicture} [scale=.5,anchorbase]
 \draw [double] (-.75,.5) to (-1.25,.5);
 \draw [double] (1.25,-.5) to (-1.25,-.5);
 \draw [double] (1.25,-1.5) to (-1.25,-1.5);
  \draw [double] (.75,.5) to (1.25,.5);
 \draw [very thick, directed=.55] (.75,.5) to [out=240,in=0](0,0) to [out=180,in=300] (-.75,.5);
\draw [very thick, directed=.55] (.75,.5) to [out=120,in=0](0,1) to [out=180,in=60] (-.75,.5);
 \node at (-.75,0) {$_k$};
 \node at (1.25,1) {$_{N-k}$};
 \node at (-1.75,.5) {$_N$};
 \node at (1.75,.5) {$_N$};
 \node at (-1.75,-.5) {$_N$};
 \node at (1.75,-.5) {$_N$};
 \node at (-1.75,-1.5) {$_N$};
 \node at (1.75,-1.5) {$_N$}; 
  \node at (0,-.8) {$\vdots$}; 
\end{tikzpicture}:=
B_{k,N-k}
\]
and whose differential is given by matrices of identity foams decorated on the $k$-labelled facet. Since $B_{k,N-k} \cong 1_{\mathbf{o}^{top}}\otimes H^N_k$ we have a natural isomorphism $\grtaut(B_{k,N-k})\cong \grtaut_{cut}(1_{\mathbf{o}^{cut}})$, which extends to an isomorphism $\grtaut(C'(\cal{L})) \cong \grtaut_{cut}(C'(\cal{T}))$. The deformed case is analogous.
\end{proof}

 In fact, in the proof of Lemma~\ref{lem_tangisknot} we have seen the following.
\begin{cor}
\label{cor-hoffree} $\LH{\cal{L}}{\glnn{N}}$ is the homology of a complex of free $H_k^N$-modules and $\LH{\cal{L}}{\Sigma}$ is the homology of a complex of free $H_k^\Sigma$-modules.
\end{cor}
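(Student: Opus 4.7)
The plan is to extract the conclusion directly from the reduction procedure carried out in the proof of Lemma~\ref{lem_tangisknot}. First I would recall that in that proof the Cautis--Kamnitzer--Morrison web relations were lifted to isomorphisms in the homotopy category $\Kom_h(\End_{\grFoam{N}^\bullet}(\mathbf{o}^{cut}))$, so that $\LCa{\cal{T}}{\slN}$ is homotopy equivalent to a complex whose chain objects are direct sums of grading-shifted copies of the identity $1$-morphism $1_{\mathbf{o}^{cut}}$, with differentials given by matrices of decorated identity foams on a single $k$-labelled edge.

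Next I would apply the representable functor $\grtaut_{cut}$ to this simplified complex. By Lemma~\ref{lem-endos}, $\End_{\grFoam{N}^\bullet}(1_{\mathbf{o}^{cut}}) \cong H_k^N$, so each chain object is sent to a direct sum of grading-shifted copies of $H_k^N$ regarded as a free module of rank one over itself; the differentials, being matrices of decorated identity foams, correspond to matrices with entries in $H_k^N$ and hence to $H_k^N$-linear maps between these free modules. Taking homology therefore expresses $\LH{\cal{L}}{\slN}$ (which by Lemma~\ref{lem_tangisknot} coincides with $\LH{\cal{T}}{\slN}$ when $\cal{L}$ is obtained as the closure of a $(1,1)$-tangle $\cal{T}$) as the homology of a complex of free $H_k^N$-modules.

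The deformed case is entirely parallel: one works in $\foam{N}^\Sigma$ rather than $\grFoam{N}^\bullet$, replaces $\grtaut_{cut}$ by $\taut_{cut}$, and invokes the second isomorphism of Lemma~\ref{lem-endos}, namely $\End_{\foam{N}^\Sigma}(1_{\mathbf{o}^{cut}}) \cong H_k^\Sigma$, to conclude that $\LH{\cal{L}}{\Sigma}$ is the homology of a complex of free $H_k^\Sigma$-modules.

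There is no serious obstacle here, since the corollary is really a repackaging of the proof of Lemma~\ref{lem_tangisknot}. The only point that requires a moment of care is checking that the chain objects of the simplified complex become genuinely \emph{free} $H_k^N$-modules (rather than merely finitely generated projective ones) after applying $\grtaut_{cut}$; this is automatic because each summand in the simplified complex is literally a grading shift of $1_{\mathbf{o}^{cut}}$, and the representable functor evaluated on such a summand is $H_k^N$ acting on itself by left multiplication.
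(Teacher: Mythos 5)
Your argument is correct and is exactly the paper's intended justification: the corollary is stated as an immediate byproduct of the proof of Lemma~\ref{lem_tangisknot}, where the complex is simplified to one whose chain objects are grading-shifted copies of $1_{\mathbf{o}^{cut}}$ (or, in the closed picture, bigon webs) and whose differentials are matrices of decorated identity foams, so that applying $\grtaut_{cut}$ (resp.\ $\taut_{cut}$) and Lemma~\ref{lem-endos} yields a complex of free $H_k^N$- (resp.\ $H_k^\Sigma$-) modules. Your closing remark about freeness versus projectivity is the right point to check, and your resolution of it matches the paper's.
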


\begin{thm}
\label{thm-moduleinv}
If $\cal{K}$ is a $k$-labelled knot. Then $C^{\glnn{N}}(\cal{K}) \in \Kom_h(H^N_k\dmod)$, $\LH{\cal{K}}{\glnn{N}}\in H^N_k\dmod$, $C^{\Sigma}(\cal{K}) \in \Kom_h(H_k^\Sigma\dmod)$ and $\LH{\cal{K}}{\Sigma}\in H_k^\Sigma\dmod$ are well-defined knot invariants, in the sense that different representations of $\cal{K}$ as closure of a 2-ended tangle diagram give isomorphic invariants.
\end{thm}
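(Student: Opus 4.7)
The module structure on the chain complexes is tautological: by construction, $C^\Sigma(\cal{T}) = \taut_{cut}(\LC{\cal{T}}{\Sigma}{})$ and $C^{\slN}(\cal{T}) = \grtaut_{cut}(\LC{\cal{T}}{\slN}{})$ take values in complexes over the endomorphism algebras $\End_{\foam{N}^\Sigma}(1_{\mathbf{o}^{cut}}) \cong H_k^\Sigma$ and $\End_{\grFoam{N}^\bullet}(1_{\mathbf{o}^{cut}}) \cong H_k^N$, respectively, via Lemma~\ref{lem-endos}. The action is precomposition by decorated identity foams on the cut edge.

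To promote Corollary~\ref{cor_inv} from a vector-space invariance statement to one enriched over $H^N_k\dmod$ (respectively $H_k^\Sigma\dmod$), I would first observe that the homotopy equivalences in $\Kom_h(\foam{N}^\Sigma)$ witnessing the Reidemeister and tangle moves in Theorem~\ref{thm_tangleinv} are constructed from local foams supported away from the cut edge. Because the module action is realised by precomposition with foams supported \emph{at} the cut edge, the two manifestly commute (horizontal and vertical composition in the 2-category are associative and distributive), so the homotopy equivalences lift verbatim to the module-enriched homotopy category. This yields a well-defined class in $\Kom_h(H^\Sigma_k\dmod)$ for each choice of $(1,1)$-tangle presentation.

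The main obstacle is independence of the cut point. Given two $(1,1)$-tangle diagrams $\cal{T}_1,\cal{T}_2$ with closure $\cal{K}$, the two cuts are connected by an arc running once around the knot. I would decompose the corresponding "slide" of the cut into elementary moves: (a) planar isotopy along a portion of the knot containing no diagrammatic features, for which the identification of complexes is tautological (the underlying web is literally the same, only the distinguished edge is relabelled); (b) sliding the cut past a cup, cap, or crossing, for which one uses the zig-zag (snake) relations already present in $\foam{N}^\Sigma$ to produce a canonical foam isomorphism between the two tangle complexes; and (c) Reidemeister moves, handled by the first paragraph. For case (b), the critical point is $H_k^\Sigma$-equivariance of the slide isomorphism. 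This follows from the decoration migration relation~\eqref{eqn-decmigr} (and its analogues across cups, caps, and crossings), which express that a Schur-polynomial decoration on a $k$-labelled facet can be transported along the facet, across seams, and through crossing resolutions at the cost of a sum of decorations on adjacent facets whose contribution on the cut strand is precisely the same element of $H_k^\Sigma$. Concretely, the image of $f \in H_k^\Sigma$ acting at the cut of $\cal{T}_1$ matches the image of $f$ acting at the cut of $\cal{T}_2$ under the slide isomorphism.

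Combining these two stages, any two $(1,1)$-tangle presentations of $\cal{K}$ yield complexes that are isomorphic in $\Kom_h(H^\Sigma_k\dmod)$ (respectively $\Kom_h(H^N_k\dmod)$ in the graded case). Passing to homology gives the claimed well-defined knot invariants $\LH{\cal{K}}{\slN} \in H^N_k\dmod$ and $\LH{\cal{K}}{\Sigma} \in H_k^\Sigma\dmod$. The compatibility of the present module structure with the underlying (bi-)graded vector space identification of Lemma~\ref{lem_tangisknot} is automatic from the construction, since that identification is itself induced by the same functor $\taut$ composed with the forgetful map $H_k^\Sigma\dmod \to \Vect_\C$.
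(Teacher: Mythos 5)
Your proposal is correct, but it does substantially more work than the paper's proof, which is a one-liner: for knots, two one-component $(1,1)$-tangles are isotopic if and only if they close to the same knot, so independence of the cut point is automatic from topology, and everything reduces to Corollary~\ref{cor_inv} (equivalently Theorem~\ref{thm_tangleinv}). The module-enrichment of that corollary, which you argue by locality of the homotopy equivalences, is more cleanly seen as a consequence of the interchange law: the $H_k^\Sigma$-action is precomposition with endomorphisms of $1_{\mathbf{o}^{cut}}$, while the differentials and homotopies act by postcomposition, so the representable functor $\taut_{cut}$ automatically sends every 2-morphism to a module map — no support condition is needed. Your third paragraph, the explicit basepoint-sliding argument with equivariance checked via decoration migration, is therefore redundant for knots; but it is essentially the right strategy for the generalization to links with a marked component, where the topological shortcut fails and one genuinely must slide the mark past crossings (this is what the remark following the theorem delegates to \cite[Proposition 60]{RW}, and what Lemma~\ref{lem-markedshift} carries out in the HOMFLY-PT setting). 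So your route buys generality at the cost of details you only sketch (e.g.\ the precise chain-level form of the slide-past-a-crossing homotopy), while the paper's route buys brevity by exploiting a fact special to knots.
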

\begin{proof} It is a classical fact that one-component 2-ended tangles are isotopic if and only if they close to isotopic knots, see e.g. \cite[Section 3]{Kho6}. Then it follows from Corollary~\ref{cor_inv} that the module structure is also an invariant.
\end{proof}

Theorem~\ref{thm-moduleinv} also holds for links, although we do not use this fact here. It can be proved via the interpretation of the module structure in Section~\ref{sec-HH} and \cite[Proposition 60]{RW}.

\begin{exa} The unknot homologies are given by $\LH{\bigcirc^k}{\glnn{N}}\cong H^N_k$, $\LH{\bigcirc^k}{\Sigma}\cong H^\Sigma_k$ and, in particular, $\LH{\bigcirc^k}{{\{0,\dots,0\}}}\cong H^{\{0,\dots,0\}}_k$.
\end{exa}

\subsection{Decomposing deformations}
\label{sec-2-4}
Let $\cal{K}$ be a knot given as the closure of a 2-ended tangle diagram $\cal{T}$ and let $\Sigma=\{\lambda_1^{N_1},\dots \lambda_l^{N_l}\}$ be an $N$-element multiset consisting of distinct complex numbers $\lambda_j$ occurring with multiplicities $N_j$. 

\begin{lem}
\label{lem-defunknotprop}\cite[Theorem 13]{RW} There is an isomorphism of $\C$-algebras 
\[H_k^\Sigma \cong \bigoplus_{\sum a_j=k} \bigotimes_{j=1}^l H^{N_j}_{a_j}\] and the direct summands are local algebras that are indexed by $k$-element multisubsets $A=\{\lambda_1^{a_1},\dots \lambda_l^{a_l}\}$ of $\Sigma$. 
\end{lem}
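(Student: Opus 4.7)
The plan is to construct a $\C$-algebra homomorphism from $H_k^\Sigma$ onto the right-hand side and conclude by a dimension count combined with a surjectivity argument via explicit idempotents. For each $k$-element multisubset $A = \{\lambda_1^{a_1},\ldots,\lambda_l^{a_l}\}$ of $\Sigma$, I would split the alphabet $\X$ as a disjoint union $\X_1 \sqcup \cdots \sqcup \X_l$ with $|\X_j| = a_j$. Using the shifted presentation $H^{N_j}_{a_j} \cong \Sym(\X_j)/\la h_{N_j-a_j+i}(\X_j - \{\lambda_j^{N_j}\}) : i>0 \ra$ (equivalent to \eqref{eqn_pres2} after centering $\X_j$ at $\lambda_j$), define
\[\psi_A \colon \Sym(\X) \twoheadrightarrow \bigotimes_j H^{N_j}_{a_j}\]
by sending the formal alphabet $\X$ to $\bigsqcup_j \X_j$ and projecting. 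To check that $\psi_A$ kills $I = \la h_{N-k+i}(\X-\Sigma) : i>0 \ra$, multiply the identities $\sum_i h_i(\X_j-\{\lambda_j^{N_j}\}) t^i \cdot \prod_{x \in \X_j}(1-xt) = (1-\lambda_j t)^{N_j}$ over $j$, which shows that $\sum_i h_i(\X-\Sigma) t^i$ is a polynomial in $t$ of degree at most $N-k$ in $\bigotimes_j H^{N_j}_{a_j}[t]$, forcing all generators of $I$ to map to zero. Hence $\psi_A$ descends to a ring map $H_k^\Sigma \to \bigotimes_j H^{N_j}_{a_j}$, and combining over all $A$ yields $\psi \colon H_k^\Sigma \to \bigoplus_A \bigotimes_j H^{N_j}_{a_j}$.

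Next, a dimension count matches both sides: by Remark~\ref{rem-defunknotprop}, $\dim_\C H_k^\Sigma = \binom{N}{k}$, which by Vandermonde's identity equals $\sum_{\sum a_j = k} \prod_j \binom{N_j}{a_j}$. So it suffices to prove that $\psi$ is surjective. For this I would construct orthogonal idempotents $\epsilon_A \in H_k^\Sigma$ whose images under $\psi$ are the units of the corresponding summands. The same generating function argument shows that in $H_k^\Sigma[x]$ one has the factorization $P(x) = \prod_i(x-x_i) \cdot \tilde Q(x)$, where $P(x) := \prod_j(x-\lambda_j)^{N_j}$, so that $P(x_i) = 0$ for each variable $x_i \in \X$. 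By the Chinese Remainder Theorem in $\C[x]/(P(x))$, choose polynomials $e_1(x),\ldots,e_l(x) \in \C[x]$ representing the orthogonal idempotents of the factors $\C[x]/((x-\lambda_j)^{N_j})$, and for each $A$ set
\[\epsilon_A := \frac{1}{\prod_j a_j!} \sum_{\sigma \in S_k} \sigma \cdot \prod_j \prod_{x \in \X_j} e_j(x),\]
which lies in $\Sym(\X)$ and hence represents a symmetric class in $H_k^\Sigma$.

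The hard part will be verifying that $\psi_B(\epsilon_A) = \delta_{A,B} \cdot 1$, which simultaneously establishes surjectivity of $\psi$ and identifies $\epsilon_A \cdot H_k^\Sigma$ with the local summand $\bigotimes_j H^{N_j}_{a_j}$. This amounts to realizing $H_k^\Sigma$ as the ring of $S_k$-invariants of $\bigotimes_i \C[x_i]/(P(x_i))$; the orbit decomposition of functions $\{1,\ldots,k\} \to \{1,\ldots,l\}$ under $S_k$ then yields the direct sum indexed by multisubsets $A$, and orbit-stabilizer accounts for the factors $H^{N_j}_{a_j} \cong (\C[\X_j]/\la (x-\lambda_j)^{N_j} : x \in \X_j \ra)^{S_{a_j}}$ via the classical presentation of the cohomology of the Grassmannian. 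The identification with invariants, equivalently the equality $I = \la P(x_1),\ldots,P(x_k)\ra \cap \Sym(\X)$, follows from the containment $I \subseteq \la P(x_i)\ra$ (given by the factorization above) together with the matching dimensions, which forces equality.
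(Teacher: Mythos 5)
Your setup is sound up to and including the idempotents: $\psi_A$ is well defined (the generating-function computation showing that the classes $h_{N-k+i}(\X-\Sigma)$ die in $\bigotimes_j H^{N_j}_{a_j}$ is correct), the Vandermonde dimension count is correct, each variable does satisfy $P(x_i)=0$ in $H_k^\Sigma$, and the classes $\epsilon_A$ form a complete set of orthogonal idempotents with $\psi_B(\epsilon_A)=\delta_{A,B}$. The gap is in the final step. The identification of $H_k^\Sigma$ with the $S_k$-invariants of $\bigotimes_i \C[x_i]/(P(x_i))$ is false, and the dimensions that are supposed to ``force equality'' do not match: the invariant ring decomposes over $S_k$-orbits of functions $\{1,\dots,k\}\to\{1,\dots,l\}$ as $\bigoplus \bigotimes_j \Sym^{a_j}\bigl(\C[x]/((x-\lambda_j)^{N_j})\bigr)$, of total dimension $\sum\prod_j \binom{N_j+a_j-1}{a_j}$, which strictly exceeds $\binom{N}{k}$ in general. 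Concretely, for $N=k=2$ and $\Sigma=\{0,0\}$ one has $H_2^\Sigma\cong\C$, while $(\C[x_1,x_2]/(x_1^2,x_2^2))^{S_2}$ is three-dimensional. Relatedly, the containment you extract from the factorization goes the wrong way: $P(x)=\tilde Q(x)\prod_i(x-x_i)$ modulo $I$ gives $\la P(x_1),\dots,P(x_k)\ra \subseteq I\,\C[\X]$, not $I\subseteq \la P(x_i)\ra$; indeed $e_1(\X)=x_1+x_2$ lies in $I$ but not in $\la x_1^2,x_2^2\ra$ in the example above. What the factorization actually yields is a surjection $(\C[\X]/\la P(x_i)\ra)^{S_k}\twoheadrightarrow H_k^\Sigma$, which is a proper quotient in general.

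What is genuinely missing is therefore the surjectivity of $\psi$, equivalently of each $\psi_A\colon \Sym(\X)\to\bigotimes_j H^{N_j}_{a_j}$. Knowing $\psi_B(\epsilon_A)=\delta_{A,B}$ only shows that the image of $\psi$ surjects onto the semisimple quotient $\bigoplus_A\C$ of the target; since each summand $\bigotimes_j H^{N_j}_{a_j}$ is local with a large nilpotent radical, this says nothing about hitting the rest. You still have to show that the subalgebra of $\bigotimes_j H^{N_j}_{a_j}$ generated by the images of the total symmetric functions $e_m(\X_1\sqcup\dots\sqcup\X_l)$ is everything, i.e.\ that each $e_m(\X_j)$ can be recovered; this is where the distinctness of the $\lambda_j$ must actually be used (via interpolation-type elements whose images are units in exactly one summand, in the spirit of Lemma~\ref{lem_unitcrit}), and it is the substance of \cite[Theorem 13]{RW}, which the present paper cites rather than reproves. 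Once surjectivity of each $\psi_A$ is established, your dimension count and idempotents do finish the argument exactly as you outline.
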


\begin{defi} Let $A$ be a $k$-element multisubset of $\Sigma$ and denote by $1_A$ the corresponding idempotent in $H_k^\Sigma$. Let $\cal{T}$ be a 2-ended tangle diagram with boundary label $k$, then we define \emph{the $A$-localized complex}
\begin{align*}
C^\Sigma_A(\cal{T}) :=  1_A \taut_{cut}(\LC{\cal{T}\hspace{0.5pt}}{\Sigma}{}) 
\end{align*} 
which is a direct summand of $C^\Sigma(\cal{T})$ and clearly itself a knot invariant in $\Kom_h(H_k^\Sigma\dmod)$. Then the \emph{$A$-localized $\Sigma$-deformed Khovanov--Rozansky homology of $\cal{K}$} is defined to be
\[
\mathrm{KhR}^\Sigma_A(\cal{K}):= H_*(C^\Sigma_A(\cal{T})).
\] 
Note that $\mathrm{KhR}^\Sigma(\cal{K}) \cong \bigoplus_{A\subset \Sigma, |A|=k} \mathrm{KhR}^\Sigma_A(\cal{K})$.
\end{defi}

\begin{thm}
\label{thm_localizeddecomp} Let $\cal{K}$ be a knot and write $\cal{K}^k$ for its $k$-labelled version if $k\in \N$. Let $A=\{\lambda_1^{a_1},\dots \lambda_l^{a_l}\}$ be a $k$-element multisubset of $\Sigma$. There is an isomorphism of singly-graded vector spaces:
\begin{equation}
\mathrm{KhR}^\Sigma_A(\cal{K}^k) \cong \bigotimes_{j=1}^l \mathrm{KhR}^{\glnn{N_j}}(\cal{K}^{a_j})
 \end{equation}
\end{thm}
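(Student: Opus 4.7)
The plan is to bootstrap from the analogous decomposition theorem for closed links established in \cite{RW} (Theorem~3 there, which is the unreduced analogue) by carefully accounting for the cut $k$-labelled strand. Represent $\cal{K}^k$ as the closure of a $(1,1)$-tangle diagram $\cal{T}$ with boundary label $k$, and consider the complex $\LCa{\cal{T}}{\Sigma}\in\Kom(\foam{N}^\Sigma)$ whose objects are webs with boundary object $\mathbf{o}^{cut}=(N,\dots,N,k)$.

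The first step is to invoke the decomposition of the $\Sigma$-deformed foam 2-category from \cite{RW}: the identity 1-morphism on a $k$-labelled edge decomposes, via the idempotents of Lemma~\ref{lem-defunknotprop}, as a direct sum over $k$-element multisubsets $A=\{\lambda_1^{a_1},\dots,\lambda_l^{a_l}\}$ of $\Sigma$, and this idempotent decomposition extends to all webs and foams appearing in $\LCa{\cal{T}}{\Sigma}$. More precisely, for each choice of multisubset decomposition of the boundary labels there is a full subcategory of $\foam{N}^\Sigma$ which is equivalent to the corresponding tensor product of the undeformed (shifted) foam categories $\foam{N_j}^{\{\lambda_j,\dots,\lambda_j\}}\simeq\foam{N_j}^\bullet$, where the equivalence splits each $a$-labelled edge into strands of labels $a_1,\dots,a_l$ with $\sum a_j=a$. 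This is exactly the mechanism used in \cite{RW} to prove the closed-link version of the theorem.

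Applying this decomposition to $\LCa{\cal{T}}{\Sigma}$ with the cut $k$-labelled strand forced into the multisubset $A$ by the idempotent $1_A$, one obtains an isomorphism in $\Kom_h(\foam{N_1}^\bullet\boxtimes\cdots\boxtimes\foam{N_l}^\bullet)$ of the form
\[
1_A\LCa{\cal{T}}{\Sigma}\ \simeq\ \LCa{\cal{T}^{(a_1)}}{\slnn{N_1}}\boxtimes\cdots\boxtimes\LCa{\cal{T}^{(a_l)}}{\slnn{N_l}},
\]
where $\cal{T}^{(a_j)}$ denotes the diagram $\cal{T}$ with each strand relabelled according to the $j$-th piece of the multisubset partition (in particular the cut strand carries label $a_j$). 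This is the key step: it follows because the Rickard complexes used to resolve crossings are built from 1-facet foams that respect the idempotent decomposition, so each crossing resolution in $\LCa{\cal{T}}{\Sigma}$ factors as a product of Rickard complexes, one for each $\slnn{N_j}$ factor. Applying the representable functor $\taut_{cut}$ and noting that $\Hom$ factors through tensor products on the highest-weight objects of each factor (since $\End(1_{\mathbf{o}^{top}_j})\cong\C$ by Lemma~\ref{lem-endos}), we obtain the desired isomorphism of complexes
\[
C^\Sigma_A(\cal{T})\ \cong\ C^{\slnn{N_1}}(\cal{T}^{(a_1)})\otimes\cdots\otimes C^{\slnn{N_l}}(\cal{T}^{(a_l)}),
\]
and then taking homology (together with the Künneth formula over $\C$) yields the claim.

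The main obstacle is the precise verification that the decomposition of $\foam{N}^\Sigma$ from \cite{RW}, which was stated for endomorphism categories on highest weight objects (equivalently, closed webs), correctly descends to the full subcategory with boundary $\mathbf{o}^{cut}$ and intertwines the cap and cup morphisms entering the Rickard complexes. Once this is set up, the argument is essentially the same as in the closed case, since locally every foam in $\LCa{\cal{T}}{\Sigma}$ and every web relation is compatible with the idempotent decomposition; the only additional bookkeeping is that the fixed multisubset $A$ selects the labels $a_j$ on the cut strand of each tensor factor.
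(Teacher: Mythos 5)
Your proposal is correct and takes essentially the same route as the paper: both apply the idempotent/web-splitting decomposition of the deformed foam 2-category from \cite{RW} to the $(1,1)$-tangle complex with the cut $k$-strand colored by $A$, identify the root-colored subcategories with the undeformed 2-categories $\foam{N_j}^\bullet$, and conclude via the factorization of the representable functor (the paper's $\taut_{split}$ and $\taut_{\lambda_j}$) together with K\"unneth. The "main obstacle" you flag is exactly what the paper's cited results \cite[Definition 46, Theorem 65, Proposition 68]{RW} are set up to handle.
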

\begin{proof}
The proof is very similar to the proof of~\cite[Theorem 1]{RW}. We sketch it here because we need a variation in the next section, but readers interested in the details are referred to~\cite{RW}.

The proof uses a slightly enlarged 2-category $\hat{\foam{N}{}^{\Sigma}}$, in which idempotent-decorated identity foams are split idempotents---in other words, webs are allowed to be colored by idempotents as well. Then we have that  $C^\Sigma_A(\cal{T}) = 1_A \taut_{cut}(\LC{\cal{T}\hspace{0.5pt}}{\Sigma}{} \cong \taut_{cut}(\LC{\cal{T}}{\Sigma}{A})$, where $\LC{\cal{T}}{\Sigma}{A}$ is the complex in $\hat{\foam{N}{}^{\Sigma}}$ which is associated to the 2-ended tangle diagram $\cal{T}$, with the boundary $k$-labelled web edges colored by the idempotent corresponding to the multisubset $A$.\footnote{From now on we will abbreviate this and say ``colored by $A$'' instead.}

In~\cite[Definition 46 and Theorem 65]{RW} we (that is Rose and the author) define web splitting functors $\phi$, which can be used to split an idempotent-colored complex in $\hat{\foam{N}{}^{\Sigma}}$ into its root-colored components:
\[\phi(\LC{\cal{T}}{\Sigma}{A}) = \bigotimes_{j=1}^l \LC{\cal{T}}{\Sigma}{\lambda_j\in A}\] 
Here we use the shorthand $\LC{\cal{T}}{\Sigma}{\lambda_j\in A}:= \LC{\cal{T}^{a_j}}{\Sigma}{A_j}$ for the complex in $\hat{\foam{N}{}^{\Sigma}}$ assigned to the $a_j$-labelled tangle $\cal{T}$, with the boundary web edges labelled by $A_j:=\{\lambda_j^{a_j}\}$.
Furthermore, there exist representable functors $\taut_{split}$ and $\taut_{\lambda_j}$ such that
\begin{equation}
\label{eqn-tautdecomp}
\taut_{cut}(\LC{\cal{T}}{\Sigma}{A}) \cong \taut_{split}(\phi(\LC{\cal{T}}{\Sigma}{A})) \cong  \bigotimes_{j=1}^l \taut_{\lambda_j}(\LC{\cal{T}}{\Sigma}{\lambda_j\in A}).
\end{equation}
Finally, the complexes $\LC{\cal{T}}{\Sigma}{\lambda_j\in A}$ in the tensor factors on the right-hand side of \eqref{eqn-tautdecomp} live in 2-subcategories $\hat{\foam{N}{}^{\lambda_j\in \Sigma}}$ of $\hat{\foam{N}{}^{\Sigma}}$, which consist of webs and foams that are colored by multisubsets that only contain $\lambda_j$. In~\cite[Proposition 68]{RW} it is shown that $\hat{\foam{N}{}^{\lambda_j\in \Sigma}}$ is isomorphic to the undeformed foam 2-category $\foam{N_j}{}^\bullet$. Moreover, under this isomorphism, the complex $\LC{\cal{T}}{\Sigma}{\lambda_j\in A}$ corresponds to the undeformed complex $\LC{\cal{T}^{a_j}}{\{0,\dots,0\}}{}$ and, hence, $\taut_{\lambda_j}$ applied to $\LC{\cal{T}}{\Sigma}{\lambda_j\in A}$ produces a complex of (ungraded) vector spaces isomorphic to the complex computing $\mathrm{KhR}^{\glnn{N_j}}(\cal{K}^{a_j})$ with the $q$-grading forgotten.
\end{proof}

A version of Theorem~\ref{thm_localizeddecomp} for links with a marked component can be proved analogously, but we will not need it here.

\subsection{Reduced colored $\glnn{N}$ homology}
\label{sec-reducedN}
The purpose of this section is to define reduced colored $\glnn{N}$ homologies and to study their $\Sigma$-deformed versions, which depend on the choice of a $k$-element multiset $A\in\Sigma$. For the sake of brevity, we only consider knots $\cal{K}$, whose $k$-labelled versions we denote by $\cal{K}^k$. However, all definitions and results in this section have straightforward generalizations for links with a marked component.

\begin{defi}Consider the presentation~\eqref{eqn_pres1} of $H_k^N$ and the highest degree element $\pi^N_k:=e_k(\X)^{N-k}$ in $H_k^N$. Then $\la \pi^N_k\ra $ is a $1$-dimensional ideal\footnote{This is proven more generally in Lemma~\ref{lem_piA}} in $H_k^N$ and the \emph{reduced colored $\glnn{N}$ Khovanov--Rozansky homology} of a $k$-labelled knot $\cal{K}^k$ (given as the closure of a $k$-labelled 2-ended tangle $\cal{T}$) is defined as
\[
\overline{\mathrm{KhR}}^{\glnn{N}}(\cal{K}^k):= H_*(\overline{C}^{\glnn{N}}(\cal{T}))\quad \text{ where } \overline{C}^{\glnn{N}}(\cal{T}) := \la\pi^N_k\ra\grtaut_{cut}(\LC{\cal{T}\hspace{0.5pt}}{\glnn{N}}{}).
\]
This produces bi-graded knot invariants, which take the value $\C$ on the unknot.
\end{defi}

\begin{defi} Let $A$ be a  $k$-element multisubset of $\Sigma$. We define the symmetric polynomial
\[\pi_A:= \prod_{\lambda\in \Sigma \setminus A}\prod_{x\in \X} (\lambda-x)\] and denote by the same symbol the corresponding element in $H_k^\Sigma$. 
\end{defi}

\begin{lem}
\label{lem_piA}
$\la \pi_A\ra$ is a $1$-dimensional ideal in $H_k^\Sigma$ and all such are of the form $\la \pi_A\ra$ for some $k$-element multisubset $A$ of $\Sigma$. Moreover, the top degree homogeneous component of $\pi_A$ as a symmetric polynomial in $\X$ is $\pm \pi^N_k$. 
\end{lem}
\begin{proof}
First of all, note that $\pi_A$ has the expansion
\[ \pi_A= \prod_{\lambda\in \Sigma \setminus A}(\lambda^k - \lambda^{k-1}e_1(\X)+\cdots +(-1)^{k-1}\lambda e_{k-1}(\X)+(-1)^{k}e_k(\X)) = \pm e_k(\X)^{N-k} + \text{lower order terms.} \]  
The leading term $\pm e_k(\X)^{N-k}$ of $\pi_A$ is (up to a sign) the Schur polynomial corresponding to the maximal $k\times (N-k)$-box Young diagram. In particular, $\pi_A$ is not zero in $H_k^\Sigma$, see Remark~\ref{rem-defunknotprop}.

Checking 1-dimensionality of $\la \pi_A\ra$ is equivalent to showing that $\pi_A$ is a simultaneous eigenvector for the $e_i(\X)$:
For a multiset of complex numbers $S$ and the variable set $\X$, we consider the ideal $I_S:=\la h_{|S|-|X|+i}(\X-S)|i>0\ra \subset \Sym(\X)$. Via a straightforward computation with generating functions for complete symmetric polynomials in differences of alphabets, we can see that $\prod_{x\in \X} (\lambda-x)I_S \subset I_{S\cup \{\lambda\}}$ for any $\lambda\in \C$. We first note that $\prod_{x\in \X} (\lambda-x)=h_{|\X|}(\lambda-\X)$. We need to check that $h_{|\X|}(\lambda-\X)h_{|S|-|\X|+i}(\X-S)\in I_{S\cup\{\lambda\}}$ for every $i>0$. Writing $S'=S\cup \{\lambda\}$, we note that \[H(S)^{-1}= H(\lambda)^{-1} H(\lambda-\X)H(\X-S)=H(\lambda-\X)H(\X-S')\] is a polynomial of degree $|S|$. Using $h_{k}(\X-S')= h_{k}(\X-S) - \lambda h_{k-1}(\X-S)$ for $k>0$, we get the following identity in degree $|S|+i$.
\begin{align}
\label{eqn:genfctn1}
0 &= h_{|\X|}(\lambda - \X)h_{|S|-|\X|+i}(\X-S) + \sum_{j=0}^{|\X|-1}h_j(\lambda - \X) h_{|S|+i-j}(\X-S') 
\\
\label{eqn:genfctn2}
 &-\lambda h_{|\X|}(\lambda - \X)h_{|S|-|\X|+i-1}(\X-S) +  \sum_{j=|\X|+1}^{|S|+i} h_j(\lambda - \X) h_{|S|+i-j}(\X-S')
\end{align}
The second line \eqref{eqn:genfctn2} can be further rewritten as:
\begin{align*}
&   \sum_{j=|\X|+1}^{|S|+i} h_j(\lambda - \X) h_{|S|+i-j}(\X-S) - \lambda \sum_{j=|\X|}^{|S|+i-1} h_j(\lambda - \X) h_{|S|+i-j-1}(\X-S))\\
=&   \sum_{j=|\X|+1}^{|S|+i} (h_j(\lambda - \X)-\lambda h_{j-1}(\lambda - \X))  h_{|S|+i-j}(\X-S) =0
\end{align*}
since $h_j(\lambda - \X)=\lambda h_{j-1}(\lambda - \X)$ for $j>|\X|$. Then \eqref{eqn:genfctn1} implies that $h_{|\X|}(\lambda-\X)h_{|S|-|\X|+i}(\X-S)\in I_{S\cup\{\lambda\}}$.

Then, by induction, we also have $\pi_A I_A \subset I_\Sigma$ and since $I_A$ contains all polynomials of the form $e_i(\X)-e_i(A)$, it follows that $\pi_A$ is an  eigenvector with eigenvalue $e_i(A)$ for multiplication by $e_i(\X)$ in $H_k^\Sigma$. Also, it shows that $\pi_A\in 1_A H_k^\Sigma$ because $\pi_A$ is annihilated by every $e_i(\X)-e_i(A)$, at least one of which would need to act as a unit in any other summand $1_B H_k^\Sigma$ by Lemma~\ref{lem_unitcrit}.

Finally, for every $1$-dimensional ideal $I=\C\pi$, the homomorphism $w_I\colon H_k^\Sigma \to \C$ given by $x \pi = w_I(x) \pi$ has as kernel a maximal ideal in $H_k^\Sigma$, of which there are exactly as many as local direct summands of $H_k^\Sigma$, which have been identified as $1_A H_k^\Sigma$. Thus, we have $I=\la \pi_A \ra $ for some $k$-element multisubset $A$ of $\Sigma$.
\end{proof}

\begin{lem}\cite[Corollary 18]{RW}
\label{lem_unitcrit} With respect to the presentation~\eqref{eqn_pres2} of $H_k^\Sigma$, a symmetric polynomial $p\in \Sym(\X)$ projects to a unit in the summand $1_A H_k^\Sigma$ if and only if $p(A)\neq 0$.
\end{lem}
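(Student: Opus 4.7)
The plan is to reduce the statement to the computation of the residue field map on the local summand $1_A H_k^\Sigma$. By Lemma~\ref{lem-defunknotprop}, $H_k^\Sigma$ decomposes as a direct sum of local Artinian $\C$-algebras $1_A H_k^\Sigma$ indexed by $k$-element multisubsets $A$ of $\Sigma$, each with residue field $\C$. Since an element of a local ring is a unit if and only if its image in the residue field is nonzero, it suffices to identify the quotient map $1_A H_k^\Sigma \to \C$ with the evaluation $\ev_A \colon p \mapsto p(A)$.

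First, I would check that $\ev_A$ descends to a well-defined $\C$-algebra map $H_k^\Sigma \to \C$. Using the presentation~\eqref{eqn_pres2}, this amounts to verifying that $h_{N-k+i}(A-\Sigma)=0$ for all $i>0$. Writing $\Sigma = A \sqcup (\Sigma\setminus A)$ as multisets and specialising $\X = A$ in the generating function for $h_i(\X - \Sigma)$ yields
\[
\sum_{i \geq 0} h_i(A-\Sigma)\,t^i \;=\; \frac{\prod_{\lambda\in\Sigma}(1-\lambda t)}{\prod_{x\in A}(1-xt)} \;=\; \prod_{\lambda\in\Sigma\setminus A}(1-\lambda t),
\]
a polynomial of degree $|\Sigma\setminus A| = N-k$. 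Hence $h_j(A-\Sigma)=0$ for every $j>N-k$, so $\ev_A$ is well-defined on $H_k^\Sigma$.

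Next, I would match $\ev_A$ with the residue field map of $1_A H_k^\Sigma$ by unwinding the isomorphism $1_A H_k^\Sigma \cong \bigotimes_{j=1}^l H^{N_j}_{a_j}$ of Lemma~\ref{lem-defunknotprop}, whose explicit form is constructed in \cite[Theorem~13]{RW}. Under this isomorphism the alphabet $\X$ splits into sub-alphabets $\X_j$ of size $a_j$, and each variable $x \in \X_j$ is identified with $\lambda_j + y_x$, where $y_x$ is a variable of the tensor factor $H^{N_j}_{a_j}$. Because the maximal ideal of $H^{N_j}_{a_j}$ is generated by the positive-degree symmetric polynomials in its variables, the residue field map of $H^{N_j}_{a_j}$ sets these variables to zero. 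Composing these factorwise quotients, the residue field map of $1_A H_k^\Sigma$ sends a symmetric polynomial $p(\X)$ to its value at the multiset $\{\lambda_j^{a_j}\}_{j=1}^l = A$, which is precisely $\ev_A(p)$.

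The main technical point will be the second step — translating the abstract block decomposition of Lemma~\ref{lem-defunknotprop} into the explicit shift-by-$\lambda_j$ behaviour of the variables under the isomorphism. That translation is already packaged inside the proof of~\cite[Theorem~13]{RW}, so once it is invoked the conclusion follows: $p$ is a unit in the local ring $1_A H_k^\Sigma$ if and only if its residue class $\ev_A(p) = p(A)$ is nonzero.
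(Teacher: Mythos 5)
Your argument is correct: the paper itself gives no proof of this lemma (it is quoted from \cite[Corollary 18]{RW}), and your reduction to the residue-field map of the local summand $1_A H_k^\Sigma$, together with the generating-function check that $\ev_A$ is well defined and the identification of the residue map with evaluation at $A$ via the explicit shift-by-$\lambda_j$ isomorphism of \cite[Theorem 13]{RW}, is essentially the argument given in that reference. The only dependence worth flagging is that matching $\ev_A$ to the summand labelled $A$ (rather than some other $1_B H_k^\Sigma$) genuinely requires the indexing convention from the cited isomorphism, which you correctly invoke.
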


\begin{rem} For a multisubset $A\subset \Sigma$, define $\overline{A}:=\{\lambda\in \Sigma \mid \lambda \text{ appears at least once in } A\}$ and $\overline{\pi}_A:= \prod_{\lambda\in \overline{A} \setminus A}\prod_{x\in \X} (\lambda-x) $. Then $\la \pi_A \ra = \la \overline{\pi}_A 1_A \ra$ in $H_k^\Sigma$. This is easily seen, since the additional factors in the definition of $\pi_A$ are units in $1_A H_k^\Sigma$ by Lemma~\ref{lem_unitcrit}.
\end{rem}

\begin{defi} Let $A$ be a $k$-element multisubset of $\Sigma$. We define the $A$-\emph{reduced} $\Sigma$\emph{-deformed Khovanov--Rozansky homology}
\[
\overline{\mathrm{KhR}}^\Sigma_A(\cal{K}):= H_*(\overline{C}^\Sigma_A(\cal{T}))\quad \text{where } \overline{C}_A^\Sigma(\cal{T}):=\la\pi_{A}\ra \taut_{cut}{\LC{\cal{T}\hspace{0.5pt}}{\Sigma}{}} .
\]
By Theorem~\ref{thm-moduleinv} this is a singly-graded knot invariant. It takes value $\C$ on the unknot.
\end{defi}

In the uncolored case, where $A=\{\lambda\}$, these reduced deformed homologies have been previously defined by Lewark-Lobb \cite{LL}, who also show that they lead to new obstructions to sliceness. 

\begin{thm}
\label{thm_reduceddecomp}  Let $\Sigma=\{\lambda_1^{N_1},\dots \lambda_l^{N_l}\}$ be an $N$-element multiset of distinct complex numbers $\lambda_j$, occurring with multiplicities $N_j$. Let $A=\{\lambda_1^{a_1},\dots \lambda_l^{a_l}\}$ be a $k$-element multisubset of $\Sigma$. Then there is an isomorphism of singly-graded vector spaces:
\begin{equation}
\overline{\mathrm{KhR}}^\Sigma_A(\cal{K}^k) \cong \bigotimes_{j=1}^l \overline{\mathrm{KhR}}^{\glnn{N_j}}(\cal{K}^{a_j})
 \end{equation}
\end{thm}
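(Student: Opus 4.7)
The plan is to adapt the proof of Theorem~\ref{thm_localizeddecomp} by carefully tracking the action of the $1$-dimensional ideal $\langle\pi_A\rangle$ through the web-splitting isomorphism from \cite{RW}. First, since $\pi_A\in 1_A H_k^\Sigma$ by Lemma~\ref{lem_piA} and $H_k^\Sigma$ is commutative, we have $\langle\pi_A\rangle = \langle\pi_A\rangle\, 1_A$, and therefore
\[\overline{C}_A^\Sigma(\cal{T}) \;=\; \langle\pi_A\rangle\, \taut_{cut}(\LC{\cal{T}}{\Sigma}{}) \;\cong\; \langle\pi_A\rangle\, \taut_{cut}(\LC{\cal{T}}{\Sigma}{A}),\]
where coloring the boundary edge by the idempotent $1_A$ implements the $A$-localization as in the proof of Theorem~\ref{thm_localizeddecomp}.

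Next, I would apply the web-splitting functor $\phi$ of \cite{RW} to obtain $\phi(\LC{\cal{T}}{\Sigma}{A}) = \bigotimes_{j=1}^l \LC{\cal{T}}{\Sigma}{\lambda_j\in A}$, together with the tensor factorization
\[\taut_{cut}(\LC{\cal{T}}{\Sigma}{A}) \;\cong\; \bigotimes_{j=1}^l \taut_{\lambda_j}(\LC{\cal{T}}{\Sigma}{\lambda_j\in A}).\]
The essential new ingredient is to verify that multiplication by $\pi_A$ on the left corresponds to multiplication by $\bigotimes_{j=1}^l \pi^{N_j}_{a_j}$ (up to an overall unit) on the right. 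For this I would use Lemma~\ref{lem-defunknotprop}, which gives $1_A H_k^\Sigma \cong \bigotimes_{j=1}^l H^{N_j}_{a_j}$ as $\C$-algebras, together with the fact that both $\pi_A$ and $\bigotimes_{j=1}^l \pi^{N_j}_{a_j}$ generate $1$-dimensional ideals in their respective algebras by Lemma~\ref{lem_piA}. Since a tensor product of local commutative $\C$-algebras each possessing a unique $1$-dimensional ideal itself has a unique such ideal, the two ideals must coincide up to a unit scalar.

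Combining this compatibility with the identification $\hat{\foam{N}{}^{\lambda_j\in\Sigma}} \cong \foam{N_j}{}^\bullet$ from \cite[Proposition 68]{RW}, under which $\LC{\cal{T}}{\Sigma}{\lambda_j\in A}$ corresponds to the undeformed complex $\LC{\cal{T}^{a_j}}{\{0,\dots,0\}}{}$, I obtain
\[\overline{C}_A^\Sigma(\cal{T}) \;\cong\; \bigotimes_{j=1}^l \langle \pi^{N_j}_{a_j}\rangle\, \grtaut_{cut}(\LC{\cal{T}^{a_j}}{\slnn{N_j}}{}) \;=\; \bigotimes_{j=1}^l \overline{C}^{\slnn{N_j}}(\cal{T}^{a_j})\]
as complexes of (ungraded) vector spaces. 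Taking homology and applying the Künneth formula over $\C$ then yields the claimed isomorphism of singly-graded vector spaces.

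The main obstacle will be the compatibility check in the second paragraph: one must trace the $\pi_A$-action through the functor $\phi$. Since $\pi_A$ acts on $\taut_{cut}$ via a facet decoration on the boundary edge (using the identification $\End_{\foam{N}^\Sigma}(1_{\mathbf{o}^{cut}}) \cong H_k^\Sigma$ of Lemma~\ref{lem-endos}), this reduces to a computation using the decoration migration relation~\eqref{eqn-decmigr} combined with the local structure of the idempotent decomposition of $H_k^\Sigma$ -- a slightly involved bookkeeping exercise, but conceptually routine once the uniqueness-of-$1$-dimensional-ideal observation is in place.
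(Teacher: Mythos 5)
Your proposal is correct and follows essentially the same route as the paper: localize at $1_A$, apply the web-splitting functor of \cite{RW}, and show that the boundary decoration $\pi_A$ distributes (up to units) into the decorations $\pi_{A_j}$ on the split facets, which correspond to $\pi^{N_j}_{a_j}$ under the identification with the undeformed foam categories, after which K\"unneth finishes the argument. The only variation is in one step: where you pin down the image of $\pi_A$ abstractly via uniqueness of the one-dimensional ideal (socle) in the tensor product of the local algebras $H^{N_j}_{a_j}$, the paper instead carries out explicitly the ``bookkeeping exercise'' you defer, computing $\pi_B\pi_C = u_B^{-1}u_C^{-1}\pi_A$ on split facets via the decoration migration relation~\eqref{eqn-decmigr} and identifying the extra factors as units by Lemma~\ref{lem_unitcrit}; both justifications are valid.
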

\begin{proof}

The reduced deformed homology is computed by the chain complex $\la\pi_{A}\ra \taut_{cut}{\LC{\cal{T}\hspace{0.5pt}}{\Sigma}{}}$, which is a chain complex whose chain groups are spanned by foams $F \colon 1_{\mathbf{o}^{cut}} \to W$, which are decorated by $\pi_A$ on the $k$-facet adjacent to the domain web $1_{\mathbf{o}^{cut}}$, and $W$ is a web appearing in the cube of resolutions chain complex $\LC{\cal{T}\hspace{0.5pt}}{\Sigma}{}$. The differentials in $\la\pi_{A}\ra \taut_{cut}{\LC{\cal{T}\hspace{0.5pt}}{\Sigma}{}}$ act by post-composing with foams $G\colon W\to W'$ from $\LC{\cal{T}\hspace{0.5pt}}{\Sigma}{}$. 

Now we apply a splitting functor $\phi$ to all webs in the complex to separate out individual roots. This works exactly as in the proof of Theorem~\ref{thm_localizeddecomp}, except that the decoration by $\pi_A$ on the $k$-facet needs to be distributed onto the $l$ parallel $a_j$-facets colored by $A_{j}:=\{\lambda_j^{a_j}\}$. We claim that if an $A$-colored facet is split into facets colored by multisubsets $A=B\uplus C$ which are disjoint $B\cap C=0$, then $\pi_A$ splits into the product of decorations $u_B \pi_B$ and $u_C \pi_C$ on the $B$- and $C$-colored facets respectively, where $u_B$ and $u_C$ are units. 

To see this, denote the alphabet on the $A$-, $B$- and $C$-colored facets with $\X$, $\X_1$ and $\X_2$ respectively. Then the decoration migration relations~\eqref{eqn-decmigr} impose $\X = \X_1\uplus \X_2$ and we get:
\[ \pi_B \pi_C = \prod_{\lambda\in \Sigma \setminus B}\prod_{x\in \X_1} (\lambda-x) \prod_{\lambda\in \Sigma \setminus C}\prod_{x\in \X_2} (\lambda-x) = \underbrace{\prod_{\lambda\in \Sigma \setminus A}\prod_{x\in \X} (\lambda-x)}_{= \pi_A} \underbrace{\prod_{\lambda\in C}\prod_{x\in \X_1} (\lambda-x)}_{=: u_B^{-1}} \underbrace{\prod_{\lambda\in B}\prod_{x\in \X_2} (\lambda-x)}_{=: u_C^{-1}}\]
Using the criterion from Lemma~\ref{lem_unitcrit} it is clear that the elements which we have suggestively denoted by $u_B^{-1}$ and $u_C^{-1}$ are indeed units. The following equation illustrates the action of the foam splitting functor $\phi$ in a simple case. The green sheet on the left is decorated by $\pi_A$ and colored with the idempotent $1_A$ with $A=\{\lambda_1^{a_1},\lambda_1^{a_2}\}$ containing only two distinct roots. The functor $\phi$ acts by splitting the sheet along the boundary into two parallel sheets colored with the idempotents $1_B$ (blue) and $1_C$ (red), where $B=\{\lambda_1^{a_1}\}$ and $C=\{\lambda_2^{a_2}\}$, c.f. \cite[Example 4.27]{RW}. The decoration $\pi_A$ is split into factors $u_B\pi_B$ and $u_C\pi_C$ as computed above.
\begin{equation}
\phi\left (
\xy
(0,0)*{
\begin{tikzpicture} [scale=.5,fill opacity=0.2]
	\path[fill=green]  (-2,0.5) to (-2,4.5) to (2,4.5) to (2, 0.5) to (-2,0.5);
	\draw[very thick, directed=.55] (2,0.5) to (-2,0.5);
	\draw[very thick] (2,0.5) to (2,4.5);
	\draw[very thick] (-2,0.5) to (-2,4.5);
	\draw[green, dashed] (-2,2.5) to  (2,2.5);
	\draw[very thick, directed=.55] (2,4.5) to (-2,4.5);
	\node[opacity=1] at (0,2.75) {\small $\pi_A$};
\end{tikzpicture}
};
\endxy 
 \right )
  \quad = \quad
\sum_r \xy
(0,0)*{
\begin{tikzpicture} [scale=.5,fill opacity=0.2]
\path[fill=blue]  (2,5) to [out=90,in=270] (2,1) to [out=180,in=0] (0.25,1) to  (0.25,1.75) to [out=0, in=270] (1,2.5) to [out=90, in=0] (0.25,3.25) to [out=180, in=90] (-0.5,2.5) to [out=270,in=180] (0.25,1.75) to (0.25,1)  to (-2,1) to (-2,5) to (2,5);
\path[fill=red]  (2.5,4) to [out=90,in=270] (2.5,0) to [out=180,in=0] (0.25,0) to  (0.25,1.75) to [out=0, in=270] (1,2.5) to [out=90, in=0] (0.25,3.25) to [out=180, in=90] (-0.5,2.5) to [out=270,in=180] (0.25,1.75) to (0.25,0)  to (-1.5,0) to (-1.5,4) to (2.5,4);
\path[fill=green] (0.25,1.75) to [out=0, in=270] (1,2.5) to [out=90, in=0] (0.25,3.25) to [out=180, in=90] (-0.5,2.5) to [out=270,in=180] (0.25,1.75);
	\draw[very thick, directed=.65] (2,1) to [out=180,in=0] (-2,1);
	\draw[very thick, directed=.55] (2.5,0) to [out=180,in=0] (-1.5,0);
	\draw[very thick] (2,1) to (2,5);
	\draw[very thick] (2.5,0) to (2.5,4);
	\draw[very thick] (-1.5,0) to (-1.5,4);
	\draw[very thick] (-2,1) to (-2,5);	
	\draw[dashed, blue] (2,3) to [out=180,in=45] (1,2.5);
	\draw[red, dashed] (2.5,2) to [out=180,in=315] (1,2.5);
	\draw[green, dashed] (1,2.5) to (-.5,2.5);
	\draw[red, dashed] (-.5,2.5) to [out=225,in=0] (-1.5,2);
	\draw[dashed, blue] (-.5,2.5) to [out=135,in=0] (-2,3);
	\draw[very thick, red, directed=.65] (1,2.5) to [out=270,in=0]  (0.25,1.75) to [out=180, in = 270] (-0.5,2.5) to [out=90, in = 180] (0.25,3.25) to [out=0, in = 90] (1,2.5);
	\draw[very thick, directed=.65] (2,5) to [out=180,in=0] (-2,5);
	\draw[very thick, directed=.55] (2.5,4) to [out=180,in=0] (-1.5,4);
	\node[opacity=1] at (-1.55,4.55) {\small $g_r$};
	\node[opacity=1] at (-1.05,0.5) {\small $f_r$};
	\node[opacity=1] at (.25,2.75) {\small $\pi_A$};
\end{tikzpicture}
};
\endxy   
   \quad = \quad
\xy
(0,0)*{
\begin{tikzpicture} [scale=.5,fill opacity=0.2]
\path[fill=blue]  (2,5) to [out=90,in=270] (2,1) to (-2,1) to (-2,5) to (2,5);
\path[fill=red]  (2.5,4) to [out=90,in=270] (2.5,0)  to (-1.5,0) to (-1.5,4) to (2.5,4);
	\draw[very thick, directed=.65] (2,1) to [out=180,in=0] (-2,1);
	\draw[very thick, directed=.55] (2.5,0) to [out=180,in=0] (-1.5,0);
	\draw[very thick] (2,1) to (2,5);
	\draw[very thick] (2.5,0) to (2.5,4);
	\draw[very thick] (-1.5,0) to (-1.5,4);
	\draw[very thick] (-2,1) to (-2,5);	
	\draw[dashed, blue] (2,3) to (-2,3);
	\draw[red, dashed] (2.5,2) to (-1.5,2);
	\draw[very thick, directed=.65] (2,5) to [out=180,in=0] (-2,5);
	\draw[very thick, directed=.55] (2.5,4) to [out=180,in=0] (-1.5,4);
		\node[opacity=1] at (0,4.55) {\small $u_B \pi_B$};
	\node[opacity=1] at (0.5,0.5) {\small $u_C \pi_C$};
\end{tikzpicture}
};
\endxy.
\end{equation}

If $A$ contains more than two distinct roots, then an inductive argument allows to split $A$-facets into parallels of $A_j$-facets and to distribute $\pi_A$ into decorations by $\pi_{A_j}$ (up to units). This argument provides the necessary modification of \eqref{eqn-tautdecomp}:
\begin{equation}
\label{eqn-tautdecomp2}
\la \pi_A\ra \taut_{cut}(\LC{\cal{T}}{\Sigma}{A}) \cong  \bigotimes_{j=1}^l \la \pi_{A_j} \ra \taut_{\lambda_j}(\LC{\cal{T}}{\Sigma}{\lambda_j\in A}).
\end{equation}
Next, note that $\la \pi_{A_j} \ra = \la \overline{\pi}_{A_j}\ra$ is the unique maximal ideal in $H_{a_j}^{\Sigma_j}$, where $\Sigma_j = \{\lambda_j^{N_j}\}$. Under the isomorphism of $\hat{\foam{N}{}^{\lambda_j\in \Sigma}}$ with the undeformed foam 2-categories $\foam{N_j}{}^\bullet$, this ideal corresponds to the maximal ideal $\la \pi^{N_j}_{a_j}\ra $ in $H_{a_j}^{N_j}$, which is used in the definition of reduced, undeformed $a_j$-colored $\glnn{N_j}$ homology. Thus, the tensor factors on the right-hand side of \eqref{eqn-tautdecomp2} compute knot homologies isomorphic to the singly-graded versions of $\overline{\mathrm{KhR}}^{\glnn{N_j}}(\cal{K}^{a_j})$.
\end{proof}

\subsection{Deformation spectral sequences}
\label{sec-2-6}
In this section we study Wu's quantum filtration on deformed colored Khovanov--Rozansky homology and its induced deformation spectral sequence \cite{Wu4, Wu2}, and introduce versions for reduced colored Khovanov--Rozansky homology. We again restrict to the case of knots and leave the straightforward generalization to the case of links to the reader.

\begin{defi} A $\C$-vector space $V$ is called \emph{filtered} if it is equipped with a sequence of vector subspaces $\{\cal{F}^p V\}_{p\in \Z}$---the \emph{filtration}---satisfying
\begin{gather*} 
\cdots \subset\cal{F}^p V \subset \cal{F}^{p+1} V \subset  \cal{F}^{p+2} V \subset \cdots\quad ,\quad 
\bigcap_{p\in \Z} \cal{F}^p V = \emptyset\quad ,\quad \bigcup_{p\in \Z} \cal{F}^p V = V.
\end{gather*}
A $\C$-algebra $A$ is \emph{filtered} if its underlying $\C$-vector space is filtered and the multiplication satisfies
\[  \cal{F}^{p} A \cdot \cal{F}^{q} A \subset \cal{F}^{p+q} A \text{ for any } p,q\in \Z.\] 
A module $M$ over a graded, commutative $\C$-algebra $A$ is \emph{filtered} if its underlying $\C$-vector space is filtered and the $A$-action satisfies
\[ x\cdot \cal{F}^{p} M \subset \cal{F}^{p+\mathrm{deg} x} M \text{ for any } x\in A, p\in \Z.\] 
From a filtered $\C$-algebra or an $A$-module $X$ we can construct the \emph{associated graded} object
\[\mathrm{Gr}(X)=\bigoplus_{p\in \Z} {\cal{F}^p X}/{\cal{F}^{p-1} X}\]
which inherits the structure of a graded $\C$-algebra or a graded $A$-module respectively.
We use powers of $q$ to indicate shifts in filtration:
\[\cal{F}^{p} (q^d M):= \cal{F}^{p+d} M.\]
An $A$-module homomorphism $f\colon M\to N$ between filtered $A$-modules is \emph{filtered} if 
\[ f(\cal{F}^{p} M) \subset \cal{F}^{p} N.\]
A chain complex of filtered $A$-modules is \emph{filtered} if the differential is filtered. The homology of such a chain complex is again a filtered $A$-module and its associated graded module can be computed via a spectral sequence induced by the filtration on the chain complex, see e.g. \cite[Theorem 2.6]{McC}.
\end{defi}

\begin{exa}
\label{exa-filt} $H_k^\Sigma$ is a filtered $\C$-algebra and a filtered $\Sym(\X)$-module with respect to the filtration
\[\cal{F}^p H_k^\Sigma := \{x| x \text{ has a representative in } \Sym(\X) \text{ of degree} \leq p\} \]
defined via the presentation~\eqref{eqn_pres2}. In fact, $H_k^\Sigma \cong \End_{\foam{N}^\Sigma}(1_{\mathbf{o}^{cut}})$ is also a filtered module over the larger algebra $R:=\Sym(\X|\X_1|\cdots|\X_s)$ of polynomials separately symmetric in the $k$-element alphabet $\X$ and in $N$-element alphabets $\X_i$, one for each occurrence of $N$ in $\mathbf{o}^{cut}$. The action can be interpreted as placing additional decorations on (decorated) identity foams on $1_{\mathbf{o}^{cut}}$.
\end{exa}

\begin{lem}
\label{lem-filt}
The associated graded algebra of $H_k^\Sigma$  with respect to the filtration introduced in Example~\ref{exa-filt} is $\mathrm{Gr}(H_k^\Sigma)\cong H_k^N$.
\end{lem}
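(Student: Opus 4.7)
My proof plan is to combine a direct comparison of the defining presentations with a dimension count to squeeze an a priori surjection into an isomorphism. The key observation is that the presentation \eqref{eqn_pres2} of $H_k^\Sigma$ has generators whose top-degree parts (in $\X$) are exactly the defining relations of $H_k^N$ in \eqref{eqn_pres1}.

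More precisely, I would first compute that the generators satisfy
\[
h_j(\X - \Sigma) \;=\; \sum_{i+l=j} (-1)^l e_l(\Sigma)\, h_i(\X) \;=\; h_j(\X) + \text{(terms of degree $< j$ in $\X$)},
\]
since each $e_l(\Sigma)$ is a scalar. Thus the generators $h_{N-k+i}(\X-\Sigma)$ of the defining ideal $I_\Sigma \subset \Sym(\X)$ have top-degree parts $h_{N-k+i}(\X)$. Let $J \subseteq \Sym(\X)$ denote the ideal generated by the top-degree parts of all elements of $I_\Sigma$, so that by construction $\mathrm{Gr}(H_k^\Sigma) \cong \Sym(\X)/J$ as graded algebras. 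Since $h_{N-k+i}(\X) \in J$ for all $i > 0$, there is a natural surjection of graded $\C$-algebras
\[
H_k^N \;=\; \Sym(\X)/\langle h_{N-k+i}(\X) \mid i>0\rangle \;\twoheadrightarrow\; \Sym(\X)/J \;\cong\; \mathrm{Gr}(H_k^\Sigma).
\]

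To promote this surjection to an isomorphism, I would invoke the dimension count from Remark~\ref{rem-defunknotprop}: both $H_k^N$ and $H_k^\Sigma$ have $\C$-dimension $\binom{N}{k}$, with bases given by Schur polynomials $\pi_\lambda$ for $\lambda$ fitting in a $k\times(N-k)$ box. Since the filtration on $H_k^\Sigma$ is bounded (below by $0$ and above by $k(N-k)$) and exhaustive, one has $\dim_\C \mathrm{Gr}(H_k^\Sigma) = \dim_\C H_k^\Sigma = \binom{N}{k} = \dim_\C H_k^N$. A surjection of finite-dimensional $\C$-vector spaces of equal dimension is an isomorphism, giving $\mathrm{Gr}(H_k^\Sigma) \cong H_k^N$ as graded $\C$-algebras.

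The same argument gives the asserted $\Sym(\X)$-module (and more generally $R$-module, in the notation of Example~\ref{exa-filt}) compatibility, since the surjection is induced from the quotient map out of $\Sym(\X)$ and the extra alphabet generators act through $\Sym(\X)$ in both source and target. The only substantive point is the dimension count, which is already available in the literature; everything else is a direct comparison of generators degree by degree, so I expect no real obstacle.
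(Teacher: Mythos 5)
Your proof is correct and rests on the same two ingredients as the paper's: the observation that the top-degree part of $h_j(\X-\Sigma)$ is $h_j(\X)$, and the common Schur-polynomial basis of Remark~\ref{rem-defunknotprop} giving $\dim_\C H_k^N = \dim_\C H_k^\Sigma = \binom{N}{k}$. The paper computes the graded pieces $\mathrm{Gr}(\Sym(\X)/I)_p$ as subquotients of $\Sym(\X)$ and argues that the relevant denominators for $I_0$ and $I_\Sigma$ coincide, whereas you package the same content as a surjection $H_k^N \twoheadrightarrow \mathrm{Gr}(H_k^\Sigma)$ induced by the leading forms of the generators, closed by a dimension count --- a reorganization whose merit is that it makes explicit, and then disposes of, the usual gap between the ideal of all initial forms of $I_\Sigma$ and the ideal generated by the initial forms of its chosen generators.
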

\begin{proof}
Consider the algebra $\Sym(\X)$ and let $I_0:=\la h_{N-k+i}(\X)|i>0\ra$ and $I_\Sigma:=\la h_{N-k+i}(\X-\Sigma)|i>0\ra$ be the ideals used in the presentations \eqref{eqn_pres1} and \eqref{eqn_pres2} of $H_k^N$ and $H_k^\Sigma$ respectively. $I_0$ is a homogeneous ideal, the quotient algebra $H_k^N$ is graded, and so it is canonically isomorphic to its own associated graded $H_k^N\cong \mathrm{Gr}(H_k^N)$. We shall show $\mathrm{Gr}(H_k^N) \cong \mathrm{Gr}(H_k^\Sigma)$. 

Let $I$ be any ideal of $\Sym(\X)$, then an elementary computation identifies the degree $p$ part of the associated graded of $\Sym(\X)/I$ as:
\begin{equation} 
\label{eqn-gradedpiece}
\mathrm{Gr}(\Sym(\X)/I)_p \cong \frac{\cal{F}^{p}\Sym(\X)}{\cal{F}^{p-1}\Sym(\X)+ \cal{F}^{p}\Sym(\X)\cap I}
\end{equation}
Now it is sufficient to note that in the case of the ideals $I_0$ and $I_\Sigma$, the corresponding denominators are equal. To see this, recall that $\Sym(\X)$ has two triangularly equivalent bases $\{\pi_\lambda(\X)=\mathrm{det}(h_{\lambda_i-i+j}(\X))\}$ and $\pi_\lambda(\X-\Sigma)=\mathrm{det}(h_{\lambda_i-i+j}(\X-\Sigma))$ of Schur polynomials, which are indexed by Young diagrams $\lambda=(\lambda_1\geq \lambda_2 \geq \dots)$ with at most $k$ rows. Moreover, by \cite[Theorem 2.10]{Wu2}, we have that $I_0$ and $I_\Sigma$ are spanned by those $\pi_\lambda(\X)$ and $\pi_\lambda(\X-\Sigma)$ respectively, whose Young diagrams $\lambda$ do not fit inside the $k\times (N-k)$ box. Since the top degree part of $\pi_\lambda(\X-\Sigma)$ is precisely $\pi_\lambda(\X)$, it follows that $\cal{F}^{p-1}\Sym(\X)+ \cal{F}^{p}\Sym(\X)\cap I_0 =\cal{F}^{p-1}\Sym(\X)+ \cal{F}^{p}\Sym(\X)\cap I_\Sigma$. 

Then \eqref{eqn-gradedpiece} shows that the graded parts of $\mathrm{Gr}(H_k^N)$ and $\mathrm{Gr}(H_k^\Sigma)$ are canonically isomorphic and that the algebra structures agree since they are both inherited from $\Sym(\X)$.
\end{proof}

The existence of deformation spectral sequences for colored Khovanov--Rozansky homology has been established via Theorems~\ref{thm_filt} and~\ref{thm_ss}, which are due to Wu. Their basis is the observation that the chain complexes computing colored Khovanov--Rozansky homology have chain groups that are graded, but the differentials are only filtered. The filtration thus induced by the grading is known as the \textit{quantum filtration}.

\begin{thm}\label{thm_filt} \cite[Theorem 9.3]{Wu2} Let $\cal{K}$ be a labelled knot diagram. Then the filtered homotopy type of $C^{\Sigma}(\cal{K})$ with respect to the quantum filtration is a knot invariant. 
\end{thm}

\begin{thm}\label{thm_ss} \cite[Theorem 9.9]{Wu2} Let $\cal{K}$ be a labelled knot and $\Sigma$ an $N$-element multiset of complex numbers. Then the \emph{deformation spectral sequence} induced by the quantum filtration on $C^{\Sigma}(\cal{K})$ satisfies:
\begin{itemize}
\item the $E_0$ page is isomorphic to the undeformed bi-graded Khovanov--Rozansky complex $C^{\glnn{N}}(\cal{K})$
\item the $E_1$ page is isomorphic to the undeformed bi-graded Khovanov--Rozansky homology $\LH{\cal{K}}{\glnn{N}}$
\item the $E_\infty$ page is isomorphic as a bi-graded $\C$-vector space to the associated graded of the $\Sigma$-deformed Khovanov--Rozansky homology: 
\[E_\infty \cong \mathrm{Gr}\left( \LH{\cal{K}}{\Sigma} \right )\]
and as a singly-graded $\C$-vector space, it is isomorphic to $\LH{\cal{K}}{\Sigma}$ itself.
\end{itemize} 
\end{thm}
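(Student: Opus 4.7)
The plan is to apply the standard spectral sequence associated with a filtered chain complex, whose existence for $C^\Sigma(\cal{K})$ is provided by Theorem~\ref{thm_filt}. The underlying chain groups are finite-dimensional (the cube of resolutions is finite, and Hom spaces in $\foam{N}^\Sigma$ between fixed webs are finite-dimensional), so the filtration is bounded and the spectral sequence converges. General machinery then yields $E_\infty \cong \mathrm{Gr}(\LH{\cal{K}}{\Sigma})$ as a bi-graded $\C$-vector space, and after forgetting the filtration degree this is isomorphic to $\LH{\cal{K}}{\Sigma}$ itself as a singly-graded space. This establishes the third bullet for free.

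For the first bullet, I would identify the $E_0$ page with the undeformed complex $C^{\slN}(\cal{K})$. Both complexes arise by applying a representable functor to the same cube of resolutions, differing only in the ambient foam $2$-category: $\foam{N}^\Sigma$, cut out by the relation \eqref{eqn_dot2}, versus $\grFoam{N}^\bullet$, cut out by its homogeneous counterpart \eqref{eqn_dot1}. I would prove the following generalization of Lemma~\ref{lem-filt}: for any pair of webs $W,W'$ with boundary $\mathbf{o}^{top}$, the Hom space $\Hom_{\foam{N}^\Sigma}(W,W')$ carries a natural quantum filtration, and its associated graded vector space is canonically isomorphic, via the symbol map on the top-degree parts of defining relations, to the graded Hom space $\Hom_{\grFoam{N}^\bullet}(W,W')$. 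The argument follows the pattern of Lemma~\ref{lem-filt}: write each Hom space as a quotient of a space of decorated foams by the ideal generated by \eqref{eqn_dot2} together with the common homogeneous foam relations, and observe that the top-degree parts of the generators of this ideal are precisely the generators of the ideal in the undeformed setting. Because the cube-of-resolutions differentials are built from the same local foam pieces in both settings and are grading-preserving in the undeformed case, they are automatically filtered in the deformed case, and the induced map on associated graded pieces coincides with the undeformed differential. This gives $(E_0,d_0) \cong C^{\slN}(\cal{K})$ as bi-graded chain complexes.

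The second bullet is then immediate: by definition of the spectral sequence of a filtered complex, $E_1 = H_*(E_0,d_0) = H_*(C^{\slN}(\cal{K})) = \LH{\cal{K}}{\slN}$, with the expected bi-grading inherited from $E_0$.

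The main obstacle is the compatibility statement in the second paragraph, namely extending Lemma~\ref{lem-filt} from endomorphisms of a single identity web to arbitrary Hom spaces in the cube of resolutions, which requires verifying that every defining relation of $\foam{N}^\Sigma$ — not just the one governing $H_k^\Sigma$ — is a filtered perturbation of its homogeneous counterpart in $\grFoam{N}^\bullet$, with matching top-degree term. Once this local comparison is in place, the global identification of $E_0$ with $C^{\slN}(\cal{K})$ and the convergence statements for $E_\infty$ follow by standard homological algebra.
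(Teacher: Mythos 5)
Your overall architecture is sound and, for the second and third bullets, coincides with the paper's: a bounded filtration on a complex of finite-dimensional spaces gives a convergent spectral sequence with $E_\infty\cong\mathrm{Gr}$ of the homology, and the $E_1$ statement is formal once $E_0$ is identified. Where you diverge from the paper, and where the gap lies, is in the identification of $E_0$ with $C^{\slN}(\cal{K})$. You propose to compare, for every pair of webs in the cube of resolutions, the Hom space in $\foam{N}^\Sigma$ with the graded Hom space in $\grFoam{N}^\bullet$ by observing that the top-degree parts of the generators of the deformed relation ideal are exactly the generators of the undeformed one. That observation only yields a \emph{surjection} from the undeformed graded Hom space onto the associated graded of the deformed one: for a filtered quotient $R/I$ one has $\mathrm{Gr}(R/I)\cong\mathrm{Gr}(R)/\mathrm{in}(I)$, and the initial ideal $\mathrm{in}(I)$ contains, but need not equal, the ideal generated by the initial forms of a chosen generating set of $I$. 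Equality is a Gr\"obner-basis-type statement, and it is precisely the nontrivial content of Lemma~\ref{lem-filt}, where it is supplied by the fact that $H_k^N$ and $H_k^\Sigma$ share the basis of Schur polynomials $\pi_\lambda$ with $\lambda$ in a $k\times(N-k)$ box (Remark~\ref{rem-defunknotprop}). For arbitrary Hom spaces between webs in the cube you would need the analogous dimension count --- that deformed and undeformed Hom spaces between the same webs have equal graded dimensions --- which is true but is not established by matching generators.

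The paper sidesteps this general statement by a preliminary reduction: working with a $(1,1)$-tangle presentation, it first replaces the cube of resolutions, up to homotopy equivalence compatible with the grading, by a complex all of whose chain objects are $q$-shifted identity webs $1_{\mathbf{o}^{cut}}$ and whose differentials are decorated identity foams (as in the proof of Lemma~\ref{lem_tangisknot}, via web evaluation). After applying the representable functor, the only algebra comparison needed is then $\mathrm{Gr}(H_k^\Sigma)\cong H_k^N$, i.e.\ Lemma~\ref{lem-filt} itself, together with the observation that passing to $\foam{N}^\Sigma$ can only lower the degree of the decorations appearing in the differentials, so the top-degree part of the deformed differential recovers the undeformed one. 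To repair your more direct route you must either prove the graded dimension equality for general Hom spaces (for instance by reducing to the identity-web case anyway, or by invoking the structural results on deformed foam categories in \cite{RW}), or restructure the argument along the paper's lines.
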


\begin{conv}
Given a spectral sequence as in Theorem~\ref{thm_ss}, we abuse notation and say there is a spectral sequence:
$\mathrm{KhR}^{\glnn{N}}(\cal{K}) \quad \rightsquigarrow \quad \mathrm{KhR}^\Sigma(\cal{K})$.
\end{conv}

We will now give an explicit description of the quantum filtration in the framework of foams and a sketch proof of Theorem~\ref{thm_ss}.

\begin{proof}[Sketch proof of Theorem~\ref{thm_ss}]
We work with a 2-ended tangle diagram $\cal{T}$ whose closure is $\cal{K}$. This has the advantage of allowing an analogous  proof of Theorem~\ref{thm_reducedss}.

The original cube of resolutions complex $\LC{\cal{T}\hspace{0.5pt}}{}{}$ can be regarded as living in $\grFoam{N}$, i.e. it is $q$-graded with grading preserving differentials. Analogously as in the proof of Lemma~\ref{lem_tangisknot}, this complex can be simplified to a (graded) homotopy equivalent complex, which consists only of identity webs $1_{\mathbf{o}^{cut}}$, and decorated, grading preserving identity foams between them\footnote{The main difference here is that $N$-labelled foam facets can carry decorations, which can be regarded as ``equivariant'' parameters in the sense of \cite{Kra} and \cite{Wu2}.}---by abuse of notation we also denote this by $\LC{\cal{T}\hspace{0.5pt}}{}{}$. Next we consider the corresponding complex in $\foam{N}$, but we remember the $q$-grading shifts of the objects in the complex. Proceeding to the quotient $\LC{\cal{T}\hspace{0.5pt}}{\Sigma}{}$ in $\foam{N}^\Sigma$ has the effect of equating decorations by high-degree symmetric polynomials on $k$-facets with decorations by symmetric polynomials of lower degree and specializing decorations on $N$-facets to multiplication by complex scalars. Thus, the formal degree of any foam $F\colon q^{d_1} 1_{\mathbf{o}^{cut}} \to q^{d_2} 1_{\mathbf{o}^{cut}}$ appearing in a differential in the new complex $\LC{\cal{T}\hspace{0.5pt}}{\Sigma}{}$ is actually less than or equal to $d_2-d_1$. When applying the functor $\taut_{cut}$ to $\LC{\cal{T}\hspace{0.5pt}}{\Sigma}{}$ every $q^d 1_{\mathbf{o}^{cut}}$ in the complex will contribute a copy of $q^d H_k^\Sigma$ to the chain groups of $C^{\Sigma}(\cal{T})$ according to Lemma~\ref{lem-endos}. Now, however, we consider this copy as being equipped with the filtration from Example~\ref{exa-filt} shifted by $d$. This defines filtrations on all chain groups of $C^{\Sigma}(\cal{T})$ and we will check that the differential is filtered too. An element of $\cal{F}^p \taut_{cut}(q^{d_1} 1_{\mathbf{o}^{cut}})$ is given by an identity foam decorated by a polynomial of degree $\leq p+d_1$. By the above discussion, applying the differential $\taut_{cut}(F)$ adds a further decoration by a symmetric polynomial of degree $\leq d_2-d_1$. The resulting total decoration on the identity foam is of degree $\leq p + d_2$, so it lies in $\cal{F}^p \taut_{cut}(q^{d_2} 1_{\mathbf{o}^{cut}})$.

Now we know that $C^{\Sigma}(\cal{T})$ is endowed with a filtration, and since it is clearly bounded, there is an induced spectral sequence which converges to $\mathrm{Gr}\left( \LH{\cal{K}}{\Sigma} \right )$. Finally, we argue that $E_0:= \mathrm{Gr}\left( C^{\Sigma}(\cal{T})\right )$ is isomorphic to $C^{\glnn{N}}(\cal{T})$. For the chain groups, this follows from the fact $\mathrm{Gr}(H_k^\Sigma)\cong H_k^N$, which was observed in Lemma~\ref{lem-filt}. Moreover, it is clear from the above discussion that taking the grading preserving (top degree) part of the differential in $C^{\Sigma}(\cal{T})$ recovers the differential in $C^{\glnn{N}}(\cal{T})$. The statement about the $E_1$ page follows immediately.
\end{proof}

\begin{rem} Theorem~\ref{thm_ss} also holds for links $\cal{L}$, although in this case an analogous proof using closed diagrams (and endomorphism categories of $\mathbf{o}^{top}$ instead of $\mathbf{o}^{cut}$) would appear to be more natural.
\end{rem}

We will need deformation spectral sequences for reduced colored Khovanov--Rozansky homology.

\begin{thm}\label{thm_reducedss} Let $\Sigma$ be an $N$-element multiset of complex numbers and $A$ a $k$-element multisubset of $\Sigma$ . Then there is a \emph{reduced deformation spectral sequence} satisfying the following conditions:
\begin{itemize}
\item the $E_0$ page is isomorphic to the reduced bi-graded Khovanov--Rozansky complex $\overline{C(\cal{K}^k)}^{\glnn{N}}$
\item the $E_1$ page is isomorphic to the reduced bi-graded Khovanov--Rozansky homology $\overline{\mathrm{KhR}}^{\glnn{N}}(\cal{K}^k) $
\item the $E_\infty$ page is isomorphic to the associated graded of the $A$-reduced $\Sigma$-deformed Khovanov--Rozansky homology, which by Theorem~\ref{thm_reduceddecomp} decomposes into a tensor product of undeformed reduced homologies of smaller color and rank:  
\[E_\infty \cong
\mathrm{Gr}\left( \overline{\mathrm{KhR}}^\Sigma_A(\cal{K}^k)\right ) \overset{\text{Theorem }\ref{thm_reduceddecomp}}{\cong}
\mathrm{Gr}\left(  \bigotimes_{j=1}^l \overline{\mathrm{KhR}}^{\glnn{N_j}}(\cal{K}^{a_j})\right ) 
 \]
 if $\Sigma= \{\lambda_1^{N_1},\dots, \lambda_l^{N_l}\}$ and $A= \{\lambda_1^{a_1},\dots, \lambda_l^{a_l}\}$. As a singly-graded $\C$-vector space, the $E_\infty$ page is isomorphic to $\overline{\mathrm{KhR}}^\Sigma_A(\cal{K}^k)\cong \bigotimes_{j=1}^l \overline{\mathrm{KhR}}^{\glnn{N_j}}(\cal{K}^{a_j})$ itself.
\end{itemize} 
\end{thm}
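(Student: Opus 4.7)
The strategy is to run the argument of the sketch proof of Theorem~\ref{thm_ss} one level deeper, by restricting Wu's quantum filtration on $C^\Sigma(\cal{T})$ to the reduced subcomplex $\overline{C}_A^\Sigma(\cal{T})$, where $\cal{T}$ is a $k$-labelled $(1,1)$-tangle with closure $\cal{K}^k$. Concretely, I would first simplify $\LC{\cal{T}}{\Sigma}{}$ through filtered homotopies to a homotopy-equivalent complex whose $1$-morphisms are shifted copies of the identity web $1_{\mathbf{o}^{cut}}$ and whose differentials are decorated identity foams. Applying $\taut_{cut}$ then gives a complex whose chain groups are shifted copies of $H_k^\Sigma$ (filtered as in Example~\ref{exa-filt}) and whose differentials are multiplication by symmetric polynomials of bounded polynomial degree, so the quantum filtration is respected. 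Since the $H_k^\Sigma$-action on $\taut_{cut}(\LC{\cal{T}}{\Sigma}{})$ by boundary decoration commutes with all differentials, the one-dimensional ideal $\la \pi_A\ra \subset H_k^\Sigma$ cuts out a filtered subcomplex $\overline{C}_A^\Sigma(\cal{T})\subset C^\Sigma(\cal{T})$, which inherits a bounded filtration.

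The key geometric input is Lemma~\ref{lem_piA}: the symmetric polynomial $\pi_A$ has top-degree homogeneous component $\pm\pi^N_k = \pm e_k(\X)^{N-k}$, of polynomial degree $k(N-k)$. Hence on each simplified chain group the one-dimensional subspace $\la\pi_A\ra\subset H_k^\Sigma$ is pure of filtration degree $k(N-k)$, and its image in $\mathrm{Gr}(H_k^\Sigma)\cong H_k^N$ (Lemma~\ref{lem-filt}) is exactly the one-dimensional ideal $\la \pi^N_k \ra$ used to define $\overline{C}^{\slN}(\cal{T})$. Differential by differential the associated graded therefore intertwines the reductions, yielding an isomorphism of bi-graded complexes
\begin{equation*}
\mathrm{Gr}\bigl(\overline{C}_A^\Sigma(\cal{T})\bigr)\;\cong\;\overline{C}^{\slN}(\cal{T}).
\end{equation*}
This identifies $E_0\cong \overline{C}^{\slN}(\cal{T})$, and taking homology gives $E_1\cong \overline{\mathrm{KhR}}^{\slN}(\cal{K}^k)$.

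Boundedness of the filtration then guarantees that the spectral sequence converges to $\mathrm{Gr}(H_*(\overline{C}_A^\Sigma(\cal{T}))) = \mathrm{Gr}(\overline{\mathrm{KhR}}^\Sigma_A(\cal{K}^k))$, and the final identification with $\mathrm{Gr}\bigl(\bigotimes_{j=1}^l\overline{\mathrm{KhR}}^{\slnn{N_j}}(\cal{K}^{a_j})\bigr)$ is immediate from Theorem~\ref{thm_reduceddecomp}. The main technical point, as in the unreduced case, is to verify that the simplification of $\LC{\cal{T}}{\Sigma}{}$ to an identity-web complex can be performed through genuinely filtered (rather than merely ungraded) homotopies; this is inherited from the proof of Theorem~\ref{thm_ss}, so the only additional check is that these homotopies respect the boundary $H_k^\Sigma$-module structure used to form $\la\pi_A\ra\,\taut_{cut}(\LC{\cal{T}}{\Sigma}{})$. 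This is automatic because the simplifying local moves take place in the interior, away from the boundary $k$-strand on which $\pi_A$ is supported.
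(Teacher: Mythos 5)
Your proposal is correct and follows essentially the same route as the paper: restrict Wu's quantum filtration to the subcomplex $\la\pi_A\ra\,\taut_{cut}(\LC{\cal{T}}{\Sigma}{})$, note the differentials are unchanged, and identify the associated graded of the elementary chain-group building block via $\mathrm{Gr}(\la\pi_A\ra H_k^\Sigma)\cong\la\pi_k^N\ra H_k^N$, which is exactly the combination of Example~\ref{exa-filt}, Lemma~\ref{lem-filt} and Lemma~\ref{lem_piA} the paper invokes. Your extra observation that $\pi_A$ is pure of filtration degree $k(N-k)$ with leading term $\pm\pi_k^N$ is just an explicit unpacking of that middle isomorphism, not a different argument.
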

\begin{proof} As before, we work with a 2-ended tangle $\cal{T}$ instead of $\cal{K}$. It is clear that $\overline{C}_A^\Sigma(\cal{T}):=\la\pi_{A}\ra \taut_{cut}{\LC{\cal{T}\hspace{0.5pt}}{\Sigma}{}}$ inherits a quantum filtration from $\taut_{cut}{\LC{\cal{T}\hspace{0.5pt}}{\Sigma}{}}$ and the induced spectral sequence computes $\mathrm{Gr}\left(\overline{\mathrm{KhR}}^\Sigma_A(\cal{K}^k) \right )$. Moreover, the differentials in the complexes $\overline{C}_A^\Sigma(\cal{T})$ and $C^\Sigma(\cal{T}) $ are given by composition by the same foams. Thus, it only remains to show that the associated graded of a chain group of $\overline{C}_A^\Sigma(\cal{T})$ is isomorphic to the corresponding chain group of $\overline{C(\cal{K}^k)}^{\glnn{N}}$. Again, it suffices to check this for the elementary building blocks of the chain groups:
\[\mathrm{Gr}\left ( \la\pi_{A}\ra \taut_{cut}({1_{\mathbf{o}^{cut}}}) \right) \cong \mathrm{Gr}\left ( \la\pi_{A}\ra H_k^\Sigma \right) \cong \la\pi_{k}^N\ra H_k^N \cong \la\pi_{k}^N\ra \grtaut(1_{\mathbf{o}^{cut}}) \]
The middle isomorphism follows from Example~\ref{exa-filt}, Lemma~\ref{lem-filt} and Lemma~\ref{lem_piA}.
\end{proof}

\begin{cor} \label{cor_colss} There exist spectral sequences
\begin{align}
\overline{\mathrm{KhR}}^{\glnn{k N}}(\cal{K}^k)  \quad &\rightsquigarrow \quad  \left( \overline{\mathrm{KhR}}^{\glnn{N}}(\cal{K}^1)\right)^{\otimes k} \quad \text{and}
\\ 
\overline{\mathrm{KhR}}^{\glnn{2 N}}(\cal{K}^k)  \quad &\rightsquigarrow \quad \overline{\mathrm{KhR}}^{\glnn{N}}(\cal{K}^{k-1}) \,\otimes \, \overline{\mathrm{KhR}}^{\glnn{N}}(\cal{K}^{1}).
\end{align}
\end{cor}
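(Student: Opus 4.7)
The plan is to obtain both spectral sequences as direct applications of Theorem~\ref{thm_reducedss} by making appropriate choices of the deformation multiset $\Sigma$ and the reducing multisubset $A$, and then reading off the $E_1$ and $E_\infty$ pages.

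For the first spectral sequence, I would take $\Sigma = \{\lambda_1^N, \lambda_2^N, \dots, \lambda_k^N\}$ to be a multiset of size $kN$ consisting of $k$ pairwise distinct complex numbers each with multiplicity $N$, and let $A = \{\lambda_1, \lambda_2, \dots, \lambda_k\}$ be the $k$-element multisubset where each $\lambda_j$ appears with multiplicity $a_j = 1$. Theorem~\ref{thm_reducedss} then produces a spectral sequence whose $E_1$ page is $\overline{\mathrm{KhR}}^{\slnn{kN}}(\cal{K}^k)$ and whose $E_\infty$ page, as a singly-graded vector space, is
\[
\overline{\mathrm{KhR}}^\Sigma_A(\cal{K}^k) \;\cong\; \bigotimes_{j=1}^{k} \overline{\mathrm{KhR}}^{\slnn{N}}(\cal{K}^{1}),
\]
by the decomposition in Theorem~\ref{thm_reduceddecomp}.

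For the second spectral sequence, I would instead take $\Sigma = \{\lambda_1^N, \lambda_2^N\}$ of size $2N$ with $\lambda_1 \neq \lambda_2$, together with the $k$-element multisubset $A = \{\lambda_1^{k-1}, \lambda_2^{1}\}$ (which is well-defined provided $k-1 \leq N$ and $1 \leq N$, the regime in which the statement is meaningful). Applying Theorem~\ref{thm_reducedss} with this data yields a spectral sequence from $\overline{\mathrm{KhR}}^{\slnn{2N}}(\cal{K}^k)$ converging to $\overline{\mathrm{KhR}}^\Sigma_A(\cal{K}^k)$, and Theorem~\ref{thm_reduceddecomp} identifies the latter with $\overline{\mathrm{KhR}}^{\slnn{N}}(\cal{K}^{k-1}) \otimes \overline{\mathrm{KhR}}^{\slnn{N}}(\cal{K}^{1})$.

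There is essentially no obstacle here beyond checking that the chosen $A$ is an admissible multisubset of $\Sigma$ in each case (so that Lemma~\ref{lem_piA} produces a genuine $1$-dimensional ideal $\la \pi_A\ra$) and verifying the bookkeeping of multiplicities in the decomposition of Theorem~\ref{thm_reduceddecomp}. Since the whole argument consists of substituting data into already-established machinery, I expect no new difficulty; the only subtlety worth flagging is the conventional abuse of notation ``$E_1 \rightsquigarrow E_\infty$'' used to state the corollary, which here hides the fact that convergence is only as singly-graded vector spaces (the filtered $q$-grading on the $E_\infty$ page being the associated graded one, as in Theorem~\ref{thm_reducedss}).
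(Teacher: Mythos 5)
Your proposal is correct and is essentially identical to the paper's proof: both spectral sequences are obtained by substituting into Theorem~\ref{thm_reducedss} the choices $l=k$, $N_1=\cdots=N_l=N$, $a_1=\cdots=a_l=1$ for the first, and $l=2$, $N_1=N_2=N$, $a_1=k-1$, $a_2=1$ for the second, exactly as you describe. The admissibility remark ($k-1\leq N$) and the caveat about singly-graded convergence are fine but not needed beyond what the paper already records.
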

\begin{proof}
By Theorem~\ref{thm_reducedss} there is a spectral sequence from $\overline{\mathrm{KhR}}^{\glnn{N}}(\cal{K}^k)$ to $\overline{\mathrm{KhR}}^\Sigma_A(\cal{K}^k)$ for any choice of $k$-element multisubset $A$ of an $N$-element multiset $\Sigma$ of complex deformation parameters. For the first type of spectral sequence we choose $\Sigma$ and $A$ such that $l=k$, $N_1=\dots=N_l=N$ and $a_1 = \dots = a_l=1$ and for the second type we choose $l=2$, $N_1=N_2=N$ and $a_1=k-1$, $a_2=1$.
\end{proof}

\begin{cor} \textbf{(Rank-reducing spectral sequences)}  \label{cor_rankdiff}
For $N\geq M\geq k$ there is a spectral sequence
\begin{align}
\overline{\mathrm{KhR}}^{\glnn{N}}(\cal{K}^k)  \quad &\rightsquigarrow \quad \overline{\mathrm{KhR}}^{\glnn{M}}(\cal{K}^k).
\end{align}
\end{cor}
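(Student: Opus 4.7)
The plan is to apply Theorem~\ref{thm_reducedss} to a carefully chosen pair $(\Sigma, A)$ so that the tensor product description of the $E_\infty$ page collapses to a single nontrivial factor $\overline{\mathrm{KhR}}^{\slnn{M}}(\cal{K}^k)$. Since $\overline{\mathrm{KhR}}^{\slnn{M}}(\cal{K}^k)$ only makes sense when $k \leq M$, which is implicit in the statement, I will assume $k \leq M \leq N$ throughout.

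First I would pick two distinct complex numbers $\lambda_1, \lambda_2$ and set $\Sigma := \{\lambda_1^M, \lambda_2^{N-M}\}$, an $N$-element multiset with $l=2$ distinct values of multiplicities $N_1 = M$ and $N_2 = N-M$. Then I choose the $k$-element multisubset $A := \{\lambda_1^k, \lambda_2^0\} \subset \Sigma$, which is valid since $k \leq M = N_1$. Applying Theorem~\ref{thm_reducedss} to this data yields a spectral sequence whose $E_1$ page is $\overline{\mathrm{KhR}}^{\slnn{N}}(\cal{K}^k)$ and whose $E_\infty$ page, as a singly-graded $\C$-vector space, is isomorphic to
\[
\overline{\mathrm{KhR}}^\Sigma_A(\cal{K}^k) \;\cong\; \overline{\mathrm{KhR}}^{\slnn{M}}(\cal{K}^k) \otimes \overline{\mathrm{KhR}}^{\slnn{N-M}}(\cal{K}^0).
\]

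The next step is to check that the second tensor factor is trivially $\C$. A $0$-labelled component is an empty strand: the corresponding boundary label in $\mathbf{o}^{cut}$ is $0$, and $\End_{\foam{N-M}^\bullet}(1_{(0)}) \cong \C$ by Lemma~\ref{lem-endos}, so $H^{N-M}_0 \cong \C$, and the unique maximal ideal $\langle \pi^{N-M}_0 \rangle$ is all of $\C$. Thus the reduced undeformed chain complex reduces to a single copy of $\C$, giving $\overline{\mathrm{KhR}}^{\slnn{N-M}}(\cal{K}^0) \cong \C$. Combining these identifications yields the desired spectral sequence $\overline{\mathrm{KhR}}^{\slnn{N}}(\cal{K}^k) \rightsquigarrow \overline{\mathrm{KhR}}^{\slnn{M}}(\cal{K}^k)$.

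I do not expect any real obstacle here: once Theorem~\ref{thm_reducedss} is in hand, the proof is a matter of selecting the right deformation parameters and verifying the triviality of the $\cal{K}^0$ factor, precisely in the spirit of the spectral sequences constructed in Corollary~\ref{cor_colss}. The mildly subtle point worth spelling out is that the homological grading is preserved throughout: the deformation spectral sequence of Theorem~\ref{thm_reducedss} arises from a quantum filtration (see the sketch proof of Theorem~\ref{thm_ss} and the analogous argument for Theorem~\ref{thm_reducedss}), and the filtration only interacts with the $q$-grading, leaving the homological grading untouched on every page.
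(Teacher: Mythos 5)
Your proof is correct and is essentially identical to the paper's: the paper also applies Theorem~\ref{thm_reducedss} with $l=2$, $N_1=M$, $N_2=N-M$, $a_1=k$, $a_2=0$ and notes that $\overline{\mathrm{KhR}}^{\slnn{N-M}}(\cal{K}^{0})\cong \C$. Your additional remarks justifying the triviality of the $\cal{K}^0$ factor and the preservation of the homological grading are accurate elaborations of points the paper leaves implicit.
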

\begin{proof}
Choose $l=2$ and $N_1=M$, $N_2=N-M$ and $a_1=k$, $a_2=0$ in Theorem~\ref{thm_reducedss} and note that $\overline{\mathrm{KhR}}^{\glnn{N-M}}(\cal{K}^{0})\cong \C$.
\end{proof}

\begin{cor} \label{cor_colss2}
 For $k\geq h$ there is a spectral sequence
\begin{align}
\overline{\mathrm{KhR}}^{\glnn{N+k-h}}(\cal{K}^k)  \quad &\rightsquigarrow \quad  \overline{\mathrm{KhR}}^{\glnn{N}}(\cal{K}^h).
\end{align}
\end{cor}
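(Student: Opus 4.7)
The plan is to apply Theorem~\ref{thm_reducedss} with $l = 2$, $N_1 = N$, $N_2 = k-h$, $a_1 = h$, $a_2 = k-h$. These parameters satisfy $N_1 + N_2 = N + k - h$, $a_1 + a_2 = k$, and $a_j \leq N_j$ for $j=1,2$, with the inequality $h \leq N$ ensuring that the target homology is defined. The resulting spectral sequence reads
\[
\overline{\mathrm{KhR}}^{\slnn{N+k-h}}(\cal{K}^k) \;\rightsquigarrow\; \overline{\mathrm{KhR}}^{\slnn{N}}(\cal{K}^{h}) \otimes \overline{\mathrm{KhR}}^{\slnn{k-h}}(\cal{K}^{k-h}),
\]
and to conclude I need to identify the second tensor factor with $\C$, in complete analogy with the way Corollary~\ref{cor_rankdiff} uses the triviality $\overline{\mathrm{KhR}}^{\slnn{N-M}}(\cal{K}^{0}) \cong \C$.

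The key remaining input is therefore the ``top-color'' triviality $\overline{\mathrm{KhR}}^{\slnn{M}}(\cal{K}^{M}) \cong \C$ for every knot $\cal{K}$ and every $M \geq 0$. Since $\pi_M^M = e_M(\X)^{M-M} = 1$, the reduction ideal $\la \pi_M^M \ra$ equals all of $H_M^M \cong \C$, so the reduced complex coincides with the unreduced one and computes the categorification of the RT invariant for the top exterior power (determinant) representation of $\slnn{M}$. One expects this representation to be categorically trivial, reflected in the fact that an $M$-labelled strand in $\grFoam{M}^\bullet$ admits no proper splitters and can be removed via web isomorphisms, collapsing the cube of resolutions $\LCa{\cal{T}^M}{\slnn{M}}$ of any $(1,1)$-tangle $\cal{T}^M$ up to grading shifts to the identity foam on $1_{\mathbf{o}^{cut}}$; after closing to a knot, the framing-related shifts cancel and $\grtaut_{cut}$ returns $H_M^M \cong \C$. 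The main obstacle is the verification of this ``top-color triviality'' rigorously from the foam-theoretic definition at the level of homotopy equivalences in $\Kom^b(\grFoam{M}^\bullet)$; once it is established, the claimed spectral sequence follows immediately from Theorem~\ref{thm_reducedss} by substituting the chosen $(\Sigma,A)$.
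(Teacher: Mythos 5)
Your proof matches the paper's exactly: it applies Theorem~\ref{thm_reducedss} with the same parameters $l=2$, $N_1=N$, $N_2=k-h$, $a_1=h$, $a_2=k-h$ and then invokes $\overline{\mathrm{KhR}}^{\slnn{k-h}}(\cal{K}^{k-h})\cong\C$. The paper likewise treats this top-color triviality as a known fact stated without further proof, so the step you flag as the ``main obstacle'' is not regarded as a gap there (and your sketch via $\pi^M_M=1$ and $H^M_M\cong\C$ is the right justification).
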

\begin{proof}
Choose $l=2$ and $N_1=N$, $N_2=k-h$ and $a_1=h$, $a_2=k-h$ in Theorem~\ref{thm_reducedss} and note that $\overline{\mathrm{KhR}}^{\glnn{k-h}}(\cal{K}^{k-h})\cong \C$.
\end{proof}

\begin{rem} \label{rem-cav1}
In general, it is unclear how fast the spectral sequences in the previous corollaries converge. However, it follows from the stability property of Theorem 2 in the introduction (see also Theorem~\ref{thm-gradcollapse}) that the spectral sequences from Corollary~\ref{cor_rankdiff} are degenerate for large $M$. 
\end{rem}

For completeness we mention the following theorem, which follows from Corollary~\ref{cor-hoffree} similarly as in the uncolored case, see e.g. Rasmussen's \cite[Lemma 5.5]{Ras3}.

\begin{thm} There is a spectral sequence 
\[\mathrm{KhR}^{\glnn{N}}(\bigcirc^k) \otimes \overline{\mathrm{KhR}}^{\glnn{N}}(\cal{K}^k) \quad \rightsquigarrow{}\quad  \mathrm{KhR}^{\glnn{N}}(\cal{K}^k)\]
which respects the homological and quantum grading.
\end{thm}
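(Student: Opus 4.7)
The plan is to realize the reduced complex as the quotient of the unreduced one modulo an augmentation ideal, and then to run the $\mathfrak{m}$-adic filtration spectral sequence on a complex of free modules.

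Let $\cal{T}$ be a $(1,1)$-tangle diagram whose closure is $\cal{K}^k$, and recall from Corollary~\ref{cor-hoffree} and the proof of Lemma~\ref{lem_tangisknot} that $C^{\slN}(\cal{T})$ is homotopy equivalent, through maps of complexes of $H_k^N$-modules, to a bounded complex $C$ all of whose chain groups are finitely generated free graded $H_k^N$-modules. Let $\mathfrak{m} \subset H_k^N$ denote the augmentation ideal (the sum of all strictly positively graded pieces). Since $\pi^N_k$ spans the top-degree line of $H_k^N$ and $\mathfrak{m}\pi^N_k=0$ for degree reasons, multiplication by $\pi^N_k$ induces a (grading-shifted) isomorphism $H_k^N/\mathfrak{m} \xrightarrow{\ \sim\ } \la\pi^N_k\ra$, so we obtain an identification
\[
\overline{C}^{\slN}(\cal{T}) \;\cong\; C \otimes_{H_k^N} \C
\]
up to an overall quantum grading shift, where $\C=H_k^N/\mathfrak{m}$.

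Next I would introduce the $\mathfrak{m}$-adic filtration $F^p C := \mathfrak{m}^p C$. Since $H_k^N$ is a finite-dimensional graded ring, some power of $\mathfrak{m}$ vanishes, so this filtration is bounded; combined with the boundedness of $C$ in homological degree, the resulting spectral sequence converges strongly. The differential of $C$ is $H_k^N$-linear, hence preserves $F^\bullet$, so the induced differential on the associated graded is the obvious one. Because $C$ is free in each homological degree, we have a canonical isomorphism of bigraded complexes
\[
\mathrm{Gr}_{\mathfrak{m}}(C) \;\cong\; \mathrm{Gr}_{\mathfrak{m}}(H_k^N) \otimes_{\C} \overline{C}^{\slN}(\cal{T})
\]
with differential $\mathrm{id}\otimes \overline{d}$. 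The point I then want to invoke is that, as graded rings, $\mathrm{Gr}_{\mathfrak{m}}(H_k^N)\cong H_k^N$ (the $\mathfrak{m}$-adic associated graded of a graded Artinian local ring whose maximal ideal equals the positive-degree ideal is canonically the ring itself). Hence $E_1 \cong H_k^N \otimes \overline{\mathrm{KhR}}^{\slN}(\cal{K}^k) \cong \mathrm{KhR}^{\slN}(\bigcirc^k)\otimes \overline{\mathrm{KhR}}^{\slN}(\cal{K}^k)$, using the unknot computation from Section~\ref{sec-module}, and the abutment is $H(C)=\mathrm{KhR}^{\slN}(\cal{K}^k)$.

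Finally I would check that the quantum grading is respected: the $\mathfrak{m}$-adic filtration is by homogeneous subcomplexes, the identification $\mathrm{Gr}_{\mathfrak{m}}(H_k^N)\cong H_k^N$ is one of graded rings, and the identification $\overline{C}^{\slN}(\cal{T}) \cong C\otimes_{H_k^N}\C$ shifts quantum grading by a fixed amount accounting for $\deg \pi^N_k=2k(N-k)$. With these shifts recorded, the spectral sequence preserves both the homological and quantum gradings. The only real point of care in the argument is the freeness input from Corollary~\ref{cor-hoffree}, without which the associated graded would not split off the factor $H_k^N$; everything else is a direct application of the filtered-complex machinery, exactly parallel to Rasmussen's \cite[Lemma 5.5]{Ras3} in the uncolored case.
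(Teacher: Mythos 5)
Your argument is correct and is essentially the proof the paper intends: the theorem is stated there as a consequence of Corollary~\ref{cor-hoffree} "as in Rasmussen's \cite[Lemma 5.5]{Ras3}", and that argument is precisely the filtration by powers of the augmentation ideal on a bounded complex of free $H_k^N$-modules, with the identification $\la\pi_k^N\ra M\cong M\otimes_{H_k^N}\C$ (up to the shift by $\deg\pi_k^N$) supplying the reduced complex on the associated graded. The only cosmetic overreach is the claim that $\mathrm{Gr}_{\mathfrak{m}}(H_k^N)\cong H_k^N$ \emph{canonically as rings}; for the $E_1$ identification one only needs the (true) statement that they agree as bigraded vector spaces, since the filtration is by homogeneous ideals.
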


\section{Colored HOMFLY-PT homology}
\label{sec-HH}
In Section~\ref{sec-defiHHH} we give a review of colored HOMFLY-PT homology which utilizes the notation and techniques already introduced in this paper. In Section~\ref{sec-ss} we transition to matrix factorization technology to study spectral sequences connecting colored HOMFLY-PT and $\glnn{N}$ homologies as claimed in Theorem 3. Finally, we introduce reduced colored HOMFLY-PT homology in Section~\ref{sec-reducedHomfly} and prove Theorems~1, 2, 4 and 6.
\subsection{Construction}
\label{sec-defiHHH}

Let $\cal{L}$ be a labelled link presented as the \textit{closure} of a labelled braid diagram $\cal{B}$:
\begin{equation}
\label{eqn:braidclosure}
\xy
(0,0)*{
\begin{tikzpicture}[scale=.4]
	\draw [very thick,rdirected=.65] (-1.5,-.25) to  (-1,-.25);
	\draw [very thick,rdirected=.65] (-1.5,-1.25) to  (-1,-1.25);
	\draw [very thick] (1,-.25) to (1.5,-.25);
	\draw [very thick] (1,-1.25) to  (1.5,-1.25);
	\draw[thick] (1,0) rectangle (-1,-1.5);
	\node at (0,-.75) {$\cal{B}$};
	\node at (-1.25,-.75) {\tiny $\cdot$};
		\node at (1.25,-.75) {\tiny $\cdot$};
\end{tikzpicture}
};
\endxy\;
\xrightarrow{} \;\;
\overline{\cal{B}}=\xy
(0,0)*{
\begin{tikzpicture}[scale=.4]\draw [very thick,directed=.55] (-1,-.25) to [out=180,in=270] (-1.5,.25) to [out=90,in=180] (-1,.75) to (1,.75) to [out=0,in=90] (1.5,.25) to [out=270,in=0] (1,-.25);
\draw [very thick,directed=.53] (-1,-1.25) to [out=180,in=270] (-2,.25) to [out=90,in=180] (-1,1.75) to (1,1.75) to [out=0,in=90] (2,.25) to [out=270,in=0] (1,-1.25);
	\draw[thick] (1,0) rectangle (-1,-1.5);
	\node at (0,-.75) {$\cal{B}$};
		\node at (-1.25,-.75) {\tiny $\cdot$};
		\node at (1.25,-.75) {\tiny $\cdot$};
				\node at (0,.25) {$\bullet$};
\end{tikzpicture}
};
\endxy 
\xrightarrow{}\;\;  \cal{L}=
\xy
(0,0)*{
\begin{tikzpicture}[scale=.4]\draw [very thick,directed=.55] (-1,-.25) to [out=180,in=270] (-1.5,.25) to [out=90,in=180] (-1,.75) to (1,.75) to [out=0,in=90] (1.5,.25) to [out=270,in=0] (1,-.25);
\draw [very thick,directed=.53] (-1,-1.25) to [out=180,in=270] (-2,.25) to [out=90,in=180] (-1,1.75) to (1,1.75) to [out=0,in=90] (2,.25) to [out=270,in=0] (1,-1.25);
	\draw[thick] (1,0) rectangle (-1,-1.5);
	\node at (0,-.75) {$\cal{B}$};
		\node at (-1.25,-.75) {\tiny $\cdot$};
		\node at (1.25,-.75) {\tiny $\cdot$};
\end{tikzpicture}
};
\endxy 
\end{equation}
To be compatible with our conventions for webs and foams, we draw braids horizontally and oriented from right to left. In \eqref{eqn:braidclosure} we illustrate the closure of $\cal{B}$, which produces a link diagram $\cal{L}$ in $\R^2$, and also the intermediate \textit{annular closure}, which produces a link diagram $\overline{\cal{B}}$ in a planar annulus.

We will consider the cube of resolutions chain complex $\LC{\cal{B}}{}{}$, whose construction was sketched in Section~\ref{sec-2-2}, as a chain complex over $\foam{N}$, where we choose $N$ larger than the sum of the labels on the strands of $\cal{B}$. Denote the domain and co-domain object of webs in $\LC{\cal{B}}{}{}$ by $\mathbf{o}_\cal{B}=(o_1,\dots, o_m)$. More generally, a braid, tangle or web whose left and right boundaries carry matching sequences of labels is called \emph{balanced}.

\begin{defi} For a web $W\colon \mathbf{o}_1 \to \mathbf{o}_2$ in $\foam{N}$ we consider the sub-algebra $\Dec(W)$ of the endomorphism algebra $\End_{\foam{N}}(W)$ generated by the decorated identity foams on $W$. For identity foams on an object $\mathbf{o}=(o_1,\dots, o_k)$ we abbreviate $\Dec(\mathbf{o}):=\Dec(1_{\mathbf{o}})$ and note that $\Dec(W)$ is naturally a $\Dec(\mathbf{o}_1)\otimes \Dec(\mathbf{o}_2)$-module by placing decorations on the right and left boundary facets.  

With respect to a braid diagram $\cal{B}$ as above, we define $R_\cal{B}:=\Dec(\mathbf{o}_\cal{B})\cong \Sym(\X_1|\dots|\X_m)$ and remind the reader that it is given by the graded algebra of polynomials separately symmetric in $o_i$-element alphabets $\X_i$. Under the assignment $\Dec$, the webs in the complex $\LC{\cal{B}}{}{}$ correspond to $R_\cal{B}$-bimodules, which are special cases of \emph{singular Soergel bimodules}, see~\cite{Wil}. 
\end{defi}

\begin{exa}\label{exa-bimodules} The explicit description of the following decoration bimodules follows from the decoration migration relations~\eqref{eqn-decmigr} on foams:
\begin{gather*}
\nonumber
\Dec \big(
\xy
(0,0)*{
\begin{tikzpicture} [scale=.3,fill opacity=0.2]
	\draw[very thick, directed=.55] (1,1) -- (-1,1);	
\end{tikzpicture}
};
\endxy
\big)
\cong 
\frac{\Sym(\V | \Y)}{\langle \V =\Y \rangle} \\
\Dec \big(
\xy
(0,0)*{
\begin{tikzpicture} [scale=.3,fill opacity=0.2]
	\draw[very thick,directed=.55] (2,4) -- (.75,4);
	\draw[very thick,directed=.55] (-.75,4) -- (-2,4);
	\draw[very thick,directed=.55] (.75,4) .. controls (.5,3.5) and (-.5,3.5) .. (-.75,4);
	\draw[very thick,directed=.55] (.75,4) .. controls (.5,4.5) and (-.5,4.5) .. (-.75,4);
\end{tikzpicture}
};
\endxy
\big)
\cong
\left( \frac{\Sym(\V | \L | \M)}{\langle \V = \L \cup \M \rangle} \right) \otimes_{\Sym(\L | \M)} 
\left( \frac{\Sym(\L | \M | \Y)}{\langle \L \cup \M = \Y \rangle} \right) \\
\nonumber
\Dec \big(
\xy
(0,0)*{
\begin{tikzpicture} [scale=.3,fill opacity=0.2]
	\draw [very thick,directed=.55] (.75,0) to (-.75,0);
	\draw [very thick,directed=.75] (-.75,0) to [out=135,in=0] (-2.25,.5);
	\draw [very thick,directed=.75] (-.75,0) to [out=225,in=0] (-2.25,-.5);
	\draw [very thick,directed=.45] (2.25,.5) to [out=180,in=45] (.75,0);
	\draw [very thick,directed=.45] (2.25,-.5) to [out=180,in=315] (.75,0);
\end{tikzpicture}
};
\endxy
\big)
\cong
\left( \frac{\Sym(\V | \W | \L)}{\langle \V \cup \W = \L \rangle} \right) \otimes_{\Sym(\L)} 
\left( \frac{\Sym(\L | \X | \Y)}{\langle \L = \X \cup \Y \rangle} \right) \\
\label{eq-linfac}
\Dec \big(
\xy
(0,0)*{
\begin{tikzpicture} [scale=.3,fill opacity=0.2]
	\draw [very thick,directed=.55] (1,.5) to [out=190,in=350] (-1,.5);
	\draw [very thick,directed=.55] (1,-.5) to [out=170,in=10] (-1,-.5);
\end{tikzpicture}
};
\endxy
\big)
\cong
\frac{\Sym(\V | \X)}{\langle \V =\X \rangle} \otimes_{\C}
\frac{\Sym(\W | \Y)}{\langle \W= \Y \rangle}  \\
\nonumber
\Dec \left(
\xy
(0,0)*{
\begin{tikzpicture} [scale=.3,fill opacity=0.2]
	\draw[very thick, directed=.65] (2,3) to [out=180,in=0] (.75,2.75);
	\draw[very thick, directed=.65] (.75,2.75) to [out=135,in=0] (-.75,3.5);
	\draw[very thick, directed=.65] (.75,2.75) to [out=225,in=0] (-2,2);
	\draw[very thick, directed=.75]  (-.75,3.5) to [out=225,in=0] (-2,3);
	\draw[very thick, directed=.75]  (-.75,3.5) to [out=135,in=0] (-2,4);
\end{tikzpicture}
};
\endxy
\right) \cong
\left( \frac{\Sym(\V | \W | \L)}{\langle \V \cup \W =\L \rangle} \right) \otimes_{\Sym(\L)} 
\left( \frac{\Sym(\L | \X | \Y)}{\langle \L \cup \X = \Y \rangle} \right) \\
\nonumber
\Dec \left(
\xy
(0,0)*{
\begin{tikzpicture} [scale=.3,fill opacity=0.2]
	\draw[very thick, directed=.55] (2,3) to [out=180,in=0] (.75,3.25);
	\draw[very thick, directed=.65] (.75,3.25) to [out=225,in=0] (-.75,2.5);
	\draw[very thick, directed=.65] (.75,3.25) to [out=135,in=0] (-2,4);
	\draw[very thick, directed=.75] (-.75,2.5) to [out=135,in=0] (-2,3);
	\draw[very thick, directed=.75] (-.75,2.5) to [out=225,in=0] (-2,2);
\end{tikzpicture}
};
\endxy
\right)
\cong
\left( \frac{\Sym(\W | \X | \M)}{\langle \W \cup \X = \M \rangle} \right) \otimes_{\Sym(\M)} 
\left( \frac{\Sym(\V | \M | \Y)}{\langle \V \cup \M =\Y \rangle} \right)
\end{gather*}
Here and in the following, we use the suggestive shorthand $\langle \X_1 \cup \cdots \cup \X_{a} = \X_{a+1} \cup \cdots \cup \X_{a+b}\rangle$ for ideals of the form $ \langle e_i(\X_1 \cup \cdots \cup \X_{a}) - e_i(\X_{a+1} \cup \cdots \cup \X_{a+b})| i>0 \rangle$ which have the effect of identifying polynomials in the respective alphabets in the quotient ring.
\end{exa}

\begin{rem}
In \cite{RW}, Rose and the author have used the singular Soergel bimodules from Example~\ref{exa-bimodules} to relate the (deformed) link homologies constructed in foam 2-categories with the original construction using matrix factorizations \cite{KR1, Yon, Wu1, Wu2}. More precisely, we show that, under favourable circumstances\footnote{In the absence of oriented cycles in the web.}, the matrix factorizations assigned to webs in this construction are essentially determined by the bimodules $\Dec(\cdot)$ via the process of stabilization. Similarly, homomorphisms of matrix factorizations are then determined by the corresponding bimodule maps. For details, see \cite[Section 4.4]{RW} and the references therein, in particular \cite{Bec, CM2, CM, Web1}. 
\end{rem}

The main tool from \cite[Section 4.4]{RW} that we need in the following is that $\Dec$ extends to a 2-functor from the 2-category of foams to the 2-category of singular Soergel bimodules. We have already described that on the level of 1-morphisms, the 2-functor $\Dec$ sends the web $W$ to the decoration bimodule $\Dec(W)$. On the level of 2-morphisms, $\Dec$ sends foams to the bimodule homomorphisms specified as follows with respect to the notation in Example~\ref{exa-bimodules}:
\begin{gather*}
\xy
(0,0)*{
\begin{tikzpicture} [scale=.5,fill opacity=0.2]
	\path[fill=green] (-.75,4) to [out=270,in=180] (0,2.5) to [out=0,in=270] (.75,4) .. controls (.5,4.5) and (-.5,4.5) .. (-.75,4);
	\path[fill=green] (-.75,4) to [out=270,in=180] (0,2.5) to [out=0,in=270] (.75,4) -- (2,4) -- (2,1) -- (-2,1) -- (-2,4) -- (-.75,4);
	\path[fill=green] (-.75,4) to [out=270,in=180] (0,2.5) to [out=0,in=270] (.75,4) .. controls (.5,3.5) and (-.5,3.5) .. (-.75,4);
	\draw[very thick, directed=.55] (2,1) -- (-2,1);
	\path (.75,1) .. controls (.5,.5) and (-.5,.5) .. (-.75,1); 
	\draw [very thick, red, directed=.65] (-.75,4) to [out=270,in=180] (0,2.5) to [out=0,in=270] (.75,4);
	\draw[very thick] (2,4) -- (2,1);
	\draw[very thick] (-2,4) -- (-2,1);
	\draw[very thick,directed=.55] (2,4) -- (.75,4);
	\draw[very thick,directed=.55] (-.75,4) -- (-2,4);
	\draw[very thick,directed=.55] (.75,4) .. controls (.5,3.5) and (-.5,3.5) .. (-.75,4);
	\draw[very thick,directed=.55] (.75,4) .. controls (.5,4.5) and (-.5,4.5) .. (-.75,4);
	\node [opacity=1]  at (1.5,3.5) {\tiny{$_{a+b}$}};
	\node[opacity=1] at (.25,3.4) {\tiny{$a$}};
	\node[opacity=1] at (-.25,4.1) {\tiny{$b$}};	
\end{tikzpicture}
};
\endxy 
\mapsto
\big(
\overline{1} \mapsto \overline{1} \otimes \overline{1}
\big)
\quad , \quad
\xy
(0,0)*{
\begin{tikzpicture} [scale=.5,fill opacity=0.2]
	\path[fill=green] (-.75,-4) to [out=90,in=180] (0,-2.5) to [out=0,in=90] (.75,-4) .. controls (.5,-4.5) and (-.5,-4.5) .. (-.75,-4);
	\path[fill=green] (-.75,-4) to [out=90,in=180] (0,-2.5) to [out=0,in=90] (.75,-4) -- (2,-4) -- (2,-1) -- (-2,-1) -- (-2,-4) -- (-.75,-4);
	\path[fill=green] (-.75,-4) to [out=90,in=180] (0,-2.5) to [out=0,in=90] (.75,-4) .. controls (.5,-3.5) and (-.5,-3.5) .. (-.75,-4);
	\draw[very thick, directed=.55] (2,-1) -- (-2,-1);
	\path (.75,-1) .. controls (.5,-.5) and (-.5,-.5) .. (-.75,-1); 
	\draw [very thick, red, directed=.65] (.75,-4) to [out=90,in=0] (0,-2.5) to [out=180,in=90] (-.75,-4);
	\draw[very thick] (2,-4) -- (2,-1);
	\draw[very thick] (-2,-4) -- (-2,-1);
	\draw[very thick,directed=.55] (2,-4) -- (.75,-4);
	\draw[very thick,directed=.55] (-.75,-4) -- (-2,-4);
	\draw[very thick,directed=.55] (.75,-4) .. controls (.5,-3.5) and (-.5,-3.5) .. (-.75,-4);
	\draw[very thick,directed=.55] (.75,-4) .. controls (.5,-4.5) and (-.5,-4.5) .. (-.75,-4);
	\node [opacity=1]  at (1.25,-1.25) {\tiny{$_{a+b}$}};
	\node[opacity=1] at (-.25,-3.4) {\tiny{$b$}};
	\node[opacity=1] at (.25,-4.1) {\tiny{$a$}};
\end{tikzpicture}
};
\endxy
\mapsto
\left(
\overline{\pi_{\lambda}^{\L}} \otimes \overline{1} \mapsto 
\left\{ \begin{array}{l} 
\overline{1} \quad \text{if} \quad \lambda=b^a, \\
0 \quad \text{all other } \lambda \in P(a,b),
\end{array} \right.
\right)
\\
\xy
(0,0)*{
\begin{tikzpicture} [scale=.5,fill opacity=0.2]
	\path [fill=green] (4.25,-.5) to (4.25,2) to [out=165,in=15] (-.5,2) to (-.5,-.5) to 
		[out=0,in=225] (.75,0) to [out=90,in=180] (1.625,1.25) to [out=0,in=90] 
			(2.5,0) to [out=315,in=180] (4.25,-.5);
	\path [fill=green] (3.75,.5) to (3.75,3) to [out=195,in=345] (-1,3) to (-1,.5) to 
		[out=0,in=135] (.75,0) to [out=90,in=180] (1.625,1.25) to [out=0,in=90] 
			(2.5,0) to [out=45,in=180] (3.75,.5);
	\path[fill=green] (.75,0) to [out=90,in=180] (1.625,1.25) to [out=0,in=90] (2.5,0);
	\draw [very thick,directed=.55] (2.5,0) to (.75,0);
	\draw [very thick,directed=.55] (.75,0) to [out=135,in=0] (-1,.5);
	\draw [very thick,directed=.55] (.75,0) to [out=225,in=0] (-.5,-.5);
	\draw [very thick,directed=.55] (3.75,.5) to [out=180,in=45] (2.5,0);
	\draw [very thick,directed=.55] (4.25,-.5) to [out=180,in=315] (2.5,0);
	\draw [very thick, red, directed=.75] (.75,0) to [out=90,in=180] (1.625,1.25);
	\draw [very thick, red] (1.625,1.25) to [out=0,in=90] (2.5,0);
	\draw [very thick] (3.75,3) to (3.75,.5);
	\draw [very thick] (4.25,2) to (4.25,-.5);
	\draw [very thick] (-1,3) to (-1,.5);
	\draw [very thick] (-.5,2) to (-.5,-.5);
	\draw [very thick,directed=.55] (4.25,2) to [out=165,in=15] (-.5,2);
	\draw [very thick, directed=.55] (3.75,3) to [out=195,in=345] (-1,3);
	\node[opacity=1]  at (1.625,.5) {\tiny{$_{a+b}$}};
	\node[opacity=1] at (3.5,2.65) {\tiny{$b$}};
	\node[opacity=1] at (4,1.85) {\tiny{$a$}};		
\end{tikzpicture}
};
\endxy
\mapsto
\big(
\overline{1}\otimes \overline{1} \mapsto \overline{1} \otimes \overline{1}
\big)
\quad , \quad
\xy
(0,0)*{
\begin{tikzpicture} [scale=.5,fill opacity=0.2]
	\path [fill=green] (4.25,2) to (4.25,-.5) to [out=165,in=15] (-.5,-.5) to (-.5,2) to
		[out=0,in=225] (.75,2.5) to [out=270,in=180] (1.625,1.25) to [out=0,in=270] 
			(2.5,2.5) to [out=315,in=180] (4.25,2);
	\path [fill=green] (3.75,3) to (3.75,.5) to [out=195,in=345] (-1,.5) to (-1,3) to [out=0,in=135]
		(.75,2.5) to [out=270,in=180] (1.625,1.25) to [out=0,in=270] 
			(2.5,2.5) to [out=45,in=180] (3.75,3);
	\path[fill=green] (2.5,2.5) to [out=270,in=0] (1.625,1.25) to [out=180,in=270] (.75,2.5);
	\draw [very thick,directed=.55] (4.25,-.5) to [out=165,in=15] (-.5,-.5);
	\draw [very thick, directed=.55] (3.75,.5) to [out=195,in=345] (-1,.5);
	\draw [very thick, red, directed=.75] (2.5,2.5) to [out=270,in=0] (1.625,1.25);
	\draw [very thick, red] (1.625,1.25) to [out=180,in=270] (.75,2.5);
	\draw [very thick] (3.75,3) to (3.75,.5);
	\draw [very thick] (4.25,2) to (4.25,-.5);
	\draw [very thick] (-1,3) to (-1,.5);
	\draw [very thick] (-.5,2) to (-.5,-.5);
	\draw [very thick,directed=.55] (2.5,2.5) to (.75,2.5);
	\draw [very thick,directed=.55] (.75,2.5) to [out=135,in=0] (-1,3);
	\draw [very thick,directed=.55] (.75,2.5) to [out=225,in=0] (-.5,2);
	\draw [very thick,directed=.55] (3.75,3) to [out=180,in=45] (2.5,2.5);
	\draw [very thick,directed=.55] (4.25,2) to [out=180,in=315] (2.5,2.5);
	\node [opacity=1]  at (1.625,2) {\tiny{$_{a+b}$}};
	\node[opacity=1] at (3.5,2.65) {\tiny{$b$}};
	\node[opacity=1] at (4,1.85) {\tiny{$a$}};		
\end{tikzpicture}
};
\endxy
\mapsto
\left(
\overline{1} \otimes \overline{1} \mapsto
\sum_{\alpha \in P(a,b)} (-1)^{|\hat{\alpha}|} \overline{\pi_{\hat{\alpha}}^{\V}} \otimes \overline{\pi_{\alpha}^{\Y}}
\right)
\\
\xy
(0,0)*{
\begin{tikzpicture} [scale=.5,fill opacity=0.2]
	\path[fill=green] (-2.5,4) to [out=0,in=135] (-.75,3.5) to [out=270,in=90] (.75,.25)
		to [out=135,in=0] (-2.5,1);
	\path[fill=green] (-.75,3.5) to [out=270,in=125] (.29,1.5) to [out=55,in=270] (.75,2.75) 
		to [out=135,in=0] (-.75,3.5);
	\path[fill=green] (-.75,-.5) to [out=90,in=235] (.29,1.5) to [out=315,in=90] (.75,.25) 
		to [out=225,in=0] (-.75,-.5);
	\path[fill=green] (-2,3) to [out=0,in=225] (-.75,3.5) to [out=270,in=125] (.29,1.5)
		to [out=235,in=90] (-.75,-.5) to [out=135,in=0] (-2,0);
	\path[fill=green] (-1.5,2) to [out=0,in=225] (.75,2.75) to [out=270,in=90] (-.75,-.5)
		to [out=225,in=0] (-1.5,-1);
	\path[fill=green] (2,3) to [out=180,in=0] (.75,2.75) to [out=270,in=55] (.29,1.5)
		to [out=305,in=90] (.75,.25) to [out=0,in=180] (2,0);
	\draw[very thick, directed=.55] (2,0) to [out=180,in=0] (.75,.25);
	\draw[very thick, directed=.55] (.75,.25) to [out=225,in=0] (-.75,-.5);
	\draw[very thick, directed=.55] (.75,.25) to [out=135,in=0] (-2.5,1);
	\draw[very thick, directed=.55] (-.75,-.5) to [out=135,in=0] (-2,0);
	\draw[very thick, directed=.55] (-.75,-.5) to [out=225,in=0] (-1.5,-1);
	\draw[very thick, red, rdirected=.85] (-.75,3.5) to [out=270,in=90] (.75,.25);
	\draw[very thick, red, rdirected=.75] (.75,2.75) to [out=270,in=90] (-.75,-.5);	
	\draw[very thick] (-1.5,-1) -- (-1.5,2);	
	\draw[very thick] (-2,0) -- (-2,3);
	\draw[very thick] (-2.5,1) -- (-2.5,4);	
	\draw[very thick] (2,3) -- (2,0);
	\draw[very thick, directed=.55] (2,3) to [out=180,in=0] (.75,2.75);
	\draw[very thick, directed=.55] (.75,2.75) to [out=135,in=0] (-.75,3.5);
	\draw[very thick, directed=.65] (.75,2.75) to [out=225,in=0] (-1.5,2);
	\draw[very thick, directed=.55]  (-.75,3.5) to [out=225,in=0] (-2,3);
	\draw[very thick, directed=.55]  (-.75,3.5) to [out=135,in=0] (-2.5,4);
	\node[opacity=1] at (-2.25,3.375) {\tiny$c$};
	\node[opacity=1] at (-1.75,2.75) {\tiny$b$};	
	\node[opacity=1] at (-1.25,1.75) {\tiny$a$};
	\node[opacity=1] at (0,2.75) {\tiny$_{b+c}$};
	\node[opacity=1] at (0,.25) {\tiny$_{a+b}$};
\end{tikzpicture}
};
\endxy
\mapsto
\big(
\overline{1} \otimes \overline{1} \mapsto \overline{1} \otimes \overline{1}
\big)
\quad , \quad
\xy
(0,0)*{
\begin{tikzpicture} [scale=.5,fill opacity=0.2]
	\path[fill=green] (-2.5,4) to [out=0,in=135] (.75,3.25) to [out=270,in=90] (-.75,.5)
		 to [out=135,in=0] (-2.5,1);
	\path[fill=green] (-.75,2.5) to [out=270,in=125] (-.35,1.5) to [out=45,in=270] (.75,3.25) 
		to [out=225,in=0] (-.75,2.5);
	\path[fill=green] (-.75,.5) to [out=90,in=235] (-.35,1.5) to [out=315,in=90] (.75,-.25) 
		to [out=135,in=0] (-.75,.5);	
	\path[fill=green] (-2,3) to [out=0,in=135] (-.75,2.5) to [out=270,in=125] (-.35,1.5) 
		to [out=235,in=90] (-.75,.5) to [out=225,in=0] (-2,0);
	\path[fill=green] (-1.5,2) to [out=0,in=225] (-.75,2.5) to [out=270,in=90] (.75,-.25)
		to [out=225,in=0] (-1.5,-1);
	\path[fill=green] (2,3) to [out=180,in=0] (.75,3.25) to [out=270,in=45] (-.35,1.5) 
		to [out=315,in=90] (.75,-.25) to [out=0,in=180] (2,0);				
	\draw[very thick, directed=.55] (2,0) to [out=180,in=0] (.75,-.25);
	\draw[very thick, directed=.55] (.75,-.25) to [out=135,in=0] (-.75,.5);
	\draw[very thick, directed=.55] (.75,-.25) to [out=225,in=0] (-1.5,-1);
	\draw[very thick, directed=.45]  (-.75,.5) to [out=225,in=0] (-2,0);
	\draw[very thick, directed=.35]  (-.75,.5) to [out=135,in=0] (-2.5,1);	
	\draw[very thick, red, rdirected=.75] (-.75,2.5) to [out=270,in=90] (.75,-.25);
	\draw[very thick, red, rdirected=.85] (.75,3.25) to [out=270,in=90] (-.75,.5);
	\draw[very thick] (-1.5,-1) -- (-1.5,2);	
	\draw[very thick] (-2,0) -- (-2,3);
	\draw[very thick] (-2.5,1) -- (-2.5,4);	
	\draw[very thick] (2,3) -- (2,0);
	\draw[very thick, directed=.55] (2,3) to [out=180,in=0] (.75,3.25);
	\draw[very thick, directed=.55] (.75,3.25) to [out=225,in=0] (-.75,2.5);
	\draw[very thick, directed=.55] (.75,3.25) to [out=135,in=0] (-2.5,4);
	\draw[very thick, directed=.55] (-.75,2.5) to [out=135,in=0] (-2,3);
	\draw[very thick, directed=.55] (-.75,2.5) to [out=225,in=0] (-1.5,2);
	\node[opacity=1] at (-2.25,3.75) {\tiny $c$};
	\node[opacity=1] at (-1.75,2.75) {\tiny $b$};	
	\node[opacity=1] at (-1.25,1.75) {\tiny $a$};
	\node[opacity=1] at (-.125,2.25) {\tiny $_{a+b}$};
	\node[opacity=1] at (-.125,.75) {\tiny $_{b+c}$};
\end{tikzpicture}
};
\endxy
\mapsto
\big(
\overline{1} \otimes \overline{1} \mapsto \overline{1} \otimes \overline{1}
\big)
\end{gather*}
Here $\bar{f}$ denotes an element of a quotient ring (as shown in Example~\ref{exa-bimodules}) represented by a polynomial $f$,  $\pi_{\lambda}^{\W}$ denotes the Schur 
polynomial in the alphabet $\W$ corresponding to the Young diagram $\lambda$. $P(a,b)$ is the set of Young diagrams fitting into a box of size $a\times b$ and its maximal element is  $b^a$. For a Young diagram $\lambda\in P(a,b)$ with rows $\lambda_i$ for $0\leq i \leq a$, the complementary diagram $\lambda^c$ has rows $\lambda^c_{i}=b-\lambda_{a+1-i}$ for $0\leq i \leq a$, and $\hat{\lambda}$ denotes the transpose of $\lambda^c$.
\begin{prop}
\label{prop-bimodconstr} The above assignments uniquely define bimodule maps between the corresponding decoration bimodules. $\Dec$ extends naturally to a monoidal 2-functor from $\foam{N}$ to the monoidal 2-category $\Bimod$ of singular Soergel bimodules. For a balanced labelled braid diagram $\cal{B}$, the complex $\Dec(\LC{\cal{B}}{}{})$ is isomorphic to the complex of singular Soergel bimodules constructed by Mackaay--Sto\v{s}i\'c--Vaz \cite{MSV} and Webster--Williamson \cite{WW}.
\end{prop}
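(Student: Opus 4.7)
The plan is to establish the three parts of the proposition sequentially, leaning heavily on the explicit description of decoration bimodules in Example~\ref{exa-bimodules} and on computations already carried out in \cite{RW, MSV, MSV2, QR}.

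For the first assertion, I observe that each decoration bimodule in Example~\ref{exa-bimodules} is generated over the appropriate tensor product of boundary rings by the element $\overline{1}$ (or, in the case of the cap/cup, by the Schur basis $\{\overline{\pi_\lambda^\L} \otimes \overline{1}\}_{\lambda \in P(a,b)}$), so to check that the displayed formulas define honest bimodule maps it suffices to verify on generators that images respect the defining ideals. For the unit, counit, split and merge foams this reduces to checking that the identifications $\langle \V \cup \W = \L \rangle$ etc.\ on the target are implied by the corresponding identifications on the source, which is immediate. The only computation of substance is for the cap/saddle with respect to Schur basis elements; this recovers precisely the formula for the Koszul dual pairing of $\Sym(\L)$ over $\Sym(\V) \otimes \Sym(\W)$ and is the same identity that appears in the matrix factorization description of the saddle map.

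For the 2-functoriality, I need to verify that $\Dec$ respects (i) horizontal and vertical composition of foams and (ii) the local foam relations imposed in $\foam{N}$, namely the sphere and theta relations, digon and square removal, neck-cutting, and the associativity and isotopy relations at seams. Compatibility with composition is a direct computation from the generator formulas above. The local relations are all homogeneous bimodule identities between compositions of the listed generators; these are precisely the identities verified (in largely the same notation) in \cite[Section 2]{MSV2} and in the matrix factorization stabilization results of \cite[Section 4.4]{RW}, from which they may be imported. Monoidality of $\Dec$ with respect to disjoint union follows from the explicit tensor product structure visible in the bimodule associated to parallel strands, c.f.\ the penultimate display of Example~\ref{exa-bimodules}. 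The hard part here is bookkeeping rather than ideas: verifying every individual relation in \cite[Section 3.1]{QR} case by case, including decoration migration~\eqref{eqn-decmigr} (which is built into the bimodules by construction) and the relations involving $N$-labelled facets (which hold after applying $\Dec$ because the top exterior power behaves like a line).

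For the final assertion, I compare the cube of resolutions complex $\Dec(\LC{\cal{B}}{}{})$ term by term with the Mackaay--Sto\v{s}i\'c--Vaz and Webster--Williamson complex. On each vertex of the cube, the underlying web is the same, and by Example~\ref{exa-bimodules} its image under $\Dec$ is the expected singular Soergel bimodule. The differentials in \eqref{eqn-crossingcx} are, by construction, elementary zip or unzip foams bridging adjacent resolutions by a single $1$-facet; under $\Dec$ these map to the merge/split bimodule maps whose images are described above, and these are exactly the bimodule maps used in \cite{MSV, WW} to define the Rouquier complex for an elementary crossing. Since both complexes are obtained by the same rule of tensoring such elementary Rouquier complexes along the braid, the two complexes are isomorphic as complexes of singular Soergel bimodules. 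The only delicate point is matching grading shift conventions between \cite{QR} and \cite{MSV, WW}, which amounts to a finite comparison on the crossing complex and follows from the way the shifts in~\eqref{eqn-crossingcx} are normalised.
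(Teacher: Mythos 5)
Your proposal is correct and, for the first two assertions, follows the same route as the paper: uniqueness from the fact that the maps are specified on generators, 2-functoriality by importing the verification of the foam relations from \cite[Section 4.4]{RW}, and monoidality from the disjoint-union/tensor-product correspondence. The one place where you diverge is the final comparison with the Mackaay--Sto\v{s}i\'c--Vaz and Webster--Williamson complex. You assert that the images under $\Dec$ of the zip/unzip differentials in \eqref{eqn-crossingcx} are \emph{exactly} the bimodule maps used in \cite{MSV,WW}; that is a nontrivial claim which, as stated, you have not verified (different papers normalise these maps differently, and a direct match would require chasing each elementary map through both constructions). The paper sidesteps this entirely: it only observes that the two complexes have the same singular Soergel bimodules as objects and then invokes the rigidity result \cite[Corollary 5.7]{WW}, which says the differentials in a crossing complex of singular Soergel bimodules are uniquely determined up to non-zero scalars. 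Since any non-zero rescaling of differentials yields an isomorphic complex, the isomorphism follows without identifying the maps on the nose. Your argument can be completed, but either you must actually carry out the map-by-map comparison you assert, or you should replace that step with the uniqueness argument, which is the cheaper and more robust route.
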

\begin{proof} Uniqueness is clear since the maps are defined on a generating set and the bimodule map condition can be checked explicitly. All arising decoration bimodules are singular Soergel bimodules which form a 2-category $\Bimod$ and \cite[Section 4.4]{RW} contains a proof that the images of foam relations under $\Dec$ hold in $\Bimod$. Moreover, $\Dec$ respects the monoidal structure given by disjoint union on foams and by tensor product over $\C$ on bimodules. A quick comparison with \cite{MSV} shows that the complex constructed there and $\Dec(\LC{\cal{B}}{}{})$ have the same bimodules as objects. Furthermore, in both cases, the differentials are constructed locally for each crossing and it is well-known that the differentials in a crossing complex of singular Soergel bimodules are uniquely determined (up to non-zero scalars), see e.g. \cite[Corollary 5.7.]{WW}. Thus, the complexes are isomorphic. 
\end{proof}

Next, one needs a functor to translate $\Dec(\LC{\cal{B}}{}{})$ into a complex of triply-graded vector spaces.

\begin{defi} Let $\cal{R}$ be a commutative, graded $\C$-algebra. The Hochschild homology of a graded $\cal{R}$-bimodule $M$ (equivalently, a $\cal{R}\otimes \cal{R}$-module) is defined to be:
\[ HH_*(M):=\Tor^{\cal{R}\otimes \cal{R}}_{*}(\cal{R},M)\]
\end{defi}
In fact, $HH_*(\cdot)$ is covariant functor from the category of graded $\cal{R}$-bimodules to the category of bi-graded $\C$-vector spaces. 

\begin{defi}
With $\cal{R}$ as in the previous definition, let $\mathbf{x}=\{x_1,\dots, x_n\}$ be a sequence of homogeneous elements of $\cal{R}$, then the the Koszul complex of $\mathbf{x}$ is defined as
\[\cal{Z}_\mathbf{x} := \bigotimes_{i=1}^n \left ( \cal{R}\xrightarrow{x_i} \uwave{\cal{R}}\right ) \]
where we assume that the copies of $\cal{R}$ have been shifted in (internal) grading to make all differentials homogeneous of degree $2$. The homological grading in this complex is called the \emph{horizontal grading}. We use the convention that the the differential in this complex increases the horizontal grading by one and that the \uwave{underlined} term is in horizontal and internal grading zero. 
\end{defi}

In the following let $W\colon \mathbf{o}_\cal{B} \to \mathbf{o}_\cal{B}$ be a web and $\Dec(W)$ its associated $\cal{R}_\cal{B}$-bimodule. Recall that $R_\cal{B}\cong \Sym(\X_1|\dots|\X_m)$ with $|\X_i|=o_i$. Let $e_j(\X_i)$ denote the $j^{th}$ elementary symmetric polynomial in $\X_i$, which is of internal degree $2 j$. We also identify $\cal{R}_\cal{B}\otimes \cal{R}_\cal{B}\cong \Sym(\X'_1|\dots|\X'_m|\X_1|\dots|\X_m)$. Next, we recall how the Hochschild homology of bimodules over (symmetric) polynomial rings is computed via Koszul resolutions. For more details see e.g. \cite{Kho3}.

\begin{lem}\label{lem-Koszul}
Consider the sequence $\mathbf{x}=\{e_j(\X_i)- e_j(\X'_i)\}$ in $\cal{R}_\cal{B}\otimes \cal{R}_\cal{B}$ where the indices range in $1\leq i\leq m$ and $1\leq j\leq o_i$. Then the Koszul complex $\cal{Z}_\mathbf{x}$ is a resolution of $\cal{R}_\cal{B}$ by free $\cal{R}_\cal{B}\otimes \cal{R}_\cal{B}$-modules and we can compute the Hochschild homology of a $\cal{R}_\cal{B}$-bimodule $M$ as
\begin{equation}
\label{eqn-HH} HH_*(M) \cong H_*(M\otimes \cal{Z}_\mathbf{x}).
\end{equation}
\end{lem}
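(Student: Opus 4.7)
The plan is to exhibit $\cal{Z}_\mathbf{x}$ as the standard Koszul resolution of the diagonal bimodule. First I would observe that $\cal{R}_\cal{B} = \Sym(\X_1|\cdots|\X_m)$ is isomorphic to the polynomial ring $\C[e_j(\X_i) : 1\le i\le m,\,1\le j\le o_i]$ by the fundamental theorem of symmetric polynomials, so $\cal{R}_\cal{B}\otimes\cal{R}_\cal{B}$ is a polynomial ring in the $2(\sum_i o_i)$ generators $e_j(\X_i)$ and $e_j(\X'_i)$.

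Next I would show that $\mathbf{x}=\{e_j(\X_i)-e_j(\X'_i)\}$ is a regular sequence generating the kernel of the multiplication map $\mu\colon \cal{R}_\cal{B}\otimes\cal{R}_\cal{B}\to\cal{R}_\cal{B}$. Performing the linear change of variables $u_{ij}:=e_j(\X_i)-e_j(\X'_i)$, $v_{ij}:=e_j(\X'_i)$ identifies $\cal{R}_\cal{B}\otimes\cal{R}_\cal{B}$ with $\C[u_{ij},v_{ij}]$; the $u_{ij}$ are then indeterminates in a polynomial ring, which is the prototypical regular sequence, and the quotient is $\C[v_{ij}]\cong\cal{R}_\cal{B}$ with both left and right actions given by $\X_i\mapsto \X'_i$, i.e., the diagonal $\cal{R}_\cal{B}$-bimodule.

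Because $\mathbf{x}$ is regular, the Koszul complex $\cal{Z}_\mathbf{x}$ is acyclic in positive horizontal degrees and its degree-zero homology is $(\cal{R}_\cal{B}\otimes\cal{R}_\cal{B})/(\mathbf{x})\cong \cal{R}_\cal{B}$; it therefore provides a resolution of $\cal{R}_\cal{B}$ by free $\cal{R}_\cal{B}\otimes\cal{R}_\cal{B}$-modules (the grading shifts in the definition of $\cal{Z}_\mathbf{x}$ ensure this is a resolution in the category of graded modules, with all differentials of internal degree zero). Substituting this resolution into the definition of $\Tor$ yields
\[
HH_*(M)=\Tor^{\cal{R}_\cal{B}\otimes\cal{R}_\cal{B}}_*(\cal{R}_\cal{B},M)\cong H_*\bigl(\cal{Z}_\mathbf{x}\otimes_{\cal{R}_\cal{B}\otimes\cal{R}_\cal{B}}M\bigr),
\]
and the standard identification of the Koszul differential shows that this last complex is precisely $M\otimes\cal{Z}_\mathbf{x}$ with $\cal{R}_\cal{B}$-bimodule action used to multiply by the elements $e_j(\X_i)-e_j(\X'_i)$ on $M$. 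The only mild obstacle is bookkeeping with the internal gradings to check that the grading shifts chosen in the definition of $\cal{Z}_\mathbf{x}$ make the Koszul differentials homogeneous of degree two; this is straightforward since $e_j(\X_i)-e_j(\X'_i)$ has internal degree $2j$.
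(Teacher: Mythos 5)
Your argument is correct and is exactly the standard one the paper implicitly relies on (the lemma is stated without proof there): pass to the polynomial generators $e_j(\X_i)$, observe that the differences form a regular sequence cutting out the diagonal bimodule, and compute $\Tor$ with the resulting Koszul resolution. The only blemish is a cosmetic inconsistency in your grading bookkeeping (you first say the differentials have internal degree zero and later, matching the paper's convention, degree two); this does not affect the argument.
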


In other words, the Hochschild homology $\mathrm{HH}_*(\Dec(W))$ of $\Dec(W)$ can be computed as the homology of a complex in the shape of the 1-skeleton of a $\sum_i  o_i$-dimensional hypercube, with copies of $\Dec(W)$ on all vertices and differentials given by multiplication by $\pm (e_j(\X_i)- e_j(\X'_i))$ on the edges, see also Example~\ref{exa-2unknot}. 

\begin{defi} Let $\cal{B}$ be a balanced labelled braid diagram as above. Then the triply-graded colored HOMFLY-PT homology $\LH{\cal{B}}{\infty}{} = \bigoplus_{i,j,h} \LHH{\cal{B}}{i,j,h}$ is defined as:
\begin{equation}
\LHH{\cal{B}}{*,j,h}:= H_h(HH_j(\Dec(\LC{\cal{B}}{}{})))
\end{equation}
The three gradings are the internal grading (index $i$), the horizontal grading (index $j$) and the homological, a.k.a. \emph{vertical grading} inherited from the complex $\LC{\cal{B}}{}{}$ (index $h$). 
\end{defi}

Modulo Proposition~\ref{prop-bimodconstr} and grading conventions, this definition is due to Mackaay--Sto\v{s}i\'c--Vaz~\cite{MSV} and Webster--Williamson~\cite{WW}, building on work of Khovanov~\cite{Kho3} in the uncolored setting, who also proved the predecessor of the following theorem.

\begin{thm} \cite[c.f.~Theorem 1.1.]{WW}. Let $\cal{L}$ be a labelled link and $\cal{B}$ any diagram for a braid representative of $\cal{L}$. Then, as a triply-graded vector space, $\LH{\cal{L}}{\infty}{}:=\LH{\cal{B}}{\infty}{}$ only depends on $\cal{L}$ and not on $\cal{B}$ (up to an overall grading shift). Furthermore $\LH{\cal{L}}{\infty}{}$ categorifies the unreduced colored HOMFLY-PT polynomial $P^\mathrm{HOMFLY-PT}(\cal{L})$ of the link $\cal{L}$:
\[\sum_{i,j,h}(-1)^h(-a^2)^j q^i \dim_\C \LHH{\cal{L}}{i,j,h} = P^\mathrm{HOMFLY-PT}(\cal{L}) \]
up to overall multiplication by a monomial in $a$ and $q$.  
\end{thm}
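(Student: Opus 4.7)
The theorem is essentially a translation of the Mackaay--Sto\v{s}i\'c--Vaz \cite{MSV} and Webster--Williamson \cite{WW} results into the present foam-based framework, so the plan is to reduce to their setup via Proposition~\ref{prop-bimodconstr} and then verify invariance under Markov moves plus the decategorification identity.

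First, I would fix a braid diagram $\cal{B}$ representing $\cal{L}$ and apply the 2-functor $\Dec$ to the complex $\LC{\cal{B}}{}{}\in\Kom(\foam{N})$, yielding $\Dec(\LC{\cal{B}}{}{})\in\Kom(\Bimod)$. By Proposition~\ref{prop-bimodconstr} this agrees with the singular Soergel bimodule complex of \cite{MSV,WW}. Since $HH_*(\cdot)$ is a functor on graded $\cal{R}_\cal{B}$-bimodules, it extends termwise to chain complexes, and $\LHH{\cal{B}}{*,*,*}$ is obtained by then taking the vertical homology. Independence of $\LHH{\cal{B}}{*,*,*}$ of the presentation of the braid word (invariance under braid relations) is immediate: Theorem~\ref{thm_tangleinv} already gives invariance of $\LC{\cal{B}}{}{}$ in $\Kom_h(\foam{N})$ under Reidemeister~II and~III, hence in $\Kom_h(\Bimod)$ after $\Dec$, and $HH_*$ sends homotopy equivalences to quasi-isomorphisms.

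The first Markov move (conjugation $\cal{B}\mapsto\sigma\cal{B}\sigma^{-1}$) follows from the same functoriality argument combined with the Hochschild trace property $HH_*(M\otimes_{\cal{R}_\cal{B}} N)\cong HH_*(N\otimes_{\cal{R}_\cal{B}} M)$, which for singular Soergel bimodules is standard. The nontrivial ingredient, and what I expect to be the main obstacle, is Markov~II (stabilization): if $\cal{B}'$ is obtained from $\cal{B}$ by adding an extra $k$-labelled strand and a positive or negative crossing between it and an adjacent $k$-labelled strand, one must show that $\LHH{\cal{B}'}{}\cong\LHH{\cal{B}}{}$ up to an overall shift in the three gradings. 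Following \cite{MSV,WW}, I would localize the computation to the relevant factor of the braid and use the explicit form of the crossing complex~\eqref{eqn-crossingcx} applied via $\Dec$. The key lemma -- which is the analogue of Khovanov's Hochschild homology computation in \cite{Kho3} -- is that the Koszul resolution of Lemma~\ref{lem-Koszul}, when tensored with the bimodules associated to the two resolutions of the stabilization crossing, produces complexes whose Hochschild homologies differ from $HH_*(\Dec(\LC{\cal{B}}{}{}))$ by a controlled shift, and exactly one of the two resolutions contributes nontrivially after forming the mapping cone that represents the crossing. The remaining strand's decoration algebra $\Sym(\X)$ with $|\X|=k$ contributes a tensor factor isomorphic to an exterior algebra on $k$ generators (via the Koszul complex), which accounts for the shifts.

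Finally, for the decategorification statement, I would compute the triply-graded Poincar\'e series
\[
\sum_{i,j,h}(-1)^h(-a^2)^j q^i \dim_\C \LHH{\cal{B}}{i,j,h}
=\sum_h(-1)^h\chi_{q,a}\bigl(HH_*(\Dec(\LC{\cal{B}}{}{})_h)\bigr),
\]
where $\chi_{q,a}$ is the bigraded Euler characteristic in the internal and horizontal directions. For each web $W$ in $\LC{\cal{B}}{}{}$ the Poincar\'e series of $HH_*(\Dec(W))$ is computable directly from the Koszul complex in Lemma~\ref{lem-Koszul}, and it is a routine check (using the same local pieces as in the Cautis--Kamnitzer--Morrison web calculus) that these pieces combine to give the MOY-type state sum for the colored HOMFLY-PT polynomial, up to an overall monomial in $a$ and $q$ that reflects framing and normalization conventions. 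This last verification is the same as in \cite{MSV,WW} and needs only the local bimodule descriptions from Example~\ref{exa-bimodules}.
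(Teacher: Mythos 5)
Your opening reduction is exactly what the paper does: this theorem is imported wholesale from Mackaay--Sto\v{s}i\'c--Vaz and Webster--Williamson, and the only new content supplied in this paper is Proposition~\ref{prop-bimodconstr}, which identifies $\Dec(\LC{\cal{B}}{}{})$ with their complex of singular Soergel bimodules; the paper then simply cites \cite[Theorem 1.1]{WW} rather than reproving invariance and decategorification. Your remaining paragraphs sketch the internals of that cited proof, and the outline (braid relations via Theorem~\ref{thm_tangleinv} and functoriality of $HH_*$, Markov~I via the Hochschild trace property, Markov~II by a local partial-Hochschild-homology computation, decategorification by checking MOY relations on the local bimodules) is the standard and correct strategy. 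One caution: your description of Markov~II as ``exactly one of the two resolutions contributes nontrivially after forming the mapping cone'' is the \emph{uncolored} picture; for a $k$-labelled stabilization the crossing complex~\eqref{eqn-crossingcx} has $k+1$ terms, and the collapse after taking partial Hochschild homology in the new alphabet is a correspondingly longer Gaussian-elimination argument (this is where the bulk of the work in \cite{MSV,WW} lies). Since the paper does not attempt this verification itself, your proposal is, if anything, more ambitious than what the paper records.
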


\begin{exa}\label{exa-2unknot} We compute the HOMFLY-PT homology of the $2$-labelled unknot via its trivial 1-strand braid diagram. It is concentrated in homological degree zero and given by the Hochschild homology of $\Dec \big(
\xy
(0,0)*{
\begin{tikzpicture} [scale=.3,fill opacity=0.2]
	\draw[very thick, directed=.55] (1,1) -- (-1,1);
	\node [opacity=1]  at (1.5,1) {\tiny{$_{2}$}};
\end{tikzpicture}
};
\endxy
\big)$, which is a bimodule over $\Sym(\X)\cong \C[e_1(\X),e_2(\X)]$ where $|\X|=2$. As before we distinguish the left- and right-actions by using different sets of variables $\X',\X$ and write:
\[\Dec \big(
\xy
(0,0)*{
\begin{tikzpicture} [scale=.3,fill opacity=0.2]
	\draw[very thick, directed=.55] (1,1) -- (-1,1);
	\node [opacity=1]  at (1.5,1) {\tiny{$_{2}$}};
\end{tikzpicture}
};
\endxy
\big)= 
\frac{\Sym(\X' | \X)}{\langle \X' =\X \rangle} \cong \frac{\C[e_1(\X'),e_2(\X'),e_1(\X),e_2(\X)]}{\langle e_1(\X')-e_1(\X),e_2(\X')-e_2(\X) \rangle}\]

The following equation shows the complex computing the corresponding Hochschild homology, where we have indicated grading shifts by powers of $q$ and the slightly unnaturally looking variable $-a^{2}$. It is clear from the above description that the differentials are all zero.
\[\mathrm{HH}_*\left(\Dec \big(
\xy
(0,0)*{
\begin{tikzpicture} [scale=.3,fill opacity=0.2]
	\draw[very thick, directed=.55] (1,1) -- (-1,1);
	\node [opacity=1]  at (1.5,1) {\tiny{$_{2}$}};
\end{tikzpicture}
};
\endxy
\big) \right) 
= 
a^{-4}\mathrm{H}_*\left( \xy
(0,0)*{
\begin{tikzpicture} [scale=.3,fill opacity=0.2]
\draw[->] (3.5,0.9) to (6,1.4);
\draw[->] (3.5,-0.9) to (6,-1.4);
\draw[->] (15.5,1.4) to (17.5,0.9);
\draw[->] (15.5,-1.4) to (17.5,-0.9);
\node [opacity=1]  at (4.5,1.5) {\tiny $0$};
\node [opacity=1]  at (4.5,-1.5) {\tiny $0$};
\node [opacity=1]  at (16.5,1.5) {\tiny $0$};
\node [opacity=1]  at (16.5,-1.5) {\tiny $0$};
	\node [opacity=1]  at (21.5,0) {$a^4 \Dec \big(
\xy
(0,0)*{
\begin{tikzpicture} [scale=.3,fill opacity=0.2]
	\draw[very thick, directed=.55] (1,1) -- (-1,1);
	\node [opacity=1]  at (1.5,1) {\tiny{$_{2}$}};
\end{tikzpicture}
};
\endxy
\big)$	};
\node [opacity=1]  at (0,0) {$q^2\Dec \big(
\xy
(0,0)*{
\begin{tikzpicture} [scale=.3,fill opacity=0.2]
	\draw[very thick, directed=.55] (1,1) -- (-1,1);
	\node [opacity=1]  at (1.5,1) {\tiny{$_{2}$}};
\end{tikzpicture}
};
\endxy
\big)$	};
\node [opacity=1]  at (11,1.5) {$-a^2 q^2\Dec \big(
\xy
(0,0)*{
\begin{tikzpicture} [scale=.3,fill opacity=0.2]
	\draw[very thick, directed=.55] (1,1) -- (-1,1);
	\node [opacity=1]  at (1.5,1) {\tiny{$_{2}$}};
\end{tikzpicture}
};
\endxy
\big)$	};
\node [opacity=1]  at (11,-1.5) {$-a^2 \Dec \big(
\xy
(0,0)*{
\begin{tikzpicture} [scale=.3,fill opacity=0.2]
	\draw[very thick, directed=.55] (1,1) -- (-1,1);
	\node [opacity=1]  at (1.5,1) {\tiny{$_{2}$}};
\end{tikzpicture}
};
\endxy
\big)	$};
\end{tikzpicture}
};
\endxy \right )\]
The Hochschild homology of $\Dec \big(
\xy
(0,0)*{
\begin{tikzpicture} [scale=.3,fill opacity=0.2]
	\draw[very thick, directed=.55] (1,1) -- (-1,1);
	\node [opacity=1]  at (1.5,1) {\tiny{$_{2}$}};
\end{tikzpicture}
};
\endxy
\big)$ thus consists of four grading shifted copies of $\Dec \big(
\xy
(0,0)*{
\begin{tikzpicture} [scale=.3,fill opacity=0.2]
	\draw[very thick, directed=.55] (1,1) -- (-1,1);
	\node [opacity=1]  at (1.5,1) {\tiny{$_{2}$}};
\end{tikzpicture}
};
\endxy
\big)$ itself, whose underlying $\C$-space is isomorphic to a polynomial algebra with generators of internal degree $2$ and $4$. Thus, the corresponding Hilbert--Poincar\'e series (with respect to the Hochschild grading) has the expansion 
\[\mathrm{HP}\left (\mathrm{HH}_*\left(\Dec \big(
\xy
(0,0)*{
\begin{tikzpicture} [scale=.3,fill opacity=0.2]
	\draw[very thick, directed=.55] (1,1) -- (-1,1);
	\node [opacity=1]  at (1.5,1) {\tiny{$_{2}$}};
\end{tikzpicture}
};
\endxy
\big) \right)  \right ) = a^{-4} \frac{(a^4 - a^2 q^2- a^2 + q^2)}{(1-q^2)(1-q^4)} 
= a^{-4} \frac{(a^2-1)(a^2-q^2)}{(1-q^2)(1-q^4)}
\] 
Up to overall multiplication by a monomial, this agrees with the HOMFLY-PT polynomial of the $2$-labelled unknot.
\end{exa}

\subsection{HOMFLY-PT to $\glnn{N}$ spectral sequences}
\label{sec-ss}
Following Webster \cite{Web1}, we explain how the Hochschild homology functor can be replaced by an alternative functor in order to compute the $\Sigma$-deformed Khovanov--Rozansky homology instead of HOMFLY-PT homology from $\Dec{\LC{\cal{B}}{}{}}$. This functor takes the form 
\begin{equation}
WH^\Sigma_*(\cdot):=H_{tot}(\cdot \otimes \cal{Z}_{\mathbf{y},\mathbf{x}}).
\end{equation}
see Theorem~\ref{thm-Webster}. Here, the Koszul complex $ \cal{Z}_{\mathbf{x}}$ has been replaced by a \emph{Koszul matrix factorization} $\cal{Z}_{\mathbf{y},\mathbf{x}}$. We now recall some background on matrix factorizations and their relevance for $\glnn{N}$ link homologies.

\begin{defi} Let $\cal{R}$ be a commutative $\C$-algebra. A $\Z$-graded matrix factorization over $\cal{R}$ with potential $\phi\in \cal{R}$ is a $\Z$-graded $\cal{R}$-module $M$ together with a map $d_{tot}=d_+ +d_-\colon M \to M$ composed of differentials $d_\pm$ of degree $\pm 1$ such that $d_{tot}^2$ acts by multiplication by $\phi$. A morphism of matrix factorizations is a graded $\cal{R}$-module map which intertwines the respective $d_+$'s and $d_-$'s. Moreover, matrix factorizations over $\cal{R}$ admit a tensor product which is additive on potentials. We call the $\Z$-grading the \emph{horizontal} grading and denote it by $\mathrm{gr}_h$. Usually, the matrix factorizations we consider will have an additional internal polynomial grading denoted $\mathrm{gr}_q$. 
\end{defi}
 We use the shorthand notation $M^+ =(M, d_+)$ for $M$ considered as a chain complex with differential $d_+$. A matrix factorization with potential zero is also a chain complex with respect to $d_{tot}=d_+ + d_-$, which implies that $d_+$ and $d_-$ anti-commute. In this case, we write $H_{tot}(M) := H_*(M, d_{tot})$ for the total homology, but we could also consider just the positive homology with its induced $d_-^*$ differential $(H^+(M),d_-^*):=(H_*(M^+),d_-^*)$. 

\begin{defi}\label{def-KoszulMF} Let $\mathbf{a}=\{a_1,\dots,a_n\}$ and $\mathbf{b}=\{b_1,\dots,b_n\}$ be sequences of elements of $\cal{R}$, then the \emph{Koszul matrix factorization} of $\mathbf{a}$ and $\mathbf{b}$ is defined as:
\[\cal{Z}_{\mathbf{a},\mathbf{b}}:=\bigotimes_{i=1}^n \left(\cal{R} \mathrel{\mathop{\rightleftarrows}^{b_i}_{a_i}} \uwave{\cal{R}} \right)\]
This is a matrix factorization with potential $\sum_{i=1}^n a_ib_i$. If $\cal{R}$ is graded and the $b_i$ homogeneous, we always assume that degrees are shifted to make $d_+$ homogeneous of internal degree $2$. Note that then $\cal{Z}^+_{\mathbf{a},\mathbf{b}}=\cal{Z}_{\mathbf{b}}$. It is common to encode the differentials of $\cal{Z}_{\mathbf{a},\mathbf{b}}$ in a $n \times 2$ matrix with columns $\mathbf{a}$ and $\mathbf{b}$. Then the \emph{rows} of this matrix correspond to the tensor factors in the defining equation. 
\end{defi}

\begin{lem} \cite[Section 3.3, \emph{row operations}]{Ras1} \label{lem-rowops} Let $a_1,a_2,b_1,b_2,c \in \cal{R}$. Then there is an isomorphism of $\Z$-graded matrix factorizations: $\cal{Z}_{a_1,b_1} \otimes \cal{Z}_{a_2,b_2} \cong \cal{Z}_{a_1+c a_2,b_1}\otimes \cal{Z}_{a_2,b_2-c b_1}$.
\end{lem}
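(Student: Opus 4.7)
The plan is to construct an explicit isomorphism as a unimodular change of basis on the rank-$2$ free $\cal{R}$-module sitting in horizontal degree $-1$ of $\cal{Z}_{a_1,b_1}\otimes\cal{Z}_{a_2,b_2}$. As a preliminary sanity check, the potentials agree:
\[
(a_1+ca_2)b_1+a_2(b_2-cb_1)=a_1b_1+a_2b_2,
\]
so both sides of the claimed isomorphism are matrix factorizations of the same potential.

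First I would record the underlying structure. Denote by $\epsilon_1,\epsilon_2$ the canonical generators of horizontal degree $-1$ in the two tensor factors, so the underlying bigraded $\cal{R}$-module of the left-hand side is $\cal{R}\langle 1,\epsilon_1,\epsilon_2,\epsilon_1\wedge\epsilon_2\rangle$, with $d_+(\epsilon_i)=b_i$, $d_-(1)=a_1\epsilon_1+a_2\epsilon_2$, and the extensions to the top wedge degree dictated by the Leibniz rule (with the appropriate Koszul signs from Definition~\ref{def-KoszulMF}). The right-hand side has the same underlying module, which I will write with primed generators $\epsilon'_1,\epsilon'_2$, equipped with differentials $d'_+(\epsilon'_1)=b_1$, $d'_+(\epsilon'_2)=b_2-cb_1$, and $d'_-(1)=(a_1+ca_2)\epsilon'_1+a_2\epsilon'_2$.

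The key step is to declare the change of basis $\epsilon'_1:=\epsilon_1$, $\epsilon'_2:=\epsilon_2-c\,\epsilon_1$ and extend it $\cal{R}$-linearly to a homogeneous automorphism $\phi$ of the whole tensor product, with $\phi(1)=1$ and $\phi(\epsilon'_1\wedge\epsilon'_2)=\epsilon_1\wedge(\epsilon_2-c\epsilon_1)=\epsilon_1\wedge\epsilon_2$, where the last equality uses $\epsilon_1^2=0$. Checking that $\phi$ intertwines the differentials then splits into two short calculations. For $d_+$, one has $d_+\phi(\epsilon'_1)=b_1$ and $d_+\phi(\epsilon'_2)=b_2-cb_1$, matching $d'_+$ on the respective generators, and one verifies compatibility on $\epsilon_1\wedge\epsilon_2$ by expanding both $b_1\epsilon'_2-(b_2-cb_1)\epsilon'_1$ and $b_1\epsilon_2-b_2\epsilon_1$ and using the change of basis. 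For $d_-$, expanding $d_-(1)=a_1\epsilon_1+a_2\epsilon_2$ in the $\epsilon'_i$-basis gives $(a_1+ca_2)\epsilon'_1+a_2\epsilon'_2=d'_-(1)$; the corresponding check on $\epsilon'_1$ and $\epsilon'_2$ is again a direct substitution.

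There is no real obstacle here — the argument is a purely formal manipulation on a tensor product of two two-term factorizations. The only thing to be careful about is tracking Koszul signs when extending $\phi$ to the top wedge degree and, at the level of $\Z$-gradings, ensuring the parameter $c$ is assigned the internal degree that makes $\epsilon'_2=\epsilon_2-c\epsilon_1$ homogeneous (namely $|c|=|b_2|-|b_1|=|a_1|-|a_2|$, as forced by the grading convention making $d_+$ of internal degree $2$).
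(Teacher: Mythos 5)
Your proof is correct and is essentially the argument the paper delegates to Rasmussen: the lemma is cited without proof, and the standard proof of the \emph{row operations} is exactly your unimodular change of basis $\epsilon'_1=\epsilon_1$, $\epsilon'_2=\epsilon_2-c\,\epsilon_1$ on the rank-two piece, extended multiplicatively to the wedge. Your grading bookkeeping (homogeneity of $c$ with $|c|=|b_2|-|b_1|=|a_1|-|a_2|$) is also the right condition for the isomorphism to be one of $\Z$-graded matrix factorizations.
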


More generally, we can obtain isomorphisms of larger Koszul matrix factorizations by applying a sequence of such row operations on pairs of tensor factors in them. Recall that a sequence of elements $\mathbf{b}=\{b_1,\dots, b_k\}$ of $\cal{R}$ is \emph{regular} if for $1\leq i\leq k$ the element $b_i$ is not a zero-divisor in $\cal{R}/\la b_1,\dots b_{i-1}\ra$. It is easy to see that $H^+(\cal{Z}_{\mathbf{b}}) \cong \cal{R}/\la \mathbf{b}\ra$, concentrated in horizontal degree zero. A generalization of this fact is provided by the following result of Webster.

\begin{prop}\cite[Section 1.2]{Web1} \label{prop-simplify} Let $\cal{Z}_{\mathbf{a},\mathbf{b}}$ be a Koszul matrix factorization over $\cal{R}$ with potential $\phi$. Suppose that $\mathbf{b}$ is a regular sequence in $\cal{R}$. Let $M$ be a projective matrix factorization over $\cal{R}$ with potential $-\phi$. Then the chain map $\pi \colon \cal{Z}^+_{\mathbf{a},\mathbf{b}} \to H^+(\cal{Z}_{\mathbf{a},\mathbf{b}})\cong \cal{R}/\la \mathbf{b}\ra $ induces the following isomorphisms of $\cal{R}$-modules:
\begin{gather*}
H^+(\cal{Z}_{\mathbf{a},\mathbf{b}} \otimes M) \cong H^+(\cal{R}/ \la \mathbf{b}\ra \otimes M )\\
H_{tot}(\cal{Z}_{\mathbf{a},\mathbf{b}} \otimes M) \cong H_{tot}(\cal{R}/ \la \mathbf{b}\ra \otimes M )
\end{gather*}
\end{prop}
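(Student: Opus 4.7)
The plan is to establish both isomorphisms by the map $\pi \otimes \mathrm{id}_M$ through a two-step argument: first the easier $d_+$-identification, then an upgrade to $d_{\mathrm{tot}}$ via a cone-and-contraction argument.

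For the first isomorphism, the regularity of $\mathbf{b}$ ensures that the underlying $d_+$-complex $\cal{Z}^+_{\mathbf{a},\mathbf{b}} = \cal{Z}_{\mathbf{b}}$ is the Koszul resolution of $\cal{R}/\la\mathbf{b}\ra$ by free (hence projective) $\cal R$-modules. The map $\pi$ is precisely the canonical augmentation of this resolution, and so a quasi-isomorphism of $d_+$-complexes. Projectivity of the underlying module of $M$ implies flatness, so tensoring with $M$ preserves $d_+$-quasi-isomorphisms. Concretely, viewing $\cal{Z}_{\mathbf{a},\mathbf{b}}^+ \otimes M^+$ as the totalization of the double complex with rows $\cal{Z}_{\mathbf{b}} \otimes M^+_k$, each row is exact above horizontal degree zero and gives $\cal{R}/\la\mathbf{b}\ra \otimes M^+_k$ in degree zero, so the induced map on totalizations is a quasi-isomorphism, yielding $H^+(\cal{Z}_{\mathbf{a},\mathbf{b}} \otimes M) \cong H^+(\cal{R}/\la\mathbf{b}\ra \otimes M)$.

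For the second isomorphism, first observe that $\cal{R}/\la\mathbf{b}\ra$ carries the trivial matrix factorization structure with potential $\phi$, which is well defined since $\phi = \sum_i a_i b_i \in \la\mathbf{b}\ra$ acts as zero on the quotient. The map $\pi$ is then a morphism of matrix factorizations with potential $\phi$: the condition $\pi \circ d_-^{\cal Z} = 0$ holds because $d_-^{\cal Z}$ strictly decreases horizontal degree while $\pi$ is supported in horizontal degree zero. Hence $\pi \otimes \mathrm{id}_M$ is a chain map of $d_{\mathrm{tot}}$-complexes, and its mapping cone $C$ is a matrix factorization of zero potential. By the first part, $H^+(C) = 0$.

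The remaining claim is that, for such $C$, vanishing of $H^+$ forces vanishing of $H_{\mathrm{tot}}$. This is the main (albeit standard) obstacle and is handled by a homological perturbation argument. Projectivity of the underlying modules together with $d_+$-acyclicity produces a contracting homotopy $h$ of horizontal degree $-1$ satisfying $d_+ h + h d_+ = \mathrm{id}$. One then defines the perturbed homotopy
\begin{equation*}
h' \;=\; \sum_{k \geq 0} (-1)^k\, (h\, d_-^{C})^k\, h,
\end{equation*}
and verifies directly that $d_{\mathrm{tot}}\, h' + h'\, d_{\mathrm{tot}} = \mathrm{id}$. Because $\cal{Z}_{\mathbf{a},\mathbf{b}}$ has horizontal grading concentrated in $\{0, \dots, n\}$, the same holds for $C$, and each iterate $(h\,d_-^C)^k$ lowers horizontal degree by $2k$; the sum is thus finite on each element and $h'$ is well defined. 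This proves $H_{\mathrm{tot}}(C) = 0$, and the long exact sequence associated to the cone gives the desired isomorphism $H_{\mathrm{tot}}(\cal{Z}_{\mathbf{a},\mathbf{b}} \otimes M) \cong H_{\mathrm{tot}}(\cal{R}/\la\mathbf{b}\ra \otimes M)$. The only nontrivial point is verifying that $\pi\otimes\mathrm{id}_M$ literally realizes this abstract isomorphism, which follows since the isomorphism is induced by the explicit chain map $\pi\otimes\mathrm{id}_M$.
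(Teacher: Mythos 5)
The paper gives no proof of this proposition (it is quoted from Webster), so I am assessing your argument on its own terms. Your first half is correct: $\cal{Z}^+_{\mathbf{a},\mathbf{b}}=\cal{Z}_{\mathbf{b}}$ is the Koszul resolution of $\cal{R}/\la\mathbf{b}\ra$ by finite free modules, flatness of the terms of $M$ kills the higher Tor in each row, and the (convergent, since the Koszul direction is bounded) spectral sequence of the double complex identifies $H^+(\cal{Z}_{\mathbf{a},\mathbf{b}}\otimes M)$ with $H^+(\cal{R}/\la\mathbf{b}\ra\otimes M)$ via $\pi\otimes\mathrm{id}_M$. The reduction of the second isomorphism to showing $H_{tot}(C)=0$ for the cone $C$ of $\pi\otimes\mathrm{id}_M$, using that $\pi$ is a morphism of matrix factorizations and that $H^+(C)=0$, is also sound.

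The gap is in the perturbation step. You assert that projectivity of the underlying modules plus $d_+$-acyclicity produces an $\cal{R}$-linear contracting homotopy $h$ for $(C,d_+)$. But $C$ is not a complex of projectives: half of its terms are of the form $\cal{R}/\la\mathbf{b}\ra\otimes M_j$, and already for $n=1$, $M=\cal{R}$ the cone is $0\to\cal{R}\xrightarrow{b}\cal{R}\to\cal{R}/(b)\to 0$, which is exact but not split, so no $\cal{R}$-linear contraction exists; acyclicity alone never yields one. In addition, your claim that $C$ has horizontal support in $\{0,\dots,n\}$ is false, since the support of $C$ also involves that of $M$; what you actually need is that $M$ is bounded (true for every Koszul factorization used in the paper, but not part of the stated hypotheses), so that the series $\sum_k(-1)^k(hd_-)^kh$ is locally finite. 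Two repairs are available. First, since $\cal{R}$ is a $\C$-algebra, every acyclic complex of $\C$-vector spaces admits a $\C$-linear contraction; the perturbed $h'$ is then merely $\C$-linear, but vanishing of $H_{tot}(C)$ as a vector space is all you need, after which the exact triangle of the cone produces the $\cal{R}$-linear isomorphism induced by $\pi\otimes\mathrm{id}_M$. Second, and more elementarily, one can dispense with homotopies: for a $d_{tot}$-cycle $x=\sum_kx_k$ with top horizontal component $x_{k_0}$, one has $d_+x_{k_0}=0$, hence $x_{k_0}=d_+y$ by acyclicity of $(C,d_+)$, and $x-d_{tot}y$ is a cohomologous cycle of strictly smaller top degree; bounded-below horizontal degrees force this descent to terminate at $0$. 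Either way, the boundedness assumption on $M$ should be made explicit.
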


\begin{cor}\label{cor_plusminus} Under the assumptions of Proposition~\ref{prop-simplify}, there is an isomorphism of chain complexes of $\cal{R}$-modules:
\[(H^+(\cal{Z}_{\mathbf{a},\mathbf{b}} \otimes M),d_-^*) \cong (H^+(\cal{R}/ \la \mathbf{b}\ra \otimes M ),d_-^*)\]
\end{cor}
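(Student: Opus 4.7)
The plan is to upgrade the statement of Proposition~\ref{prop-simplify} by tracking the $d_-$--structure through the $d_+$--quasi-isomorphism $\pi\otimes\id_M$. First I would observe that the statement is well-posed: because $\phi=\sum a_i b_i$ lies in $\langle\mathbf{b}\rangle$, we can equip $\cal{R}/\langle\mathbf{b}\rangle$ with the structure of a matrix factorization of potential $\phi$ by giving it trivial differentials. Consequently both $\cal{Z}_{\mathbf{a},\mathbf{b}}\otimes M$ and $\cal{R}/\langle\mathbf{b}\rangle\otimes M$ are matrix factorizations with total potential $0$, so $d_+$ and $d_-$ anti-commute on each side and $d_-$ descends to a well-defined differential $d_-^\ast$ on $H^+$.

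Next I would verify that the map $\pi\otimes\id_M$ used in Proposition~\ref{prop-simplify} intertwines the two copies of $d_-^\ast$. Since $\pi$ vanishes outside horizontal degree zero in $\cal{Z}$, the map $\pi\otimes\id_M$ preserves the horizontal-degree filtration and is already a $d_+$--chain map. Unwinding the definition of $d_-^\ast$, the obstruction to it being a $d_-$--chain map on cocycles is
\[ [(\pi\otimes\id_M)\circ d_- - d_-\circ(\pi\otimes\id_M)](x)=(\pi\circ d_-^{\cal{Z}})\otimes\id_M(x), \]
since the contribution $\id\otimes d_-^M$ commutes with $\pi\otimes\id_M$ tautologically and the $d_-^{\cal{R}/\langle\mathbf{b}\rangle}$-contribution is zero. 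The corollary therefore reduces to showing that for every $d_+$--cocycle $x\in\cal{Z}_{\mathbf{a},\mathbf{b}}\otimes M$ the element $(\pi\circ d_-^{\cal{Z}})\otimes\id_M(x)$ represents the zero class in $H^+(\cal{R}/\langle\mathbf{b}\rangle\otimes M)$.

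This final check is where the hypotheses really come into play, and I expect it to be the main obstacle. The strategy is a Koszul computation: combine the matrix-factorization identity $d_+^{\cal{Z}}d_-^{\cal{Z}}+d_-^{\cal{Z}}d_+^{\cal{Z}}=\phi\cdot\id$ on $\cal{Z}$ with the fact that $\phi$ acts as zero on $\cal{R}/\langle\mathbf{b}\rangle$, and use that $\cal{Z}^+_{\mathbf{a},\mathbf{b}}$ is a Koszul resolution of $\cal{R}/\langle\mathbf{b}\rangle$ (by regularity of $\mathbf{b}$) to construct an explicit $d_+$--primitive for $(\pi\circ d_-^{\cal{Z}})\otimes\id_M(x)$ from a Koszul contracting homotopy applied to $x$. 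The projectivity of $M$ is used to ensure that these constructions carry through the tensor product with $M$, exactly as in the proof of Proposition~\ref{prop-simplify}; equivalently, it guarantees the degeneration of the Künneth-style spectral sequence that compares the $H^+$'s of the two tensor products. Once this compatibility with $d_-^\ast$ is established, the corollary follows immediately from the $H^+$--isomorphism of Proposition~\ref{prop-simplify}.
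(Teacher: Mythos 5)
Your reduction is the same one the paper uses: both proofs come down to checking that $\pi\otimes\id_M$ intertwines the negative differentials, and then the module isomorphism of Proposition~\ref{prop-simplify} automatically upgrades to an isomorphism of complexes $(H^+,d_-^*)$. The problem is that you stop exactly at the decisive point. You correctly isolate the obstruction as $(\pi\circ d_-^{\cal{Z}})\otimes\id_M$, but then declare showing its vanishing (in cohomology) to be ``the main obstacle'' and only outline a Koszul-contracting-homotopy strategy without carrying it out. As written, the proof is therefore incomplete at its one nontrivial step.

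What you are missing is that the obstruction vanishes identically, for elementary degree reasons, so no homotopy construction is needed. With the paper's conventions, $\cal{Z}^+_{\mathbf{a},\mathbf{b}}$ is supported in horizontal degrees $-n,\dots,0$, and since $\mathbf{b}$ is regular its positive homology $\cal{R}/\la\mathbf{b}\ra$ sits entirely in the top horizontal degree $0$; the projection $\pi$ is thus zero in all horizontal degrees below $0$. Since $d_-^{\cal{Z}}$ lowers horizontal degree by one, its image lies in degrees $\leq -1$, whence $\pi\circ d_-^{\cal{Z}}=0$ on the nose. In other words, $\pi$ is already a morphism of matrix factorizations (intertwining $d_-$ on the source with the trivial negative differential on $\cal{R}/\la\mathbf{b}\ra$), hence so is $\pi\otimes\id_M$, and the induced map on $H^+$ strictly commutes with $d_-^*$ --- not merely up to a coboundary on cocycles. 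This is precisely the paper's argument. Your proposed detour through a Koszul primitive for $(\pi\circ d_-^{\cal{Z}})\otimes\id_M(x)$ is not wrong (it would be a primitive for the zero element), but it obscures the fact that the statement you need is strictly stronger and completely formal; once you note the degree argument, the projectivity of $M$ and the ``K\"unneth degeneration'' you invoke play no role in this corollary beyond what Proposition~\ref{prop-simplify} already supplies.
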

\begin{proof} The chain map $\pi \colon \cal{Z}^+_{\mathbf{a},\mathbf{b}} \to H^+(\cal{Z}_{\mathbf{a},\mathbf{b}})\cong \cal{R}/\la \mathbf{b}\ra $ also intertwines the negative differential $d_-$ on $\cal{Z}_{\mathbf{a},\mathbf{b}}$ with the trivial negative differential on $\cal{R}/\la \mathbf{b}\ra$, i.e. it is a morphism of matrix factorizations. Then $\pi\otimes 1\colon \cal{Z}_{\mathbf{a},\mathbf{b}} \otimes M \to \cal{R}/ \la \mathbf{b}\ra \otimes M$ also intertwines the negative differentials. The induced map on positive homology is an isomorphism on modules which intertwines the induced differentials $d_-^*$, i.e. it is an isomorphism of chain complexes.
\end{proof}

\begin{defi} Let $\cal{Z}_{\mathbf{a},\mathbf{b}}$ be as in Proposition~\ref{prop-simplify} and $M_1$ a projective matrix factorization over $\cal{R}$ (not necessarily with potential $-\phi$), then we consider $\cal{R}/ \la \mathbf{b}\ra \otimes M_1$ as a matrix factorization over $\cal{R}/ \la \mathbf{b}\ra$ and call it a \emph{simplification} of $\cal{Z}_{\mathbf{a},\mathbf{b}} \otimes M_1$.
\end{defi} 

Proposition~\ref{prop-simplify} and Corollary~\ref{cor_plusminus} imply that Koszul matrix factorizations appearing as tensor factors in a (projective) matrix factorization with potential zero can be replaced by their simplifications without changing the $H^+$ homology with its induced negative differential or the $H_{tot}$ homology or the $\cal{R}$-module structures on them. 

We now recall how to assign Koszul matrix factorizations to webs, which can be used to compute $\glnn{N}$ link homologies, see \cite{Wu1, Wu2}\footnote{Note however that we use $\Z$-graded matrix factorizations instead of $\Z_2$-graded ones.}. For this, let $W$ be a web and associate to every edge in $W$ with label $l$ an alphabet $\Y$ with $|\Y|=l$. The alphabets adjacent to the left and right boundary we again denote by $\X_i$ and $\X'_i$. We consider the ring $\cal{R}_W$ of polynomials separately symmetric in these alphabets. For a polynomial $T(X)=\sum_{i=0}^k c_i X^i \in \C[X]$ we set $T(\X) = \sum_{i=0}^k c_i p_i(\X)$ where $p_i(\X)$ denotes the $i^{th}$ power sum symmetric polynomial in the alphabet $\X$. Recall that we denote by $P(X)\in \C[X]$ the monic polynomial with root multiset $\Sigma$ and let $Q(X)\in \C[X]$ be such that $Q'(X)=(N+1)P(X)$ and $Q(0)=0$. 

\begin{defi} To every web $W$ we associate a Koszul matrix factorization $\Mf(W)$ over $\cal{R}_W$ with potential $Q(\bigsqcup \X'_j)-Q(\bigsqcup \X_j)$. We first describe it in special cases: 
\begin{itemize}
\item If $W$ is a single strand of label $a$ without incident vertices, then $\Mf(W)$ has positive sequence $e_j(\X'_1 ) - e_j(\X_1)$ for $1\leq j \leq a$ and the negative sequence is chosen\footnote{It is easy to see that this is possible \cite[Section 3.2]{Wu2}, the specific choice is eirrelevant for our discussion.} to generate the potential $Q(\X'_1)-Q(\X_1)$.
\item If $W$ is a merge web of incoming strands of thickness $a$ and $b$ then $\Mf(W)$ has the positive sequence consisting of the entries $e_j(\X'_1 ) - e_j(\X_1 \sqcup \X_2)$ for $1\leq j \leq a+b$ and the negative sequence is chosen to generate the potential $Q(\X'_1)-Q(\X_1\sqcup \X_2)$. 

\item Analogously, the Koszul matrix factorization associated to the split web has the positive sequence with entries $e_j(\X'_1 \sqcup \X'_2)-e_j(\X_1 )$ and an appropriate negative sequence.
\end{itemize}
For any other web $W$, the Koszul matrix factorization $\Mf(W)$ is defined over $\cal{R}_W$ and consists of all the rows appearing in the Koszul factorizations associated to the merge and split vertices and non-interacting strands in $W$. 
\end{defi}

\begin{lem}\label{lem-singlegrad} Let $W$ be a web in $\LC{\cal{B}}{}{}$. Then $\Mf(W)$ simplifies to $H^+(\Mf(W))\cong \Dec(W)$  and this simplification is concentrated in horizontal degree zero. 
\end{lem}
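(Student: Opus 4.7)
The positive part $\Mf(W)^+$ is, by definition, the Koszul complex $\cal{Z}_{\mathbf{y}}$ over $\cal{R}_W$ whose defining sequence $\mathbf{y}$ is assembled from the rows associated to each merge vertex, split vertex, and non-interacting identity edge in $W$. Concretely, at each such vertex or strand one obtains relations of the form $e_j(\X'_{out}) - e_j(\X^{in}_1 \sqcup \X^{in}_2)$ (or $e_j(\X'_1 \sqcup \X'_2) - e_j(\X^{in})$, or $e_j(\X') - e_j(\X)$) for appropriate ranges of $j$. The plan is to show (i) that $\mathbf{y}$ is a regular sequence in $\cal{R}_W$, so that $\cal{Z}_{\mathbf{y}}$ is a free resolution of $\cal{R}_W/\la\mathbf{y}\ra$ concentrated in horizontal degree zero, and (ii) that this quotient is canonically isomorphic to $\Dec(W)$ as an $\cal{R}_\cal{B} \otimes \cal{R}_\cal{B}$-module.

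For (i), I would exploit that webs in $\LC{\cal{B}}{}{}$ come from braid resolutions, so they are planar, acyclic, and can be built up by processing one vertex at a time following the orientation from the right boundary to the left. Order the rows of $\mathbf{y}$ accordingly. At each step, the new block of relations expresses the $|\X'_{out}|$ elementary symmetric polynomials of a newly introduced output alphabet in terms of symmetric polynomials in alphabets already occurring in the ambient ring. Since the elementary symmetric polynomials in an $|\X'_{out}|$-element alphabet form a regular sequence in any polynomial algebra containing them as algebraically independent generators, adjoining this block preserves regularity. Iterating over all vertices and identity edges, the full sequence $\mathbf{y}$ is regular in $\cal{R}_W$, whence $\cal{Z}_{\mathbf{y}}$ is a resolution of $\cal{R}_W/\la\mathbf{y}\ra$ living entirely in horizontal degree zero.

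For (ii), I would compare directly with the tensor-product presentations of the decoration bimodules exhibited in Example~\ref{exa-bimodules}. The ring $\cal{R}_W$ is by construction the polynomial algebra in the symmetric functions of all edge alphabets of $W$, and the ideal $\la\mathbf{y}\ra$ is generated by exactly the relations $e_j(\X'_{out}) = e_j(\sqcup \X^{in})$ that are imposed at each vertex when forming $\Dec(W)$ as an iterated tensor product over symmetric-polynomial subalgebras. Since the tensor products in Example~\ref{exa-bimodules} amount to identifying the two presentations of each internal edge's symmetric functions, an elementary comparison yields the desired isomorphism $\cal{R}_W/\la\mathbf{y}\ra \cong \Dec(W)$ as $\cal{R}_\cal{B}\otimes\cal{R}_\cal{B}$-modules.

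The main obstacle is the regularity assertion in (i); the inductive reduction above turns it into the standard fact that elementary symmetric polynomials in an independent alphabet form a regular sequence, which is why it is essential that $W$ arises from a braid cube of resolutions (planarity and acyclicity provide the vertex ordering) rather than an arbitrary web with, say, oriented cycles.
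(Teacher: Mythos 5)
Your proof is correct and follows essentially the same route as the paper's: check that the positive sequence is regular (so $H^+$ sits in horizontal degree zero and equals the quotient ring) and then match the resulting ideal with the decoration-migration relations defining $\Dec(W)$. The only difference is that you spell out the regularity check via a topological ordering of the vertices, which the paper dismisses as ``easy to check''; your elimination argument (including the split-vertex case, where one uses that $\Sym(\X'_1|\X'_2)$ is free over $\Sym(\X'_1\sqcup\X'_2)$) is a valid way to fill in that detail.
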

\begin{proof} It is easy to check that the positive sequence  of $\Mf(W)$ is regular, and so the positive homology is concentrated in degree zero and isomorphic to the quotient of the ground ring by the ideal generated by the elements in the sequence. Identifying the ring $\cal{R}_W$ with the possible placements of decorations on the identity foam on $W$, the entries of the positive sequence generate the same ideal as relations~\eqref{eqn-decmigr}. This identifies $H^+(\Mf(W))$ with $\Dec(W)$. 
\end{proof}

\begin{rem}
Lemma~\ref{lem-singlegrad} shows that we can view $\Dec(W)$ as a matrix factorization over $\cal{R}_{\cal{B}}\otimes \cal{R}_\cal{B}$ with potential $\phi := Q(\bigsqcup \X_j)-Q(\bigsqcup \X'_j)$ and trivial differentials. To see this directly, note that since $H^+(\Mf(W))$ is concentrated in horizontal degree zero, we have $\Dec(W) \cong \Mf_0(\cal{B})/\mathrm{Im}~d_+$. Then $\phi\,\mathrm{id}_{\Mf_h(\cal{B})}= d_+ d_-$ and so the element $\phi$ is in the image of $d_+$ and acts by zero.
\end{rem}

In \cite[Section 4.4]{RW} Rose and the author show that there exists a 2-functor from $\Foam{N}^\Sigma$ to a 2-category of ($\Z_2$-graded) matrix factorizations with potential modelled on $Q$. Under this 2-functor, the web $W$ is sent to the matrix factorization $\Mf(W)$ (with the horizontal grading taken modulo $2$). More generally, for a braid $\cal{B}$, the complex $\LC{\cal{B}}{\Sigma}{}$ is sent to a \emph{cube of resolutions} complex $\Mf(\cal{B})$ of matrix factorizations and morphisms between them. This complex agrees up to isomorphism and grading conventions with the complex used in Wu's construction of (deformed) colored $\glnn{N}$ link homologies in \cite{Wu1,Wu2}. We will not describe the differentials in this complex more explicitly, as Lemma~\ref{lem-singlegrad} implies that it suffices to work with $\Dec(\LC{\cal{B}}{}{})$, see Theorem~\ref{thm-Webster}. \comm{Why? The $\Z$-graded matrix factorizations are free resolutions of Soergel bimodules. Thus, maps of the modules induce unique (up to homotopy) maps between the resolutions.}

We fix the notation $\cal{Z}_{\mathbf{y},\mathbf{x}}:= \Mf(1^{op}_{\mathbf{o}_\cal{B}})$ for the Koszul matrix factorization assigned to the oppositely oriented identity web on $\mathbf{o}_\cal{B}$, i.e. with with alphabets $\X_j$ at the left and alphabets $\X'_j$ at the right boundary. It is defined over $\cal{R}_{\cal{B}}\otimes \cal{R}_{\cal{B}}$, has potential $Q(\bigsqcup_j \X_j)-Q(\bigsqcup_j\X'_j)$ and the positive sequence $\mathbf{x}$ consists of entries $e_l(\X_j)-e_l(\X'_j)$ for $1\leq j \leq m$ and $1\leq l \leq o_j$.

\begin{defi} 
The matrix factorization associated to the labelled link diagram $\cal{L}$ given as closure of a balanced labelled braid diagram $\cal{B}$ is a complex of Koszul matrix factorizations with potential zero defined as
\[\Mf(\cal{L}):= \Mf(\cal{B})\otimes \cal{Z}_{\mathbf{y},\mathbf{x}}.\] 
 Further, we define the functor
\[WH^\Sigma_*(\cdot) := H_{tot}(\cdot \otimes \cal{Z}_{\mathbf{y},\mathbf{x}})\] from the category of matrix factorizations over $\cal{R}_{\cal{B}}\otimes \cal{R}_{\cal{B}}$ with potential $Q(\bigsqcup_j \X'_j)-Q(\bigsqcup_j\X_j)$ to the category of (bi-)graded $\cal{R}_{\cal{B}}$-modules. 
\end{defi}

The functor $WH^\Sigma_*(\cdot)$ is a matrix factorization analogue of the Hochschild homology functor $HH_*(\cdot)$ which depends on the multiset $\Sigma$, see Lemma~\ref{lem-Koszul}. It has been introduced by Webster in \cite{Web1}. Clearly, $WH^\Sigma_*(\cdot)$ extends to chain complexes of matrix factorizations, in which case we denote the induced \emph{vertical differential} by $d_v^*$ and the new homological, a.k.a.~\emph{vertical grading} by $\mathrm{gr}_v$.

\begin{thm}\label{thm-Webster} Let $\cal{B}$ be a balanced labelled braid diagram and $\cal{L}$ its closure. Then we have:
\begin{equation}
\label{eqn-Webster}
\LH{\cal{L}}{\Sigma}{} \cong H_*(H_{tot}(\Mf(\cal{L})),d_v^*) \cong H_*(WH^\Sigma_*(\Dec(\LC{\cal{B}}{}{})),d_v^*)
\end{equation}
\end{thm}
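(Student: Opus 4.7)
The proof breaks into two isomorphisms. For the first, one identifies the complex $\Mf(\cal{L}) = \Mf(\cal{B}) \otimes \cal{Z}_{\mathbf{y},\mathbf{x}}$ with (up to conventions) Wu's matrix factorization cube of resolutions for the link diagram obtained from $\cal{B}$ by closure: the Koszul factor $\cal{Z}_{\mathbf{y},\mathbf{x}}$ encodes exactly the relations $e_l(\X_j)=e_l(\X'_j)$ that implement the pairwise identification of top and bottom endpoint alphabets required by the braid closure. Taking $H_{tot}$ at each chain space and then $d_v^*$-homology gives, by definition, Wu's $\Sigma$-deformed Khovanov--Rozansky homology, which is identified with $\LH{\cal{L}}{\Sigma}$ via Theorem~\ref{thm_itsderKhR}. (Alternatively, the matrix-factorization 2-functor from $\foam{N}^\Sigma$ constructed in \cite[Section 4.4]{RW} directly yields this complex as the image of $\LC{\cal{L}}{\Sigma}{}$, and the result follows from the definition of $\LH{\cal{L}}{\Sigma}$.)

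For the second isomorphism, one simplifies object-wise. Fix a web $W$ appearing in $\LC{\cal{B}}{}{}$ and look at $\Mf(W) \otimes \cal{Z}_{\mathbf{y},\mathbf{x}}$. By Lemma~\ref{lem-singlegrad}, the positive sequence of $\Mf(W)$ is regular and $H^+(\Mf(W)) \cong \Dec(W)$ is concentrated in horizontal degree zero; moreover $\cal{Z}_{\mathbf{y},\mathbf{x}}$ is a projective matrix factorization whose potential is minus that of $\Mf(W)$ after identifying left-boundary alphabets. Proposition~\ref{prop-simplify} then supplies a natural isomorphism $H_{tot}(\Mf(W) \otimes \cal{Z}_{\mathbf{y},\mathbf{x}}) \cong H_{tot}(\Dec(W) \otimes \cal{Z}_{\mathbf{y},\mathbf{x}}) = WH^\Sigma_*(\Dec(W))$, with compatibility of the induced negative differentials guaranteed by Corollary~\ref{cor_plusminus}. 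Thus, object-by-object, the chain spaces of $H_{tot}(\Mf(\cal{L}))$ and $WH^\Sigma_*(\Dec(\LC{\cal{B}}{}{}))$ are identified.

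It remains to show that the vertical differentials are matched. The edge maps of $\Mf(\cal{B})$ -- i.e. the cube-of-resolutions differentials in the matrix factorization picture -- are by construction (the matrix-factorization incarnation of Proposition~\ref{prop-bimodconstr} established in \cite[Section 4.4]{RW}) lifts, along the quasi-isomorphisms $\Mf(W) \to \Dec(W)$ of positive complexes, of the corresponding bimodule maps in $\Dec(\LC{\cal{B}}{}{})$. Since any two such lifts between projective resolutions are chain homotopic, the squares relating edge maps before and after simplification commute up to homotopy; they therefore induce the same maps on $d_v^*$-homology, yielding the stated isomorphism. The main obstacle is precisely this last step of checking naturality of simplification with respect to the crossing-induced vertical differentials; it is essentially a projective-resolution bookkeeping argument, but it depends crucially on the explicit setup of the matrix-factorization 2-functor from $\foam{N}^\Sigma$ established in \cite[Section 4.4]{RW}, without which the edge maps in $\Mf(\cal{B})$ would not be canonically determined by the foams in $\LC{\cal{B}}{}{}$ up to homotopy.
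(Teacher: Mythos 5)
Your proof is correct and follows essentially the same route as the paper: the first isomorphism is the identification of $\LH{\cal{L}}{\Sigma}$ with Wu's deformed homology via Theorem~\ref{thm_itsderKhR} (i.e.\ \cite[Theorem 2]{RW}), and the second comes from simplifying $\Mf(\cal{B})$ to $\Dec(\LC{\cal{B}}{}{})$ using Proposition~\ref{prop-simplify} and Corollary~\ref{cor_plusminus} before tensoring with $\cal{Z}_{\mathbf{y},\mathbf{x}}$. Your explicit homotopy-lifting argument for the compatibility of the vertical differentials is a point the paper's own proof leaves implicit (deferring to the 2-functor of \cite[Section 4.4]{RW}), so this is a welcome elaboration rather than a deviation.
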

\begin{proof}
First of all, we note that~\cite[Theorem 2]{RW} identifies $\LH{\cal{L}}{\Sigma}{} $ with Wu's $\Sigma$-deformed Khovanov Rozansky $\glnn{N}$ homology, which can be computed as
\[\LH{\cal{L}}{\Sigma}{} \cong H_*(H_{tot}(\Mf(\cal{L})),d_v^*)\]
where the inner homology is with respect to the total matrix factorization differential $d_{tot}=d_++d_-$ and the outer homology is with respect to the induced vertical differential $d_v^*$. This establishes the first isomorphism. For the second isomorphism we use that the vertical complex $\Mf(\cal{B})$ simplifies to $H^+(\Mf(\cal{B}))\cong \Dec(\LC{\cal{B}}{}{})$. 
\[H_{tot}(\Mf(\cal{L})) = H_{tot}(\Mf(\cal{B})\otimes \cal{Z}_{\mathbf{y},\mathbf{x}})\cong H_{tot}(\Dec(\LC{\cal{B}}{}{}) \otimes \cal{Z}_{\mathbf{y},\mathbf{x}}) = WH^\Sigma_*(\Dec(\LC{\cal{B}}{}{})\]
and the second claimed isomorphism follows after taking $d_v^*$-homology.
\end{proof}

\begin{rem}
\label{rem-gradings} We need to comment on the behaviour of gradings under the identifications in~\eqref{eqn-Webster}. Up to a global shift, the homological grading of $\LH{\cal{L}}{\Sigma}{}$ simply agrees with the homological grading on $H_*(WH^\Sigma_*(\Dec(\LC{\cal{B}}{}{})),d_v^*)$. In the undeformed case where $\Sigma=\{0,\dots,0\}$, we also would like to recover the quantum grading of $\LH{\cal{L}}{\Sigma}{} \cong \LH{\cal{L}}{\glnn{N}}{}$ on the right-hand side of~\eqref{eqn-Webster}. Note however, that $d_{tot}=d_+ + d_-$ is not even homogeneous with respect to the internal polynomial grading $gr_q$ on $H_*(WH^\Sigma_*(\Dec(\LC{\cal{B}}{}{})),d_v^*)$. Instead one considers the new grading $\mathrm{gr}_N:= \mathrm{gr}_q +  (N-1) \mathrm{gr}_h$, which is preserved by $d_{tot}$ and thus descends to homology, where it can be shown to correspond (up to a global shift) to the quantum grading on the left-hand side of~\eqref{eqn-Webster}, see Rasmussen's~\cite[Proposition 3.12]{Ras1}.
\end{rem}

\begin{thm}\label{thm-Hss} There is a spectral sequence 
\[\LHH{\cal{L}}{} \quad \rightsquigarrow \quad \LH{\cal{L}}{\Sigma}{}\]
whose $k^{th}$ differential lowers the horizontal and homological grading by $k$ and $k-1$ respectively. If $\Sigma=\{0,\dots,0\}$, it increases the internal degree by $2k N$.
\end{thm}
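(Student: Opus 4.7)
The plan is to exhibit $\LH{\cal{L}}{\Sigma}$ as the total cohomology of the bi-complex $(HH_*(\Dec(\LC{\cal{B}}{}{})),\, d_v^*,\, d_-^*)$ and then extract the desired spectral sequence from a filtration by Hochschild degree, in the spirit of Rasmussen's uncolored construction.

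Combining Theorem~\ref{thm-Webster} with Lemma~\ref{lem-singlegrad}, $\LH{\cal{L}}{\Sigma}$ equals the total cohomology $H_*(C,\, d_v + d_+ + d_-)$, where $C := \Dec(\LC{\cal{B}}{}{}) \otimes \cal{Z}_{\mathbf{y},\mathbf{x}}$. Since $\cal{L}$ is a closed braid, the potential of $\cal{Z}_{\mathbf{y},\mathbf{x}}$ vanishes, so the three differentials $d_v$, $d_+$, $d_-$ are pairwise anticommuting. By Lemma~\ref{lem-Koszul}, $H_*(C, d_+) \cong HH_*(\Dec(\LC{\cal{B}}{}{}))$ is a chain complex in the vertical direction with induced differential $d_v^*$, and because $d_-$ anticommutes with $d_+$, it passes to a differential $d_-^*$ on $HH_*$. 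A standard tri-complex argument (using that the Koszul complex $\cal{Z}_{\mathbf{x}}$ is a free resolution of $\cal{R}_\cal{B}$) identifies
\[
H_*\bigl(HH_*(\Dec(\LC{\cal{B}}{}{})),\; d_v^* + d_-^*\bigr) \;\cong\; \LH{\cal{L}}{\Sigma}.
\]

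Next I would filter $HH_*(\Dec(\LC{\cal{B}}{}{}))$ increasingly by Hochschild degree, setting $F_p := \{x \mid \mathrm{gr}_h(x) \leq p\}$. Both $d_v^*$ (which preserves $\mathrm{gr}_h$) and $d_-^*$ (which lowers it by $1$) respect this filtration. The associated spectral sequence has $E_0 = \mathrm{Gr}^F HH_*$ with $d_0 = d_v^*$, so $E_1 = H_*(HH_*, d_v^*) = \LHH{\cal{L}}{}$. The first higher differential is $d_1 = d_-^*$, and the $d_k$ for $k \geq 2$ are Massey-type differentials built from $k$ successive applications of $d_-^*$ bridged by $k-1$ zigzags through $(d_v^*)^{-1}$. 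Verifying the bidegrees: each $d_-^*$ lowers $\mathrm{gr}_h$ by $1$ and, in the undeformed case, raises $\mathrm{gr}_q$ by $2N$ (since $d_+$ is homogeneous of $\mathrm{gr}_q$-degree $2$ by Definition~\ref{def-KoszulMF} and the potential $\sum y_i x_i$ has polynomial degree $N+1$, forcing each $y_i$ to have $\mathrm{gr}_q$-degree $2N$), while each bridging zigzag through $(d_v^*)^{-1}$ contributes $\Delta\mathrm{gr}_v = -1$. Accumulating over $k$ steps yields the claimed shifts $\Delta\mathrm{gr}_h = -k$, $\Delta\mathrm{gr}_v = -(k-1)$, and $\Delta\mathrm{gr}_q = 2kN$ for $d_k$.

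The main obstacle will be the careful justification of the tri-complex collapse $H_*(HH_*,\; d_v^* + d_-^*) \cong \LH{\cal{L}}{\Sigma}$, which requires showing that the partial spectral sequence obtained by first taking $d_+$-cohomology converges without further extension problems in our matrix-factorization setting. A secondary technical point is the handling of the deformed case, where $d_-^*$ is no longer homogeneous in $\mathrm{gr}_q$ (being instead a sum of pieces of $\mathrm{gr}_q$-degree at most $2N$); this should be accommodated by a Wu-style filtration argument analogous to the proof of Theorem~\ref{thm_ss}, yielding the spectral sequence with the stated grading behavior in both the deformed and undeformed cases.
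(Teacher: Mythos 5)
Your skeleton matches the paper's: filter the double complex $(H^+(\Mf(\cal{L})),d_-^*,d_v^*)\cong(HH_*(\Dec(\LC{\cal{B}}{}{})),d_-^*,d_v^*)$ by horizontal (Hochschild) degree, get $E_1=\LHH{\cal{L}}{}$, and read off the bidegrees of the higher differentials (your conclusion that $d_-^*$ raises $\mathrm{gr}_q$ by $2N$ is right, though the reason is that $d_-$ has degree $2(N+1)-2$ once $d_+$ is normalized to degree $2$, not that each $y_i$ individually has degree $2N$). But there is a genuine gap exactly where you flag "the main obstacle": the identification of the abutment $H_*\bigl(H^+(\Mf(\cal{L})),\,d_-^*+d_v^*\bigr)$ with $\LH{\cal{L}}{\Sigma}=H_*(H_{tot}(\Mf(\cal{L})),d_v^*)$ is \emph{not} a standard tri-complex argument and does not follow from $\cal{Z}_{\mathbf{x}}$ being a free resolution. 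The two objects are different orders of iterated homology of a module with three anticommuting differentials, and in general they disagree; no formal collapse argument closes this.

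What makes it work — and what your proposal is missing — is the keystone Lemma~\ref{lem-singlehor}: $H_*(H^+(\Mf(\cal{L})),d_-^*)$ is concentrated in a \emph{single horizontal grading}. This concentration forces the degeneration needed for both halves of the identification ($H_*(H^+,d_-^*+d_v^*)\cong H_*(H_*(H^+,d_-^*),d_v^*)$ and $H_*(H_*(H^+,d_-^*),d_v^*)\cong H_*(H_{tot},d_v^*)$, following Rasmussen's Propositions 5.10--5.13 in the uncolored case). In the colored setting this concentration is itself a substantial theorem: it is proved by reducing, via a categorified annular web evaluation algorithm (rung slides, rung combination, Frobenius relations, square switches — the content of the paper's appendix), the closure of any web in the cube of resolutions to nested labelled essential circles, for each of which the claim is checked directly, with the horizontal degree $-\sum o_i$ being the same for every collection of circles that appears. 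Without supplying this input (or an equivalent degeneration mechanism), your argument establishes a spectral sequence with the right $E_1$ page but converging to an object you have not identified with $\LH{\cal{L}}{\Sigma}$.
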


\begin{proof}[Sketch proof following Rasmussen \cite{Ras1}] The matrix factorization $\Mf(\cal{L})$ associated to $\cal{L}$ is a equipped with the three anti-commuting differentials $d_+$, $d_-$ and $d_v$. Its $H^+$-homology is a triply-graded chain complex (internal grading, horizontal grading, homological grading), with induced differentials $d_-^*$ and $d_v^*$, which lower horizontal and homological grading by one respectively, but preserve the other grading. The horizontal filtration on this double complex induces a spectral sequence with $E_0$ page $(H^+(\Mf(\cal{L})),d_v^*)$, which is isomorphic to $(HH_*(\Dec(\LC{\cal{B}}{}{})),d_v^*)$ by Proposition~\ref{prop-simplify}. Consequently, the $E_1$ page is isomorphic to $H_*(HH_*(\Dec(\LC{\cal{B}}{}{})),d_v^*) = \LHH{\cal{L}}{}$. 
The spectral sequence converges to the total homology  $H_*((H^+(\Mf(\cal{L})),d_-^*+d_v^*)$ and it remains to identify this with $\LH{\cal{L}}{\Sigma}{}=H_*((H_{tot}(\Mf(\cal{L})),d_v^*)$.
The proof of this is split into two steps. First, we need \[H_*((H^+(\Mf(\cal{L})),d_-^*+d_v^*) \cong H_*(H_*((H^+(\Mf(\cal{L})),d_-^*),d_v^*),\] which can be proved exactly as in \cite[Proposition 5.10]{Ras1}. And second \[ H_*(H_*((H^+(\Mf(\cal{L})),d_-^*),d_v^*) \cong H_*(H_{tot}(\Mf(\cal{L})),d_v^*),\] which follows as in \cite[Lemma 5.11, Lemma 5.12, Proposition 5.13]{Ras1}. The proofs of these isomorphisms, however, use a critical fact, which is a generalization of Rasmussen's \cite[Proposition 5.8 and Corollary 5.9]{Ras1} and which we prove in the separate Lemma~\ref{lem-singlehor}.

As regards gradings, the $k^{th}$ differential in the spectral sequence lowers horizontal degree $\mathrm{gr_h}$ by $k$ and homological degree $\mathrm{gr_v}$ by $k-1$ (and it raises internal degree $\mathrm{gr_q}$ by $2k N$ in the undeformed case). In particular, all differentials in the spectral sequence preserve the quantity $\mathrm{gr_-}=\mathrm{gr_v}-\mathrm{gr_h}$ (and $\mathrm{gr_N'}= \mathrm{gr_q} +2 N \mathrm{gr_h}$), inducing a grading on the $E_\infty$ page. Careful comparison of gradings as in \cite[Proposition 5.14]{Ras1} shows that this agrees with homological grading (and quantum grading) on $\mathrm{KhR}^{\Sigma}(\cal{K})$ (and $\mathrm{KhR}^{\glnn{N}}(\cal{K})$ respectively) as identified in Remark~\ref{rem-gradings}.
\end{proof}

\begin{lem} $H_*(H^+(\Mf(\cal{L})),d_-^*)$ is concentrated in a single horizontal grading. 
\label{lem-singlehor}
\end{lem}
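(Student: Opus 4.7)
The strategy is a colored generalization of Rasmussen's simplification argument \cite[Proposition 5.8 and Corollary 5.9]{Ras1}, adapted to the colored setting using ideas from the annular web evaluation algorithm of Queffelec-Rose \cite{QR2}. The plan is to prove the stronger statement that $(H^+(\Mf(\cal{L})),d_-^*)$ is itself isomorphic to a chain complex concentrated in a single horizontal grading (namely zero), from which the claim follows immediately upon passing to homology.

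First, observe that the positive sequence $\mathbf{x}=\{e_l(\X_j)-e_l(\X'_j)\}_{j,l}$ of $\cal{Z}_{\mathbf{y},\mathbf{x}}$ is regular in $\cal{R}_\cal{B}\otimes\cal{R}_\cal{B}$, so Proposition~\ref{prop-simplify} and Corollary~\ref{cor_plusminus}, applied termwise in the vertical complex $\Mf(\cal{B})\otimes\cal{Z}_{\mathbf{y},\mathbf{x}}$, yield an isomorphism of $d_-^*$-chain complexes
\[
(H^+(\Mf(\cal{L})),d_-^*) \;\cong\; (H^+(\Mf(\cal{B}) \otimes_{\cal{R}_\cal{B}\otimes\cal{R}_\cal{B}} \cal{R}_\cal{B}),d_-^*).
\]
Each term $\Mf(W)\otimes_{\cal{R}_\cal{B}\otimes\cal{R}_\cal{B}}\cal{R}_\cal{B}$ on the right is the Koszul matrix factorization associated to the closed annular web $\hat W$ obtained from $W\in\LC{\cal{B}}{}{}$ by identifying its two boundaries. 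It therefore suffices to show that for every such $\hat W$, the positive homology of its Koszul matrix factorization is concentrated in a single horizontal grading.

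The main step is to establish that, for every such annular web $\hat W$, the positive sequence of its Koszul matrix factorization can be brought, by a finite sequence of row operations (Lemma~\ref{lem-rowops}) and eliminations of internal alphabets, to a regular sequence; Proposition~\ref{prop-simplify} then implies concentration in horizontal degree zero. I would prove this by induction along the annular web evaluation algorithm of \cite{QR2}, which reduces any closed labelled $\slN$ web in the annulus to a $\C$-linear combination of disjoint unions of concentric essential circles via a finite sequence of local simplifications (digon and square removals together with the treatment of annular idempotents). Each such simplification is a web isomorphism in the annular foam 2-category, and via the dictionary between foams and matrix factorizations from \cite[Section~4.4]{RW} it lifts to an explicit isomorphism of the associated simplified Koszul matrix factorizations realizing either an elementary row operation or a quotient by a regular subsequence of generators that have become redundant after the local reduction. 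The base case is the labelled concentric circle, where the positive sequence is a single block of elementary symmetric differences in two disjoint alphabets of matching size and is manifestly regular, analogous to the computation in Example~\ref{exa-2unknot}.

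The main technical obstacle is to verify, step by step, that each local move of the annular evaluation algorithm lifts to an isomorphism of matrix factorizations that simultaneously preserves the horizontal grading, the regularity of the positive sequence after simplification, and the compatibility with the induced $d_-^*$-differential. In the uncolored setting this reduces essentially to Rasmussen's explicit row operations; in the colored case one must handle the Koszul generators indexed by Young diagrams attached to each trivalent vertex and propagate the induction carefully through multi-term reductions, while keeping track of the Schur-polynomial-valued coefficients produced by the simplifications.
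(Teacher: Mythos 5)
Your proposed strengthening is false, and the error occurs right at the start: $(H^+(\Mf(\cal{L})),d_-^*)$ is \emph{not} concentrated in a single horizontal grading, and the positive sequence of the Koszul matrix factorization of a \emph{closed} annular web can never be made regular. By Proposition~\ref{prop-simplify} (simplifying the factor $\Mf(\cal{B})$ rather than $\cal{Z}_{\mathbf{y},\mathbf{x}}$, as in the proof of Theorem~\ref{thm-Hss}), $H^+(\Mf(\cal{L}))$ is isomorphic to $HH_*(\Dec(\LC{\cal{B}}{}{}))$, i.e.\ to the chain groups of colored HOMFLY-PT homology, whose horizontal grading is exactly the nontrivial $a$-grading. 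Example~\ref{exa-2unknot} already exhibits $H^+(\Mf(\bigcirc^2))$ spread over three horizontal degrees. Concretely, your ``base case'' fails: for the closure of a single $k$-labelled strand the two boundary alphabets are identified, so the positive sequence degenerates (each entry $e_l(\X)-e_l(\X')$ either vanishes or occurs twice up to sign in $\cal{Z}_{\mathbf{y},-\mathbf{x}}\otimes\cal{Z}_{\mathbf{y},\mathbf{x}}$), and $H^+$ is an exterior algebra on $k$ generators over $\Sym(\X)$ — manifestly not concentrated in one horizontal degree. The same degeneration happens for every annular web, which is why no sequence of row operations can restore regularity.

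The actual content of the lemma is that the \emph{second} differential $d_-^*$ (built from the $h_{N-k+i}$-type entries of the negative sequence) kills everything outside a single horizontal degree, so any correct proof must engage $d_-^*$ in an essential way; your argument never uses it. The route the paper takes is: reduce to the annular closure $L$ of a single web $W$ (the analogue of Rasmussen's Corollary~5.9), then run the annular web evaluation algorithm of Queffelec--Rose to exhibit $H_*(H^+(\Mf(L)),d_-^*)$ as a direct summand of the corresponding invariant of nested labelled essential circles. This requires checking that each of the four relations of the algorithm is compatible with $H_*(H^+(\cdot),d_-^*)$ — which is done via Corollary~\ref{cor_plusminus} and the known isomorphisms of singular Soergel bimodules, with the caveat that the square-switch relation only yields a direct summand, not an isomorphism — and finally computing that for nested circles the answer sits in the single horizontal degree $-\sum o_i$ (not zero). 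Your intuition to use the annular evaluation algorithm is the right one, but it must be applied after, not instead of, taking $d_-^*$-homology.
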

\begin{proof}
An analogue of Rasmussen's \cite[Corollary 5.9]{Ras1} shows that it suffices to prove the claim for a matrix factorization $\Mf(L)$ in place of $\Mf(\cal{L})$, where $L$ is the closure (as in \eqref{eqn:braidclosure}) of a web $W$ appearing in $\LC{\cal{B}}{}{}$. The key ingredient in Rasmussen's proof of this fact in the uncolored setting in \cite[Proposition 5.8]{Ras1} is a recursive algorithm for computing $H_*(H^+(\Mf(L)),d_-^*)$, see \cite[Sections 4.2 and 4.3]{Ras1}. 

A suitable replacement for this algorithm in the colored case is an analogue of the annular web evaluation algorithm of Queffelec and Rose~\cite[Proposition 5.1 and Lemma 5.2]{QR2}, which we review in detail in Section~\ref{sec-app}. In its simplest, decategorified version, the algorithm expands the annular closure of a web in $\NWeb$ as a linear combination of the annular closures of identity webs---which can be thought of as collections of nested labelled essential circles in the annulus. It does so by successively rewriting annular webs as linear combinations of annular webs of lower complexity, using two types of relations: relations that already hold for webs in $\NWeb$, namely \textit{rung combination}, \textit{the Frobenius relations} and \textit{square switch relations} from Lemma~\ref{lem-annulareval}), as well as the \textit{trace relations} which witness that the annular closure $\overline{W_1W_2}$ of a composite web $W_1W_2$ is isotopic to the annular closure $\overline{W_2W_1}$ of $W_2W_1$.

Using arguments as in the proof of Lemma~\ref{lem-straighten}, we can mimick the the annular web evaluation on the categorified level to exhibit $H_*(H^+(\Mf(L)),d_-^*)$ as a direct summand of a direct sum of triply-graded vector spaces of the form $H_*(H^+(\Mf(C)),d_-^*)$, where $C$ is a collection of nested labelled essential circles. Indeed, it follows directly from the definition of $\Mf(\cdot)$ and Proposition~\ref{prop-simplify} that $\Mf(\overline{W_1W_2})\cong \Mf(\overline{W_2W_1})$, i.e. the functor $\Mf(\overline{\cdot})$ is trace-like, and so is $H_*(H^+(\Mf(\cdot)),d_-^*)$. To see that $H_*(H^+(\Mf(\cdot)),d_-^*)$ also satisfies categorified analogues of the other required relations, we write
\[(H^+(\Mf(L)),d_-^*) = (H^+(\Mf(W)\otimes \cal{Z}_{\mathbf{y},\mathbf{x}}),d_-^*) \overset{\text{Cor.}~\ref{cor_plusminus}}{\cong} (H^+(\Dec(W)\otimes \cal{Z}_{\mathbf{y},\mathbf{x}}),d_-^*)  \] 
Now it follows that rung combination, the Frobenius relations and square switch relations hold here because they hold for singular Soergel bimodules $\Dec(\cdot)$, as verified e.g. in \cite[Section 4.4]{RW}. More precisely, an isomorphism of Soergel bimodules (i.e. of $\Ru\otimes \Ru$-modules) induces an isomorphism of matrix factorizations after tensoring with $ \cal{Z}_{\mathbf{y},\mathbf{x}}$ over $\Ru\otimes \Ru$. After taking positive homology, we get an isomorphism of complexes with respect to the induced negative differentials.

 We conclude that $H_*(H^+(\Mf(L)),d_-^*)$ is (isomorphic to) a direct summand of a direct sum of objects of the form $H_*(H^+(\Mf(C)),d_-^*)$, where $C$ is a collection of nested labelled essential circles. Such a summand is concentrated in the horizontal grading equal to the negative of the sum of the labels on the circles in $C$, which is equal (to $\sum o_i$) for all $C$ appearing in the annular evaluation algorithm. Thus $H_*(H^+(\Mf(L)),d_-^*)$ and then also $H_*(H^+(\Mf(\cal{L})),d_-^*)$ are concentrated in this grading as well. 
\end{proof}

\begin{rem} The categorified annular web evaluation algorithm of Queffelec--Rose \cite{QR2}, which is essentially independent of $N$, suggests a possible alternative approach to colored HOMFLY-PT homology as an invariant of annular links. This might lead to a new and more illuminating proof of Theorem~\ref{thm-Hss}.
\end{rem}

\subsection{Reduced colored HOMFLY-PT homology}
\label{sec-reducedHomfly}

In this section, we introduce a reduced version of colored HOMFLY-PT homology and relate it to reduced colored $\glnn{N}$ homology. As in the case of $\glnn{N}$ homology, we have to choose a link component at which we want to reduce. To this end, we consider a balanced labelled braid diagram $\cal{B}$---which closes to an oriented, labelled link diagram $\cal{L}$---together with a marked right boundary point with associated alphabet $\X_i$ of size $k=|\X_i|$. In the following, we assume that all strands of $\cal{B}$ have labels $\geq k$ and that $\cal{B}$ does not split into a disjoint union of two braids. 

\begin{rem}
Both of these assumptions are automatically satisfied for braid diagrams representing knots. Unfortunately, however, they currently prevent the treatment of colored HOMFLY-PT homologies of links reduced at a component of non-minimal label, for which interesting \emph{color-shifting properties} have been conjectured in \cite{GNSSS}.
\end{rem}

\begin{defi}\label{defi_Vu}
We cut the labelled planar graph underlying the diagram $\cal{B}$ into labelled arcs between crossings, and we assign alphabets $\X'_1,\dots,\X'_m$ and $\X_1,\dots,\X_m$ to boundary arcs on the left and the right respectively and alphabets $\Y_l$ of the appropriate size to the internal arcs. Note that every such arc corresponds to an edge in any web appearing in $\LC{\cal{B}}{}{}$. Let 
\[\cal{S}_\cal{B}:=\Sym(\X_1|\dots|\X_m|\X'_1|\dots |\X'_m|\Y_1|\dots)\]
be the graded ring of polynomials separately symmetric in these variables. Let 
\[\Vu := \Su / \la e_j(\W_1'\sqcup \W_2')-e_j(\W_1\sqcup \W_2)|1\leq j\leq k \ra\]
where $(\W_1',\W'_2,\W_1,\W_2)$ ranges over all tuples of alphabets adjacent to a crossing site, with the dashed alphabets associated to outgoing edges and the other ones to incoming edges. Finally, we define
\[\Tu := \Ru\otimes\Ru / \la e_j( \bigsqcup_{l=1}^m\X_l')-e_j(\bigsqcup_{l=1}^m\X_l)|1\leq j\leq k \ra\]
where we identify $\Ru\otimes\Ru \cong \Sym(\X_1|\cdots|\X_m|\X'_1|\cdots|\X'_m)$.

\end{defi}

\begin{lem} \label{lem-symthrough}
$\Vu$ is a $\Tu$-module.
\end{lem}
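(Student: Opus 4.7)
The plan is to view $\Vu$ naturally as a module over the subring $\Ru \otimes \Ru \hookrightarrow \Su$ (the inclusion sending $\Ru \otimes \Ru$ to the polynomials that only involve the left and right boundary alphabets) and then to verify that the additional relations that define $\Tu$ as a quotient of $\Ru \otimes \Ru$, namely
\[
e_j\bigl(\bigsqcup_{l=1}^m \X'_l\bigr) \;=\; e_j\bigl(\bigsqcup_{l=1}^m \X_l\bigr), \qquad 1 \leq j \leq k,
\]
already hold in $\Vu$. Granting this, the $\Ru \otimes \Ru$-action on $\Vu$ automatically factors through $\Tu$, proving the lemma.

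The approach to establishing these identities is to propagate them across the braid, one crossing at a time. For any horizontal slice of $\cal{B}$ transversal to all strands, let $\Z_1, \ldots, \Z_m$ be the alphabets on the arcs it meets, read in a fixed order. I would show that the class of $e_j(\bigsqcup_l \Z_l)$ in $\Vu$ is independent of the vertical position of the slice for all $1 \leq j \leq k$; taking the slices at the right and left boundary then yields the desired equality. Comparing two slices just above and just below a single crossing, only two adjacent alphabets change, from $(\W_1,\W_2)$ to $(\W'_1,\W'_2)$. Using the elementary identity
\[
e_j(A_1 \sqcup \cdots \sqcup A_r) \;=\; \sum_{l_1+\cdots+l_r = j} e_{l_1}(A_1) \cdots e_{l_r}(A_r),
\]
applied with $\W_1 \sqcup \W_2$ treated as a single grouped alphabet, one sees that $e_j(\bigsqcup_l \Z_l)$ is a polynomial in the $e_{l}(\Z_p)$ with $p \neq a,a+1$ and in the quantities $e_s(\W_1 \sqcup \W_2)$ with $0 \leq s \leq j$. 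Since $j \leq k$, every such $s$ satisfies $s \leq k$, and the crossing relation in the definition of $\Vu$ precisely equates $e_s(\W_1 \sqcup \W_2)$ with $e_s(\W'_1 \sqcup \W'_2)$ for $1 \leq s \leq k$ (and trivially for $s=0$). Hence the expression is unchanged modulo the defining relations of $\Vu$.

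An obvious induction on the number of crossings separating the right boundary slice from the left boundary slice completes the argument. The only subtlety to keep an eye on is purely bookkeeping: one must be careful that the truncation $s \leq k$ in the crossing relations really is sufficient to handle every term appearing in the expansion of $e_j$ for $1 \leq j \leq k$. This is immediate from the degree constraint $l_1 + \cdots + l_r = j$ in the multinomial expansion and requires no hypothesis on the thicknesses of individual strands beyond the standing conventions; in particular, no deeper structural input (such as the assumption that all strand labels are $\geq k$, which is used elsewhere for the reduction procedure) is needed here.
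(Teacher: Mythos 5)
Your proposal is correct and is precisely the argument the paper has in mind: the paper's proof simply observes that $\Vu$ is an $\Ru\otimes\Ru$-module and that the relations $e_j(\bigsqcup_l \X'_l)=e_j(\bigsqcup_l\X_l)$, $1\leq j\leq k$, "are easy to check by induction on the number of crossings," which is exactly the slice-by-slice propagation you spell out. Your expansion showing that only $e_s(\W_1\sqcup\W_2)$ with $s\leq j\leq k$ appear — so the truncated crossing relations suffice — is the right justification, and you are also correct that the hypothesis that all labels are $\geq k$ plays no role here.
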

\begin{proof} $\Vu$ is a  $\Ru\otimes\Ru$-module and and it is easy to check (e.g. by induction on the number of crossings in $\cal{B}$) that the relations $e_j( \bigsqcup_{l=1}^m\X'_l)=e_j(\bigsqcup_{l=1}^m\X_l)$ hold in $\Vu$ for $1\leq j\leq k$. 
\end{proof}

\begin{lem}\label{lem-Vufree} $\Tu$ and $\Vu$ are isomorphic to polynomial rings over $\C$ and $\Vu$ is free over $\Tu$. 
\end{lem}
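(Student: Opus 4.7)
The proof will proceed by exhibiting explicit free polynomial generator sets for both rings and then arranging them compatibly. Note first that $\Ru\otimes\Ru \cong \Sym(\X_1|\cdots|\X_m|\X'_1|\cdots|\X'_m)$ and $\Su$ are polynomial rings over $\C$, freely generated by the elementary symmetric polynomials in each constituent alphabet. For $\Tu = (\Ru\otimes\Ru)/J$, I would solve the $k$ defining relations $e_j(\bigsqcup_l \X'_l) - e_j(\bigsqcup_l \X_l) = 0$ for the generators $e_j(\X_i)$ corresponding to the marked right boundary alphabet $\X_i$ of size $k$. Expanding, $e_j(\bigsqcup_l \X_l) = e_j(\X_i) + R_j$, where $R_j$ is a polynomial in the $e_{j'}(\X_{l'})$ with either $j' < j$ or $l' \neq i$; iterating on $j = 1,\dots,k$ expresses each $e_j(\X_i)$ as a polynomial in the other generators. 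This identifies $\Tu$ with a polynomial ring in $\{e_j(\X_l) : l \neq i \text{ or } j > k\} \cup \{e_j(\X'_l) : \text{all } l, j\}$.

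For $\Vu$, I would process crossings one at a time. Orient $\cal{B}$ so that its crossings can be ordered from left to right. At each crossing, with incoming alphabets $\W_1, \W_2$ and outgoing alphabets $\W'_1, \W'_2$, the relations $e_j(\W'_1 \sqcup \W'_2) = e_j(\W_1\sqcup \W_2)$ for $1 \leq j \leq k$ allow one to solve for $e_1(\W'),\dots,e_k(\W')$ for a chosen ``dependent'' outgoing alphabet $\W' \in \{\W'_1, \W'_2\}$, in terms of the other generators appearing in these relations (this requires $|\W'|\geq k$, which holds by hypothesis). Since each dependent alphabet is an outgoing edge and the relations only involve alphabets at the same crossing, processing crossings in left-to-right order produces a consistent, non-circular system of substitutions. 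The result is that $\Vu$ is a polynomial ring in the remaining generators.

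To obtain freeness of $\Vu$ over $\Tu$, I would select the dependent alphabets in the preceding step compatibly with the elimination used for $\Tu$: always pick the dependent outgoing alphabet of a crossing to be internal (rather than a right boundary alphabet), except at the final crossing encountered along the marked strand, where we pick $\X_i$ itself. With this choice, the generators eliminated in $\Vu$ form a superset of those eliminated in $\Tu$, so the inclusion $\Tu\hookrightarrow\Vu$ (via Lemma~\ref{lem-symthrough}) sends the chosen free generating set of $\Tu$ into the chosen free generating set of $\Vu$. Consequently $\Vu$ is a polynomial ring over $\Tu$, hence free as a $\Tu$-module.

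The main obstacle is verifying that this compatible elimination scheme can always be realised: one must show that at each non-terminal crossing, an internal outgoing edge is available, and that the marked strand admits a well-defined ``last'' crossing whose elimination matches the one used in $\Tu$. This is where the standing assumptions of Section~\ref{sec-reducedHomfly} enter essentially: the hypothesis that all strand labels are $\geq k$ guarantees each candidate alphabet is large enough to support the $k$ relations, while the assumption that $\cal{B}$ does not split as a tensor product of two braids ensures the braid is sufficiently connected for the elimination to propagate coherently from the marked boundary through to the rest of the diagram.
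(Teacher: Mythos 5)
Your overall strategy is the same as the paper's: present $\Tu$ and $\Vu$ by generators and relations, eliminate one generator per relation to exhibit both as polynomial rings, and arrange the two eliminations compatibly so that $\Vu\cong\Tu\otimes_\C R$. The elimination of the $k$ relations of $\Tu$ against the generators of a single boundary alphabet is fine (the paper uses $\X'_i$ where you use $\X_i$; either works for that step). The problem lies in the combinatorial heart of the argument -- the existence of a compatible, acyclic assignment of one dependent alphabet per crossing -- which you correctly flag as ``the main obstacle'' but then do not resolve, and the specific scheme you propose is not realizable in general.

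Two concrete issues. First, with the paper's conventions the outgoing edges of a crossing point toward the \emph{left} boundary, so $\X_i$, a right-boundary alphabet, is an \emph{incoming} edge of the unique crossing it touches (the first one met along the marked strand, not the last); choosing it as ``the dependent outgoing alphabet of the final crossing'' is incoherent, and once you eliminate an incoming alphabet somewhere, your acyclicity argument (``each dependent alphabet is an outgoing edge, so left-to-right processing is non-circular'') no longer applies verbatim. Second, and more seriously, it is false that every non-terminal crossing has an internal outgoing edge: for $\cal{B}=\sigma_1^2$ on two strands (all standing assumptions satisfied), both outgoing edges of the leftmost crossing are left-boundary arcs, so your rule would force you to eliminate some $e_j(\X'_l)$ -- precisely a generator that must survive for compatibility with $\Tu$. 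The paper's scheme avoids this by allowing the dependent alphabet of a crossing to be \emph{any} of its four adjacent alphabets (in particular an internal \emph{incoming} one), and by reserving the single boundary elimination for $\X'_i$, which genuinely is an outgoing edge of its crossing; the non-split hypothesis is then invoked to match the remaining crossings to distinct internal alphabets. If you insist on eliminating $\X_i$, you must both permit internal incoming dependents and re-verify that the resulting global substitution terminates.
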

\begin{proof}  The unital $\C$-algebra $\Tu$ has a presentation with commuting generators $e_j(\X_l)$ and $e_j(\X'_l)$ for $1\leq l \leq m$ and $1\leq j \leq o_l = |\X_l|$  and one relation for each $1\leq j\leq k$, which we rewrite as follows:
\[ 0= e_j( \bigsqcup_{l=1}^m\X_l')-e_j(\bigsqcup_{l=1}^m\X_l) = e_j(\X_i') - \left( e_j(\bigsqcup_{l=1}^m\X_l) - \sum_{x=0}^{j-1} e_x(\X_i')e_{j-x}( \bigsqcup_{l\neq i}\X_l')  \right)\]
Thus, the generator $e_j(\X_i')$ can be expressed in terms of other generators. Under this substitution, we obtain a presentation for $\Tu$ with one generator and one relation eliminated. By iteration, we arrive at a presentation of $\Tu$ without any relations and with all generators $e_j(\X_i')$ for $1\leq j\leq k$ eliminated. In other words, the ring homomorphism, which is given by the composition 
\begin{equation}
\label{eqn_iso1}
\Sym(\X_1|\cdots|\X_m|\X'_1|\cdots|\X'_{i-1}|\X'_{i+1}|\cdots|\X'_{m})\hookrightarrow \Ru\otimes\Ru \to \Tu
\end{equation} 
of the canonical inclusion and the defining quotient map, is an isomorphism.

In Definition~\ref{defi_Vu}, $\Vu$ is given by a similar presentation with generators associated to arcs in the braid diagram $\cal{B}$ and $k$ relations for every crossing in $\cal{B}$. For a crossing with adjacent alphabets $(\W_1',\W'_2,\W_1,\W_2)$, we can choose one of these alphabets, call it $\W$, and eliminate the generators $e_j(\W)$ for $1\leq j\leq k$ in pairs with the relations $e_j(\W_1'\sqcup \W_2')-e_j(\W_1\sqcup \W_2)$ for $1\leq j\leq k$. This is possible since we assume that $|W|\geq k$. To eliminate the relations associated to all crossings, we need to choose different alphabets $\W$ at every crossing. It is easy to see that this is possible. In fact, since $\cal{B}$ is a non-split braid diagram, we can arrange to eliminate generators $e_j(\X'_i)$ for $1\leq j \leq k$ first and only generators in internal alphabets $\Y_l$ thereafter. The surviving generators in internal alphabets generate a (possibly trivial) polynomial ring $R$ and the canonical composition
\begin{equation}
\label{eqn_iso2}
\Sym(\X_1|\cdots|\X_m|\X'_1|\cdots|\X'_{i-1}|\X'_{i+1}|\cdots|\X'_{m})\otimes_\C R \hookrightarrow \Su \to \Vu
\end{equation} is an isomorphism. Finally, note that the isomorphisms in \eqref{eqn_iso1} and \eqref{eqn_iso2} are defined as identity maps on representatives. Thus $\Vu\cong \Tu\otimes_\C R$ is free over $\Tu$.
\end{proof}

For the following we write $\cal{Z}_{\mathbf{y},\mathbf{x}} \cong \cal{Z}_{\mathbf{y}_i,\mathbf{x}_i} \otimes_\C \cal{Z}_{\overline{\mathbf{y}},\overline{\mathbf{x}}}$ where $\cal{Z}_{\mathbf{y}_i,\mathbf{x}_i}$ is the Koszul matrix factorization over $\Sym(\X_i|\X'_i)$ with respect to the subsequences of $\mathbf{x}$ and $\mathbf{y}$, which only consist of polynomials involving $\X_i$ and $\X'_i$. $\overline{\mathbf{x}}$ and $\overline{\mathbf{y}}$ are the complementary subsequences, forming the Koszul complex $\cal{Z}_{\overline{\mathbf{y}},\overline{\mathbf{x}}}$ over the subring of $\Ru\otimes\Ru$ generated by all alphabets except $\X_i$ and $\X'_i$. 

\begin{lem}\label{lem-symxi} $H^+(\Tu \otimes \cal{Z}_{\overline{\mathbf{y}},\overline{\mathbf{x}}}) \cong \Tu/ \la \X_j=\X'_j| j\neq i\ra \cong \Ru\otimes \Ru / \la\X_j = \X'_j|\forall j\ra$.
\end{lem}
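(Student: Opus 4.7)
The plan is to prove both isomorphisms in sequence. For the first isomorphism, the strategy is to show that the positive sequence $\overline{\mathbf{x}} = \{e_l(\X_j)-e_l(\X'_j) \mid j\neq i,\ 1\leq l\leq o_j\}$ is regular in $\Tu$. By Lemma~\ref{lem-Vufree}, $\Tu$ is a polynomial ring, and in fact the isomorphism from that proof identifies $\Tu$ with $\Sym(\X_1|\cdots|\X_m|\X'_1|\cdots|\X'_{i-1}|\X'_{i+1}|\cdots|\X'_m)$, in which for each $j\neq i$ the elements $\{e_l(\X_j)\}_{l=1}^{o_j}$ and $\{e_l(\X'_j)\}_{l=1}^{o_j}$ appear as disjoint sets of free polynomial generators. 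Thus $\overline{\mathbf{x}}$ can be used to successively eliminate one generator per relation, verifying regularity. Standard Koszul theory then gives that $H^+(\Tu\otimes \cal{Z}_{\overline{\mathbf{y}},\overline{\mathbf{x}}})$ is concentrated in horizontal degree zero and equal to the quotient $\Tu/\la \overline{\mathbf{x}}\ra = \Tu/\la \X_j=\X'_j \mid j\neq i\ra$, as claimed.

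For the second isomorphism, the plan is to unfold the definitions and show that imposing $\X_j=\X'_j$ for all $j\neq i$ on top of the defining relations of $\Tu$ forces the remaining equalities $e_l(\X_i)=e_l(\X'_i)$ for $1\leq l\leq k$. Starting from a defining relation $e_j(\bigsqcup_l \X'_l)-e_j(\bigsqcup_l\X_l)$ of $\Tu$, and writing $A:=\bigsqcup_{l\neq i}\X_l$, after imposing $\X_j=\X'_j$ for $j\neq i$ the relation becomes
\[
\sum_{a+b=j}\bigl(e_a(\X'_i)-e_a(\X_i)\bigr)\,e_b(A) \;=\; 0 \qquad (1\leq j\leq k).
\]
An induction on $j$, using that $e_0=1$, yields $e_a(\X'_i)=e_a(\X_i)$ for $1\leq a\leq k$. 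Since $|\X_i|=|\X'_i|=k$, these generate all symmetric functions in these alphabets, so $\Sym(\X_i)$ is identified with $\Sym(\X'_i)$ in the quotient, giving exactly $\Ru\otimes\Ru/\la \X_j=\X'_j \mid \forall j\ra$.

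The main (modest) obstacle is organizing the bookkeeping in the first step: one has to check carefully that the isomorphism from Lemma~\ref{lem-Vufree} really exhibits the required polynomial generators as free so that regularity of $\overline{\mathbf{x}}$ becomes immediate; once this structural point is in hand, both isomorphisms follow by direct computation.
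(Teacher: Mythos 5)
Your proposal is correct and follows essentially the same route as the paper: regularity of $\overline{\mathbf{x}}$ in $\Tu$ via the polynomial-ring presentation from Lemma~\ref{lem-Vufree} gives the first isomorphism, and rewriting the defining relations of $\Tu$ after imposing $\X_j=\X'_j$ for $j\neq i$ as $\sum_{a+b=j}(e_a(\X'_i)-e_a(\X_i))e_b(\bigsqcup_{l\neq i}\X_l)=0$ yields the second. Your write-up just makes explicit the induction on $j$ and the elimination-of-generators check that the paper leaves implicit.
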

\begin{proof} From the proof of Lemma~\ref{lem-Vufree} we see that  $\overline{\mathbf{x}}$ is a regular sequence in $\Tu$ and it clearly generates the ideal $\la \X_j=\X'_j| j\neq i\ra$. Thus we get the first isomorphism. For the second isomorphism note that after identifying $\X_j =\X'_j$ we can rewrite the defining relations in $\Tu$ as
\[\sum_{a+b=j} (e_a(\X'_i)-e_a(\X_i) e_b( \bigsqcup_{l\neq i}\X_l)= 0,\quad 1\leq j\leq k\] which impose $e_j(\X'_i)=e_j(\X_i)$ for $1\leq j\leq k$, or in shorthand $\X'_i=\X_i$. 
\end{proof}

In the following, we again consider complexes of matrix factorizations associated to braid and link diagrams. Throughout we work in the undeformed setting $\Sigma = \{0,\dots, 0\}$ and omit $\Sigma$ from the notation. Also note that from now on all matrix factorizations carry the internal polynomial grading $\mathrm{gr}_q$ and all differentials are homogeneous.

\begin{lem} \label{lem-quotientcx} $\Mfu(\cal{B})$ has a simplification $\UMf(\cal{B})$ over $\Vu$. 
\end{lem}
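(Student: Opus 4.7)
The plan is to exhibit a regular sequence $\mathbf{b}$ in $\Su$ such that $\Su/\la\mathbf{b}\ra\cong\Vu$, show that $\mathbf{b}$ arises as a subsequence of the positive sequence of each term $\Mfu(W)$ of $\Mfu(\cal{B})$ (compatibly with differentials), and then apply Proposition~\ref{prop-simplify} to obtain the simplification $\UMf(\cal{B})$ over $\Vu$.

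First I would analyze a single crossing $c$ of $\cal{B}$ with adjacent alphabets $\W_1,\W_2$ incoming and $\W'_1,\W'_2$ outgoing. For a singular resolution of $c$ (where a ladder rung is present), the Koszul rows from the merge and split vertices include entries of the form $e_j(\W_1\sqcup\W_2)-e_j(\Z)$ and $e_j(\Z)-e_j(\W'_1\sqcup\W'_2)$ where $\Z$ is the alphabet of the thick edge; summing corresponding rows via Lemma~\ref{lem-rowops} yields a sub-Koszul factor with positive sequence $\mathbf{b}_c = (e_j(\W_1\sqcup\W_2)-e_j(\W'_1\sqcup\W'_2))_{1\leq j\leq k}$. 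For an oriented resolution, the positive sequence contains per-strand entries $e_j(\W'_l)-e_j(\W_l)$ for $l=1,2$ and all $j$, and iterated row operations (exploiting $e_p(A)e_q(B)-e_p(A')e_q(B') = (e_p(A)-e_p(A'))e_q(B)+e_p(A')(e_q(B)-e_q(B'))$) produce the same $\mathbf{b}_c$ as a sub-Koszul factor. Thus in either case $\Mfu(W)\cong\cal{Z}_{\mathbf{a}^c_W,\mathbf{b}_c}\otimes N^c_W$ with the first factor local to $c$.

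Performing this extraction at every crossing simultaneously gives $\Mfu(W)\cong\cal{Z}_{\mathbf{a}_W,\mathbf{b}}\otimes M_W$, where $\mathbf{b}$ is the concatenation of the $\mathbf{b}_c$. By construction $\Su/\la\mathbf{b}\ra\cong\Vu$, so Lemma~\ref{lem-Vufree} identifies $\Vu$ with a polynomial ring, and the elimination procedure in its proof exhibits $\mathbf{b}$ as a regular sequence in $\Su$. Proposition~\ref{prop-simplify} then delivers a simplification $\UMf(W):=\Vu\otimes_{\Su}M_W$ for each term. Since the differentials of $\Mfu(\cal{B})$ are induced (via \cite[Section 4.4]{RW}) from bimodule maps between the $\Dec(W)$'s, and these bimodule maps are supported away from the $\mathbf{b}$-rows, the term-wise simplifications assemble into a complex $\UMf(\cal{B})$ of matrix factorizations over $\Vu$.

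The hard part will be making the row operations uniform as $W$ varies over the cube of resolutions, so that the isomorphisms $\Mfu(W)\cong\cal{Z}_{\mathbf{a}_W,\mathbf{b}}\otimes M_W$ commute with the differentials of the Rickard complex~\eqref{eqn-crossingcx}. Since those differentials are local to individual crossings and act by $1$-facet bridging foams that do not involve the low-degree relations $\mathbf{b}_c$, this compatibility should reduce to a local check at each crossing, which can be carried out from the explicit form of the matrix factorizations assigned to merge-split and identity webs in Example~\ref{exa-bimodules}.
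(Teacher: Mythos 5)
Your proposal is correct and follows essentially the same route as the paper: apply row operations (Lemma~\ref{lem-rowops}) locally at each crossing to replace the first $k$ positive Koszul entries by the generators $e_j(\W_1'\sqcup\W_2')-e_j(\W_1\sqcup\W_2)$ of the defining ideal of $\Vu$, observe regularity via the elimination argument of Lemma~\ref{lem-Vufree}, and invoke Proposition~\ref{prop-simplify}. The paper handles all resolutions at once via the single web \eqref{eqn-crossingresol2} with rung thickness $\A$ (allowing $\A=\emptyset$), rather than splitting into oriented and singular cases, but this is only a cosmetic difference.
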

\begin{proof} If $\cal{B}$ has no crossings, i.e. is a single strand, we define $\UMf(\cal{B}):=\Sym(\X_i|\X'_i)/\la \X_i=\X'_i\ra$ which is a simplification by Lemma~\ref{lem-singlegrad}. Otherwise, let $W$ be a (non-trivial) web appearing in $\LC{\cal{B}}{}{}$. We will apply row operations to replace the Koszul matrix factorization $\Mfu(W)$ by an isomorphic matrix factorization whose positive sequence contains entries that impose the defining relations in $\Vu$. It suffices to consider one half of a crossing web appearing in \eqref{eqn-crossingcx}:
\begin{equation}
\label{eqn-crossingresol2}\xy
(0,0)*{
\begin{tikzpicture} [scale=.5,fill opacity=0.2]
	\draw[very thick, directed=.55]  (2,0) to (1.25,0);
	\draw[very thick]  (1.25,0) to (0.5,0);
	\draw[very thick]  (1.25,1) to (0.5,1);
	\draw[very thick, directed=.55]  (2,1) to (1.25,1);
	\draw[very thick, directed=.55]  (0.5,1) to (-0.25,1);
	\draw[very thick, directed=.55]  (0.5,0) to (-0.25,0);
	\draw[very thick, directed=.55] (1.25,1) to [out=180,in=0](0.5,0) ;
\node[opacity=1] at (0,1.5) {$_{\Y_1}$};
	\node[opacity=1] at (0,-0.5) {$_{\Y_2}$};
	\node[opacity=1] at (1.4,0.45) {\tiny $\A$};
		\node[opacity=1] at (1.7,1.5) {$_{\X_1}$};
	\node[opacity=1] at (1.7,-0.5) {$_{\X_2}$};
\end{tikzpicture}
};
\endxy
\end{equation}
The Koszul matrix factorization associated to this web has positive entries $e_a(\Y_1 \sqcup \A)-e_a(\X_1)$ for $1\leq a \leq |\X_1|$ and $e_b(\Y_2)- e_b(\X_2\sqcup \A)$  for $1\leq b \leq |\Y_2|=|\X_2|+|\A|$, which includes the case $\A=\emptyset$. Now it is easy to see that a sequence of row operations as in Lemma~\ref{lem-rowops} can be used to iteratively replace the first $k$ positive entries $e_a(\Y_1 \sqcup \A)-e_a(\X_1)$ by $e_a(\X_1\sqcup\X_2)-e_a(\Y_1\sqcup\Y_2)$ for $1\leq a \leq k$. Here we again use that all labels are $\geq k$. The new rows contain precisely the generators of the defining ideal of $\Vu$ as positive entries. Since they form a regular sequence, we apply Proposition~\ref{prop-simplify} and denote the result by $\UMf(\cal{B})$.\comm{We will replace $k$ positive entries associated to the splitting vertex by positive entries $e_a(\Y_1 \sqcup \Y_2)-e_a(\X_1 \sqcup \X_2)$ for $1\leq a \leq k$. For $a=1$ we obtain this by adding the degree $1$ relation of the merge vertex to the degree $1$ relation of the split vertex. We now describe the induction step $a-1 \to a$:
\begin{gather*}
e_a(\X_1\sqcup A)-e_a(\Y_1) + \sum_{c+d=a, c<a} e_c(\X_2)(e_d(\X_1\sqcup A)-e_d(\Y_1)) = e_a(\X_1\sqcup\X_2\sqcup\A)-e_a(\Y_1\sqcup\Y_2\sqcup\A)\\ = e_a(\X_1\sqcup\X_2)-e_a(\Y_1\sqcup\Y_2) + \sum_{c+d=a, c<a}   (e_c(\X_1\sqcup\X_2)-e_c(\Y_1\sqcup\Y_2))e_d(\A)   \end{gather*}
In the first line we add multiples of lower order terms (which can be obtained as linear combinations of the new entries) to the old degree $a$ entry $e_a(\X_1\sqcup A)-e_a(\Y_1)$. In the second line we can subtract multiples of new entries to obtain $e_a(\X_1\sqcup\X_2)-e_a(\Y_1\sqcup\Y_2)$. This also shows that the old degree $a$ entry can be obtained as linear combination of new degree $\leq a$ entries.}
\end{proof}

As a Koszul matrix factorization, $\UMf(\cal{B})$ is free over $\Vu$, so the proof of Lemma~\ref{lem-Vufree} together with Proposition~\ref{prop-simplify} imply that $\UMf(\cal{B})\otimes \cal{Z}_{\overline{\mathbf{y}},\overline{\mathbf{x}}}$ simplifies to $\UMf(\cal{B})/\la\X_j=\X'_j|j\neq i\ra$, which is a $\Sym(\X_i|\X'_i)/\la\X_i=\X'_i\ra$-module by Lemma~\ref{lem-symxi}. So we write it more compactly as $\UMf(\cal{B})/\la\X_j=\X'_j\ra$. Further tensoring this with  $\cal{Z}_{\mathbf{y}_i,\mathbf{x}_i}$ is equivalent to tensoring with $\cal{Z}_{\pi(\mathbf{y}_i),0}$, where $\pi\colon \Sym(\X_i|\X'_i)\to\Sym(\X_i|\X'_i)/\la\X_i=\X'_i\ra$ is the obvious quotient map. We summarize:

\begin{cor}
\label{cor-umf} $\Mfu(\cal{L})$ has a simplification of the form:
\begin{equation}
\label{eqn-umf}
 \Mfd(\cal{L}):= \UMf(\cal{B})/ \la \X'_j = \X_j\ra \otimes \cal{Z}_{\pi(\mathbf{y}_i),0} 
 \end{equation}
\end{cor}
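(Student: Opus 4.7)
The plan is to assemble the corollary directly from the structural lemmas already established, by simplifying the three Koszul factors of $\Mfu(\cal{L})=\Mfu(\cal{B})\otimes\cal{Z}_{\mathbf{y},\mathbf{x}}$ in succession. I would first invoke Lemma~\ref{lem-quotientcx} to replace the braid matrix factorization $\Mfu(\cal{B})$ with its simplification $\UMf(\cal{B})$, a Koszul matrix factorization over $\Vu$ whose positive sequence (after the row operations in the proof of that lemma) encodes exactly the defining relations of $\Vu$. By construction $\UMf(\cal{B})$ is free as a $\Vu$-module, which is the hypothesis needed to apply Proposition~\ref{prop-simplify} in the next step.

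Next I would use the splitting $\cal{Z}_{\mathbf{y},\mathbf{x}}\cong\cal{Z}_{\mathbf{y}_i,\mathbf{x}_i}\otimes_\C\cal{Z}_{\overline{\mathbf{y}},\overline{\mathbf{x}}}$ and address the two factors separately. The factor $\cal{Z}_{\overline{\mathbf{y}},\overline{\mathbf{x}}}$ involves only alphabets other than $\X_i,\X'_i$, and the proof of Lemma~\ref{lem-Vufree} shows that $\overline{\mathbf{x}}$ is a regular sequence in $\Vu$ generating the ideal $\la \X_j=\X'_j\mid j\neq i\ra$. Proposition~\ref{prop-simplify} therefore lets me replace $\UMf(\cal{B})\otimes\cal{Z}_{\overline{\mathbf{y}},\overline{\mathbf{x}}}$ by the quotient $\UMf(\cal{B})/\la \X_j=\X'_j\mid j\neq i\ra$. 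Lemma~\ref{lem-symxi} then tells me that the ambient ring for this quotient already forces $e_j(\X_i)=e_j(\X'_i)$ for $1\leq j\leq k$, so I may abbreviate the quotient as $\UMf(\cal{B})/\la \X_j=\X'_j\ra$ without loss of information.

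Finally, I would handle the remaining factor $\cal{Z}_{\mathbf{y}_i,\mathbf{x}_i}$. Because the previous step has already imposed the identification $\X_i=\X'_i$ on the ambient ring in the symmetric sense, the entries $e_l(\X_i)-e_l(\X'_i)$ of $\mathbf{x}_i$ all act by zero on the simplified factor. Under the quotient map $\pi\colon\Sym(\X_i|\X'_i)\to\Sym(\X_i|\X'_i)/\la \X_i=\X'_i\ra$, tensoring with $\cal{Z}_{\mathbf{y}_i,\mathbf{x}_i}$ therefore has the same effect as tensoring with the degenerate Koszul matrix factorization $\cal{Z}_{\pi(\mathbf{y}_i),0}$. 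Composing the three simplifications produces exactly the factorization $\Mfd(\cal{L})$ claimed in the statement.

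The one point that requires care rather than pure bookkeeping is checking that the applicability hypothesis of Proposition~\ref{prop-simplify} holds at each step, that is, that the tensor complement remains projective over the successively shrinking ambient ring. This is what the $\Tu$-module interpretation from Lemma~\ref{lem-symthrough} together with the freeness statement of Lemma~\ref{lem-Vufree} are designed to deliver, and once those are in place the rest of the argument is a straightforward chain of isomorphisms.
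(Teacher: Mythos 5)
Your proposal is correct and follows essentially the same route as the paper: simplify $\Mfu(\cal{B})$ to $\UMf(\cal{B})$ via Lemma~\ref{lem-quotientcx}, use the regularity of $\overline{\mathbf{x}}$ (from the proof of Lemma~\ref{lem-Vufree}) together with Proposition~\ref{prop-simplify} to convert the $\cal{Z}_{\overline{\mathbf{y}},\overline{\mathbf{x}}}$ factor into the quotient by $\la \X_j=\X'_j\ra$, invoke Lemma~\ref{lem-symxi} to see that $\X_i=\X'_i$ is then automatic, and conclude that the remaining factor $\cal{Z}_{\mathbf{y}_i,\mathbf{x}_i}$ degenerates to $\cal{Z}_{\pi(\mathbf{y}_i),0}$. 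This matches the paper's argument step for step, including the attention to the freeness/projectivity hypothesis of Proposition~\ref{prop-simplify}.
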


\noindent Further, note that we have an isomorphism of $\Sym(\X_i)$-modules:
\[H_{tot}(\cal{Z}_{\pi(\mathbf{y}_i),0})\cong H_{tot}\left( \frac{\Sym(\X_i|\X'_i)}{\la\X_i=\X'_i\ra}\otimes_{\Sym(\X_i|\X'_i)}  \cal{Z}_{\mathbf{y}_i,\mathbf{x}_i}\right) = \LH{\bigcirc^k}{\glnn{N}} \cong H_k^N\]
So, the entries of $\pi(\mathbf{y}_i)\in \Sym(\X_i)$ are generators for the defining ideal of $H_k^N$ in $\Sym(\X_i)$.

\begin{defi} For a marked, labelled link diagram $\cal{L}$ given as the closure of a braid diagram $\cal{B}$ as specified in the introduction to this section, we define the \emph{reduced complex} associated to $\cal{L}$ as:
\[\overline{\Mfu(\cal{L}(i))}:= \UMf(\cal{B}) / \la \X_i=0,\X_j=\X'_j\ra\]
\end{defi}

\begin{rem} \label{rem-z2}
In order to prove the following two results we need to take a short technical detour. In their original work, Khovanov--Rozansky \cite{KR1} and Wu \cite{Wu1} work with categories of $\Z_2$-graded matrix factorizations. The matrix factorizations they associate to webs agree with the ones in this paper after reducing the $\Z$-grading modulo $2$ and, in particular, their total homologies agree. However, the advantage of $\Z_2$-graded matrix factorizations is that they admit a good notion of homotopy equivalence. Working in the corresponding homotopy category, Khovanov--Rozansky and Wu have proved matrix factorization analogues of many of the web isomorphisms that exist in the foam 2-category $\grFoam{N}^\bullet$. More generally, thanks to the 2-functor between $\foam{N}$ and Soergel bimodules, which was studied in \cite[Section 4.4]{RW}, we now know that all web isomorphisms in $\grFoam{N}^\bullet$ imply the analogous relations in the corresponding homotopy 2-category of matrix factorizations. In the following lemma, we will use matrix factorization versions of the web relations employed in web evaluation algorithm from Proposition~\ref{prop-annulareval}, and we will use that these relations preserve the module structure with respect to alphabet assigned to the marked edge. 
\end{rem}

\begin{lem} \label{lem-circlecx}
There is a homotopy equivalence of complexes over the homotopy category of $\Z_2$-graded matrix factorizations over $\Sym(\X_i|\X'_i)$
\[ (\Mfu(\cal{L}),d_v) \rightarrow (C(\cal{L}),d_v),\] where the target is a chain complex whose chain groups are direct sums of terms $\Mfu(\bigcirc^k)$ associated to $k$-labelled unknots. Upon taking total homology, this induces a homotopy equivalence of complexes of $\Sym(\X_i)$-modules: 
\[ (H_{tot}(\Mfu(\cal{L})),d_v^*) \rightarrow (H_{tot}(C(\cal{L})),d_v^*).\] 
\end{lem}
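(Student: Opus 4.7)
The plan is to simplify the complex $\Mfu(\cal{L})$ vertex-by-vertex using web isomorphisms that lift from the foam 2-category $\grFoam{N}^\bullet$ to the homotopy category of $\Z_2$-graded matrix factorizations over $\Sym(\X_i|\X'_i)$ via the 2-representation from \cite[Section 4.4]{RW} recalled in Remark~\ref{rem-z2}. At each vertex $W$ of the cube of resolutions $\LC{\cal{B}}{}{}$, the chain group of $\Mfu(\cal{L})$ is (up to shifts) $\Mfu(\hat W)$, where $\hat W$ is the annular closure of the balanced web $W$. I would first apply the annular web evaluation algorithm of Queffelec-Rose (the same one used in the proof of Lemma~\ref{lem-singlehor} and detailed in Section~\ref{sec-app}) to $\hat W$. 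Its four basic moves—ladder rung sliding, rung combination, Frobenius relations, and square switches—all correspond either to web isomorphisms or to direct-summand relations in the annular foam 2-category, and hence lift to the matrix factorization homotopy category while preserving the $\Sym(\X_i|\X'_i)$-bimodule structure carried by the marked edge.

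After running the algorithm, $\hat W$ is expressed as a direct summand of a direct sum of collections of nested essential circles around the annular core. Because all strand labels of $\cal{B}$ are assumed to be $\geq k$ and the marked edge carries the minimal label $k$, the marked edge lies on exactly one essential circle in each collection, and that circle is necessarily $k$-labelled. The remaining essential circles in each collection are disjoint from the marked one, so their matrix factorizations can be closed up using the standard bubble-removal / MOY relations for $\Z_2$-graded matrix factorizations. This yields, for every collection $C$ containing the marked $k$-circle, a homotopy equivalence $\Mfu(C) \simeq V_C \otimes_\C \Mfu(\bigcirc^k)$ where $V_C$ is a finite-dimensional graded $\C$-vector space independent of $\X_i$. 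Collecting these equivalences, $\Mfu(\hat W)$ becomes a direct summand of a direct sum of shifts of $\Mfu(\bigcirc^k)$.

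To promote ``summand of'' to ``is a direct sum of,'' I would invoke Krull-Schmidt in the homotopy category of $\Z_2$-graded matrix factorizations over $\Sym(\X_i|\X'_i)$; this applies because $\Mfu(\bigcirc^k)$ is indecomposable up to shifts (its endomorphism algebra in the homotopy category is isomorphic to the local algebra $H_k^N$, by Lemma~\ref{lem-endos}). Consequently, each vertex of $\Mfu(\cal{L})$ is homotopy equivalent to a direct sum of shifts of $\Mfu(\bigcirc^k)$, defining the chain groups of $C(\cal{L})$. The differentials $d_v$ of $\Mfu(\cal{L})$ can then be transported along these vertex-wise homotopy equivalences to differentials in $C(\cal{L})$, producing a homotopy equivalence of chain complexes of $\Z_2$-graded matrix factorizations. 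Since $H_{tot}(\cdot)$ sends homotopy equivalences of $\Z_2$-graded matrix factorizations to isomorphisms of $\Sym(\X_i|\X'_i)/\la\X_i{=}\X'_i\ra \cong \Sym(\X_i)$-modules, the induced chain map $(H_{tot}(\Mfu(\cal{L})),d_v^*) \to (H_{tot}(C(\cal{L})),d_v^*)$ is the desired homotopy equivalence of $\Sym(\X_i)$-module complexes.

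The main obstacle will be twofold: first, ensuring that each move of the annular evaluation algorithm—in particular the square-switch relation which only yields summand relations—is handled correctly, which is where Krull-Schmidt enters; and second, verifying that the entire simplification can be performed while keeping the marked circle intact and preserving the $\Sym(\X_i|\X'_i)$-module structure, so that the resulting building blocks are genuinely $\Mfu(\bigcirc^k)$ (with its canonical module structure) rather than only matching after forgetting to $\C$.
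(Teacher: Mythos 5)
Your overall strategy (simplify vertex-by-vertex via a web evaluation algorithm, lift the relations to the homotopy category of $\Z_2$-graded matrix factorizations via Remark~\ref{rem-z2}, then apply $H_{tot}$) is the right one, but it breaks down at the decisive step: tracking the marked edge through the \emph{annular} evaluation. The Queffelec--Rose algorithm of Lemma~\ref{lem-annulareval} rewrites the annular closure $\hat W$ as a combination of nested essential circles by repeatedly combining and switching ladder rungs; these moves do not preserve any individual edge of $W$, and in particular the marked $k$-labelled edge gets absorbed. There is no reason why each resulting collection of circles should contain a distinguished $k$-labelled circle carrying the alphabet $\X_i$ --- the circles' individual labels are whatever the algorithm produces (only their sum is controlled), and the $\Sym(\X_i|\X'_i)$-bimodule structure, which lives on the marked edge, is not transported to any identifiable tensor factor $\Mfu(\bigcirc^k)$. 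Your assertion that ``the marked edge lies on exactly one essential circle, necessarily $k$-labelled'' is exactly the point that needs proof and is not supplied by the minimality of $k$.

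The paper circumvents this by never closing the marked edge until the very end: one cuts $\cal{L}$ open along the marked edge to obtain a $(1,1)$-tangle $\cal{T}$ with $\Mfu(\cal{T})=\Mfu(\cal{B})\otimes\cal{Z}_{\overline{\mathbf{y}},\overline{\mathbf{x}}}$, and then applies the \emph{modified} evaluation algorithm of Proposition~\ref{prop-annulareval}, which closes off all strands except the marked one and reduces each chain group to a direct sum of copies of the matrix factorization of the trivial $k$-labelled strand. Since the marked strand is left untouched throughout, the $\Sym(\X_i|\X'_i)$-action is manifestly preserved; tensoring with $\cal{Z}_{\mathbf{y}_i,\mathbf{x}_i}$ at the end produces $C(\cal{L})$ with chain groups built from $\Mfu(\bigcirc^k)$. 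If you want to salvage your version, you would need to prove an analogue of Proposition~\ref{prop-annulareval} rather than invoke Lemma~\ref{lem-annulareval}; your appeal to Krull--Schmidt addresses only the secondary issue of summand-versus-sum in the square switches, and even there one must check that the differentials $d_v$ are compatible with splittings chosen independently at each vertex --- a point that is automatic when the chain groups are replaced by honestly isomorphic objects, as in the paper's argument.
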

\begin{proof} 
Consider the 2-ended tangle diagram $\cal{T}$ obtained by cutting open the marked edge in $\cal{L}$, and its associated complex of matrix factorizations
\[ \Mfu(\cal{T}):= \Mfu(\cal{B})\otimes \cal{Z}_{\overline{\mathbf{y}},\overline{\mathbf{x}}}.  \]  
The web evaluation algorithm from Proposition~\ref{prop-annulareval} together with Remark~\ref{rem-z2} imply that the complex $\Mfu(\cal{T})$ of matrix factorization is homotopy equivalent (in the category of chain complexes over the homotopy category of $\Z_2$-graded matrix factorizations over $\Sym(\X_i|\X'_i)$) to a complex $O(\cal{L})$ whose chain groups are direct sums of the matrix factorizations $\cal{Z}_{\mathbf{y}_i,-\mathbf{x}_i}$ assigned to the trivial $k$-labelled 1-strand tangle. After closing off the marked edge by tensoring with $\cal{Z}_{\mathbf{y}_i,\mathbf{x}_i}$, we obtain the desired homotopy equivalence:
\[ \Mfu(\cal{L}) = \Mfu(\cal{T})\otimes  \cal{Z}_{\mathbf{y}_i,\mathbf{x}_i} \to  O(\cal{L}) \otimes  \cal{Z}_{\mathbf{y}_i,\mathbf{x}_i} = C(\cal{L}).\]
This preserves the $\Sym(\X_i|\X'_i)$-module structure on the matrix factorizations and induces a homotopy equivalence of $\Sym(\X_i|\X'_i)/ \la\X_i = \X'_i\ra\cong\Sym(\X_i)$-modules after taking total homology.
\end{proof}

In the following, let $\zerorep:=\Sym(\X_i)/ \la \X_i=0 \ra$ and $\zerorep':=\Sym(\X'_i)/ \la \X'_i=0 \ra$.

\begin{prop}\label{prop-innerred} There is a homotopy equivalence of chain complexes with respect to $d_v^*$:
\[ \zerorep\otimes H_{tot}(\Mfu(\cal{L})) \sim H_{tot}(\overline{\Mfu(\cal{L}(i))})\]
\end{prop}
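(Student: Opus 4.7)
The plan is to identify both sides of the proposition with a common expression built from the matrix factorization $M := \UMf(\cal{B})/\la\X'_j=\X_j\ra$, exploiting the freeness of $M$ as a $\Sym(\X_i)$-module.

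First I would apply Corollary~\ref{cor-umf} to replace $\Mfu(\cal{L})$ with its simplification $\Mfd(\cal{L}) = M \otimes_{\Sym(\X_i)} \cal{Z}_{\pi(\mathbf{y}_i),0}$. Since the simplification procedure of Proposition~\ref{prop-simplify} and Corollary~\ref{cor_plusminus} is $d_v$-equivariant and $\Sym(\X_i)$-linear, this yields a homotopy equivalence of $d_v^*$-complexes of $\Sym(\X_i)$-modules
\[(H_{tot}(\Mfu(\cal{L})),d_v^*) \;\sim\; (H_{tot}(\Mfd(\cal{L})),d_v^*).\]

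Second, combining Lemma~\ref{lem-Vufree} with Lemma~\ref{lem-symxi} shows that $M$ is free over $\Sym(\X_i)$; moreover, after imposing $\X_j = \X'_j$ the matrix factorization potential $Q(\bigsqcup\X'_j)-Q(\bigsqcup\X_j)$ of $\UMf(\cal{B})$ vanishes, so the (non-vertical) differential $d_M$ of $M$ squares to zero. I would then run the standard filtration spectral sequence for the tensor product $M\otimes_{\Sym(\X_i)}\cal{Z}_{\pi(\mathbf{y}_i),0}$. Because $\pi(\mathbf{y}_i)$ is a regular sequence (of length $k$) in $\Sym(\X_i)$ generating the defining ideal of $H_k^N$, the Koszul factor has total homology concentrated in a single horizontal degree equal to $H_k^N$, and the spectral sequence collapses to give
\[H_{tot}(\Mfd(\cal{L})) \;\cong\; H\bigl(M\otimes_{\Sym(\X_i)} H_k^N,\,d_M\bigr),\]
with the induced vertical differential $d_v^*$ acting on both factors. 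Now tensor with $\zerorep$ over $\Sym(\X_i)$. Since $\pi(\mathbf{y}_i)\subset\la\X_i\ra$ inside $\Sym(\X_i)$, one has the elementary identity $\zerorep\otimes_{\Sym(\X_i)}H_k^N\cong\zerorep$; combined with the freeness of $M$ (so that $\zerorep\otimes_{\Sym(\X_i)}(-)$ is exact on $M$ and its $d_M$-subquotients), this gives
\[\zerorep\otimes_{\Sym(\X_i)} H_{tot}(\Mfd(\cal{L})) \;\cong\; H\bigl(\zerorep\otimes_{\Sym(\X_i)}M,\,d_M\bigr) = H\bigl(M/\la\X_i\ra M,\,d_M\bigr),\]
and the right-hand side is visibly $H_{tot}(\overline{\Mfu(\cal{L}(i))})$, since $\overline{\Mfu(\cal{L}(i))} = \UMf(\cal{B})/\la\X_i=0,\X_j=\X'_j\ra = M/\la\X_i\ra M$ carries precisely the induced differential $d_M$.

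The main obstacle will be making the exchange between $\zerorep\otimes_{\Sym(\X_i)}(-)$ and the Künneth/homology step fully rigorous at the chain level, so that the resulting identification is not merely an isomorphism of $\Sym(\X_i)$-modules but a homotopy equivalence of $d_v^*$-complexes. I plan to handle this by working throughout with explicit free $\Sym(\X_i)$-bases of $M$ (provided by the polynomial-ring isomorphism in the proof of Lemma~\ref{lem-Vufree}): on such bases, both the Künneth reduction of the Koszul factor and the reduction modulo $\la\X_i\ra$ can be carried out as chain-level operations that manifestly commute with the $d_v$-action inherited from the braid cube of resolutions. This will upgrade the module-level isomorphism above to the claimed homotopy equivalence of $d_v^*$-complexes.
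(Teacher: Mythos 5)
The decisive step in your argument --- the displayed claim that $\zerorep\otimes_{\Sym(\X_i)} H_{tot}(\Mfd(\cal{L})) \cong H\bigl(\zerorep\otimes_{\Sym(\X_i)}M,\,d_M\bigr)$ --- contains a genuine gap, and the justification you offer for it is false. Freeness of $M$ over $\Sym(\X_i)$ does \emph{not} make $\zerorep\otimes_{\Sym(\X_i)}(-)$ commute with taking $d_{tot}$-homology: for the two-term complex of free modules $\C[x]\xrightarrow{\;x\;}\C[x]$ the homology is $\C$, while the homology after applying $\C[x]/\la x\ra\otimes(-)$ is $\C\oplus\C$. The discrepancy is measured by higher $\Tor$ of the \emph{homology} against $\zerorep$, and in the present situation it is visibly nonzero: $\zerorep\otimes_{\Sym(\X_i)}H_k^N\cong\C$, whereas $\Tor^{\Sym(\X_i)}_{*}(\zerorep,H_k^N)$, computed by $\zerorep\otimes\cal{Z}_{\pi(\mathbf{y}_i),0}$, is the full exterior algebra $H^*(S^1)^{\otimes k}$. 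That factor of $H^*(S^1)^{\otimes k}$ is exactly what your exchange of $\zerorep\otimes(-)$ with homology silently discards. Your proposed repair (explicit free bases, compatibility with $d_v$) does not touch the problem, which concerns $d_{tot}$ rather than $d_v$.

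The proposition is nevertheless true, and the paper's proof is organised around precisely this obstruction: it computes $H_{tot}(\zerorep\otimes\Mfu(\cal{L}))$ --- with $\zerorep$ tensored in \emph{before} taking homology --- in two different ways. Via the simplification \eqref{eqn-umf} one obtains $H_{tot}(\overline{\Mfu(\cal{L}(i))})\otimes H^*(S^1)^{\otimes k}$; via the circle-complex reduction of Lemma~\ref{lem-circlecx} one obtains $\zerorep\otimes H_{tot}(\Mfu(\cal{L}))\otimes H^*(S^1)^{\otimes k}$, the extra exterior-algebra factor carrying the trivial vertical differential in both cases. The claim then follows by cancelling $H^*(S^1)^{\otimes k}$ using the Krull--Schmidt property of the homotopy category of graded complexes of vector spaces. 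If you want to salvage your route, the missing input is that $H_{tot}$ of each individual resolution is a \emph{free} $H_k^N$-module, so that the K\"unneth spectral sequence for $\zerorep\otimes_{H_k^N}(-)$ collapses, naturally with respect to the vertical differential; but establishing that freeness already requires the web-evaluation/circle-complex reduction of Lemma~\ref{lem-circlecx}, so that ingredient cannot be avoided.
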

\begin{proof} For the proof we express $H_{tot}(\zerorep \otimes \Mfu(\cal{L}))$ in two different ways. In the case of $\cal{L}=\bigcirc^k$ we get:
\begin{gather*}
 H_{tot}(\zerorep\otimes \Mfu(\bigcirc^k)) = H_{tot}(\zerorep\otimes \cal{Z}_{\mathbf{y}_i,-\mathbf{x}_i}\otimes \cal{Z}_{\mathbf{y}_i,\mathbf{x}_i})  
  \cong H_{tot}((\zerorep\otimes \zerorep') \otimes \cal{Z}_{\mathbf{y}_i,\mathbf{x}_i} ) \\ = H_{tot}((\zerorep\otimes \zerorep') \otimes \cal{Z}_{0,0} )  \cong  \zerorep \otimes H^*(S^1)^{\otimes k} 
\\ \cong  \zerorep \otimes H_{tot}(\Mfu(\bigcirc^k)) \otimes H^*(S^1)^{\otimes k} 
\end{gather*}
Here the first isomorphism is provided by Proposition~\ref{prop-simplify}. More generally, we consider a complex $C(\cal{L})=O(\cal{L})\otimes \cal{Z}_{\mathbf{y}_i,\mathbf{x}_i}$---as in the proof of Lemma~\ref{lem-circlecx}---with vertical differential supported on $O(\cal{L})$, whose chain groups are direct sums of $\cal{Z}_{\mathbf{y}_i,-\mathbf{x}_i}$. Analogously as in the unknot case, we can simplify the direct summands $\zerorep\otimes \cal{Z}_{\mathbf{y}_i,-\mathbf{x}_i}$ of the chain groups of $V\otimes O(\cal{L})$ to terms of the form $\zerorep\otimes \zerorep'$. After tensoring with the single-strand closure $\cal{Z}_{\mathbf{y}_i,\mathbf{x}_i}$ and taking total homology, we get an isomorphism of chain complexes
\[ H_{tot}(\zerorep \otimes C(\cal{L})) \cong \zerorep \otimes H_{tot}(C(\cal{L})) \otimes H^*(S^1)^{\otimes k}   \]
in which the vertical differential acts trivially on the tensor factor $H^*(S^1)^{\otimes k}$ contributed by the single-strand closure $\cal{Z}_{\mathbf{y}_i,\mathbf{x}_i}$. Using the homotopy equivalences from Lemma~\ref{lem-circlecx}, which preserve the $\Sym(\X_i)$-module structures before and after taking total homology, we now get:
\begin{align}
\label{eqn-s11}
H_{tot}(\zerorep \otimes \Mfu(\cal{L})) \sim H_{tot}(\zerorep \otimes C(\cal{L}))
&\cong \zerorep \otimes H_{tot}(C(\cal{L})) \otimes H^*(S^1)^{\otimes k}  
\\ \nonumber
 &\sim  \zerorep \otimes H_{tot}(\Mfu(\cal{L})) \otimes H^*(S^1)^{\otimes k}   
\end{align}
On the other hand, we can tensor~\eqref{eqn-umf} with $\zerorep$ and take $H_{tot}$ homology to get:
\begin{align}
\label{eqn-s12} H_{tot}(\zerorep \otimes \Mfu(\cal{L})) &\cong H_{tot}(\zerorep \otimes \Mfd(\cal{L})) 
 \\ \nonumber &= H_{tot}\left(\zerorep \otimes \frac{\UMf(\cal{B})}{\la \X_j=\X'_j\ra}\otimes \cal{Z}_{\pi(\mathbf{y}_i),0} \right) \cong  H_{tot}(\overline{\Mfu(\cal{L}(i))})\otimes H^*(S^1)^{\otimes k} . 
\end{align}
On the latter complex, the vertical differential acts trivially on the tensor factor $H^*(S^1)^{\otimes k}$, which is again the contribution of the closure of the marked edge. Since the homotopy category of graded chain complexes of $\C$-vector spaces is Krull--Schmidt, we can cancel the factors of $H^*(S^1)^{\otimes k}$ in \eqref{eqn-s11} and \eqref{eqn-s12}, which produces the desired homotopy equivalence.
\end{proof}

\begin{cor} The reduced Khovanov--Rozansky homology is computed by the reduced complex:
\[\overline{\mathrm{KhR}}^{\glnn{N}}(\cal{L}(i)) \cong H_*(H_{tot}(\overline{\Mfu(\cal{L}(i))}),d_v^*)\]
\end{cor}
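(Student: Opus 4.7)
The plan is to combine three ingredients: the definition of reduced colored $\slN$ homology as $H_*(\la \pi_k^N \ra \grtaut_{cut}(\LC{\cal{T}}{\slN}{}))$, the identification of the unreduced invariant with $H_*(H_{tot}(\Mfu(\cal{L})), d_v^*)$ from Theorem~\ref{thm-Webster}, and the homotopy equivalence between $H_{tot}(\overline{\Mfu(\cal{L}(i))})$ and $\zerorep \otimes H_{tot}(\Mfu(\cal{L}))$ from Proposition~\ref{prop-innerred}. First I would verify that the $\Sym(\X_i)$-action on the matrix-factorization complex $H_{tot}(\Mfu(\cal{L}))$ coming from the marked strand coincides, after the identification of Theorem~\ref{thm-Webster} and up to homotopy, with the $H_k^N$-module structure on $C^{\slN}(\cal{T}) = \grtaut_{cut}(\LC{\cal{T}}{\slN}{})$ provided by Theorem~\ref{thm-moduleinv}. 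This should follow by tracking the 2-functor from $\grFoam{N}^\bullet$ to matrix factorizations from \cite{RW} already invoked in the proof of Lemma~\ref{lem-singlegrad}, together with the fact that decorations on the marked edge correspond on both sides to polynomial multiplication in the alphabet $\X_i$.

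The second and main algebraic step is the observation that for any free $H_k^N$-module $F \cong H_k^N \otimes_{\C} W$ there is a canonical $\C$-linear isomorphism
\[\la \pi_k^N \ra F \;\xrightarrow{\;\sim\;}\; \zerorep \otimes_{\Sym(\X_i)} F, \qquad \pi_k^N \otimes w \mapsto 1 \otimes w,\]
which is natural with respect to $H_k^N$-linear maps: indeed, by Lemma~\ref{lem_piA} the submodule $\la \pi_k^N \ra$ is annihilated by the homogeneous maximal ideal $\mathfrak{m} \subset H_k^N$, so the $H_k^N$-action on $\la \pi_k^N \ra F$ factors through $H_k^N/\mathfrak{m} \cong \zerorep$, which is exactly how $H_k^N$ acts on $\zerorep \otimes F$. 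Applying Corollary~\ref{cor-hoffree} to realize $C^{\slN}(\cal{T})$ (up to homotopy) as a complex of free $H_k^N$-modules, this promotes to a chain-level isomorphism
\[\overline{C}^{\slN}(\cal{T}) \;=\; \la \pi_k^N \ra C^{\slN}(\cal{T}) \;\cong\; \zerorep \otimes C^{\slN}(\cal{T}).\]

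Putting these together I would assemble the chain of isomorphisms
\[\overline{\mathrm{KhR}}^{\slN}(\cal{L}(i)) \;\cong\; H_*(\zerorep \otimes C^{\slN}(\cal{T})) \;\cong\; H_*(\zerorep \otimes H_{tot}(\Mfu(\cal{L})), d_v^*) \;\cong\; H_*(H_{tot}(\overline{\Mfu(\cal{L}(i))}), d_v^*),\]
in which the middle isomorphism uses the identification of module structures from the first step (and exactness of $\zerorep \otimes_{H_k^N}(-)$ on the relevant free resolution), while the last isomorphism is Proposition~\ref{prop-innerred}.

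The principal obstacle I anticipate is the verification that the $H_k^N$-module structures on the foam and matrix-factorization sides truly agree. While this is morally forced by the 2-functor of \cite{RW}, one must check that after simplifying $\Mfu(\cal{B})$ to the decoration bimodule $\Dec(\LC{\cal{B}}{}{})$ via Lemma~\ref{lem-singlegrad}, the resulting complex is not merely homotopy equivalent to $\grtaut_{cut}(\LC{\cal{T}}{\slN}{})$ as a chain complex, but equivariantly so for the action of decorations on the marked edge; this is the content that makes the substitution of $\zerorep$-tensoring for the $\la \pi_k^N\ra$-reduction legitimate.
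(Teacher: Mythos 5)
Your proposal is correct and follows essentially the same route as the paper: identify $(H_{tot}(\Mfu(\cal{L})),d_v^*)$ with the foam-theoretic complex equivariantly for the $H_k^N$-action via the 2-representation of \cite{RW}, trade the submodule $\la\pi_k^N\ra(-)$ for $\zerorep\otimes(-)$ on free $H_k^N$-modules, and conclude with Proposition~\ref{prop-innerred}. Your second step merely spells out in detail what the paper dispatches with ``clearly, this is equivalent to tensoring with the isomorphic $H_k^N$-module $\zerorep\otimes H_k^N$'', so there is no substantive difference.
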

\begin{proof} Analogously as in \cite[Proof of Theorem 4.12]{QR}, it follows from the 2-functor from foams to matrix factorizations that $(H_{tot}(\Mfu(\cal{L})),d_v^*)$ and $\grtaut(\LC{\cal{L}}{\glnn{N}}{})$ are isomorphic as chain complexes of $H_k^N$-modules. The reduced homology is then obtained from tensoring the chain groups of these complexes with the $H_k^N$-module $\la \pi_k^N\ra$ before taking vertical homology. Clearly, this is equivalent to tensoring with the isomorphic $H_k^N$-module $\zerorep\otimes H_k^N$. Now the claim follows from Proposition~\ref{prop-innerred}.
\end{proof}

\begin{defi} Let $\cal{L}(i)$ be the closure of a balanced labelled braid diagram $\cal{B}$ with a marked lower boundary point $o_i$ of minimal label. Then we define the reduced colored HOMFLY-PT homology of $\cal{L}$ reduced at $i$ as
\begin{align*}
\LHHr{\cal{L}(i)}{}:= H_*(H^+(\overline{\Mfu(\cal{L}(i))})), d_v^*).
\end{align*}
\end{defi}

 Let $\Wu := \Vu / \la \X_j=\X'_j| j\neq i\ra = \Vu / \la \X_j=\X'_j\ra$, which is still isomorphic to a polynomial ring containing a tensor factor $\Sym(\X_i)$. Thus we can write $\Wu \cong \Wu/\la \X_i=0\ra \otimes \Sym(\X_i)$. In the following, we consider $\UMf(\cal{B})/ \la \X_j=\X'_j\ra$ as a (vertical) complex of matrix factorizations over $\Wu/\la \X_i=0\ra$.
 
\begin{lem} \label{lem-cone} As a complex of matrix factorizations over $\Wu/\la \X_i=0\ra$, the reduced complex
$\overline{\Mfu(\cal{L}(i))}$ is homotopy equivalent to the iterated mapping cone $\UMf(\cal{B})/ \la \X_j=\X'_j\ra \otimes \cal{Z}_{\X_i}$. Here $\cal{Z}_{\X_i}$ is the (vertical) Koszul complex over $\Sym(\X_i)$ with respect to the sequence consisting of $e_l(\X_i)$ for $1 \leq l \leq k$.  
\end{lem}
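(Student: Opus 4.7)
The approach is to recognize this as an instance of a standard Koszul resolution argument, transferred to the setting of complexes of matrix factorizations. The starting observation is the identification
\[
\overline{\Mfu(\cal{L}(i))} \;=\; \UMf(\cal{B})/\la \X_j=\X'_j\ra \,\otimes_{\Sym(\X_i)}\, \C,
\]
where $\C \cong \Sym(\X_i)/\la e_l(\X_i)\mid 1\leq l \leq k\ra$ is viewed as a $\Sym(\X_i)$-module: this holds because the ideal $\la \X_i = 0\ra$ in $\Sym(\X_i)$ coincides with the ideal generated by $e_1(\X_i),\dots,e_k(\X_i)$. The iterated mapping cone $\UMf(\cal{B})/\la\X_j=\X'_j\ra \otimes \cal{Z}_{\X_i}$ is the analogous expression where $\C$ is replaced by its Koszul resolution, and the goal is to show that the augmentation $\cal{Z}_{\X_i} \to \C$ induces a homotopy equivalence after tensoring.

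Two structural facts power the argument. First, $\Sym(\X_i) \cong \C[e_1(\X_i),\dots,e_k(\X_i)]$ is a polynomial ring in the $e_l(\X_i)$, so these form a regular sequence and $\cal{Z}_{\X_i}$ is a free resolution of $\C$ over $\Sym(\X_i)$. Second, Lemma~\ref{lem-Vufree} (together with the discussion just before the lemma) yields $\Wu \cong \Wu/\la\X_i=0\ra \otimes_\C \Sym(\X_i)$, so $\Wu$ is free as a $\Sym(\X_i)$-module. Each chain group of $\UMf(\cal{B})/\la \X_j=\X'_j\ra$ is a free $\Wu$-module -- being a Koszul matrix factorization -- and is therefore also free over $\Sym(\X_i)$. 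Consequently, the induced morphism of complexes of matrix factorizations
\[
\UMf(\cal{B})/\la \X_j=\X'_j\ra \otimes \cal{Z}_{\X_i} \;\longrightarrow\; \overline{\Mfu(\cal{L}(i))}
\]
is a quasi-isomorphism, since tensoring a free resolution with a flat module preserves acyclicity.

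Finally, to upgrade this quasi-isomorphism to a homotopy equivalence in the category of chain complexes of matrix factorizations over $\Wu/\la\X_i=0\ra$, one observes that both sides are bounded complexes whose underlying modules are free over the polynomial ring $\Wu/\la\X_i=0\ra$. The iterated mapping cone acquires this structure in the homotopy category via the standard null-homotopies for multiplication by each $e_l(\X_i)$ on $\cal{Z}_{\X_i}$, which force the residual $\la\X_i\ra$-action to vanish up to homotopy. A quasi-isomorphism between bounded complexes of free modules is a homotopy equivalence, and since every map in the argument intertwines both $d_+$ and $d_-$, this lifts to the desired homotopy equivalence. The main delicate point is verifying that the Koszul null-homotopies witnessing the trivial $\la\X_i\ra$-action are compatible with the positive and negative matrix factorization differentials, so that the equivalence genuinely lives in the category of complexes of matrix factorizations over $\Wu/\la\X_i=0\ra$ rather than merely in the underlying category of chain complexes of modules.
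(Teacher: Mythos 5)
Your strategy is the same one the paper uses in substance: identify $\overline{\Mfu(\cal{L}(i))}$ with $\UMf(\cal{B})/\la\X_j=\X'_j\ra\otimes_{\Sym(\X_i)}\C$, note that $e_1(\X_i),\dots,e_k(\X_i)$ is a regular sequence generating $\la\X_i=0\ra$ so that $\cal{Z}_{\X_i}$ resolves $\C$, and use the freeness of everything over $\Sym(\X_i)$ supplied by $\Wu\cong\Wu/\la\X_i=0\ra\otimes\Sym(\X_i)$. The difference is in execution: the paper quotients out the generators $e_l(\X_i)$ one at a time, defining intermediate rings $\Wu^l$ and invoking Rasmussen's explicit one-variable lemma \cite[Lemma 5.15]{Ras1} at each step, whereas you tensor with the full Koszul complex at once and then try to upgrade the resulting quasi-isomorphism to a homotopy equivalence by a general ``quasi-isomorphism between bounded complexes of projectives'' argument.

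The one genuine gap is exactly the point you flag and then do not resolve. The abstract argument that a quasi-isomorphism between bounded complexes of free modules is a homotopy equivalence produces a homotopy inverse and homotopies that are a priori only maps of $\Wu/\la\X_i=0\ra$-modules; there is no reason for them to intertwine $d_+$ and $d_-$, so the equivalence is not automatically one of complexes of matrix factorizations, which is what the lemma asserts and what the later arguments (taking $H^+$ and $H_{tot}$ before $d_v^*$-homology) require. This is not a formality: the whole content of Rasmussen's one-variable lemma is the explicit construction, from a chosen splitting $C\cong C/e_lC\otimes\C[e_l]$ afforded by freeness, of a homotopy inverse and homotopies that are genuine morphisms of matrix factorizations. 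The cleanest repair of your argument is precisely the paper's: reduce to one generator at a time (the $e_l(\X_i)$ remain regular over each successive quotient $\Wu^l$ because $\Sym(\X_i)$ is a polynomial ring on them) and apply that explicit construction iteratively; alternatively, you would have to write down a multivariable version of Rasmussen's homotopies and check compatibility with $d_\pm$ by hand. Everything else in your outline is fine.
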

\begin{proof} For $0\leq l \leq k$ define $\Wu^l:=  \Wu/\la e_x(\X_i)=0|x>l\ra$ and $\overline{\Mfu(\cal{L}(i))}_l:= \Wu^l \otimes \UMf(\cal{B})$, which is a free matrix factorization over $\Wu^l$. As special cases we have $\overline{\Mfu(\cal{L}(i))}_{k} = \UMf(\cal{B})/ \la \X_j=\X'_j\ra$ and $\overline{\Mfu(\cal{L}(i))}_0=\overline{\Mfu(\cal{L}(i))}$. Now Rasmussen's one-variable argument from \cite[Lemma 5.15]{Ras1} shows that $\overline{\Mfu(\cal{L}(i))}_l$ is homotopy equivalent, as a complex of matrix factorizations over $\Wu^l$, to $\overline{\Mfu(\cal{L}(i))}_{l+1} \otimes \cal{Z}_{e_l(\X_i)} $. The same holds over sub-rings $\Wu^{l'}$ with $l'<l$, and so we can simultaneously trade quotienting out all generators $e_l(\X_i)$ against tensoring with the Koszul complex $\cal{Z}_{\X_i}$ . 
\end{proof}

After taking positive homology, we get a homotopy equivalence of vertical complexes
\begin{equation}
\label{eqn-cone}H^+(\overline{\Mfu(\cal{L}(i))}) \sim H^+(\UMf(\cal{B})/ \la \X_j=\X'_j\ra) \otimes \cal{Z}_{\X_i}
\end{equation}

\begin{lem}\label{lem-markedshift}  $\LHHr{\cal{L}(i)}{}$ is isomorphic to $\LHHr{\cal{L}(i')}{}$ if the marked points $o_i$ and $o_{i'}$ lie on the same component of $\cal{L}$.
\end{lem}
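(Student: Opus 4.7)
The plan is to deduce the lemma from the cone description in Lemma~\ref{lem-cone}. After passing to vertical homology in \eqref{eqn-cone}, we obtain
\[
\LHHr{\cal{L}(i)}{} \;\cong\; H_v\!\left(H^+(\UMf(\cal{B})/\la \X_j=\X'_j\ra) \otimes \cal{Z}_{\X_i},\; d_v^*\right),
\]
so $\LHHr{\cal{L}(i)}{}$ is computed as the total homology of a double complex in which the $\Sym(\X_i)$-direction is a Koszul complex on the sequence $e_1(\X_i),\dots,e_k(\X_i)$. In particular, only the $d_v^*$-homotopy class of the $\Sym(\X_i)$-action on $H^+(\UMf(\cal{B})/\la \X_j=\X'_j\ra)$ enters. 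So it suffices to show: whenever $o_i$ and $o_{i'}$ lie on the same component of $\cal{L}$ (so in particular $k = |\X_i| = |\X_{i'}|$), the actions of $e_l(\X_i)$ and $e_l(\X_{i'})$ on this vertical complex are related by an isomorphism of the underlying $\Sym$-module structures, giving homotopy equivalent mapping cones.

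The strategy for that module comparison is to reduce to elementary moves. Two marked points on the same component can be connected by a finite sequence of the following operations, each of which is a standard Markov-type manipulation at the level of the braid diagram $\cal{B}$:
\begin{itemize}
\item[(a)] a cyclic rotation of $\cal{B}$, which slides a bottom boundary point $o_i$ to an adjacent bottom boundary point via the top closure arcs;
\item[(b)] a conjugation move that carries the marked endpoint through a single crossing of $\cal{B}$.
\end{itemize}
For move (a) the claim is immediate: cyclic rotation is implemented on $\Mfu(\cal{B}) \otimes \cal{Z}_{\mathbf{y},\mathbf{x}}$ by cyclically permuting the tensor factors of $\cal{Z}_{\mathbf{y},\mathbf{x}}$, and this evidently intertwines the multiplication action of $e_l(\X_i)$ before rotation with the action of $e_l(\X_{i'})$ after rotation. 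For move (b) the key point is local: at a single crossing, the Koszul simplification performed in Lemma~\ref{lem-quotientcx} already imposes the relation $e_l(\X_1 \sqcup \X_2) = e_l(\Y_1 \sqcup \Y_2)$ through degree $k$, and combining this with the row operations from Lemma~\ref{lem-rowops} exhibits the bimodule with $e_l(\X_i)$-decoration at the incoming arc and the one with $e_l(\X_{i'})$-decoration at the outgoing arc (on the same component) as isomorphic complexes of matrix factorizations over $\Wu/\la \X_i = 0\ra$. Passing to $H^+$ and then to $d_v^*$-homology converts these isomorphisms into the required equivalence of mapping cones.

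The main obstacle is verifying move (b) rigorously: one must check that the local isomorphism of simplified Koszul factorizations produced by the row operations is compatible with the web-level relations used to establish invariance of $\LCa{\cal{B}}{}$ under Reidemeister and Markov moves, so that the induced map on $H^+$ commutes with $d_v^*$ up to chain homotopy. This will proceed essentially as in Rasmussen's uncolored argument \cite[Lemma 5.16]{Ras1}, with the necessary colored replacements supplied by Proposition~\ref{prop-bimodconstr} and the 2-representation of $\grFoam{N}^\bullet$ on matrix factorizations recalled in Remark~\ref{rem-z2}: every web isomorphism relevant to moving a marked point past a crossing on its own component already holds in $\grFoam{N}^\bullet$, hence lifts to a homotopy equivalence of the relevant matrix factorizations preserving the $\Sym(\X_i)$-module structure on the marked edge. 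Assembling the elementary moves then yields the isomorphism $\LHHr{\cal{L}(i)}{} \cong \LHHr{\cal{L}(i')}{}$.
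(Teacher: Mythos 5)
Your overall route coincides with the paper's: both start from the iterated-cone description of Lemma~\ref{lem-cone}, observe that the complex $H^+(\UMf(\cal{B})/\la\X_j=\X'_j\ra)$ is independent of the marked point, and reduce the lemma to comparing the endomorphisms ``multiply by $e_l(\X_i)$'' and ``multiply by $e_l(\X_{i'})$'', checked locally at a single crossing. Your move (a) is harmless (the closure identifications $\X_j=\X'_j$ are already built into the complex), so everything hinges on move (b).

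The gap is in your justification of move (b). The relation imposed in $\Vu$ at a crossing is $e_l(\X_1\sqcup\X_2)=e_l(\Y_1\sqcup\Y_2)$ for $l\le k$; it constrains only the \emph{unions} of the two incoming and two outgoing alphabets, and does \emph{not} identify $e_l$ of the two ends of a single strand passing through the crossing. Indeed, on the individual webs of the crossing complex (e.g.\ the bimodule \eqref{eqn-localbimod}) multiplication by $e_l(\X_i)$ and by $e_l(\X_{i'})$ are genuinely different module endomorphisms, so no sequence of row operations or Koszul simplifications will exhibit the two decorated complexes as \emph{isomorphic} in the way you assert. What is true, and what the argument actually requires, is that the two multiplications are \emph{chain homotopic} as endomorphisms of the full crossing complex, with the homotopy built from the zip/unzip differentials of that complex; this is the content of \cite[Proposition 60]{RW}, which the paper invokes (Rasmussen's one-variable analogue is the model). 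Your closing paragraph points at the right references, but the mechanism you propose in the middle paragraph would not produce the homotopy, and without it the cones on the two endomorphisms cannot be compared. Replacing that step by the explicit local homotopy of \cite[Proposition 60]{RW} (or its matrix-factorization translation via the 2-representation) closes the gap and recovers the paper's proof.
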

\begin{proof} Equation \eqref{eqn-cone} shows that $\LHHr{\cal{L}(i)}{}$ is the homology of an iterated mapping cone on endomorphisms of the complex $H^+(\UMf(\cal{B})/ \la \X_j=\X'_j\ra))$, which does not depend on $i$. Since cones on homotopic chain maps are isomorphic in the homotopy category, it suffices to show that the endomorphism of $H^+(\UMf(\cal{B})/ \la \X_j=\X'_j\ra)$ given by multiplying by  $e_l(\X_i)$ is homotopic to the endomorphism given by multiplying by $e_l(\X_{i'})$. In fact, it is enough to check this homotopy between endomorphisms locally, i.e. on the complex associated to a single crossing, where the alphabets $\X_i$ and $\X_{i'}$ are assigned to opposite sides of a single strand. This has been done by Rose and the author in \cite[Proposition 60]{RW} in the framework of webs and foams, and an analogous proof works in the present setting.
\end{proof}

As a by-product we get that the $\Su$ action on $\LHHr{\cal{L}(i)}{}$ factors through the quotient $\Su/\la \Y_j=\Y_{j'}, \X_i=0 \ra$ where we identify alphabets $\Y_j,\Y_{j'}$ which belong to the same link component. In particular, all alphabets on the component containing the marked point $o_i$ act by zero.

\begin{cor} The triply-graded vector space $\LHHr{\cal{L}(i)}{}$ is independent of the diagram of the labelled link with marked component specified by $i$. 
\end{cor}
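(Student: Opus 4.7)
The plan is to deduce diagram-independence of $\LHHr{\cal{L}(i)}{}$ from the known invariance of the unreduced colored HOMFLY-PT homology $\LHH{\cal{L}}{}$ (Webster-Williamson). Lemma~\ref{lem-markedshift} already handles the choice of marked boundary point within a fixed braid diagram of $\cal{L}$, so it suffices to compare two braid diagrams $\cal{B},\cal{B}'$ of $\cal{L}$ related by Markov moves (braid isotopy, conjugation, and stabilization), subject to the technical assumptions on marked strands.

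By Lemma~\ref{lem-cone} and equation~\eqref{eqn-cone}, $\LHHr{\cal{L}(i)}{}$ is the vertical homology of an iterated mapping cone that successively cones off the multiplication-by-$e_l(\X_i)$ operators, for $1 \leq l \leq k$, on the vertical complex $H^+(\UMf(\cal{B})/\la \X_j = \X'_j\ra)$. The first step of the proof is to identify the vertical homology of this complex with a $(1,1)$-tangle analogue of unreduced colored HOMFLY-PT homology that carries a canonical $\Sym(\X_i)$-action: indeed, $H^+(\UMf(\cal{B})) \cong \Dec(\LC{\cal{B}}{}{})$ by Lemma~\ref{lem-singlegrad}, and quotienting by $\la \X_j = \X'_j\ra$ for $j \neq i$ partially implements the Hochschild diagonal relations used to compute $\LHH{\cal{L}}{}$, keeping only the marked pair $(\X_i,\X'_i)$ uncontracted. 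This is the natural $(1,1)$-tangle counterpart of the HOMFLY-PT complex.

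The second step is to argue that Markov moves induce quasi-isomorphisms between these $(1,1)$-tangle complexes that intertwine the $\Sym(\X_i)$-action. This should follow from the fact that the standard Markov-invariance chain maps for $\LHH{\cdot}{}$ (Webster-Williamson, Mackaay-Sto\v{s}i\'c-Vaz) are constructed locally and can be performed away from the marked edge, so they manifestly commute with decorating that edge by elements of $\Sym(\X_i)$. The final step then invokes the standard homological algebra fact that iterated mapping cones are functorial with respect to module-preserving quasi-isomorphisms: combining this with the comparison of quasi-isomorphic complexes via intertwining $e_l(\X_i)$-endomorphisms, diagram-independence transfers from the unreduced to the reduced theory, and the $\Sym(\X_i)$-action is seen to pass through the quotient to $\zerorep$.

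The main obstacle will be step one: making the identification of $H_*(H^+(\UMf(\cal{B})/\la \X_j = \X'_j\ra),d_v^*)$ with a $(1,1)$-tangle version of unreduced HOMFLY-PT homology sufficiently precise that Markov invariance of the link version carries over with the module structure intact. The partial diagonal quotient $\la \X_j = \X'_j\ra$ for $j\neq i$ is nonstandard, and verifying that it produces the same module-theoretic data as Hochschild-coning on the appropriate subset of boundary alphabets requires careful application of Proposition~\ref{prop-simplify} and row operations of the type already used in Lemma~\ref{lem-quotientcx}. Once this identification is established, the argument reduces to bookkeeping and the already-known invariance theorems.
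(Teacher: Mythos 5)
Your proposal is correct and follows essentially the same route as the paper: express the reduced complex as the iterated cone of Lemma~\ref{lem-cone} on $H^+(\UMf(\cal{B})/\la \X_j=\X'_j\ra)$, inherit invariance of that complex from the known invariance of the unreduced theory, and note that the moves can be performed away from the marked edge so that the $e_l(\X_i)$-endomorphisms (and hence the cone) are respected. The only cosmetic difference is that the paper executes the middle step by splitting off the constant factor $H^*(S^1)^{\otimes k}$ via the decomposition \eqref{eqn-splitoff} rather than by directly setting up a $(1,1)$-tangle version of Markov invariance.
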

\begin{proof} From Corollary~\ref{cor-umf} we get an isomorphism of chain complexes with respect of $d_v^*$:
\begin{equation}
\label{eqn-splitoff}
(H^+(\Mfu(\cal{L})),d_v^*) \cong (H^+(\UMf(\cal{B})/ \la \X_j=\X'_j\ra),d_v^*) \otimes H^*(S^1)^{\otimes k}
\end{equation}
and this decomposition does not depend on $i$, only on its label. Since $H^+(\Mfu(\cal{L}))$ is invariant under Reidemeister 2 and 3 and Markov moves up to homotopy, so is $H^+(\UMf(\cal{B})/ \la \X_j=\X'_j\ra)$. The same is true for this complex tensored with $\cal{Z}_{\X_{i}}$, as we may assume that the alphabet $\X_{i}$ is associated to an edge not participating in the move (see proof of Lemma~\ref{lem-markedshift}). 
\end{proof}

\begin{rem} 
$H_*(H^+(\UMf(\cal{B})/ \la \X_j=\X'_j\ra),d_v^*)$ is a triply-graded link invariant which is the colored generalization of what Rasmussen calls the \emph{middle HOMFLY homology}, \cite[Definition 2.9]{Ras1}. It carries exactly the same amount of information as $\LHH{\cal{L}}{}$.
\end{rem}

\begin{rem} 
\label{rem-redunred}
In the uncolored case, Rasmussen shows that $\LHH{\cal{L}}{}\cong \LHHr{\cal{L}(i)}{}\otimes \LHH{\bigcirc}{}$, see \cite[Section 2.8]{Ras1}. The key ingredient for this is an isomorphism
$\UMf(\cal{B})/ \la \X_j=\X'_j\ra) \cong \overline{\Mfu(\cal{L}(i))} \otimes \Sym(\X_i)$ of double complexes with respect to $d$ and $d_+$. We suspect that this also holds in the colored case, but do not know a proof of this fact.
\end{rem}

\begin{thm}\label{thm-redHss}  There is a spectral sequence 
\[\LHHr{\cal{L}(i)}{} \quad \rightsquigarrow \quad\overline{\mathrm{KhR}}^{\glnn{N}}(\cal{L}(i))\]
whose $k^{th}$ differential lowers horizontal grading by $k$, homological grading by $k$ and it increases the internal grading by $2k N$.
\end{thm}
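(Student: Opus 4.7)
The plan is to adapt the proof of Theorem~\ref{thm-Hss} to the reduced setting, substituting the reduced complex $\overline{\Mfu(\cal{L}(i))}$ for $\Mfu(\cal{L})$ throughout. This complex carries three pairwise anti-commuting differentials $d_+$, $d_-$ and $d_v$ together with compatible horizontal, internal and vertical gradings, so its positive homology $H^+(\overline{\Mfu(\cal{L}(i))})$ becomes a bi-graded double complex with induced differentials $d_-^*$ and $d_v^*$. By definition, $\LHHr{\cal{L}(i)}{}=H_*(H^+(\overline{\Mfu(\cal{L}(i))}),d_v^*)$, and the corollary preceding Theorem~\ref{thm-redHss} identifies $\overline{\mathrm{KhR}}^{\slN}(\cal{L}(i))$ with $H_*(H_{tot}(\overline{\Mfu(\cal{L}(i))}),d_v^*)$. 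Filtering $H^+(\overline{\Mfu(\cal{L}(i))})$ by horizontal degree yields a spectral sequence whose $E_0$-page is $(H^+(\overline{\Mfu(\cal{L}(i))}),d_v^*)$, whose $E_1$-page is consequently $\LHHr{\cal{L}(i)}{}$, and which converges to $H_*(H^+(\overline{\Mfu(\cal{L}(i))}),d_-^*+d_v^*)$.

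Following Rasmussen~\cite[Proposition~5.10, Lemmas~5.11--5.13]{Ras1}, identifying this limit with $\overline{\mathrm{KhR}}^{\slN}(\cal{L}(i))$ reduces to a reduced analogue of Lemma~\ref{lem-singlehor}, namely that $H_*(H^+(\overline{\Mfu(\cal{L}(i))}),d_-^*)$ is concentrated in a single horizontal grading. Granting this, the two-step argument used for Theorem~\ref{thm-Hss} yields
\[
H_*(H^+(\overline{\Mfu(\cal{L}(i))}),d_-^*+d_v^*)\cong H_*(H_*(H^+(\overline{\Mfu(\cal{L}(i))}),d_-^*),d_v^*)\cong H_*(H_{tot}(\overline{\Mfu(\cal{L}(i))}),d_v^*).
\]

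The main obstacle is therefore the reduced single-horizontal-grading statement. I would prove it by mimicking the proof of Lemma~\ref{lem-singlehor}: applying the categorified annular web evaluation algorithm of Queffelec--Rose~\cite{QR2} in the marked annular setting reduces the problem to the case where $\cal{L}$ is replaced by an annular closure of a web in $\LC{\cal{B}}{}{}$ with a distinguished marked edge. This marked closure decomposes, up to a direct summand in the annular foam 2-category, into configurations of nested labelled essential circles with one circle carrying the reduction mark, and on such configurations a direct computation shows that $H_*(H^+(\overline{\Mfu(\cdot)}),d_-^*)$ lies in the single horizontal grading equal to the negative of the sum of circle labels. The key web isomorphisms used in the evaluation (rung sliding, rung combination, Frobenius relations, square switch) remain valid after imposing the reduction relation $\la\X_i=0\ra$ thanks to the 2-functor from $\grFoam{N}^\bullet$ to matrix factorizations described in \cite[Section~4.4]{RW} and Remark~\ref{rem-z2}, so the reduction carries through. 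The claimed grading shifts of the higher differentials follow from the triply-graded structure on $\overline{\Mfu(\cal{L}(i))}$ by the same analysis as in \cite[Proposition~5.14]{Ras1} and Remark~\ref{rem-gradings}, with an extra contribution to the homological degree from the vertical Koszul factor $\cal{Z}_{\X_i}$ of Lemma~\ref{lem-cone} accounting for the shift relative to the unreduced Theorem~\ref{thm-Hss}.
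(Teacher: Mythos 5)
Your proposal follows essentially the same route as the paper: filter $H^+(\overline{\Mfu(\cal{L}(i))})$ by horizontal degree, identify the $E_1$ page and the abutment via the two-step Rasmussen argument, and reduce everything to the reduced analogue of Lemma~\ref{lem-singlehor}. The one place where your sketch diverges from (and is vaguer than) the paper is in how that key lemma is established: rather than running the Queffelec--Rose annular evaluation with a ``marked circle,'' the paper uses its own partial-closure evaluation algorithm (Proposition~\ref{prop-annulareval}), which deliberately keeps the marked edge open and undisturbed and therefore needs the additional bigon relation \eqref{eqn-addbigon} (relation (5)) beyond the four annular relations; one must then check that this relation holds in the categorified/matrix-factorization framework and that it shifts the horizontal grading homogeneously, just as removing a full circle does. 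Your version would have to justify that the marked component survives the annular simplification as a distinguished circle compatible with the ideal $\la \X_i=0\ra$, which is precisely the delicacy the partial-closure algorithm is designed to avoid; with that substitution your argument matches the paper's.
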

\begin{proof} Analogously as in the unreduced situation of Theorem~\ref{thm-Hss}, we have a vertical complex of matrix factorizations $\overline{\Mfu(\cal{L}(i))}$ with potential zero, from which HOMFLY-PT as well as $\glnn{N}$ homologies can be recovered:  
\begin{align*}
\LHHr{\cal{L}(i)}{}&= H_*(H^+(\overline{\Mfu(\cal{L}(i))})), d_v^*)\\
\overline{\mathrm{KhR}}^{\glnn{N}}(\cal{L}(i)) &\cong H_*(H_{tot}(\overline{\Mfu(\cal{L}(i))}),d_v^*)
\end{align*}
The proof now proceeds as for Theorem~\ref{thm-Hss}, or its uncolored predecessor in \cite{Ras1}. The necessary reduced analogue of Lemma~\ref{lem-singlehor} can be proved by using the web evaluation algorithm introduced in Proposition~\ref{prop-annulareval}, which allows to keep the marked edge undisturbed in the simplification process. Further, it is easy to check by hand that the additional bigon relation \eqref{eqn-addbigon} holds in the categorified framework and that it causes a homogeneous shift in horizontal grading in all webs appearing in the algorithm (just like removing circles of labels that sum up $\sum o_i$).
\end{proof}

\begin{lem} \label{lem-finrank} For any web $W$ appearing in $\LC{\cal{B}}{}{}$, the singular Sorgel bimodule $\Dec(W)$ is finitely generated over $\cal{S}_\cal{B}$.
\end{lem}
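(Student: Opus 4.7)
The plan is to establish the stronger statement that $\Dec(W)$ is finitely generated as a left module over the subring $R_\cal{B} = \Sym(\X'_1|\cdots|\X'_m) \subset \cal{S}_\cal{B}$ acting via the left boundary alphabets. Since a finite generating set over $R_\cal{B}$ remains a finite generating set over any larger commutative ring containing $R_\cal{B}$ (in particular over $\cal{S}_\cal{B}$), this will suffice.

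First I would slice $W$ horizontally into elementary pieces, each containing at most one trivalent vertex. For an elementary merge of alphabets $\X_1, \X_2$ of sizes $a, b$ into $\Y$ of size $a+b$, the local decoration bimodule $\Sym(\X_1|\X_2|\Y)/\langle \Y = \X_1 \sqcup \X_2\rangle$ is isomorphic to $\Sym(\X_1|\X_2)$, which is cyclic over itself and free of rank $\binom{a+b}{a}$ over $\Sym(\Y)$ by the classical theorem on symmetric polynomials in a disjoint union. The analogous statement holds for elementary splits, and identity slices contribute rank-one bimodules. In particular, every elementary piece is finitely generated as a left module over its left boundary alphabet ring.

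Then I would reconstruct $\Dec(W)$ inductively by tensoring these elementary bimodules over their shared intermediate alphabet rings, corresponding to gluing horizontal slices along common edges. The key algebraic fact is that if $M_1$ is an $(A,R)$-bimodule finitely generated as a left $A$-module by $\{n_i\}$ and $M_2$ is an $(R,B)$-bimodule finitely generated as a left $R$-module by $\{m_j\}$, then $M_1 \otimes_R M_2$ is finitely generated as a left $A$-module by the finite set $\{n_i \otimes m_j\}$. Indeed, any tensor $n \otimes m$ can be rewritten as $\sum_j n r_j \otimes m_j$ after expanding $m = \sum_j r_j m_j$, and then each $n r_j \in M_1$ is a left $A$-combination of the $n_i$. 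Iterating this gluing across all slices of $W$ yields a finite generating set for $\Dec(W)$ as a left $R_\cal{B}$-module, and hence as an $\cal{S}_\cal{B}$-module.

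The main obstacle is verifying that finite generation is preserved at each intermediate alphabet ring in the induction, but this follows directly from the explicit finite ranks of the elementary bimodules established in the base case. In fact, this argument gives an explicit upper bound on the number of generators of $\Dec(W)$ over $R_\cal{B}$, namely the product of the binomial coefficients $\binom{a+b}{a}$ over all trivalent vertices of $W$ (with $a, b$ the labels of the thinner edges at each vertex).
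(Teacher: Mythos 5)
Your proof is correct, and it takes a genuinely different route from the paper's, even though both hinge on the same algebraic input (the freeness of $\Sym(\W_1|\W_2)$ over $\Sym(\W_1\sqcup\W_2)$ of rank ${|\W_1\sqcup\W_2| \choose |\W_1|}$, with Schur polynomial basis). The paper's proof exploits the fact that $\cal{S}_\cal{B}$ already contains an alphabet for every edge of $W$ \emph{except} the finitely many new edges created inside each crossing resolution; it therefore works locally at a single crossing, writes out the corresponding bimodule explicitly, and uses the freeness statement to express all decorations on those few ``bad'' facets in terms of finitely many generators over the surrounding arc alphabets. You instead slice $W$ globally into elementary merge/split layers and induct along the composition, using the freeness statement as the base case and the standard ``finite over finite is finite'' argument for $M_1\otimes_R M_2$ as the inductive step. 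Your version buys a strictly stronger conclusion — finite generation already over the boundary subring $\Sym(\X'_1|\cdots|\X'_m)\subset \cal{S}_\cal{B}$, together with an explicit generator count — at the cost of tracking all intermediate slice rings; the paper's version is shorter precisely because it never has to leave the alphabets of $\cal{S}_\cal{B}$ outside the crossing squares. One small imprecision in your bound: only the vertices whose thick edge lies on the \emph{generating} side contribute a factor ${a+b \choose a}$ (the others are free of rank one over their two-alphabet boundary), so the product over all trivalent vertices is a valid but non-sharp upper bound.
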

\begin{proof}
This would be obvious from the definition of $\Dec(W)$ if $\cal{S}_\cal{B}$ would contain an alphabet for every web edge of $W$. However, $W$ contains additional web edges on top of the ones already present as arcs of the diagram $\cal{B}$---namely the ones introduced in crossing resolutions, e.g. $\W_1, \W_2, \W_3$ and $\W_4$ in \eqref{eqn-crossingresol}. It remains to show that all decorations on these additional web edges can be expressed as $\cal{S}_\cal{B}$-linear combination of only a finite number of basis decorations. For this, we consider a typical web appearing at a crossing site:
\begin{equation}
\label{eqn-crossingresol}\xy
(0,0)*{
\begin{tikzpicture} [scale=.5,fill opacity=0.2]
	\draw[very thick, directed=.55]  (2,0) to (1.25,0);
	\draw[very thick]  (1.25,0) to (0.5,0);
	\draw[very thick, directed=.55]  (-1.25,0) to (-2,0);
	\draw[very thick]  (-0.5,0) to (-1.25,0);
	\draw[very thick, directed=.55]  (2,1) to (1.25,1);
	\draw[very thick, directed=.55]  (-1.25,1) to (-2,1);
	\draw[very thick] (1.25,1) to (-1.25,1);
	\draw[very thick, directed=.55]  (0.5,1) to (-0.5,1);
	\draw[very thick, directed=.55]  (0.5,0) to (-0.5,0);
	\draw[very thick, directed=.55] (1.25,1) to [out=180,in=0](0.5,0) ;
	\draw[very thick, directed=.55] (-0.5,0) to [out=180,in=0](-1.25,1) ;
\node[opacity=1] at (0,1.5) {$_{\W_3}$};
	\node[opacity=1] at (0,-0.5) {$_{\W_4}$};
	\node[opacity=1] at (-1.7,0.45) {\tiny $\W_2$};
	\node[opacity=1] at (1.7,0.45) {\tiny $\W_1$};
		\node[opacity=1] at (1.7,1.5) {$_{\Y_1}$};
	\node[opacity=1] at (1.7,-0.5) {$_{\Y_2}$};
	\node[opacity=1] at (-1.7,1.5) {$_{\Y'_1}$};
	\node[opacity=1] at (-1.7,-0.5) {$_{\Y'_2}$};
\end{tikzpicture}
};
\endxy
\end{equation}

Here, the edges labelled with the alphabets $\Y_i$ come from arcs in the diagram $\cal{B}$ and the edges labelled by $\W_i$ alphabets are new. After eliminating $\W_4$, we see that the singular Soergel bimodule associated to this web is isomorphic to:

\begin{equation}
\label{eqn-localbimod}\frac{\Sym(\Y'_1|\W_2|\W_3)}{(\Y'_1=\W_2\sqcup \W_3)}  \otimes_{\Sym(\W_2|\W_3)}
\frac{\Sym(\Y_2|\Y'_2|\W_1|\W_2|\W_3)}{(\Y'_2\sqcup \W_2=\Y_2\sqcup \W_1)}  \otimes_{\Sym(\W_1|\W_3)}
 \frac{\Sym(\W_1|\W_3|\Y_1)}{(\W_1\sqcup \W_3=\Y_1)} 
  \end{equation} 
It is well known that $\Sym(\W_2|\W_3)$ is a free $\Sym(\W_2\sqcup \W_3)$-module of rank ${|\W_2\sqcup \W_3|\choose |\W_2|}$ with a basis given by Schur polynomials $\pi^{\W_2}_\lambda$ with $\lambda\in P(|\W_2|,|\W_3|)$. Thus, the left tensor factor is of finite rank over $\Sym(\Y'_1)$. The case of the right tensor factor is completely analogous and, consequently, the total bimodule is finitely generated over $\Sym(\Y_1|\Y_2|\Y'_1|\Y'_2)$. 
\end{proof}

\begin{prop}\label{prop-findim} For a labelled knot $\cal{K}$, the reduced colored HOMFLY-PT homology $\LHHr{\cal{K}}{}$ is finite dimensional.
\end{prop}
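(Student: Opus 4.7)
The plan is to exhibit $\LHHr{\cal{K}}{}$ as a finitely generated module over a Noetherian polynomial ring $R$ whose augmentation ideal acts trivially; this forces $\LHHr{\cal{K}}{}$ to be finite-dimensional over $\C$. Fix a presentation of $\cal{K}$ as the closure of a balanced labelled braid diagram $\cal{B}$ with marked boundary point $o_i$, and set $R := \Su/\la \X_j=\X'_j,\X_i=0\ra$. Since $\cal{B}$ has only finitely many arcs, $R$ is a polynomial algebra in finitely many variables over $\C$, hence Noetherian.

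First I would rewrite the complex computing $\LHHr{\cal{K}}{}$ using equation \eqref{eqn-cone}, which gives a homotopy equivalence $H^+(\overline{\Mfu(\cal{K}(i))})\sim H^+(\UMf(\cal{B})/\la \X_j=\X'_j\ra)\otimes \cal{Z}_{\X_i}$ of vertical complexes. By Lemma~\ref{lem-singlegrad}, the chain groups on the right are given, up to the reductions $\X_j=\X'_j$ and $\X_i=0$, by the singular Soergel bimodules $\Dec(W)$ attached to webs $W$ appearing in $\LC{\cal{B}}{}{}$, tensored with terms of the finite Koszul complex $\cal{Z}_{\X_i}$. By Lemma~\ref{lem-finrank} each $\Dec(W)$ is finitely generated over $\Su$; finite generation is preserved under further quotient by $\la \X_j=\X'_j,\X_i=0\ra$ and under tensoring with the finite-dimensional complex $\cal{Z}_{\X_i}$. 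Since $R$ is Noetherian, the vertical homology of this bounded complex of finitely generated $R$-modules is again a finitely generated $R$-module.

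To conclude, I would invoke the observation immediately following Lemma~\ref{lem-markedshift}: the $\Su$-action on $\LHHr{\cal{L}(i)}{}$ factors through identifying the alphabets of any two arcs lying on the same link component and setting $\X_i=0$. For a knot, every arc lies on the unique component containing $o_i$, so after these identifications every alphabet collapses to $\X_i=0$; in particular every generator of $R$ annihilates $\LHHr{\cal{K}}{}$. A finitely generated $R$-module on which the augmentation ideal acts trivially is finite-dimensional over $\C$, which completes the proof.

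I expect the main subtlety to be merely bookkeeping: one must verify that the finite generation from Lemma~\ref{lem-finrank} descends to the reduced situation -- including the auxiliary ladder-rung alphabets $\W$ introduced at crossing resolutions, which are already eliminated in the proof of that lemma -- and that the module-structure conclusion of Lemma~\ref{lem-markedshift} transfers from $\LHHr{\cal{L}(i)}{}$ computed via $\Mfu(\cal{L})$ to the version computed via $\UMf(\cal{B})/\la\X_j=\X'_j\ra$. Both checks are routine once one traces the identifications through the splitting-off of the marked edge as in equation~\eqref{eqn-splitoff}.
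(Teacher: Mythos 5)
Your argument is correct and follows essentially the same route as the paper's proof: finite generation of the chain groups over $\Su$ via Lemma~\ref{lem-finrank}, passage through the iterated mapping cone $\cal{Z}_{\X_i}$, and the observation (via Lemma~\ref{lem-markedshift} and its by-product) that for a knot all alphabets act by zero on the homology, forcing finite-dimensionality. The only cosmetic difference is that you work directly with the reduced complex from \eqref{eqn-cone}, whereas the paper keeps the factor $H^*(S^1)^{\otimes k}$ around and cancels it at the end; both are fine.
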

\begin{proof}
 From Lemma~\ref{lem-finrank} we see that $H^+(\Mfu(\cal{L}))\cong H^+(\Dec(\cal{B})\otimes \cal{Z}_{\mathbf{y},\mathbf{x}})$ is finitely generated over $\Su$ and so is the iterated mapping cone $H^+(\Mfu(\cal{L}))\otimes \cal{Z}_{\X_i}$. From~\eqref{eqn-splitoff} and Lemma~\ref{lem-cone} we see:
 \[H^+(\Mfu(\cal{L}))\otimes \cal{Z}_{\X_i} \cong H^+(\UMf(\cal{B})/ \la \X_j=\X'_j\ra) \otimes H^*(S^1)^{\otimes k} \otimes \cal{Z}_{\X_i}\sim H^+(\overline{\Mfu(\cal{L}(i))})\otimes H^*(S^1)^{\otimes k}\]
The vertical homology of this iterated cone is isomorphic as a triply-graded vector space to a direct sum of $2^k$ copies of $\LHHr{\cal{K}}{}$ and we shall argue that it is finite-dimensional. The generators $\Sym(\X_i)$ act null-homotopically on the iterated cone (any chain endomorphisms $f$ induces a null-homotopic endomorphism of $\mathrm{Cone}(f)$) and, thus, by zero on its vertical homology. By the proof of Lemma~\ref{lem-markedshift}, the same holds for all alphabets in $\Su$ in the case of a knot $\cal{L}=\cal{K}$. The vertical homology of the iterated cone is a finitely generated $\Su$-module, but all variables act by zero, so it is finitely generated over $\C$. Consequently, its tensor factor $\LHHr{\cal{K}}{}$ is finite-dimensional.
\end{proof}

\comm{
\PW{Lemma providing basic homological algebra referred to above}
\begin{lem} Let $(C,d)$ be a chain complex in an additive category and $f$ and endomorphism of $C$. Then $f$ applied component-wise is a null-homotopic endomorphism of the cone $C(f)$.  
\end{lem}
\begin{proof} 
\comm{
An alternative, equivalent definition of cone:
$C(f)\cong C \otimes C$ with differential $d_f:=(x,y)\mapsto (d(x),d(y)+(-1)^{|x|}f(x))$. Let's check that this is a chain complex by squaring the differential
\[(x,y)\mapsto (d(x),d(y)+(-1)^{|x|}f(x)) \mapsto (d(d(x)),d(d(y))+(-1)^{|x|}d(f(x)) - (-1)^{|x|}f(d(x)) ) =(0,0)\]
Let $H$ be the map sending $(x,y)\mapsto (y,0)$. Then we compute 
\[(H d_f - d_f H)(x,y) = (d(y)+(-1)^{|x|}f(x),0)-(d(y),(-1)^{|y|}f(y))= ((-1)^{|x|}f(x),(-1)^{|y|}f(y))\] }
$C(f)\cong C \otimes C$ with differential $d_f:=(x,y)\mapsto (-d(x),d(y)+f(x))$. Let's check that this is a chain complex by squaring the differential
\[(x,y)\mapsto (-d(x),d(y)+f(x)) \mapsto (d(d(x)),d(d(y))+d(f(x)) -f(d(x)) ) =(0,0)\]
Let $H$ be the map sending $(x,y)\mapsto (y,0)$. Then we compute 
\[(H d_f + d_f H)(x,y) = (d(y)+f(x),0)+(-d(y),f(y))= (f(x),f(y))\]
This automatically implies that $(x,y)\mapsto (f(x),f(y))$ is a chain map. But let's check it anyway:
\[(f d_f) (x,y)=(f(d(x)),f(d(y)))+f(f(x))= (d(f(x)),d(f(y)))+f(f(x)) = (d_f f)(x,y) \]
\end{proof}
}

\begin{thm}
\label{thm-gradcollapse} For a labelled knot $\cal{K}$, the reduced colored HOMFLY-PT homology $\LHHr{\cal{K}}{}$ is a stabilization of reduced colored $\glnn{N}$ homologies $\overline{\mathrm{KhR}}^{\glnn{N}}(\cal{K})$ for large $N$. More precisely:

\begin{equation}
\label{eqn-gradcollapse}
\bigoplus_{{i+2N j = I}\atop{h-j=J}} \LHHr{\cal{K}}{i,j,h} \cong \overline{\mathrm{KhR}}_{I,J}^{\glnn{N}}(\cal{K})
\end{equation}

\end{thm}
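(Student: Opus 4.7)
The plan is to deduce the theorem from the reduced specialization spectral sequence of Theorem~\ref{thm-redHss} together with the finite-dimensionality of $\LHHr{\cal{K}}{}$ established in Proposition~\ref{prop-findim}.

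First, I would recall the setup. The spectral sequence of Theorem~\ref{thm-redHss} has $E_1$-page isomorphic to $\LHHr{\cal{K}}{}$ and converges to $\overline{\mathrm{KhR}}^{\slnn{N}}(\cal{K})$. By the grading statement of that theorem, the $k$-th differential $d_k$ lowers the horizontal grading by $k$, lowers the homological grading by $k$, and raises the internal grading by $2kN$. In particular, $d_k$ preserves the two combinations
\[
I \;=\; \mathrm{gr}_q + 2N\,\mathrm{gr}_h, \qquad J \;=\; \mathrm{gr}_v - \mathrm{gr}_h,
\]
which on the $E_1$-page pick out the subspaces indexed on the left-hand side of \eqref{eqn-gradcollapse}.

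Second, I would argue that the spectral sequence collapses at $E_1$ for $N$ sufficiently large. By Proposition~\ref{prop-findim}, $\LHHr{\cal{K}}{}$ is a finite-dimensional triply-graded vector space, so there exists a constant $Q$ (depending only on $\cal{K}$) such that $\LHHr{\cal{K}}{i,j,h}=0$ whenever $|i| \geq Q$. Any nonzero differential $d_k$ on $E_1 \cong \LHHr{\cal{K}}{}$ would connect a nonzero class in internal degree $i$ to one in internal degree $i+2kN$, forcing $2kN \leq 2Q$. Choosing $N > Q$ rules this out for every $k \geq 1$, so $d_k = 0$ on $E_1$ for all $k \geq 1$. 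Since the spectral sequence converges to $\overline{\mathrm{KhR}}^{\slnn{N}}(\cal{K})$, we obtain an isomorphism of vector spaces $\LHHr{\cal{K}}{} \cong \overline{\mathrm{KhR}}^{\slnn{N}}(\cal{K})$ for all such $N$.

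Third, I would identify the bigrading on $\overline{\mathrm{KhR}}^{\slnn{N}}(\cal{K})$ inherited under this isomorphism. As in the unreduced case treated in the proof of Theorem~\ref{thm-Hss} and Remark~\ref{rem-gradings} (following Rasmussen \cite[Proposition 5.14]{Ras1}), the quantum grading on the $E_\infty$-page corresponds to the differential-preserved quantity $\mathrm{gr}_q + 2N\mathrm{gr}_h = I$, and the homological grading corresponds to $\mathrm{gr}_v - \mathrm{gr}_h = J$. Since $E_\infty = E_1 \cong \LHHr{\cal{K}}{}$ (now with its $(I,J)$-bigrading), grouping the summands of $\LHHr{\cal{K}}{}$ according to constant values of $I$ and $J$ yields the desired isomorphism.

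The main obstacle will be rigorously transferring the grading identification of Remark~\ref{rem-gradings} from the unreduced to the reduced setting: one needs to check that passing from $\Mfu(\cal{L})$ to $\overline{\Mfu(\cal{L}(i))}$ is compatible with the horizontal/vertical/internal triple grading, so that the spectral sequence on the reduced complex inherits the same grading shifts as in the unreduced case. This is essentially bookkeeping using the explicit models in Section~\ref{sec-reducedHomfly}, together with the observation that the reduction amounts to tensoring with the Koszul complex $\cal{Z}_{\X_i}$ by Lemma~\ref{lem-cone}, whose grading behaviour is manifestly controlled. Once this is in place, finite-dimensionality does the rest.
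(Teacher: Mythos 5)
Your proof is correct and follows essentially the same route as the paper: finite-dimensionality from Proposition~\ref{prop-findim} forces the spectral sequence of Theorem~\ref{thm-redHss} to degenerate at $E_1$ for $N$ large, and the surviving bigrading is given by the differential-preserved quantities $I=\mathrm{gr}_q+2N\mathrm{gr}_h$ and $J=\mathrm{gr}_v-\mathrm{gr}_h$. Your third step, identifying these quantities with the quantum and homological gradings on $\overline{\mathrm{KhR}}^{\slnn{N}}(\cal{K})$ via Remark~\ref{rem-gradings}, makes explicit a point the paper's proof leaves implicit but is exactly the intended argument.
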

\begin{proof}  We know that $\LHHr{\cal{K}}{}$ is finite-dimensional and, thus, supported in finitely many internal gradings. Since the $k^{th}$ differential in the spectral sequence from Theorem~\ref{thm-redHss} increases the internal grading by $2 N k$, all higher differentials must be trivial for large $N$. So the spectral sequence, which is known to compute $\glnn{N}$ homology, must have already converged on the first page, which is precisely the grading-collapsed version of HOMFLY-PT homology displayed in~\eqref{eqn-gradcollapse}. 
\end{proof}

\begin{cor} \textbf{(Refined exponential growth for HOMFLY-PT homology)} 
\label{cor_expgrw} There exist spectral sequences
\begin{align}
\LHHr{\cal{K}^k}{}  \quad &\rightsquigarrow \quad  \left( \LHHr{\cal{K}^1}{}\right)^{\otimes k}\\
\LHHr{\cal{K}^k}{}  \quad &\rightsquigarrow \quad  \LHHr{\cal{K}^{k-1}}{} \otimes \LHHr{\cal{K}^{1}}{}.
\end{align}
In particular, the colored HOMFLY-PT homologies of a knot grow at least exponentially in color.
\end{cor}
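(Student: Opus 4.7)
The plan is to derive Corollary~\ref{cor_expgrw} as a large-$N$ stabilization of the reduced $\slN$ color-reducing spectral sequences provided by Corollary~\ref{cor_colss}, using the grading collapse of Theorem~\ref{thm-gradcollapse}. The finite-dimensionality of $\LHHr{\cal{K}^k}{}$ established in Proposition~\ref{prop-findim} is essential here: both $\LHHr{\cal{K}^k}{}$ and $(\LHHr{\cal{K}^1}{})^{\otimes k}$ are supported in a bounded set of tri-gradings $(i,j,h)$, so for $N$ sufficiently large the collapse formula
\[\bigoplus_{i+2Nj=I,\; h-j=J} \LHHr{\cal{K}^k}{i,j,h} \cong \overline{\mathrm{KhR}}^{\slnn{N}}_{I,J}(\cal{K}^k)\]
becomes a bi-graded reshuffling in which the map $(i,j,h) \mapsto (i+2Nj,\, h-j)$ is injective on the support of the HOMFLY-PT homology, so that the full tri-grading can be read off from the $\slN$ bi-grading.

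First, I would apply Corollary~\ref{cor_colss} with $N$ taken larger than the relevant support bounds, producing spectral sequences on reduced $\slnn{kN}$ and $\slnn{2N}$ homologies. By the injectivity just noted, the $E_1$-pages of these spectral sequences may be canonically re-indexed by the HOMFLY-PT tri-grading to recover $\LHHr{\cal{K}^k}{}$, while the $E_\infty$-pages recover (the associated graded of) $(\LHHr{\cal{K}^1}{})^{\otimes k}$ and $\LHHr{\cal{K}^{k-1}}{}\otimes\LHHr{\cal{K}^1}{}$ respectively. Since the $\slN$ differentials from Theorem~\ref{thm_reducedss} preserve the homological grading $J=h-j$, the lifted spectral sequences preserve this linear combination of HOMFLY-PT gradings, which suffices (after fixing conventions) to obtain the homological grading preservation claimed in Corollary~\ref{cor_expgrw}. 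Comparing dimensions between $E_1$ and $E_\infty$ then yields
\[\dim \LHHr{\cal{K}^k}{} \;\geq\; \dim (\LHHr{\cal{K}^1}{})^{\otimes k} \;=\; \bigl(\dim \LHHr{\cal{K}^1}{}\bigr)^k,\]
which is exponential in $k$ whenever $\dim\LHHr{\cal{K}^1}{}\geq 2$, as is the case for any non-trivial knot.

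The main obstacle will be the careful reconciliation of grading conventions: the spectral sequence on the $\slN$ side preserves $J=h-j$ and shifts internal grading by $2kN$ per page (by Theorem~\ref{thm_reducedss}), whereas the statement of Corollary~\ref{cor_expgrw} asks for preservation of the HOMFLY-PT homological grading. The hard part is verifying that after the stabilization step the multi-grading shifts cancel consistently on the finitely many graded pieces of the HOMFLY-PT homologies involved, so that the differentials acquire compatible tri-degrees upon lifting. Bookkeeping the bidegrees of the differentials via Theorem~\ref{thm_reducedss}, together with the injectivity of the stabilization map on the (finite) support, should settle this.
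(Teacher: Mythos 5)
Your proposal is correct and follows essentially the same route as the paper, whose proof of Corollary~\ref{cor_expgrw} simply combines Theorem~\ref{thm-gradcollapse} with the spectral sequences of Corollary~\ref{cor_colss} for $N\gg 0$. The extra details you supply (finite-dimensionality via Proposition~\ref{prop-findim}, injectivity of the grading-collapse map on the finite support, and the bookkeeping showing preservation of $J=h-j$) are precisely the points the paper leaves implicit in its one-line proof.
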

\begin{proof}
Immediate from Theorem~\ref{thm-gradcollapse} and the spectral sequences from Corollary~\ref{cor_colss} for $N\gg 0$.
\end{proof}

\begin{cor} \textbf{(Color-reducing spectral sequences)} 
\label{cor_coldiff}
 For $k\geq h$ there is a spectral sequence
\begin{align}
\LHHr{\cal{K}^k}{}  \quad &\rightsquigarrow \quad  \LHHr{\cal{K}^h}{}.
\end{align}
\end{cor}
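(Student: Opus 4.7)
The plan is to proceed in exact analogy with Corollary~\ref{cor_expgrw}, combining the rank-shifted color-reducing spectral sequence from Corollary~\ref{cor_colss2} at the $\slN$ level with the stabilization statement of Theorem~\ref{thm-gradcollapse}. For any $N$, Corollary~\ref{cor_colss2} gives a spectral sequence
\[
\overline{\mathrm{KhR}}^{\slnn{N+k-h}}(\cal{K}^k) \;\rightsquigarrow\; \overline{\mathrm{KhR}}^{\slnn{N}}(\cal{K}^h),
\]
arising from the deformation filtration on the $A$-reduced $\Sigma$-deformed complex with $\Sigma=\{\lambda_1^N,\lambda_2^{k-h}\}$ and $A=\{\lambda_1^h,\lambda_2^{k-h}\}$, whose differentials preserve the $J$-grading and lower the $I$-grading.

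Next, I would take $N$ large enough that Theorem~\ref{thm-gradcollapse} identifies both the source and the target, as bi-graded vector spaces, with grading-collapses of the reduced HOMFLY-PT homologies of $\cal{K}^k$ and $\cal{K}^h$ respectively: the source via $(I,J)=(i+2(N+k-h)j,\,h'-j)$ on $\LHHr{\cal{K}^k}{i,j,h'}$, and the target via $(I',J')=(i+2Nj,\,h'-j)$ on $\LHHr{\cal{K}^h}{i,j,h'}$. Proposition~\ref{prop-findim} guarantees finite support in $(i,j,h')$, so for $N$ sufficiently large the bigrading $(I,J)$ actually refines to the full trigrading $(i,j,h')$ on each side (distinct trigraded summands go to distinct $(I,J)$-summands).

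The main step, and the one that does the real work, is to track how the differentials act in the refined trigrading. The $r$-th differential of the deformation spectral sequence shifts $I$ by $-2r$, an amount that is bounded independently of $N$. Writing $\Delta I = \Delta i + 2(N+k-h)\,\Delta j = -2r$, once $N$ exceeds the diameter of the finite $i$-support of $\LHHr{\cal{K}^k}{}$ the only solution with $\Delta i$ in that support is $\Delta j = 0$, $\Delta i = -2r$. Combined with $\Delta J = \Delta h' - \Delta j = 0$, this forces $\Delta h' = 0$ as well. Thus, for $N$ sufficiently large, the $\slN$-level spectral sequence descends, grading summand by grading summand in $(j,h')$, to a spectral sequence on the HOMFLY-PT trigrading that preserves $j$ and $h'$ and merely lowers $i$, converging from $\LHHr{\cal{K}^k}{}$ to $\LHHr{\cal{K}^h}{}$ (up to the constant shift in $i$ coming from the different grading-collapse conventions used on the two sides). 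The potential obstacle is the bounded-differential argument itself: it relies crucially on the finite-dimensionality of reduced colored HOMFLY-PT homology established in Proposition~\ref{prop-findim}, which is what allows the $N\to\infty$ asymptotic behaviour to separate the $2Nj$ and $i$ contributions to $I$ cleanly. Once that separation is in place, the rest of the argument is purely formal.
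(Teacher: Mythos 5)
Your proposal is correct and follows the same route as the paper, which deduces the corollary directly from Corollary~\ref{cor_colss2} combined with the stabilization of Theorem~\ref{thm-gradcollapse} for $N\gg 0$ (with finite-dimensionality from Proposition~\ref{prop-findim} doing the work behind the scenes, exactly as in your bounded-differential argument). The grading bookkeeping you spell out is the content the paper leaves implicit in its one-line proof.
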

\begin{proof}
Immediate from Theorem~\ref{thm-gradcollapse} and the spectral sequences from Corollary~\ref{cor_colss2} for $N\gg 0$.
\end{proof}

\section{Appendix on web evaluation}
\label{sec-app}
This appendix contains an algorithm for the evaluation of $\glnn{N}$ webs with two boundary points. For this, we work in the decategorified framework of $\glnn{N}$ webs, see e.g. Cautis--Kamnitzer--Morrison's paper \cite{CKM} or Murakami--Ohtsuki--Yamada's earlier work \cite{MOY}. In this section, we will draw webs as oriented upwards instead of leftwards. First we recall the annular web evaluation algorithm of Queffelec--Rose. For this, we will need the following terminology.

\begin{defi} A $\glnn{N}$ web is in ladder form (or just \textit{is a ladder web}) if any of its edges is either vertical or horizontal. The former are called the \textit{uprights} and the latter the \textit{rungs} of the ladder web. 
\end{defi}
Every $\glnn{N}$ web can be isotoped into ladder form, although not uniquely, see \cite[Theorem 5.3.1]{CKM}.

\begin{lem}\cite[Lemma 5.2]{QR2}
\label{lem-annulareval} The annular closure (c.f. \eqref{eqn:braidclosure}) of a $\glnn{N}$ ladder web can be rewritten as a $\Z[q^{\pm 1}]$-linear combination of annular closures of identity webs (i.e. nested, labelled essential circles) using only the following four relations:
\begin{enumerate}
\item Sliding ladder rungs around the annulus
\[\xy
(0,0)*{
\begin{tikzpicture}[scale=.25]
	\draw [very thick] (-2.5,-3) to (-2.5,1) to (2.5,1) to (2.5,-3) to (-2.5,-3);
	\draw [very thick] (-1,1) to (-1,2);
	\draw [very thick] (1,1) to (1,2) ;
	\draw [very thick,dashed] (-1,2) to [out=90,in=90] (8,2) to (8,-3) to [out=270,in=270] (-1,-3);
	\draw [very thick,dashed]  (1,2) to [out=90,in=90] (6,2) to (6,-3) to [out=270,in=270] (1,-3);
	\draw [very thick, directed=.55] (-1,2) to (1,2);
	\node at (0,-1) {$\text{web}$};
	\node at (3.5,-0.5) {$\bullet$};
\end{tikzpicture}
};
\endxy \;\; = \;\;
\xy
(0,0)*{
\begin{tikzpicture}[scale=.25]
	\draw [very thick] (-2.5,-2) to (-2.5,2) to (2.5,2) to (2.5,-2) to (-2.5,-2);
	\draw [very thick] (-1,-3) to (-1,-2);
	\draw [very thick] (1,-3) to (1,-2) ;
	\draw [very thick,dashed] (-1,2) to [out=90,in=90] (8,2) to (8,-3) to [out=270,in=270] (-1,-3);
	\draw [very thick,dashed]  (1,2) to [out=90,in=90] (6,2) to (6,-3) to [out=270,in=270] (1,-3);
	\draw [very thick, directed=.55] (-1,-3) to (1,-3);
	\node at (0,0) {$\text{web}$};
		\node at (3.5,-0.5) {$\bullet$};
\end{tikzpicture}
};
\endxy
\quad,\quad 
\xy
(0,0)*{
\begin{tikzpicture}[scale=.25]
	\draw [very thick] (-2.5,-3) to (-2.5,1) to (2.5,1) to (2.5,-3) to (-2.5,-3);
	\draw [very thick] (-1,1) to (-1,2);
	\draw [very thick] (1,1) to (1,2) ;
	\draw [very thick,dashed] (-1,2) to [out=90,in=90] (8,2) to (8,-3) to [out=270,in=270] (-1,-3);
	\draw [very thick,dashed]  (1,2) to [out=90,in=90] (6,2) to (6,-3) to [out=270,in=270] (1,-3);
	\draw [very thick, rdirected=.55] (-1,2) to (1,2);
	\node at (0,-1) {$\text{web}$};
		\node at (3.5,-0.5) {$\bullet$};
\end{tikzpicture}
};
\endxy\;\; = \;\;
\xy
(0,0)*{
\begin{tikzpicture}[scale=.25]
	\draw [very thick] (-2.5,-2) to (-2.5,2) to (2.5,2) to (2.5,-2) to (-2.5,-2);
	\draw [very thick] (-1,-3) to (-1,-2);
	\draw [very thick] (1,-3) to (1,-2) ;
	\draw [very thick,dashed] (-1,2) to [out=90,in=90] (8,2) to (8,-3) to [out=270,in=270] (-1,-3);
	\draw [very thick,dashed]  (1,2) to [out=90,in=90] (6,2) to (6,-3) to [out=270,in=270] (1,-3);
	\draw [very thick, rdirected=.55] (-1,-3) to (1,-3);
	\node at (0,0) {$\text{web}$};
		\node at (3.5,-0.5) {$\bullet$};
\end{tikzpicture}
};
\endxy
\]

\item Ladder rung combination
\[\xy
(0,0)*{
\begin{tikzpicture}[scale=.25]
	\draw [very thick, directed=.55] (-2,-4) to (-2,-2);
	\draw [very thick, directed=.55] (-2,-2) to (-2,2);
	\draw [very thick, directed=.55] (2,-4) to (2,-2);
	\draw [very thick, directed=.55] (2,-2) to (2,2);
	\draw [very thick, directed=.55] (-2,2) to (-2,4);
	\draw [very thick, directed=.55] (2,2) to (2,4);
	\draw [very thick, directed=.55] (-2,2) to (2,2);
	\draw [very thick, directed=.55] (-2,-2) to (2,-2); 
	\node at (-2,-4.5) {\tiny $k$};
	\node at (2,-4.5) {\tiny $l$};
	\node at (0,-1.25) {\tiny $j_1$};
	\node at (0,2.75) {\tiny $j_2$};
\end{tikzpicture}
};
\endxy= {j_1+j_2 \brack j_1}\xy
(0,0)*{
\begin{tikzpicture}[scale=.25]
	\draw [very thick, directed=.55] (-2,-4) to (-2,-2);
	\draw [very thick] (-2,-2) to (-2,2);
	\draw [very thick, directed=.55] (2,-4) to (2,-2);
	\draw [very thick] (2,-2) to (2,2);
	\draw [very thick, directed=.55] (-2,2) to (-2,4);
	\draw [very thick, directed=.55] (2,2) to (2,4);
	\draw [very thick, directed=.55] (-2,0) to (2,0);
	\node at (-2,-4.5) {\tiny $k$};
	\node at (2,-4.5) {\tiny $l$};
	\node at (0,0.75) {\tiny $j_1\! + \! j_2 $};
\end{tikzpicture}
};
\endxy
\quad,\quad
\xy
(0,0)*{
\begin{tikzpicture}[scale=.25]
	\draw [very thick, directed=.55] (-2,-4) to (-2,-2);
	\draw [very thick, directed=.55] (-2,-2) to (-2,2);
	\draw [very thick, directed=.55] (2,-4) to (2,-2);
	\draw [very thick, directed=.55] (2,-2) to (2,2);
	\draw [very thick, directed=.55] (-2,2) to (-2,4);
	\draw [very thick, directed=.55] (2,2) to (2,4);
	\draw [very thick, rdirected=.55] (-2,2) to (2,2);
	\draw [very thick, rdirected=.55] (-2,-2) to (2,-2); 
	\node at (-2,-4.5) {\tiny $k$};
	\node at (2,-4.5) {\tiny $l$};
	\node at (0,-1.25) {\tiny $j_1$};
	\node at (0,2.75) {\tiny $j_2$};
\end{tikzpicture}
};
\endxy= {j_1+j_2 \brack j_1}\xy
(0,0)*{
\begin{tikzpicture}[scale=.25]
	\draw [very thick, directed=.55] (-2,-4) to (-2,-2);
	\draw [very thick] (-2,-2) to (-2,2);
	\draw [very thick, directed=.55] (2,-4) to (2,-2);
	\draw [very thick] (2,-2) to (2,2);
	\draw [very thick, directed=.55] (-2,2) to (-2,4);
	\draw [very thick, directed=.55] (2,2) to (2,4);
	\draw [very thick, rdirected=.55] (-2,0) to (2,0);
	\node at (-2,-4.5) {\tiny $k$};
	\node at (2,-4.5) {\tiny $l$};
\end{tikzpicture}
};
\endxy\]
\item Frobenius relations
\[
\xy
(0,0)*{
\begin{tikzpicture}[scale=.25]
	\draw [very thick, directed=.55] (-2,-4) to (-2,-2);
	\draw [very thick] (-2,-2) to (-2,2);
	\draw [very thick, directed=.55] (-2,2) to (-2,4);
		\draw [very thick, directed=.55] (2,-4) to (2,-2);
	\draw [very thick, directed=.55] (2,-2) to (2,2);
	\draw [very thick, directed=.55] (2,2) to (2,4);
		\draw [very thick, directed=.55] (6,-4) to (6,-2);
	\draw [very thick] (6,-2) to (6,2);
	\draw [very thick, directed=.55] (6,2) to (6,4);
	\draw [very thick, directed=.55] (-2,2) to (2,2);
	\draw [very thick, rdirected=.55] (2,-2) to (6,-2); 
\end{tikzpicture}
};
\endxy\;\;=\;\; \xy
(0,0)*{
\begin{tikzpicture}[scale=.25]
	\draw [very thick, directed=.55] (-2,-4) to (-2,-2);
	\draw [very thick] (-2,-2) to (-2,2);
	\draw [very thick, directed=.55] (-2,2) to (-2,4);
		\draw [very thick, directed=.55] (2,-4) to (2,-2);
	\draw [very thick, directed=.55] (2,-2) to (2,2);
	\draw [very thick, directed=.55] (2,2) to (2,4);
		\draw [very thick, directed=.55] (6,-4) to (6,-2);
	\draw [very thick] (6,-2) to (6,2);
	\draw [very thick, directed=.55] (6,2) to (6,4);
	\draw [very thick, directed=.55] (-2,-2) to (2,-2);
	\draw [very thick, rdirected=.55] (2,2) to (6,2); 
\end{tikzpicture}
};
\endxy
\quad,\quad
\xy
(0,0)*{
\begin{tikzpicture}[scale=.25]
	\draw [very thick, directed=.55] (-2,-4) to (-2,-2);
	\draw [very thick] (-2,-2) to (-2,2);
	\draw [very thick, directed=.55] (-2,2) to (-2,4);
		\draw [very thick, directed=.55] (2,-4) to (2,-2);
	\draw [very thick, directed=.55] (2,-2) to (2,2);
	\draw [very thick, directed=.55] (2,2) to (2,4);
		\draw [very thick, directed=.55] (6,-4) to (6,-2);
	\draw [very thick] (6,-2) to (6,2);
	\draw [very thick, directed=.55] (6,2) to (6,4);
	\draw [very thick, rdirected=.55] (-2,2) to (2,2);
	\draw [very thick, directed=.55] (2,-2) to (6,-2); 
\end{tikzpicture}
};
\endxy\;\;=\;\; \xy
(0,0)*{
\begin{tikzpicture}[scale=.25]
	\draw [very thick, directed=.55] (-2,-4) to (-2,-2);
	\draw [very thick] (-2,-2) to (-2,2);
	\draw [very thick, directed=.55] (-2,2) to (-2,4);
		\draw [very thick, directed=.55] (2,-4) to (2,-2);
	\draw [very thick, directed=.55] (2,-2) to (2,2);
	\draw [very thick, directed=.55] (2,2) to (2,4);
		\draw [very thick, directed=.55] (6,-4) to (6,-2);
	\draw [very thick] (6,-2) to (6,2);
	\draw [very thick, directed=.55] (6,2) to (6,4);
	\draw [very thick, rdirected=.55] (-2,-2) to (2,-2);
	\draw [very thick, directed=.55] (2,2) to (6,2); 
\end{tikzpicture}
};
\endxy
\]

\item Square switch relations

\[\xy
(0,0)*{
\begin{tikzpicture}[scale=.25]
	\draw [very thick, directed=.55] (-2,-4) to (-2,-2);
	\draw [very thick, directed=.55] (-2,-2) to (-2,2);
	\draw [very thick, directed=.55] (2,-4) to (2,-2);
	\draw [very thick, directed=.55] (2,-2) to (2,2);
	\draw [very thick, directed=.55] (-2,-2) to (2,-2);
	\draw [very thick, directed=.55] (-2,2) to (-2,4);
	\draw [very thick, directed=.55] (2,2) to (2,4);
	\draw [very thick, rdirected=.55] (-2,2) to (2,2);
	\node at (-2,-4.5) {\tiny $k$};
	\node at (2,-4.5) {\tiny $l$};
	\node at (0,-1.25) {\tiny $j_1$};
	\node at (0,2.75) {\tiny $j_2$};
\end{tikzpicture}
};
\endxy=\sum_{j^{\prime}\geq 0}{k-j_1-l+j_2 \brack j^{\prime}}\xy
(0,0)*{
\begin{tikzpicture}[scale=.25]
	\draw [very thick, directed=.55] (-2,-4) to (-2,-2);
	\draw [very thick, directed=.55] (-2,-2) to (-2,2);
	\draw [very thick, directed=.55] (2,-4) to (2,-2);
	\draw [very thick, directed=.55] (2,-2) to (2,2);
	\draw [very thick, rdirected=.55] (-2,-2) to (2,-2);
	\draw [very thick, directed=.55] (-2,2) to (-2,4);
	\draw [very thick, directed=.55] (2,2) to (2,4);
	\draw [very thick, directed=.55] (-2,2) to (2,2);
	\node at (-2,-4.5) {\tiny $k$};
	\node at (2,-4.5) {\tiny $l$};
	\node at (0,-1.25) {\tiny $j_2\! -\! j^{\prime}$};
	\node at (0,2.75) {\tiny $j_1\! -\! j^{\prime}$};
\end{tikzpicture}
};
\endxy\]
\end{enumerate}
\end{lem}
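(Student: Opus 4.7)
The plan is to prove this by induction on a suitable complexity measure of the annular ladder web. I would work with ladder webs consisting of $m$ vertical "uprights" wrapping around the annulus, connected by horizontal "rungs" labelled by positive integers and carrying one of two possible orientations. The complexity measure $c(W)$ I would use is a lexicographically ordered pair consisting of the total sum of labels on all rungs, together with a secondary quantity measuring how far the rungs are from being "sorted" along each upright (for example, the number of adjacent rung pairs oriented oppositely).

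First I would establish the base case: an annular ladder web with no rungs is exactly a disjoint union of concentric essential circles, which already has the desired form. For the inductive step, I would pick a rung $r$ and examine its local environment between two adjacent uprights, performing the following case analysis. If another rung $r'$ with the same orientation lies on the same pair of uprights, then repeated applications of relation (1) bring $r$ and $r'$ into adjacent position, and relation (2) merges them into a single rung; this strictly decreases the number of rungs and hence the complexity. If no parallel same-oriented rung exists, then either $r$ is the only rung on its pair of uprights (in which case relation (1) can be used to push it circularly until it encounters another rung or until it can be absorbed after combination with rungs between neighbouring uprights via Frobenius relation (3)), or the adjacent rung has opposite orientation and one may apply the square switch (4).

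The main obstacle is controlling the termination of the square-switch step: relation (4) expands a single web as a sum over $j'$ of webs whose rung configurations differ, and a priori the naive rung count may not decrease in every summand. My strategy here is to exploit the precise form of the square switch: in every summand with $j' > 0$, the total rung label sum strictly decreases (by $2j'$), so those terms are strictly smaller in the primary complexity. The single exceptional summand $j'=0$ preserves the total rung label sum but has the same orientation pattern as an application of the Frobenius relation (3) followed by sliding via (1); in this summand, the secondary complexity (number of oppositely oriented adjacent rung pairs) strictly decreases. The lexicographic ordering then ensures well-founded descent in all cases.

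Once these reductions have been carried out exhaustively, no rungs remain and the diagram consists only of concentric essential circles on the uprights. A final bookkeeping step arranges them into the nested form claimed, which amounts to applying further instances of relation (1) to push circles around the annulus and disentangle them using Frobenius relation (3) whenever necessary. The extraction of coefficients in $\Z[q^{\pm 1}]$ follows from the quantum binomial and quantum integer coefficients appearing in the listed relations, and no other scalars are introduced along the way.
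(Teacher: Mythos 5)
Your overall strategy --- merge parallel same-oriented rungs with (1) and (2), and use the square switch (4) to handle oppositely oriented pairs, with termination controlled by the fact that the $j'>0$ summands of (4) lower the total rung label by $2j'$ --- is the right flavor, and matches the skeleton of the Queffelec--Rose algorithm that the paper recalls. However, there is a genuine gap at the step where you "apply the square switch (4)" to an oppositely oriented adjacent pair. Relation (4) only applies to an honest square: two rungs on the \emph{same} pair of uprights with nothing else attached to either shared upright at an intermediate height. In a general annular ladder web the leftward and rightward rungs you want to switch are typically separated by \emph{trapped rungs} in neighbouring columns that meet one of the two shared uprights; the paper even reproduces an explicit example of this. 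Clearing these obstructions is the technical heart of the proof (the "untrapping subroutine": a case analysis on whether an intervening rung shares zero, one, or two uprights with the rung being moved, using isotopy, the Frobenius relations (3), and further square switches respectively). Your induction never engages with this, and your complexity measure does not obviously decrease under the untrapping moves --- Frobenius relations preserve the total label sum and can rearrange the adjacency structure arbitrarily, so the secondary count of "oppositely oriented adjacent pairs" is not controlled.

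Two smaller but real problems: first, your claim that the $j'=0$ summand of (4) "has the same orientation pattern as an application of the Frobenius relation (3) followed by sliding via (1)" is false --- relation (3) concerns rungs in adjacent columns sharing exactly one upright, whereas the $j'=0$ term of (4) genuinely transposes two rungs within a single column and is not a composite of (3) and (1). Second, the paper's actual termination argument is local rather than global: one works column by column from the leftmost upright, and each square switch there either increases the label on the segment of the leftmost upright between the two rungs (which is bounded) or, when some rung acquires label $0$, strictly decreases the number of rungs meeting that upright; once the leftmost upright is freed, one recurses on the remaining diagram. If you want to keep your global lexicographic measure, you would need to verify that it is well-founded under \emph{all} the moves used, including those performed inside the untrapping step, which as written it is not.
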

Here we write ${a \brack b}$ for the $q$-binomial coefficients. 
We recall the explicit algorithm, which is used to prove this result, as we will need a certain subroutine later.
\begin{proof}\cite{QR2} At the start, and then throughout, the algorithm combines all possible adjacent ladder rungs pointing in the same direction using relations (2) and (3). As a first goal, the algorithm aims to free the leftmost upright from the rest of the diagram by repeatedly applying relation (4) to pairs of rungs adjacent to it. If possible, this has the effect of increasing a label on the leftmost upright or it allows to reduce the number of rungs touching it, which shows that this algorithm eventually terminates and succeeds in splitting of the leftmost upright. 
It suffices to argue that relation (4) can always be applied to a pair of rungs connected to the leftmost upright (except in the trivial case when the leftmost upright is already disconnected). Clearly, one can always find a leftwards rung above a rightward rung by relation (1), however, they might be separated by \emph{trapped rungs} further to the right in the web. We reproduce an example given in \cite{QR2}:
\[
\xy
(0,0)*{
\begin{tikzpicture}[scale=.25]
	\draw [very thick] (-2,-4) to (-2,4);
	\draw [very thick,dotted] (-2,-5) to (-2,5);
		\draw [very thick] (2,-4) to (2,4);
		\draw [very thick,dotted] (2,-5) to (2,5);
		\draw [very thick] (6,-4) to (6,4);
		\draw [very thick,dotted] (6,-5) to (6,5);
		\draw [very thick] (10,-4) to (10,4);
		\draw [very thick,dotted] (10,-5) to (10,5);
		\draw [very thick] (14,-4) to (14,4);
		\draw [very thick,dotted] (14,-5) to (14,5);
	\draw [very thick, rdirected=.55] (-2,3.5) to (2,3.5);
	\draw [very thick, directed=.55] (-2,-3.5) to (2,-3.5); 
	\draw [very thick, red, directed=.55] (2,-3) to (6,-3);
	\draw [very thick, red, rdirected=.55] (2,0) to (6,0);
	\draw [very thick, red, directed=.55] (6,-2.5) to (10,-2.5);  
	\draw [very thick, red, directed=.55] (6,-1.5) to (10,-1.5);  
	\draw [very thick, red, directed=.55] (10,-2) to (14,-2);  
	\draw [very thick, red, rdirected=.55] (2,3) to (6,3);
	\draw [very thick, red, rdirected=.55] (6,2.5) to (10,2.5);  
	\draw [very thick, red, rdirected=.55] (6,1.5) to (10,1.5);  
	\draw [very thick, red, rdirected=.55] (10,1) to (14,1);  
	\draw [very thick, blue, directed=.55] (10,2) to (14,2);  
	\draw [very thick, blue, rdirected=.55] (10,-3) to (14,-3);  
\end{tikzpicture}
};
\endxy
\]
Here the black rungs are the chosen ones, which are separated by trapped rungs colored red. The blue rungs are not trapped and can be moved outside the local picture by applying relation (3). Queffelec and Rose show that all trapped rungs can be untrapped by applying relations (3) and (4), without moving the chosen pair of rungs on the left. We shall call this:  \\
\noindent \textbf{The untrapping subroutine:} Its strategy is to move the lowest leftward pointing trapped rung downwards until it becomes untrapped. This is possible because below this leftward rung, there can only be rightward pointing trapped rungs, which do not obstruct the downward passage of the leftward rung. To see this, we distinguish three cases:
\begin{itemize} 
\item The rightward rung appears below the leftward rung, but they are adjacent to four different uprights. Then the leftward rung slides down past the other rung by a planar isotopy.
\item The rightward rung appears below the leftward rung and they share exactly one upright. Then the leftward rung can be moved down past the rightward rung by relation (3).
\item The rightward rung appears immediately below the leftward rung. Then we apply relation (4) to switch them, which creates summands in which the leftwards rung is below the corresponding rightward rung (this includes the possible case of label $0$ on a rung, in which case we think of it as erased).
\end{itemize}
 After all leftward trapped rungs have been moved downwards and outside the local picture, all rightwards (formerly) trapped rungs can be moved upwards by relation (3). Then, the chosen rungs adjacent to the leftmost upright are no longer separated and relation (4) can be applied, as desired.
 
Once the leftmost upright is disconnected, the algorithm is recursively applied to the rest of the diagram.  
\end{proof}

\begin{defi} We call an $\glnn{N}$ ladder web of \emph{type EF} if each leftward-oriented ladder rung appears under all rightward-oriented ladder rungs adjacent to the same two uprights. Analogously, we call a web an $\glnn{N}$ ladder web of \emph{type FE} if rightward rungs appear below leftward rungs in each \emph{column}.
\end{defi}

We need a generalization of the untrapping subroutine
\begin{lem} \label{lem_sortroutine}\textbf{The resorting subroutine.} Any $\glnn{N}$ ladder web can rewritten as a $\Z[q^{\pm 1}]$-linear combination of ladder webs of type EF (FE), using only the relations (2), (3) and (4).
\end{lem}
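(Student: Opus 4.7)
The plan is to imitate the Queffelec--Rose untrapping subroutine globally, sorting each column so that every leftward rung lies below every rightward rung sharing the same pair of uprights; the type FE case follows by a symmetric argument. Assign to each ladder web $W$ the complexity $c(W) \in \N$ equal to the number of \emph{FE inversions}, namely ordered pairs $(R,L)$ of rungs in a common column (not necessarily adjacent in $W$) with $R$ rightward, $L$ leftward, and $R$ strictly below $L$. Then $W$ is of type EF if and only if $c(W)=0$, so the lemma will be proved by strong induction on $c(W)$.

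If $c(W)>0$, choose any column containing an FE inversion, and within that column pick a rightward rung $R$ immediately below a leftward rung $L$ with no intervening rungs of the same column. The pair $(R,L)$ need not yet be truly adjacent in $W$, as rungs from other columns may be trapped between them. As in the proof of Lemma~\ref{lem-annulareval}, using only relations~(3) and~(4), we can rewrite $W$ as a $\Z[q^{\pm 1}]$-linear combination of webs in which $R$ and $L$ have become truly adjacent: each trapped leftward rung between them is pushed down past the trapped rightward rungs below it using planar isotopy, relation~(3), or relation~(4) on the genuinely FE-inverted adjacent pair of trapped rungs. Then relation~(4) applied directly to the now-adjacent pair $(R,L)$ expresses it as a sum over $j' \geq 0$ of webs in which these rungs appear in EF order locally, with possibly reduced labels (a label of $0$ being interpreted as an erased rung).

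A direct check confirms that each application of relation~(4) --- whether during untrapping or in the final step --- strictly decreases $c$ by exactly one. Indeed, swapping two opposite-orientation adjacent rungs in a column only affects the single inversion pair they themselves form: any other rung $Z$ of the same column lies either below both swapped rungs or above both, and the inversions of $Z$ with each of the swapped rungs depend only on the orientations and relative heights of $Z$ versus the two, which are unchanged by exchanging the two heights between them. The summation index $j'$ only shrinks labels, not the inversion count. Relations~(2) and~(3), used freely to maintain ladder form and isotope rungs in disjoint columns, never create new FE inversions. Thus $W$ is rewritten as a $\Z[q^{\pm 1}]$-linear combination of webs of strictly smaller complexity, and the inductive hypothesis closes the argument.

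The main obstacle to check is that every application of relation~(4) inside the untrapping subroutine is genuinely on an FE-inverted adjacent pair; this matches the third case in the Queffelec--Rose subroutine, where by choice of moving the \emph{lowest} leftward trapped rung downward one always encounters only rightward trapped rungs below, so any adjacency requiring relation~(4) has a leftward rung directly above a rightward rung. One must similarly confirm that the sliding moves (planar isotopy for disjoint columns, relation~(3) for shared-upright pairs) never create new FE inversions anywhere in the web, which is immediate since they either rearrange rungs in disjoint columns or merely slide a single rung across a shared upright without changing the set of rungs in any fixed column.
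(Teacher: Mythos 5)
Your proof is correct, but it is organized differently from the paper's. The paper sorts the columns from left to right: once the first $x-1$ columns satisfy the EF condition, any violating adjacent pair in column $x$ has trapped rungs only to its right, so the untrapping subroutine applies exactly in the form used for Lemma~\ref{lem-annulareval}, and termination is argued column by column (somewhat informally). You instead run a single global induction on the number of FE inversions and attack an arbitrary violating adjacent pair. This buys a cleaner, more quantitative termination argument --- the inversion count is exactly the invariant that the paper leaves implicit --- but it obliges you to check two things that the paper's ordering makes automatic. First, that the untrapping subroutine still works when trapped rungs sit on \emph{both} sides of the chosen column; it does, since the key observation (the lowest leftward trapped rung has only rightward trapped rungs below it) and the three sliding cases are insensitive to which side of the chosen column a trapped rung lies on, and your verification that isotopy, (2) and (3) preserve the count while forward applications of (4) strictly decrease it goes through unchanged. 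Second, your phrase ``decreases $c$ by exactly one'' should read ``by at least one'': when a label $j_1-j'$ or $j_2-j'$ vanishes the erased rung takes its remaining inversions with it, which only helps the induction. One last small point: the FE case is not literally symmetric in the listed relations --- it needs the reflected square switch (4'), which is a consequence of (4), as the paper notes --- so you remain within the allowed moves.
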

\begin{proof} We only describe an algorithm for resorting to EF webs. The other case is completely analogous; instead of the relations (4), it uses the relations (4'), which can be deduced from (4):
\begin{enumerate}
\item[(4')] Other square switch relations 
\[\xy
(0,0)*{
\begin{tikzpicture}[scale=.25]
	\draw [very thick, directed=.55] (-2,-4) to (-2,-2);
	\draw [very thick, directed=.55] (-2,-2) to (-2,2);
	\draw [very thick, directed=.55] (2,-4) to (2,-2);
	\draw [very thick, directed=.55] (2,-2) to (2,2);
	\draw [very thick, rdirected=.55] (-2,-2) to (2,-2);
	\draw [very thick, directed=.55] (-2,2) to (-2,4);
	\draw [very thick, directed=.55] (2,2) to (2,4);
	\draw [very thick, directed=.55] (-2,2) to (2,2);
	\node at (-2,-4.5) {\tiny $k$};
	\node at (2,-4.5) {\tiny $l$};
	\node at (0,-1.25) {\tiny $j_1$};
	\node at (0,2.75) {\tiny $j_2$};
\end{tikzpicture}
};
\endxy=\sum_{j^{\prime}\geq 0}{l-j_1-k+j_2 \brack j^{\prime}}\xy
(0,0)*{
\begin{tikzpicture}[scale=.25]
	\draw [very thick, directed=.55] (-2,-4) to (-2,-2);
	\draw [very thick, directed=.55] (-2,-2) to (-2,2);
	\draw [very thick, directed=.55] (2,-4) to (2,-2);
	\draw [very thick, directed=.55] (2,-2) to (2,2);
	\draw [very thick, directed=.55] (-2,-2) to (2,-2);
	\draw [very thick, directed=.55] (-2,2) to (-2,4);
	\draw [very thick, directed=.55] (2,2) to (2,4);
	\draw [very thick, rdirected=.55] (-2,2) to (2,2);
	\node at (-2,-4.5) {\tiny $k$};
	\node at (2,-4.5) {\tiny $l$};
	\node at (0,-1.25) {\tiny $j_2\! -\! j^{\prime}$};
	\node at (0,2.75) {\tiny $j_1\! -\! j^{\prime}$};
\end{tikzpicture}
};
\endxy\]
\end{enumerate}
 Also, we have the choice of starting from the leftmost or from the rightmost upright in the ladder web diagram; we describe the former.  Without further mention, we assume that the algorithm applies the relations (2) whenever possible.  

Suppose that adjacent to the leftmost upright, there is a leftward rung immediately above a rightward rung. Then the untrapping subroutine shows that all rungs that are trapped between our chosen rungs can be moved away, modulo relations applied further to the right in the web. The result is a configuration, in which our chosen rungs form a square that can be switched via relation (4). In doing so, the leftward rung moves down past the rightward rung. As long as there are leftward rungs above rightward rungs in the first column, this procedure can be iterated. Finally, it terminates in a linear combination of webs in which the rungs in the first column satisfy the EF condition. 

Now suppose that the rungs in the first $x-1$ columns on the left are already sorted to satisfy the EF condition. Suppose there is a leftward rung immediately above a rightward rung in the $x^{\mathrm{th}}$ column. By the untrapping routine we may assume that there are no trapped further to the right in the web. Since the web is already of type EF in the $(x-1)^{\mathrm{th}}$ column, there are also no trapped rungs on the left. Thus, relation (4) can be applied to move the leftward rung down past the rightward rung. This process does not change anything  to the left of the $x^{\mathrm{th}}$ column (modulo relation (3)) and it terminates after finitely many iterations in a linear combination of webs, in which the rungs in the first $x$ columns satisfy the EF condition. 
\end{proof}

\begin{rem} Via quantum skew Howe duality \cite{CKM}, the existence of a resorting subroutine follows from the triangular decomposition of quantum $\mathfrak{gl}_m$.
\end{rem}

In the following, we need another bigon relation which is satisfied for $\glnn{N}$ webs in $\R^2$:
\begin{equation}
\label{eqn-addbigon}
\xy
(0,0)*{
\begin{tikzpicture}[scale=.25]
	\draw [very thick, directed=.55] (-2,-4) to (-2,-2);
	\draw [very thick, directed=.55] (-2,-2) to (-2,2);
	\draw [very thick, directed=.55] (-2,2) to (-2,4);
	\draw [very thick, directed=.35] (-2,2) to [out=30,in=90] (2,0) to [out=270,in=330] (-2,-2);
	\node at (-2,-4.5) {\tiny $k$};
	\node at (1,0) {\tiny $j$};
\end{tikzpicture}
};
\endxy \;\; = \;\; {N-k \brack j}
\xy
(0,0)*{
\begin{tikzpicture}[scale=.25]
	\draw [very thick] (-2,-4) to (-2,-2);
	\draw [very thick, directed=.55] (-2,-2) to (-2,2);
	\draw [very thick] (-2,2) to (-2,4);
	\node at (-2,-4.5) {\tiny $k$};
\end{tikzpicture}
};
\endxy
\;\; = \;\;
\xy
(0,0)*{
\begin{tikzpicture}[scale=.25]
	\draw [very thick, directed=.55] (2,-4) to (2,-2);
	\draw [very thick, directed=.55] (2,-2) to (2,2);
	\draw [very thick, directed=.55] (2,2) to (2,4);
	\draw [very thick, directed=.35] (2,2) to [out=150,in=90] (-2,0) to [out=270,in=210] (2,-2);
	\node at (2,-4.5) {\tiny $k$};
	\node at (-1,0) {\tiny $j$};
\end{tikzpicture}
};
\endxy
\end{equation}

\begin{prop} \label{prop-annulareval}
Let $W$ be a balanced $\glnn{N}$ ladder web with a marked boundary point of label $k$ and consider the partial closure $\overline{W}$ of all unmarked boundary strands:
\[\overline{W} \;\; = \;\; \xy
(0,0)*{
\begin{tikzpicture}[scale=.25]
	\draw [very thick] (-3.5,-2) to (-3.5,2) to (3.5,2) to (3.5,-2) to (-3.5,-2);
	\draw [very thick,directed=.55] (0,2) to (0,4);
	\draw [very thick,directed=.55] (0,-4) to (0,-2);
	\draw [very thick,dashed] (-1,2) to [out=90,in=90] (-7,2) to (-7,-2) to [out=270,in=270] (-1,-2);
	\draw [very thick,dashed] (-2,2) to [out=90,in=90] (-6,2) to (-6,-2) to [out=270,in=270] (-2,-2);
	\draw [very thick,dashed]  (2,2) to [out=90,in=90] (6,2) to (6,-2) to [out=270,in=270] (2,-2);
	\draw [very thick,dashed]  (1,2) to [out=90,in=90] (7,2) to (7,-2) to [out=270,in=270] (1,-2);
	\node at (0,0) {$W$};
	\node at (0,-4.5) {\tiny $k$};
\end{tikzpicture}
};
\endxy 
\]
Then $\overline{W}$ can be rewritten as a $\Z[q^{\pm 1}]$-linear combination of the $k$-labelled web edge using only the four relations from Lemma~\ref{lem-annulareval} and the additional relation~\eqref{eqn-addbigon}. The latter is used in the following form (and its horizontally reflected version):
\begin{enumerate}
\item[(5)] Partial closure relations
\[\xy
(0,0)*{
\begin{tikzpicture}[scale=.25]
	\draw [dotted ,thick] (-2,-5) to (-2,5);
	\draw [very thick, directed=.55] (2,-3) to (2,3);
	\draw [very thick, directed=.55] (-2,-3) to (2,-3);
	\draw [very thick, directed=.55] (-2,-1.5) to (2,-1.5);
	\draw [very thick, rdirected=.55] (-2,1.5) to (2,1.5);
	\draw [very thick, rdirected=.55] (-2,3) to (2,3);
	\draw [very thick,dashed]  (2,3) to (2,4) to [out=90,in=90] (6,4) to (6,-4) to [out=270,in=270] (2,-4) to (2,-3);
	\node at (2,-5.5) {\tiny $l$};
	\node at (0,-2.25) {\tiny $\dots$};
	\node at (0,2.25) {\tiny $\dots$};
	\node at (0,-3.75) {\tiny $j_1$};
	\node at (0,-0.75) {\tiny $j_k$};
\end{tikzpicture}
};
\endxy
\;\; = \;\; {N-\sum j_i \brack l}
\xy
(0,0)*{
\begin{tikzpicture}[scale=.25]
	\draw [dotted ,thick] (-2,-5) to (-2,5);
	\draw [very thick, directed=.55] (2,-3) to (2,3);
	\draw [very thick, directed=.55] (-2,-3) to (2,-3);
	\draw [very thick, directed=.55] (-2,-1.5) to (2,-1.5);
	\draw [very thick, rdirected=.55] (-2,1.5) to (2,1.5);
	\draw [very thick, rdirected=.55] (-2,3) to (2,3);
	\node at (0,-2.25) {\tiny $\dots$};
	\node at (0,2.25) {\tiny $\dots$};
	\node at (0,-3.75) {\tiny $j_1$};
	\node at (0,-0.75) {\tiny $j_k$};
\end{tikzpicture}
};
\endxy
\]
\end{enumerate}
\end{prop}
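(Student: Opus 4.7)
The plan is to adapt the annular web evaluation algorithm of Queffelec--Rose (Lemma~\ref{lem-annulareval}) to the partial closure setting. The key observation is that $\overline{W}$ differs from an annular closure only in that one upright --- the one carrying the marked boundary points of label $k$ --- is left open rather than closed off. The new relation (5) plays, in the partial closure setting, the role that evaluation of nested labelled essential circles played in the annular setting.

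First I would isotope $\overline{W}$ into standard ladder form so that the marked upright of label $k$ appears at one extreme end, say as the rightmost upright, and all unmarked uprights (each closed off at top and bottom by a dashed arc) appear to its left. I then proceed by induction on the number of closed uprights. In the base case there are no closed uprights, so $\overline{W}$ is already the marked $k$-labelled edge and there is nothing to prove. For the inductive step I focus on the leftmost closed upright and aim to apply relation (5) to strip it off, producing a $\mathbb{Z}[q^{\pm 1}]$-multiple of a web with one fewer closed upright. This reduction is carried out via the resorting subroutine of Lemma~\ref{lem_sortroutine} (built on relations (2), (3), and (4)), together with the untrapping subroutine from the proof of Lemma~\ref{lem-annulareval} (built on relations (3) and (4)) and the rung-sliding of relation (1), with the overall goal of gathering all rungs incident to the leftmost upright into the precise local configuration shown on the left-hand side of relation (5). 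Once in this configuration, relation (5) immediately collapses the leftmost upright to a scalar in $\mathbb{Z}[q^{\pm 1}]$, completing the inductive step, and the argument terminates when only the marked edge remains.

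The main obstacle is verifying that the combined use of resorting and untrapping can always bring the rungs incident to the leftmost closed upright into the exact form required by relation (5). In the annular case, Queffelec--Rose's untrapping subroutine shows that any rungs trapped between a chosen pair of leftmost rungs can be pushed away to the right by relations (3) and (4) without disturbing the leftmost column; after this, the resorting subroutine arranges the remaining rungs into type EF (or type FE, after a mirror reflection) so that they match the configuration shown in relation (5). The same argument applies here, because relations (1)--(4) act entirely within the inner ladder portion of $\overline{W}$ and do not touch the dashed partial closure arcs. Since the marked upright has been placed at the opposite end of the ladder from the upright being eliminated, it never participates in these local manipulations. Iterating the inductive step finitely many times strips away all closed uprights and expresses $\overline{W}$ as a $\mathbb{Z}[q^{\pm 1}]$-linear combination of the single $k$-labelled edge, as claimed.
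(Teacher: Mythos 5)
Your algorithm for stripping off a single closed upright at the outer end of the ladder (resort with Lemma~\ref{lem_sortroutine}, untrap, then apply relation (5) or its reflection) is essentially the paper's mechanism, and it correctly handles the case in which all closed strands lie on one side of the marked upright. The problem is your very first step: you cannot, in general, isotope $\overline{W}$ so that the marked upright sits at an extreme end of the ladder. The marked strand runs from the bottom boundary point to the top boundary point of the ambient disk and therefore separates it into two components; a closed upright whose closure arc lies to the right of the marked strand can never be carried to its left by a planar isotopy (nor by relation (1), which only slides rungs around one of the two partial closures, never through the marked strand). So whenever $W$ has closed boundary strands on \emph{both} sides of the marked point --- which the paper explicitly singles out as the only nontrivial case --- your reduction is unavailable and the induction never gets started in the form you set it up.

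The fix is what the paper does: keep the marked upright wherever it is, resort the whole ladder to type FE and apply relation (5) to the \emph{rightmost} closed upright, then resort to type EF and apply the reflected relation (5) to the \emph{leftmost} closed upright, and alternate. Each pass removes one closed upright from one of the two sides, so after finitely many alternations only the marked $k$-labelled edge remains. Your write-up contains all the local ingredients for each such pass; what is missing is the organization that lets you attack both sides of the marked strand rather than assuming they can be merged into one.
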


\begin{proof} We describe an explicit algorithm for the case that $W$ has closed strands on both sides of the marked upright; the other cases are easier. First we apply the resorting subroutine from Lemma~\ref{lem_sortroutine} to replace $W$ by a linear combination of webs of type FE. Then the relation (5) can be applied to the rightmost upright to remove its closure. We interpret the resulting webs as ladder webs whose rightmost upright has label $0$ at the bottom and the top. 

In the next step, we resort all webs to EF type. This has the effect of removing any interaction with the rightmost upright and reduces the problem to ladder webs with one fewer upright. Also, having resorted, we can now use the reflected version of relation (5) to remove the closure of the leftmost upright. After finitely many steps of resorting between types FE and EF, and removing closures via the relations (5), the algorithm terminates in a multiple of the marked edge.
\end{proof}

\bibliographystyle{plain}
\bibliography{refs}

\begin{thebibliography}{10}

\bibitem{BLS}
John~A. Baldwin, Adam~Simon Levine, and Sucharit Sarkar.
\newblock Khovanov homology and knot {F}loer homology for pointed links.
\newblock {\em J. Knot Theory Ramifications}, 26(2):1740004, 49, 2017.

\bibitem{Bec}
Hanno Becker.
\newblock {Khovanov-Rozansky homology via Cohen-Macaulay approximations and
  Soergel bimodules}.
\newblock {\em ArXiv e-prints}, May 2011.

\bibitem{CM}
Nils Carqueville and Daniel Murfet.
\newblock Computing {K}hovanov-{R}ozansky homology and defect fusion.
\newblock {\em Algebr. Geom. Topol.}, 14(1):489--537, 2014.

\bibitem{CM2}
Nils Carqueville and Daniel Murfet.
\newblock Adjunctions and defects in {L}andau-{G}inzburg models.
\newblock {\em Adv. Math.}, 289:480--566, 2016.

\bibitem{Cau2}
Sabin Cautis.
\newblock Clasp technology to knot homology via the affine {G}rassmannian.
\newblock {\em Math. Ann.}, 363(3-4):1053--1115, 2015.

\bibitem{CKM}
Sabin Cautis, Joel Kamnitzer, and Scott Morrison.
\newblock Webs and quantum skew {H}owe duality.
\newblock {\em Math. Ann.}, 360(1-2):351--390, 2014.

\bibitem{CR}
Joseph Chuang and Rapha{\"e}l Rouquier.
\newblock Derived equivalences for symmetric groups and {$\germ {sl}\sb
  2$}-categorification.
\newblock {\em Ann. of Math. (2)}, 167(1):245--298, 2008.

\bibitem{Dow}
Nathan Dowlin.
\newblock Knot {F}loer homology and {K}hovanov-{R}ozansky homology for singular
  links.
\newblock {\em Algebr. Geom. Topol.}, 18(7):3839--3885, 2018.

\bibitem{DGR}
Nathan~M. Dunfield, Sergei Gukov, and Jacob Rasmussen.
\newblock The superpolynomial for knot homologies.
\newblock {\em Experiment. Math.}, 15(2):129--159, 2006.

\bibitem{ETW}
M.~{Ehrig}, D.~{Tubbenhauer}, and P.~{Wedrich}.
\newblock {Functoriality of colored link homologies}.
\newblock {\em Proc. London Math. Soc.}, 117(5):996--1040, 2018.

\bibitem{EPV}
A.~P. {Ellis}, I.~{Petkova}, and V.~{V{\'e}rtesi}.
\newblock {Quantum $\mathfrak{gl}_{1\vert 1}$ and tangle Floer homology}.
\newblock {\em ArXiv e-prints}, October 2015.

\bibitem{Gar2}
Stavros Garoufalidis.
\newblock On the characteristic and deformation varieties of a knot.
\newblock In {\em Proceedings of the {C}asson {F}est}, volume~7 of {\em Geom.
  Topol. Monogr.}, pages 291--309 (electronic). Geom. Topol. Publ., Coventry,
  2004.

\bibitem{GLL}
Stavros Garoufalidis, Aaron~D. Lauda, and Thang T.~Q. L\^{e}.
\newblock The colored {HOMFLYPT} function is {$q$}-holonomic.
\newblock {\em Duke Math. J.}, 167(3):397--447, 2018.

\bibitem{GGS}
Eugene Gorsky, Sergei Gukov, and Marko Sto\v{s}i\'{c}.
\newblock Quadruply-graded colored homology of knots.
\newblock {\em Fund. Math.}, 243(3):209--299, 2018.

\bibitem{Gra2}
J.~{Grant}.
\newblock {A categorification of the skew Howe action on a representation
  category of $U_q(\mathfrak{gl}(m|n))$}.
\newblock {\em ArXiv e-prints}, April 2015.

\bibitem{GNSSS}
Sergei Gukov, Satoshi Nawata, Ingmar Saberi, Marko Sto\v{s}i\'{c}, and Piotr
  Su{\l}kowski.
\newblock Sequencing {BPS} spectra.
\newblock {\em J. High Energy Phys.}, (3):004, front matter+160, 2016.

\bibitem{GSa}
Sergei Gukov and Ingmar Saberi.
\newblock Lectures on knot homology and quantum curves.
\newblock In {\em Topology and field theories}, volume 613 of {\em Contemp.
  Math.}, pages 41--78. Amer. Math. Soc., Providence, RI, 2014.

\bibitem{GSV}
Sergei Gukov, Albert~S. Schwarz, and Cumrun Vafa.
\newblock {Khovanov-Rozansky homology and topological strings}.
\newblock {\em Lett. Math. Phys.}, 74:53--74, 2005.

\bibitem{GS}
Sergei Gukov and Marko Sto{\v{s}}i{\'c}.
\newblock Homological algebra of knots and {BPS} states.
\newblock In {\em Proceedings of the {F}reedman {F}est}, volume~18 of {\em
  Geom. Topol. Monogr.}, pages 309--367. Geom. Topol. Publ., Coventry, 2012.

\bibitem{GW}
Sergei Gukov and Johannes Walcher.
\newblock {Matrix Factorizations and Kauffman Homology}.
\newblock {\em ArXiv High Energy Physics - Theory e-prints}, December 2005.

\bibitem{Kho1}
Mikhail Khovanov.
\newblock A categorification of the {J}ones polynomial.
\newblock {\em Duke Math. J.}, 101(3):359--426, 2000.

\bibitem{Kho6}
Mikhail Khovanov.
\newblock Patterns in knot cohomology. {I}.
\newblock {\em Experiment. Math.}, 12(3):365--374, 2003.

\bibitem{Kho3}
Mikhail Khovanov.
\newblock Triply-graded link homology and {H}ochschild homology of {S}oergel
  bimodules.
\newblock {\em Internat. J. Math.}, 18(8):869--885, 2007.

\bibitem{KR1}
Mikhail Khovanov and Lev Rozansky.
\newblock Matrix factorizations and link homology.
\newblock {\em Fund. Math.}, 199(1):1--91, 2008.

\bibitem{KR2}
Mikhail Khovanov and Lev Rozansky.
\newblock Matrix factorizations and link homology. {II}.
\newblock {\em Geom. Topol.}, 12(3):1387--1425, 2008.

\bibitem{Kra}
Daniel Krasner.
\newblock Equivariant {${\mathfrak{sl}_n}$}-link homology.
\newblock {\em Algebr. Geom. Topol.}, 10(1):1--32, 2010.

\bibitem{KM}
P.~B. Kronheimer and T.~S. Mrowka.
\newblock Khovanov homology is an unknot-detector.
\newblock {\em Publ. Math. Inst. Hautes \'Etudes Sci.}, (113):97--208, 2011.

\bibitem{Las}
A.~Lascoux.
\newblock Notes on interpolation in one and several variables.

\bibitem{LL}
Lukas Lewark and Andrew Lobb.
\newblock {New Quantum Obstructions to Sliceness}.
\newblock {\em Proc. London Math. Soc.}, 2015.
\newblock Accepted.

\bibitem{Lo1}
Andrew Lobb.
\newblock A slice genus lower bound from {$\mathfrak{sl}_n$}
  {K}hovanov-{R}ozansky homology.
\newblock {\em Adv. Math.}, 222(4):1220--1276, 2009.

\bibitem{MSV2}
Marco Mackaay, Marko Sto{\v{s}}i{\'c}, and Pedro Vaz.
\newblock {$\germ{sl}_N$}-link homology {$(N\geq 4)$} using foams and the
  {K}apustin-{L}i formula.
\newblock {\em Geom. Topol.}, 13(2):1075--1128, 2009.

\bibitem{MSV}
Marco Mackaay, Marko Sto{\v{s}}i{\'c}, and Pedro Vaz.
\newblock The {$1,2$}-coloured {HOMFLY-PT} link homology.
\newblock {\em Trans. Amer. Math. Soc.}, 363(4):2091--2124, 2011.

\bibitem{MW}
Marco Mackaay and Ben Webster.
\newblock Categorified skew {H}owe duality and comparison of knot homologies.
\newblock {\em Adv. Math.}, 330:876--945, 2018.

\bibitem{McC}
John McCleary.
\newblock {\em A user's guide to spectral sequences}, volume~58 of {\em
  Cambridge Studies in Advanced Mathematics}.
\newblock Cambridge University Press, Cambridge, second edition, 2001.

\bibitem{MiW}
Victor Mikhaylov and Edward Witten.
\newblock {Branes And Supergroups}.
\newblock {\em Commun. Math. Phys.}, 340(2):699--832, 2015.

\bibitem{MOY}
Hitoshi Murakami, Tomotada Ohtsuki, and Shuji Yamada.
\newblock Homfly polynomial via an invariant of colored plane graphs.
\newblock {\em Enseign. Math. (2)}, 44(3-4):325--360, 1998.

\bibitem{NV1}
G.~{Naisse} and P.~{Vaz}.
\newblock {2-Verma modules and the Khovanov-Rozansky link homologies}.
\newblock {\em ArXiv e-prints}, April 2017.

\bibitem{OS}
Peter Ozsv{\'a}th and Zolt{\'a}n Szab{\'o}.
\newblock Holomorphic disks, link invariants and the multi-variable {A}lexander
  polynomial.
\newblock {\em Algebr. Geom. Topol.}, 8(2):615--692, 2008.

\bibitem{QR}
Hoel Queffelec and David E.~V. Rose.
\newblock The {$\mathfrak{sl}_n$} foam 2-category: a combinatorial formulation
  of {K}hovanov-{R}ozansky homology via categorical skew {H}owe duality.
\newblock {\em Adv. Math.}, 302:1251--1339, 2016.

\bibitem{QR2}
Hoel Queffelec and David E.~V. Rose.
\newblock Sutured annular {K}hovanov-{R}ozansky homology.
\newblock {\em Trans. Amer. Math. Soc.}, 370(2):1285--1319, 2018.

\bibitem{RasTh}
J.~{Rasmussen}.
\newblock {\em {Floer homology and knot complements}}.
\newblock PhD thesis, Harvard University, June 2003.

\bibitem{Ras3}
Jacob Rasmussen.
\newblock Khovanov-{R}ozansky homology of two-bridge knots and links.
\newblock {\em Duke Math. J.}, 136(3):551--583, 2007.

\bibitem{Ras2}
Jacob Rasmussen.
\newblock Khovanov homology and the slice genus.
\newblock {\em Invent. Math.}, 182(2):419--447, 2010.

\bibitem{Ras1}
Jacob Rasmussen.
\newblock Some differentials on {K}hovanov--{R}ozansky homology.
\newblock {\em Geom. Topol.}, 19(6):3031--3104, 2015.

\bibitem{RT}
Nicolai~Yu. Reshetikhin and Vladimir~G. Turaev.
\newblock Ribbon graphs and their invariants derived from quantum groups.
\newblock {\em Comm. Math. Phys.}, 127(1):1--26, 1990.

\bibitem{RWa}
L.-H. {Robert} and E.~{Wagner}.
\newblock {A closed formula for the evaluation of $\mathfrak{sl}_N$-foams}.
\newblock {\em ArXiv e-prints}, February 2017.

\bibitem{RW}
David E.~V. Rose and Paul Wedrich.
\newblock {Deformations of colored $\mathfrak{sl}_N$ link homologies via
  foams}.
\newblock {\em Geom. Topol.}, 20(6):3431--3517, 2016.

\bibitem{Roz}
Lev Rozansky.
\newblock An infinite torus braid yields a categorified {J}ones-{W}enzl
  projector.
\newblock {\em Fund. Math.}, 225(1):305--326, 2014.

\bibitem{TVW}
Daniel Tubbenhauer, Pedro Vaz, and Paul Wedrich.
\newblock Super {$q$}-{H}owe duality and web categories.
\newblock {\em Algebr. Geom. Topol.}, 17(6):3703--3749, 2017.

\bibitem{Web1}
Ben Webster.
\newblock Khovanov-{R}ozansky homology via a canopolis formalism.
\newblock {\em Algebr. Geom. Topol.}, 7:673--699, 2007.

\bibitem{WW}
Ben Webster and Geordie Williamson.
\newblock A geometric construction of colored {HOMFLYPT} homology.
\newblock {\em Geom. Topol.}, 21(5):2557--2600, 2017.

\bibitem{Wed1}
Paul Wedrich.
\newblock Categorified {$\germ{sl}_N$} invariants of colored rational tangles.
\newblock {\em Algebr. Geom. Topol.}, 16(1):427--482, 2016.

\bibitem{Wil}
Geordie Williamson.
\newblock Singular {S}oergel bimodules.
\newblock {\em Int. Math. Res. Not. IMRN}, (20):4555--4632, 2011.

\bibitem{Wu4}
Hao Wu.
\newblock On the quantum filtration of the {K}hovanov-{R}ozansky cohomology.
\newblock {\em Adv. Math.}, 221(1):54--139, 2009.

\bibitem{Wu3}
Hao Wu.
\newblock Generic deformations of the colored {$\germ{sl}_N$}-homology for
  links.
\newblock {\em Algebr. Geom. Topol.}, 11(4):2037--2106, 2011.

\bibitem{Wu2}
Hao Wu.
\newblock Equivariant colored {$\germ{sl}_N$}-homology for links.
\newblock {\em J. Knot Theory Ramifications}, 21(2):1250012, 104, 2012.

\bibitem{Wu1}
Hao Wu.
\newblock A colored {$\germ {sl}_N$} homology for links in {$S\sp 3$}.
\newblock {\em Dissertationes Math. (Rozprawy Mat.)}, 499:217, 2014.

\bibitem{Yon}
Yasuyoshi Yonezawa.
\newblock Quantum {$(\germ{sl}\sb n,\wedge V\sb n)$} link invariant and matrix
  factorizations.
\newblock {\em Nagoya Math. J.}, 204:69--123, 2011.

\end{thebibliography}

\end{document}